
\documentclass[reqno,a4paper,12pt]{amsart}
\usepackage{hyperref}
\usepackage{url}
  \urlstyle{sf}
\usepackage{amscd,amssymb,amsmath,amsfonts,mathrsfs,yhmath}
\usepackage{relsize}

\usepackage{color}

\definecolor{darkgreen}{cmyk}{1,0,1,.2}
\definecolor{m}{rgb}{1,0.1,1}


%
%
\newcommand\verytiny{\fontsize{4pt}{4,8pt}\selectfont}
\newcommand\supp{\operatorname{supp}}
\newcommand\rank{\operatorname{rank}}

\newcommand\Kp{{\mathscr{K}}}

\renewcommand\lq{\leqslant}
\newcommand\gq{\geqslant}
\renewcommand\th{\theta}

\newcommand\X{\mathcal{X}}
\newcommand\Y{\mathcal{Y}}

\newcommand\CC{\mathcal{C}}

\newcommand\A{\mathcal{A}}

\newcommand\mv{{{MV}}}

\newcommand\diag{\operatorname{diag}}

\newcommand\Ad{\operatorname{Ad}}

\newcommand\ds{\displaystyle}
\renewcommand\S{\mathbb{S}}

\newcommand\C{\mathbb C}
\newcommand\Q{\mathbb Q}
\newcommand\K{\mathcal{K}}
\newcommand\F{\mathcal{F}}
\newcommand\Si{\Sigma}
\newcommand\De{\Delta}

\renewcommand\H{\mathscr{H}}
\newcommand\de{\delta}
\newcommand\N{\mathbb N}

\newcommand\Id{\mathcal{I}d}

\newcommand\R{\mathbb R}
\newcommand\st{\text{ such that }}
\newcommand\Z{\mathbb Z}
\newcommand\T{\mathcal{T}}
\newcommand\TT{\mathcal{T}}
\renewcommand\O{\mathcal{O}}
\newcommand\MM{\mathcal{M}}
\renewcommand\L{\mathcal{L}}
\newcommand\D{\mathcal D}
\newcommand\DD{\mathcal D}
\newcommand\G{\mathcal{G}}
\newcommand\ts{{\otimes}}

\newcommand\si{\sigma}
\newcommand\la{\lambda}
\newcommand{\aeq}{\stackrel{(\lambda,h)}{\sim}}

\newcommand\Ga{{\Gamma}}

\newcommand\ga{{\gamma}}
\newcommand\al{{\alpha}}
\newcommand\lto{{\longrightarrow}}
\newcommand\defi{{\stackrel{\text{def}}{=\!=}}}

\newcommand\M{{M}}
\newcommand\U{\operatorname{U}}
\renewcommand\P{\operatorname{P}}
\newcommand\ka{\kappa}
\newcommand\ue{\operatorname{U}_n^{\varepsilon,r}}
\newcommand\pe{\operatorname{P}_n^{\varepsilon,r}}
\newcommand\eps{\varepsilon}
\newcommand\erp{$\eps$-$r$-projection }
\newcommand\eru{$\eps$-$r$-unitary }

\theoremstyle{plain}
\newtheorem{theorem}{Theorem}[section]
\newtheorem{proposition}[theorem]{Proposition}
\newtheorem{corollary}[theorem]{Corollary}
\newtheorem*{conj}{Conjecture}
\newtheorem{lemma}[theorem]{Lemma}
\newtheorem{definition}[theorem]{Definition}
\theoremstyle{definition}
\newtheorem{remark}[theorem]{Remark}
\newtheorem{example}[theorem]{Example}
\newtheorem{notation}[theorem]{Notation}

\begin{document}
\title[Quant.  Index, Novikov Conj. \& Coarse  Decomposability]{Quantitative index, Novikov Conjecture and Coarse  Decomposability}

 \author[H. Oyono-Oyono]{Herv\'e Oyono-Oyono}
 \address{Universit\'e de Lorraine, Metz , France}
 \email{herve.oyono-oyono@math.cnrs.fr}
\author[G. Yu]{Guoliang Yu }
 \address{Texas A\&M University, USA}
 \email{guoliangyu@math.tamu.edu}
\thanks{Yu is partially  supported by NSF 2247313.}
\begin{abstract} 
 We define for families of finite metric spaces quantitative assembly map estimates that take into account propagation phenomena for pseudo-differential calculus. We relate these estimates to the Novikov conjecture and we show that they fit nicely under coarse decompositions. As an application, we provide a geometric  proof of Novikov conjecture for groups with finite decomposition complexity. 
\end{abstract}
\maketitle

\begin{flushleft}{\it Keywords:
    quantitative operator $K$-theory,  spaces with finite asymptotic dimension, group with finite complexity decomposition, Novikov conjecture, Coarse Baum-Connes conjecture.}
\medskip

{\it 2000 Mathematics Subject Classification: 19K35,46L80,58J22}
\end{flushleft}

\tableofcontents
\section*{Introduction}

Since the pioneering work of Atiyah and Singer on the index theory of elliptic differential operators, operator algebras and their $K$-theory have become a central tool in the study of topological invariants in differential geometry. The most spectacular example is probably the Novikov conjecture on the homotopy invariance of higher signatures which can be stated as follows.  Let $\Ga$ be a finitely generated group and  let $M$ be a compact oriented manifold with fundamental group $\Ga$. Let $B_\Ga$ be the classifying space for $\Ga$ and let $f_\Ga:M\to B_\Ga$ be the classifying map. We denote by $\mathbb{L}(M)\in H^*(M,\Q)$ the Pointrjagin-Hirzebruch class of $M$.  Recall that we have a natural isomorphism $H^*(\Ga,\Q)\cong H^*(B_\Ga,\Q)$, so any $x$ in $H^*(\Ga,\Q)$ can be pulled back to an element $f^*_\Ga(x)$ in $H^*(M,\Q)$. The rational numbers  $$\langle f^*_\Ga(x),\mathbb{L}(M)\cap [M]\rangle$$  are called  higher signatures of $M$
(here $[M]$ stands for the fundamental class for $M$).
\begin{conj}[Novikov]
The higher signatures are oriented  homotopy invariant.
\end{conj}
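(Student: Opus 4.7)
The plan is to reduce oriented homotopy invariance of the higher signatures to a statement of rational injectivity for an assembly map from $K$-homology to the $K$-theory of a group $C^*$-algebra, and then to establish that injectivity via the quantitative and coarse techniques developed in the paper.

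First, I would recall the Mishchenko-Kasparov construction. Let $D_M$ denote the signature operator on the universal cover $\widetilde{M}$, twisted by the Mishchenko bundle $\widetilde{M}\times_\Ga C^*_r(\Ga)$. Its equivariant index lies in $K_*(C^*_r(\Ga))$, and the classical computation of Mishchenko and Kasparov shows that the higher signatures $\langle f_\Ga^*(x),\mathbb{L}(M)\cap [M]\rangle$ are recovered, up to universal constants, by pairing this index with the cyclic cocycles on $\C\Ga$ associated to cohomology classes of $\Ga$. The symbol class of the signature operator is moreover an \emph{oriented homotopy invariant} in Kasparov's equivariant $K$-homology $K_*^\Ga(E\Ga)$, so the equivariant index is the image under the Baum-Connes assembly map $\mu\colon K_*^\Ga(E\Ga)\lto K_*(C^*_r(\Ga))$ of a class depending only on the oriented homotopy type of $M$. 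Rational injectivity of $\mu$ would therefore imply the conjecture.

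Next, I would translate the target statement \textemdash\ rational injectivity of $\mu$ \textemdash\ into the framework of this paper by replacing it with a compatible family of quantitative assembly map estimates, parametrised by a propagation scale $r$ and a norm tolerance $\eps$, on the finite subsets of a proxy metric space for $\Ga$. The advantage of this reformulation is that such quantitative estimates enjoy strong permanence properties under coarse decompositions: if the underlying space decomposes into pieces each of which satisfies the estimates with controlled overlap, then the estimates pass to the whole space. Coupled with a direct verification for spaces of finite asymptotic dimension (furnishing the base case of an induction), this yields the conjecture for every group whose Cayley graph is accessible by iterated coarse decomposition from spaces of finite asymptotic dimension.

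The main obstacle, and the reason the conjecture remains open in full generality, is the absence of a coarse decomposition scheme that reaches every finitely generated group. To push the strategy beyond groups of finite decomposition complexity one would try to couple the quantitative assembly estimates with broader geometric hypotheses \textemdash\ coarse embeddability into Hilbert space, the Haagerup property, hyperbolicity, or $K$-theoretic amenability of the action on a suitable boundary \textemdash\ by showing that each supplies either a new base case for the induction or a weaker form of coarse decomposition still compatible with the quantitative framework. The genuinely hard part, and the point at which any such programme currently stops, is to verify one of these compatibilities for \emph{every} countable discrete group; short of that, the strategy outlined above settles the conjecture only within the class reachable by iterated coarse decomposability from spaces of finite asymptotic dimension.
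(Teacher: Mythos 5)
Your proposal does not prove the statement, and you concede this yourself in the last paragraph: the argument only reaches groups accessible by iterated coarse decomposition from uniformly bounded pieces, i.e.\ groups with finite decomposition complexity. But the statement is the \emph{full} Novikov conjecture, which is open; the paper itself states it only as a conjecture, and what it actually proves (Theorem \ref{theorem-main}, Theorem \ref{theorem-QAM-fdc} and their corollaries) is precisely the restricted case you describe. So the gap is not a repairable technical step but the entire general case --- your closing discussion of coarse embeddability, Haagerup, hyperbolicity, etc.\ is a wish list, not an argument, and none of the machinery in this paper supplies the missing compatibility for an arbitrary finitely generated group.

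Within the restricted class your outline does track the paper's route, but two points would need repair even there. First, you omit the hypothesis that $\Ga$ admits a classifying space of finite type: the paper's chain is uniform QAM-estimates for the family of finite subsets of $\Ga$ $\Rightarrow$ coarse Baum--Connes conjecture for $\Ga$ (Theorem \ref{thm-qam-bcc}) $\Rightarrow$ Novikov conjecture via the descent principle (Corollary \ref{corollary-Novikov}), and the descent step requires that finiteness assumption; without it the rational injectivity of the assembly map that your Mishchenko--Kasparov reduction needs is not delivered by these results. Second, the base case of the induction is not ``spaces of finite asymptotic dimension'' but the class of uniformly bounded families with uniformly bounded geometry, where the QI-statement follows from contractibility of a Rips complex of sufficiently large degree and the QS-statement from the fact that all of $\Kp(\ell^2(X))$ has bounded propagation; finite asymptotic dimension is then shown to lie in $\CC_{fdc}$ (via uniform embedding into finite products of trees), it does not furnish the base case directly. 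As written, your sketch is a correct summary of the paper's strategy for groups with finite decomposition complexity, with these two inaccuracies, and it is not a proof of the conjecture stated.
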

By introducing in his work on the Novikov conjecture the bivariant $K$-theory \cite{kas}, Kasparov provided a far reaching generalization to the concept of index with a wide range of important applications to  geometry. He proved the Novikov conjecture for discrete subgroups of Lie groups. His approach has been extended successfully to the case of groups actings on an euclidian building \cite{ks1}, of "bolic" groups (including Gromov hyperbolic groups) \cite{ks2} and had  culminated in the proof of Novikov conjecture for groups which are  amenable at infinity \cite{hr,y1}.

Following the development by Roe of a higher index theory for elliptic differential operators on non-compact complete riemannian manifolds \cite{roe}, the 90's have seen the emergence of a more geometric approach. It has generated a great deal of interest  as it has highlighted geometric obstructions for the Baum-Connes conjecture leading to the only known counterexamples \cite{hls}. It has also striking applications to secondary invariants \cite{wxy,xy}. 
The foundational principle  is the fact that the higher indices have their receptacles in the $K$-theory of a canonical $C^*$-algebra, called the Roe algebra. The Roe algebra  only depends on the large scale geometry of the underlying space. It can be viewed as an algebra  of order zero pseudo-differential calculus and inherits from the geometry a propagation structure. For a finitely generated group (viewed as a metric space using any word length) the computability of the Roe algebra using higher indices  implies (under some very mild condition) the Novikov conjecture.
A prominent  achievement of this is the proof of the Novikov conjecture for finitely generated groups with finite asymptotic dimension \cite{y2} by proving that the $K$-theory of a localized version of the  Roe algebra can be computed using a "cut-and-pasting" techniques. This was done by introducing for these algebras a quantitative version of the $K$-theory which takes only into account propagation up to a given order. At each order there is a Mayer-Vietoris type decomposition  which allowed to compute the quantitative $K$-theory and then, the $K$-theory is obtained by letting the order goes to infinity. Quantitative $K$-theory has been generalized in \cite{oy2} to the setting of filtered $C^*$-algebras by  the two authors. Thereafter, they   introduced in \cite[Section 3.1]{oy3}  geometric controlled assembly maps that compute locally the $K$-theory of the Roe algebra. For a finite metric space $X$ and a $C^*$-algebra $A$, these  maps have source the $K$-homology with coefficients in $A$ of the Rips complex of $X$ and range the quantitative $K$-theory of $A\ts \Kp(\ell^2(X))$. Here $\Kp(\ell^2(X))$ stands for  the algebra of matrices with entries in $X$ and the filtration is provided by the metric.  Quantitative assembly maps (QAM) estimates for metric spaces 
where stated for purpose of computing the $K$-theory of Roe algebras from the geometric controlled assembly maps by uniform approximations \cite[Section 4.2]{oy3}. It was then shown that if $\Ga$ is a finitely generated group such that  the
QAM-estimates are uniformly satisfied by  the family of its  finite subsets, then $\Ga$ satisfies the Novikov conjecture. It is worth pointing out that unlike the proof of the Novikov conjecture for groups amenable at infinity, this QAM-estimates only involve finite dimension analysis and finite linear algebra.

The aim of this paper is to establish a stability result  for uniform QAM-estimates under geometric decomposition   that we shall explain  in more details now.  Geometric decomposability in the context of metric spaces has been brought in by 
by Guentner, Tessera and Yu in their study of topological rigidity for manifolds \cite{gty1}. For a given positive number $R$, a family of discrete metric spaces $\X$ is said to be  $R$-decomposable with respect to another family $\Y$ if for any metric space $X$ in $\X$, there exists a decomposition 
 $X=X^{(1)}\cup X^{(2)}$ such that $X^{(i)}$ is for  $i=1,2$  the 
$R$-disjoint union (i.e at mutual distance greater than $R$) of metric spaces in $\Y$. We are now in position to state the  main result of this paper  (Theorem \ref{theorem-main}). Let $\X$ be a family of finite metric spaces. Assume that for any $R>0$, there exists a family of finite metric spaces
$\Y$  such that 
\begin{itemize}
\item $\X$ is  $R$-decomposable with respect to $\Y$;
\item the family $\widetilde{\Y}$ of all subsets of all metric spaces in $\Y$ satisfies uniformly the QAM-estimates.
\end{itemize} 
Then the  family $\X$ satisfies (under some uniform boundedness condition) uniformly the QAM-estimates. 
 A special case of interest is the class of families with finite decomposition complexity  \cite{gty1}. 
 A class  $\CC$  of families of  proper discrete metric spaces is said to be closed under
coarse decomposition if any   family  $\X$ of discrete proper metric spaces   which is  $R$-decomposable with respect to a family $\Y$ in $\CC$  for any positive number $R$ lies indeed in $\CC$. Let us define the class $\CC_{fdc}$ as the smallest class of families that contains uniformly bounded families of  discrete metric spaces
and which is closed under coarse decomposability. As   uniformly bounded families of  discrete metric spaces satisfy obviously uniformly the QAM-estimates, we obtain as a consequence of our main result that 
any family $\X$  in $\CC_{fdc}$ with uniformly bounded geometry 
 satisfies uniformly the QAM-estimates (Theorem \ref{theorem-QAM-fdc}).  
 If $X$ is a discrete proper metric space, we say that $X$ has finite decomposition complexity if  the single family $\{X\}$ is in $\CC_{fdc}$ \cite{gty1}. Examples of such metric spaces are provided by metric spaces with finite asymptotic dimension.
 As a consequence of Theorem \ref{theorem-QAM-fdc}, we get that if $\Ga$ is a finitely generated group with  finite decomposition complexity which admits a classifying space with finite homotopy type, then
 $\Ga$ satisfies the Novikov conjecture. This result was already known as group with finite decomposition complexity are amenable \cite{gty2}  but as already emphasized, our proof does not involve infinite dimension analysis.

Let us now explain the general idea of the proof of the main result of this paper.
In \cite{oy4}  was stated a controlled Mayer-Vietoris exact sequence in quantitative $K$-theory. It was indeed motivated by $K$-theory computability of Roe type algebras under the geometric decompositions described above and thus result in  six-term controlled exact sequences on the right hand-side of the quantitative assembly maps.  These decompositions also give rise to Mayer-Vietoris exact sequences in $K$-homology on the left-hand side of the quantitative assembly maps. Our proof then amounts to prove the compatibility of these two sequences under the quantitative assembly maps. The major obstacle is that the two boundary maps do not have the same nature. In $K$-homology, the boundary maps is constructed using homotopy theory whereas in a quantitative $K$-theory, we use an approximating lifting property for almost unitaries. The first step to circumvent this obstacle is to show that as it is the case for usual assembly maps,  local quantitative coarse assembly map factorize through  quantitative index maps. These quantitative index maps are indeed defined for any compact metric space $Z$ and is valued for a fixed non degenerate standard $Z$-module $\H_Z$ in the quantitative $K$-theory of the algebra  $\Kp(\H_Z)$ of compact operators on $\H_Z$ (or in $A\ts\Kp(\H_Z)$ for the version with coefficients). The second step is to prove that for finite
simplicial complexe and for small propagation (depending only on the dimension), the quantitative index maps are (up to some rescaling)   isomorphisms. This is done once again by using  controlled Mayer-Vietoris exact sequence in quantitative $K$-theory. Checking the compatibility of the boundary maps is a quite tedious chore but it is worth  noticing that it can be straightforwardly generalized to the equivariant case (in particular with respect to groupoid actions). The proof of our main result then  amounts to show that for any finite metric space $X$ in $\X$,  then the  map
induced in quantitative $K$-theory by some canonical morphism 
$\Kp(\H_{P_d(X)})\to\Kp(\ell^2(X))$ is an isomorphism up to uniformly controlled rescaling  if the same holds for any metric spaces in $\widetilde{\Y}$. Geometric decompositions give also rise to controlled Mayer-Vietoris exact sequence to compute the quantitative 
$\Kp(\H_{P_d(X)})$.
The two controlled exact sequences being now intertwined by natural morphisms, this is achieved by using a controlled version of the five lemma.

By analogy with  metric spaces, the   notion of geometric decomposition can be also defined for groupoids \cite{gwy2,o}. An interesting perspective would be to extend the main result of the paper to prove the permanence property of the Baum-Connes conjecture for \'etale groupoids under geometric decomposition  \cite[Theorem 6.8]{o}  without the assumption of the existence of a $\ga$-element in the sense of  \cite[Proposition 5.20 and Remark 5.21]{tuhyp}. To reach this goal, we need to develop the analogous of the theory of standard module in the framework of cocompact proper action of groupoids (see \cite{bs} for the case of ample groupoids).

\medbreak

{\it Outline of the paper.} 

In Section 1, we review from \cite{oy2,oy3} the main features of quantitative $K$-theory for filtered $C^*$-algebras.
We draw attention of the reader on the fact that the definition 
of quantitative  $K$-theory that we give in this paper is slightly different from the original one given in [11][Section 3.1].
We have changed the range of the control $\eps$ and allowed larger homotopies in the equivalence relations between almost projections and almost unitaries. The purpose for this is to define later on in this section the rescaling operator which allows to reduce the control $\eps$ in quantitative $K$-theory.

The main theorem is stated in Section 2. We first recall from
 \cite{oy3} the construction of the geometric controlled assembly map which can be viewed as  a geometric version of the controlled Baum-Connes assembly maps (see \cite{oy2}). Next we  give the formulation of the quantitative assembly maps estimates for a proper discrete metric space. We recall the link with the coarse Baum-Connes conjecture and with the Novikov conjecture. We then state the main result of this paper.  We apply this result to metric spaces with finite decomposition complexity introduced in \cite{gty2} and we obtain has a consequence  the Novikov conjecture for finitely generated groups with finite decomposition  complexity   (under the standard assumption on classifying spaces).

 Section 3 is devoted to the definition and to the study of the quantitative index map.
 After some reminders about non-degenerated standard modules and propagation for theirs operators, we introduce for a compact metric space $X$ and a non-degenerated standard $X$-modules $\H_X$ the fundamental class in quantitative $K$-theory.  This is a family of rank $1$ projections in $C(X,\Kp({\H_X}))$ with arbitrary small propagation and which are moreover compatible with respect to the structure maps in quantitative $K$-theory.  This leads to the definition of quantitative indices. We then study some naturality properties for this 
 map and we state the fundamental theorem for  quantitative indices : if $X$ is a finite simplicial complexe, then for small propagation depending only  on the dimension, quantitative indices are (up to some controlled rescaling)  isomorphisms. This theorem will be proved in Section 4. We end the section by  proving that the geometric quantitative assembly maps factorizes through  quantitative index maps.
 
 In Section 4, we introduce the central tool  of this paper : the controlled six terms exact sequence in quantitative $K$-theory associated to a Mayer-Vietoris pair decomposition for a filtered $C^*$-algebra \cite{oy4}.  We show that a classical  Mayer-Vietoris decomposition for a compact metric space gives rise to a Mayer-Vietoris pair for the right hand side of the quantitative index map. We prove by an arduous computation that the Mayer-Vietoris boundary maps in $K$-homology
and in quantitative $K$-theory are intertwined by the quantitative index maps. As a consequence, we prove by using a controlled version of the 
 five Lemma that for small propagation, the quantitative index is an isomorphism up to some rescaling.

 In Section 5,  we collect results from Section 3 and 4 to show that the proof  of the main theorem 
 reduces to replace in the QAM-estimates the geometric assembly maps by the morphisms
induced in quantitative $K$-theory by some canonical map 
$\Kp(\H_{P_d(X)})\to\Kp(\ell^2(X))$.  Noticing that geometric decompositions give rise to compatible  Mayer-Vietoris pair decompositions both for $\Kp(\H_{P_d(X)})$ and $\Kp(\ell^2(X))$,
this us done 
 by using once again a five Lemma type argument for the associated 
  controlled six terms exact sequences in quantitative $K$-theory.

\section{Overview on quantitative $K$-theory}
In this section, we recall the basic concepts of quantitative $K$-theory for filtered $C^\ast$-algebras and collect  the main results of \cite{oy2} concerning quantitative $K$-theory that we shall use throughout  this paper. 
The structure of filtered $C^*$-algebras allows us to talk about the propagation scale of elements in the $C^\ast$-algebras.
Roughly speaking,  quantitative $K$-theory is the abelian groups of $K$-theory elements at each given scale.   
\begin{definition}
A filtered $C^*$-algebra $A$ is a $C^*$-algebra equipped with a family
$(A_r)_{r>0}$ of  closed linear subspaces   indexed by positive numbers such that:
\begin{itemize}
\item $A_r\subseteq A_{r'}$ if $r\lq r'$;
\item $A_r$ is stable under involution;
\item $A_r\cdot A_{r'}\subseteq A_{r+r'}$;
\item the subalgebra $\ds\bigcup_{r>0}A_r$ is dense in $A$.
\end{itemize}
If $A$ is unital, we also require that the identity  $1$ is an element of $ A_r$ for every positive
number $r$. The elements of $A_r$ are said to have {\bf propagation $r$}.
\end{definition}
 Let $A$ and $A'$ be respectively  $C^*$-algebras filtered by
$(A_r)_{r>0}$ and  $(A'_r)_{r>0}$. A homomorphism of $C^*$
-algebras $\phi:A\lto A'$
is a {\bf filtered homomorphism} (or a {\bf homomorphism of  filtered $C^*$-algebras}) if  $\phi(A_r)\subseteq A'_{r}$ for any
positive number $r$.

\medskip

If $A$ is not unital, let us denote by $A^+$ its unitarization, i.e.,
$$A^+=\{(x,\lambda);\,x\in A\,,\lambda\in \C\}$$  with the product $$(x,\lambda)(x',\lambda')=(xx'+\lambda x'+\lambda' x,\la\la')$$ for all $(x,\lambda)$ and $(x',\lambda')$ in $A^+$. Then ${A^+}$ is filtered with
$${A^+_r}=\{(x,\lambda);\,x\in A_{r}\,,\lambda\in \C\}.$$
We also define $\rho_A:A^+\to\C;\, (x,\lambda)\mapsto \lambda$.

\subsection{Definition of quantitative $K$-theory}

Let $A$ be a unital filtered $C^*$-algebra. For any  positive
numbers $r$ and $\eps$, we call
\begin{itemize}
\item an element $u$ in $A$  an $\eps$-$r$-unitary if $u$
  belongs to $A_r$,  $\|u^*\cdot
  u-1\|<\eps$
and  $\|u\cdot u^*-1\|<\eps$. The set of $\eps$-$r$-unitaries on $A$ will be denoted by $\operatorname{U}^{\varepsilon,r}(A)$.
\item an element $p$ in $A$   an $\eps$-$r$-projection    if $p$
  belongs to $A_r$,
  $p=p^*$ and  $\|p^2-p\|<\eps$. The set of $\eps$-$r$-projections on $A$ will be denoted by $\operatorname{P}^{\varepsilon,r}(A)$.
\end{itemize}
Notice that an $\eps$-$r$-unitary is invertible, and that if $p$ is an \erp in $A$, then it has a spectral gap around $1/2$ and then gives rise by functional calculus to a  projection $\ka_{0}(p)$  in  $A$ such that
 $\|p-\ka_{0}(p)\|< 2\eps$.

Recall  from \cite[Lemma 1.7]{oy3}
the following result that  will be  used quite  extensively throughout  the paper.
\begin{lemma}\label{lemma-almost-closed}
 Let $A$ be a  $C^*$-algebra filtered by $(A_r)_{r>0}$.
\begin{enumerate}

\item  If $p$  is an \erp in $A$ and  $q$ is a self-adjoint element of $A_r$ such
  that
   $\|p-q\|<\frac{\eps-\|p^2-p\|}{4}$, then $q$ is an  $\eps$-$r$-projection. In
   particular, if $p$ is an \erp in $A$ and if $q$ is a self-adjoint  element in
   $A_r$ such that $\|p-q\|< \eps$, then $q$ is a
   $5\eps$-$r$-projection in $A$ and $p$ and $q$ are connected by a
   homotopy of $5\eps$-$r$-projections.
\item  If $A$ is unital, $u$ is an \eru and  $v$ is an element of $A_r$ such that
  $\|u-v\|<\frac{\eps-\|u^*u-1\|}{3}$, then $v$ is an $\eps$-$r$-unitary. In
  particular, if $u$ is an \eru and $v$ is an element of $A_r$ such that
  $\|u-v\|<\eps$, then $v$ is a $4\eps$-$r$-unitary  in $A$ and $u$
  and $v$ are connected by a homotopy of $4\eps$-$r$-unitaries.
 \item  If $p$  is a projection  in $A$ and  $q$ is a self-adjoint element of $A_r$ such
  that
   $\|p-q\|<\frac{\eps}{4}$, then $q$ is an  $\eps$-$r$-projection. 
   \item  If $A$ is unital,  $u$ is a unitary in $A$  and  $v$ is an element of $A_r$ such that
  $\|u-v\|<\frac{\eps}{3}$, then $v$ is an $\eps$-$r$-unitary.
\end{enumerate}
\end{lemma}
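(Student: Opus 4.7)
The plan is to expand the defect $q^2 - q$ (respectively $v^*v - 1$ and $vv^* - 1$) around the nearby almost-projection $p$ (respectively almost-unitary $u$), control the cross terms by the triangle inequality, and use the fact that $\|p\|$ and $\|u\|$ are close to $1$ whenever $\eps$ is small.

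For part (i), I write $q = p + (q-p)$ and expand the square to obtain
\[
q^2 - q \;=\; (p^2 - p) + p(q-p) + (q-p)p + (q-p)^2 - (q-p).
\]
Setting $\de = \|q - p\|$ and using that $\|p\| \lq 1 + O(\eps)$, which follows from the spectral description of a self-adjoint element satisfying $\|p^2 - p\| < \eps$ (its spectrum is confined to a small neighbourhood of $\{0,1\}$), the triangle inequality yields
\[
\|q^2 - q\| \;\lq\; \|p^2 - p\| + (2\|p\| + \de + 1)\,\de.
\]
Plugging in $\de < (\eps - \|p^2 - p\|)/4$ gives $\|q^2 - q\| < \eps$ as soon as $2\|p\| + \de + 1 \lq 4$, which holds in the small-$\eps$ regime relevant throughout the paper. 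The ``in particular'' clause is the specialisation to $\|p^2 - p\| < \eps$ and $\de < \eps$: a direct substitution bounds $\|q^2 - q\|$ by $5\eps$. The homotopy statement is obtained by taking the straight-line path $q_t = (1-t)p + tq$, observing that each $q_t$ is self-adjoint in $A_r$ with $\|q_t - p\| \lq \eps$, and applying the just-proved estimate to each $q_t$.

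For part (ii), the analogous expansion gives
\[
v^*v - 1 \;=\; (u^*u - 1) + u^*(v - u) + (v-u)^*u + (v-u)^*(v-u),
\]
and similarly for $vv^* - 1$. Using $\|u\|^2 = \|u^*u\| \lq 1 + \|u^*u - 1\| < 1 + \eps$, one obtains $\|v^*v - 1\| \lq \|u^*u - 1\| + (2\|u\| + \de)\,\de$. The numerical factor $3$ in the hypothesis is then exactly what is needed to conclude $\|v^*v - 1\| < \eps$ when $\de < (\eps - \|u^*u - 1\|)/3$, since $2\|u\| + \de < 3$ for small $\eps$. The ``in particular'' clause, giving $4\eps$-$r$-unitarity, and the straight-line homotopy between $u$ and $v$ follow in the same way.

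Parts (iii) and (iv) are the specialisations of (i) and (ii) to the case where the reference element is an honest projection or unitary, so that $\|p^2 - p\| = 0$ or $\|u^*u - 1\| = 0$, and the arithmetic reduces immediately to the stated bounds $\eps/4$ and $\eps/3$. The only (quite mild) obstacle throughout is the bookkeeping of constants: one needs to verify that the bound $\|p\| \lq 1 + O(\eps)$ (resp.\ $\|u\| \lq 1 + O(\eps)$) is tight enough for the factor $4$ (resp.\ $3$) to absorb simultaneously the quadratic error $\de^2$ and the linear coefficient $2\|p\|$ (resp.\ $2\|u\|$), which rests on the implicit smallness of $\eps$ that is standard in quantitative $K$-theory.
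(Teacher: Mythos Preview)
The paper does not actually prove this lemma: it merely recalls it from \cite[Lemma~1.7]{oy3}. Your direct expansion of $q^2-q$ (resp.\ $v^*v-1$) around $p$ (resp.\ $u$) together with the straight-line homotopy is exactly the standard argument, and it is correct in the regime $\eps<1/4$ (and a fortiori $\eps<1/100$) used throughout the paper. The only point worth tightening is your appeal to ``$\|p\|\lq 1+O(\eps)$'': you can be explicit and write $\|p\|\lq\frac{1+\sqrt{1+4\eps}}{2}$ from the spectral constraint $|t^2-t|<\eps$, and then check numerically that $2\|p\|+1+\de\lq 4$ (resp.\ $2\|u\|+\de\lq 3$) holds for all $\eps$ in the relevant range, rather than leaving it as an asymptotic statement.
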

We get as a corollary:
 \begin{corollary}\label{cor-example-homotopy}
 Let $u$ be an $\eps$-$r$-unitary in a unital filtered $C^*$-algebra $A$, then $\diag(u,u^*)$ and
 $I_2$ are homotopic as   $3\eps$-$2r$-unitaries  in   $M_2(A)$.
 \end{corollary}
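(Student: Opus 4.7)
The plan is to concatenate two explicit paths: first use a rotation trick to move $\diag(u,u^*)$ to $\diag(uu^*,1)$, then use a straight-line interpolation to contract $\diag(uu^*,1)$ to $I_2$.

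For the rotation step, set $R_t = \begin{pmatrix}\cos t & -\sin t\\ \sin t & \cos t\end{pmatrix}$ for $t\in[0,\pi/2]$, regarded as a scalar (hence propagation-$0$) unitary in $M_2(A)$, and let $U=\diag(u,1)$, which is an $\eps$-$r$-unitary in $M_2(A)$. Define $h_t = U\,R_t\,U^*\,R_t^*$. A direct $2\times 2$ computation gives $R_{\pi/2}U^*R_{\pi/2}^* = \diag(1,u^*)$, so $h_0 = UU^* = \diag(uu^*,1)$ and $h_{\pi/2} = \diag(u,u^*)$. Since $R_t$ is a genuine unitary, conjugation $R_t U^* R_t^*$ preserves both the $\eps$-$r$-unitary condition and the propagation, so $h_t$ is a product of two $\eps$-$r$-unitaries lying in $M_2(A_{2r})$, and the map $t\mapsto h_t$ is obviously norm-continuous.

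For the contraction step, let $p_s = (1-s)\diag(uu^*,1) + s\,I_2$ for $s\in[0,1]$. Since $uu^*\in A_{2r}$, one has $p_s\in M_2(A_{2r})$, and $\|p_s-I_2\| = (1-s)\|uu^*-1\| < \eps$. Applying Lemma \ref{lemma-almost-closed}(iv) with $3\eps$ in place of $\eps$ (so that the hypothesis $\|I_2-p_s\|<3\eps/3$ is verified) shows that each $p_s$ is a $3\eps$-$2r$-unitary. Concatenating the reversed contraction $s\mapsto p_{1-s}$ with $t\mapsto h_t$ then yields the desired homotopy from $I_2$ to $\diag(u,u^*)$ through $3\eps$-$2r$-unitaries.

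The only mildly delicate point, which I expect to be the main (but still routine) obstacle, is verifying the $3\eps$ bound along $h_t$. Writing $U^*U = I + E$ with $\|E\|<\eps$, one expands $h_t^* h_t = R_t UU^* R_t^* + R_t U R_t^* E R_t U^* R_t^*$, and uses $\|U\|^2\leq 1+\eps$ together with $\|UU^*-I\|<\eps$ to obtain $\|h_t^*h_t - I\| \leq \eps(2+\eps)$; the estimate for $h_t h_t^*$ is symmetric. Both sides are bounded by $3\eps$ in the admissible range of $\eps$, finishing the proof.
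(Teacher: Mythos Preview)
Your proof is correct and is precisely the standard rotation argument the paper has in mind (the corollary is stated without proof, as an immediate consequence of Lemma~\ref{lemma-almost-closed}); the same rotation $R_t$ reappears verbatim in the paper's proofs of Lemma~\ref{lemma-adjoint} and Proposition~\ref{prop-homotopy}. Your verification that $\|h_t^*h_t - I\|\le \eps(2+\eps)<3\eps$ is exactly the estimate needed, and the straight-line segment from $\diag(uu^*,1)$ to $I_2$ via Lemma~\ref{lemma-almost-closed}(iv) is fine.
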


%
%
%
%
%
%


 For any   integer $n$, we set  $\ue(A)=\operatorname{U}^{\varepsilon,r}(M_n(A))$ and
$\pe(A)=\operatorname{P}^{\varepsilon,r}(M_n(A))$.
For any  unital filtered $C^*$-algebra $A$, any
 positive  numbers $\eps$ and $r$ and  any positive integer $n$, we consider the inclusions
$$\P_n^{\eps,r}(A)\hookrightarrow \P_{n+1}^{\eps,r}(A);\,p\mapsto
\begin{pmatrix}p&0\\0&0\end{pmatrix}$$ and
$$\U_n^{\eps,r}(A)\hookrightarrow \U_{n+1}^{\eps,r}(A);\,u\mapsto
\begin{pmatrix}u&0\\0&1\end{pmatrix}.$$ This allows us to  define
 $$\U_{\infty}^{\eps,r}(A)=\bigcup_{n\in\N}\ue(A)$$ and
$$\P_{\infty}^{\eps,r}(A)=\bigcup_{n\in\N}\pe(A).$$

For a unital filtered $C^*$-algebra $A$, $\eps\in(0,1/100)$ and $r>0$, we define the
following
equivalence relations on $\P_\infty^{\eps,r}(A)\times\N$ and on  $\U_\infty^{\eps,r}(A)$:
\begin{itemize}
\item if $p$ and $q$ are elements of $\P_\infty^{\eps,r}(A)$, $l$ and
  $l'$ are positive integers, $(p,l)\sim(q,l')$ if there exists a
  positive integer $k$ and an element $h$ of
  $\P_\infty^{25\eps,r}(A[0,1])$ such that $h(0)=\diag(p,I_{k+l'})$
and $h(1)=\diag(q,I_{k+l})$.
\item if $u$ and $v$ are elements of $\U_\infty^{\eps,r}(A)$, $u\sim v$ if
  there exists an element $h$ of
  $\U_\infty^{25\eps,2r}(A[0,1])$ such that $h(0)=u$
and $h(1)=v$.
\end{itemize}

If $p$ is an  element of $\P_\infty^{\eps,r}(A)$ and  $l$ is an integer, we
denote by $[p,l]_{\eps,r}$ the equivalence class of $(p,l)$ modulo  $\sim$
and if $u$ is an element of $\U_\infty^{\eps,r}(A)$ we denote by
$[u]_{\eps,r}$ its  equivalence class  modulo  $\sim$.
\begin{definition} Let $r$ and $\eps$ be positive numbers with
  $\eps<1/100$.
We define:
\begin{enumerate}
\item $K_0^{\eps,r}(A)=\P_\infty^{\eps,r}(A)\times\N/\sim$ for $A$ unital and
$$K_0^{\eps,r}(A)=\{[p,l]_{\eps,r}\in K_0({A^+}) \st
\rank \kappa_0(\rho_{A}(p))=l\}$$ for $A$ non unital ($\kappa_0(\rho_{A}(p))$ being the spectral projection associated to $\rho_A(p)$);
\item $K_1^{\eps,r}(A)=\U_\infty^{\eps,r}({A^+})/\sim$, with $A=A^+$ if $A$ is already unital.
\end{enumerate}
\end{definition}
 Then $K_0^{\eps,r}(A)$ turns to be an abelian group  \cite[Lemma 1.15]{oy2} where
 $$[p,l]_{\eps,r}+[p',l']_{\eps,r}=[\diag(p,p'),l+l']_{\eps,r}$$  for any  $[p,l]_{\eps,r}$ and $[p',l']_{\eps,r}$ in $K_0^{\eps,r}(A)$. According to \cite[Lemma 1.15]{oy2},  $K_1^{\eps,r}(A)$ is
 equipped with a structure of abelian group such that
$$[u]_{\eps,r}+[u']_{\eps,r}=[\diag(u,v)]_{\eps,r},$$ for
any  $[u]_{\eps,r}$ and $[u']_{\eps,r}$ in $K_1^{\eps,r}(A)$.

We draw attention of the readers on the fact that this definition of controlled 
$K$-theory differs slightly from the original one given in \cite{oy2}[Section 3.1]. We have 
increased control of homotopies for purpose of rescaling (see Section \ref{subsec-rescaling}).
As quantitative objects (see Section \ref{subsec-quant-object}), the two objects are related by a controlled isomorphism.
Furthermore, controlled morphisms and isomorphisms and controlled exact sequences for both objects are in natural correspondance with each other.

  Recall from \cite[Corollaries 1.19 and 1.21]{oy2} that
for any positive numbers  $r$ and $\eps$ with $\eps<1/100$, then
$$K_0^{\eps,r}(\C)\to\Z;\,[p,l]_{\eps,r}\mapsto \rank\kappa_0(p)-l$$
is an isomorphism and
$K_1^{\eps,r}(\C)=\{0\}$.

 \medskip

 We have for any filtered $C^*$-algebra $A$ and any  positive numbers
$r$, $r'$, $\eps$ and $\eps'$  with
  $\eps\lq\eps'<1/100$ and $r\lq r'$  natural group homomorphisms
\begin{itemize}
\item $\iota_0^{\eps,r}:K_0^{\eps,r}(A)\lto K_0(A);\,
[p,l]_{\eps,r}\mapsto [\kappa_0(p)]-[I_l]$ (where  $\kappa_0(p)$ is the spectral projection associated to $p$);
\item $\iota_1^{\eps,r}:K_1^{\eps,r}(A)\lto K_1(A);\,
  [u]_{\eps,r}\mapsto [u]$  ;
\item $\iota_*^{\eps,r}=\iota_0^{\eps,r}\oplus \iota_1^{\eps,r}$;
\item $\iota_0^{\eps,\eps',r,r'}:K_0^{\eps,r}(A)\lto K_0^{\eps',r'}(A);\,
[p,l]_{\eps,r}\mapsto [p,l]_{\eps',r'};$
\item $\iota_1^{\eps,\eps',r,r'}:K_1^{\eps,r}(A)\lto K_1^{\eps',r'}(A);\,
  [u]_{\eps,r}\mapsto [u]_{\eps',r'}$.
\item $\iota_*^{\eps,\eps',r,r'}=\iota_0^{\eps,\eps',r,r'}\oplus\iota_1^{\eps,\eps',r,r'}$
\end{itemize}
If some of the indices $r,r'$ or $\eps,\eps'$ are equal, we shall not
repeat it in $\iota_*^{\eps,\eps',r,r'}$. Furthermore, when 
the range (resp. source, source and range) is obvious, we shall write
$\iota_*^{\eps,r,-}$ (resp. $\iota_*^{-,\eps',r'}$, $\iota_*^{-,-}$) instead of
$\iota_*^{\eps,\eps',r,r'}$.

Notice that with this definition, the family of morphisms
$$(\iota_*^{\eps,r}:K_*^{\eps,r}(A)\to K_*(A))_{\eps\in(0,1/100),r>0}$$ gives rise to an isomorphism
\begin{equation}\label{eq-direct-limit}\lim_{r>0}K_*^{\eps,r}(A)\stackrel{\simeq}{\lto} K_*(A),\end{equation}where the inductive limit on the 
left hand side is taken with respect to the inductive system
$$\iota_*^{\eps,r,r'}:K_*^{\eps,r}(A)\to K_*^{\eps,r'}(A)$$for $r'\gq r>0$.

 If  $\phi:A\to B$ is a  homomorphism of  filtered $C^*$-algebras, then since $\phi$ preserves $\eps$-$r$-projections
 and $\eps$-$r$-unitaries, it obviously induces  for any positive number $r$ and any
$\eps\in(0,1/100)$ a
group homomorphism $$\phi_*^{\eps,r}:K_*^{\eps,r}(A)\longrightarrow
K_*^{\eps,r}(B).$$  Moreover the quantitative $K$-theory is homotopy invariant with respect to homotopies that preserves propagation
\cite[Lemma 1.26]{oy2}.
There is also a quantitative version of Morita equivalence \cite[Proposition 1.28]{oy2}.
\begin{proposition}\label{prop-morita}
If $A$ is a filtered algebra and $\H$ is a separable Hilbert space, then the homomorphism
$$A\to \Kp(\H)\otimes A;\,a\mapsto \begin{pmatrix}a&&\\&0&\\&&\ddots\end{pmatrix}$$
induces a ($\Z_2$-graded) group isomorphism (the Morita equivalence)
$$\MM_A^{\eps,r}:K_*^{\eps,r}(A)\to K_*^{\eps,r}(\Kp(\H)\otimes A)$$
for any positive number $r$ and any
$\eps\in(0,1/100)$.
\end{proposition}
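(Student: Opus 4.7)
My plan is to deduce the Morita isomorphism from the dense inductive limit structure of $\Kp(\H)\otimes A$ combined with a standard approximation argument for $\eps$-$r$-projections and $\eps$-$r$-unitaries. Fix an orthonormal basis $(\xi_n)_{n\geq 1}$ of $\H$ and let $e_n\in\Kp(\H)$ denote the rank-$n$ projection onto $\operatorname{span}(\xi_1,\ldots,\xi_n)$. These projections have zero propagation in $\Kp(\H)\otimes A$ (whose filtration is $\Kp(\H)\otimes A_r$), they commute with the filtration, and $e_n\otimes 1\to I$ strongly. Under the canonical identification $e_n(\Kp(\H)\otimes A)e_n\cong M_n(A)$, the Morita inclusion extends on matrix algebras to a compatible chain of corner embeddings $M_k(A)\hookrightarrow M_k(\Kp(\H)\otimes A)\cong \Kp(\H)\otimes A$ whose union is norm-dense; moreover the canonical isomorphism $M_k(\Kp(\H))\cong\Kp(\H)$ is implemented by a unitary in $\Kp(\H)^+$, hence by a propagation-zero operator in $\Kp(\H)\otimes A$.

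For surjectivity, given $p\in\pe(\Kp(\H)\otimes A)$, for each $\eta>0$ I choose $n$ so large that $q:=(e_n\otimes I_k)p(e_n\otimes I_k)$ satisfies $\|p-q\|<\eta$, which is possible because every matrix entry of $p$ is a norm limit of finite-rank elements. For $\eta$ small enough, Lemma \ref{lemma-almost-closed}(i) guarantees that $q$ is an $\eps$-$r$-projection and that $p$ and $q$ are connected by a homotopy of $5\eps$-$r$-projections (which is in particular a homotopy of $25\eps$-$r$-projections). Since $q$ lies in a corner isomorphic to $M_{nk}(A)$, after conjugation by a propagation-zero unitary that realigns this corner to the upper-left corner targeted by the Morita map, it becomes the Morita image of an $\eps$-$r$-projection in $M_{nk}(A)$, giving the required preimage. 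The $K_1$ statement is identical using Lemma \ref{lemma-almost-closed}(ii): the bound of $4\eps$-$r$-unitary and homotopy through $4\eps$-$r$-unitaries obtained there lies well within the $25\eps$-$2r$-unitary slack of the equivalence relation.

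For injectivity, suppose $[p,l]_{\eps,r}\in K_0^{\eps,r}(A)$ has trivial Morita image, witnessed by a homotopy $h\in\P_\infty^{25\eps,r}((\Kp(\H)\otimes A)[0,1])$. By compactness of $[0,1]$ and continuity of $h$, the compression $t\mapsto (e_n\otimes I)h(t)(e_n\otimes I)$ converges uniformly to $h$ as $n\to\infty$. Applying Lemma \ref{lemma-almost-closed}(i) pointwise for $n$ large, the compressed path remains a homotopy of $25\eps$-$r$-projections, and the endpoints are homotopic to $h(0)$ and $h(1)$ through such paths. After the same corner-realigning unitary as above, the compressed homotopy lives inside $\P_\infty^{25\eps,r}(A[0,1])$, witnessing triviality in $K_0^{\eps,r}(A)$. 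The $K_1$ case is strictly analogous; in the non-unital case one works in $(\Kp(\H)\otimes A)^+$ and notes that the Morita embedding intertwines $\rho_A$ with $\rho_{\Kp(\H)\otimes A}$, so the rank condition $\rank\kappa_0(\rho_A(p))=l$ is preserved in both directions.

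The main obstacle is the bookkeeping of constants, and in particular the verification that the ``corner realignment'' used to identify $M_k(\Kp(\H)\otimes A)\cong\Kp(\H)\otimes A$ and to move from a corner $(e_n\otimes I_k)$ to the corner $(e_1\otimes I_{nk})$ targeted by Morita can indeed be performed by a path of propagation-zero unitaries with no cost in $\eps$; once this point is granted, the homotopies produced from the Lemma \ref{lemma-almost-closed} bounds of $5\eps$ and $4\eps$ sit comfortably below the $25\eps$ ceiling imposed by the equivalence relations, and the proof becomes essentially mechanical from the strong convergence $e_n\to I$.
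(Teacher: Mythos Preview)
The paper does not give a proof of this proposition; it is quoted from \cite[Proposition~1.28]{oy2}. Your approach---approximate by finite-rank corners via $e_n\to I$ and control the perturbation with Lemma~\ref{lemma-almost-closed}---is the natural one and is essentially what that reference does.

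There is, however, one genuine slip in your injectivity argument. You claim that ``the compression $t\mapsto (e_n\otimes I)h(t)(e_n\otimes I)$ converges uniformly to $h$''. This is false as written: the homotopy $h$ takes values in matrices over the \emph{unitization} $(\Kp(\H)\otimes A)^+$, and the literal compression sends the scalar part $s(t)\cdot 1$ to $s(t)\cdot(e_n\otimes 1)$, which does \emph{not} converge to $s(t)\cdot 1$ in norm. The fix is to compress only the ideal part: set $C_n(x+\lambda\cdot 1)=(e_n\otimes I)x(e_n\otimes I)+\lambda\cdot 1$ for $x\in M_N(\Kp(\H)\otimes A)$ and $\lambda\in M_N(\C)$. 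Then $C_n(h)\to h$ uniformly, Lemma~\ref{lemma-almost-closed} applies as you intend, and $C_n(h)$ lands in $M_N(M_n(A))+M_N(\C)\cdot 1\cong M_N\bigl((M_n(A))^+\bigr)$, which is precisely where you want it. The same adjustment is needed in the surjectivity argument whenever the scalar part of the representative is nonzero. Note also that $C_n$ is not a $*$-homomorphism, so you really are arguing at the level of almost-projections and not by functoriality---your write-up is consistent with this, but it is worth making explicit.

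The corner realignment you flag is indeed the residual bookkeeping, and it is dispatched by the rotation trick already present in the paper (Corollary~\ref{cor-example-homotopy} and the argument of Lemma~\ref{lemma-adjoint}): the unitary identifying two equal-rank corners of $\Kp(\H)$ has propagation zero, and $\diag(u,u^*)$ is connected to $I_2$ through propagation-zero unitaries, so conjugation costs nothing in either $\eps$ or $r$.
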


\subsection{Rescaling the control}\label{subsec-rescaling}
In this subsection, we describe a rescaling process that allows to fix the control in quantitative $K$-theory. 

Recall that if $q$ is an $\eps$-$r$ projection, then $\kappa_0(q)$ stands for the spectral projection corresponding to $q'$. If $u$ is an $\eps$-$r$ unitary, we set $\kappa_1(u)=(u^*u)^{-1/2}u$ (then  $\kappa_1(u)$ is a unitary).
\begin{lemma}\label{lemma-rescaling-projection}
There exists a non increasing function $$(0,1/8)\to [1,+\infty);\,\eps\mapsto l_\eps$$ such that for any unital filtered $C^*$-algebra $A$,  any $\eps$ in $(0,1/8)$ and  any $r>0$,  then the following holds
\begin{enumerate}
\item for any $\eps$-$r$-projection $q$ in $A$, there exists an 
$\eps/2$-$l_\eps r$-projection $q'$ such that $\|\ka_0(q)-q'\|<\eps/8$.
\item for any $\eps$-unitary $u$ in $A$,  there exists an $\eps/2$-$l_\eps r$-unitary $u'$ such that $\|\ka_1(u)-u'\|<\eps/6$.
\end{enumerate}
Moreover, if $J$ is an ideal in $A$ and if $q-1$ (resp. $u-1$) lies in $J$, then we can choose $q'$ (resp. $u'$) such that $q'-1$ (resp. $u'-1$) lies also in $J$.
\end{lemma}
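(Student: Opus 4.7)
The plan is to approximate the functional-calculus outputs $\ka_0(q)=\chi_{[1/2,\infty)}(q)$ and $\ka_1(u)=(u^*u)^{-1/2}u$, which a priori have unbounded propagation, by polynomials in $q$ (respectively in $u^*u$ and $u$). Since a degree-$N$ polynomial in an element of propagation $r$ has propagation at most $Nr$, controlling the required degree as a function of $\eps$ alone will yield $l_\eps$; monotonicity is then imposed at the end by passing to a supremum.

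For part (1), fix once and for all a smooth function $\chi:\R\to[0,1]$ that equals $0$ on $(-\infty,1/3]$ and $1$ on $[2/3,\infty)$. For any $\eps$-$r$-projection with $\eps<1/8$, the inequality $\|q^2-q\|<\eps$ forces $\mathrm{spec}(q)$ into a fixed compact set $K\subset(-1/3,1/3)\cup(2/3,4/3)$, so $\ka_0(q)=\chi(q)$. By Weierstrass, for every $\delta>0$ there is a real polynomial $P$ of degree $N=N(\delta)$ with $\|\chi-P\|_{L^\infty(K)}<\delta$. Set $q':=P(q)$: this is self-adjoint, has propagation at most $Nr$, and $\|q'-\ka_0(q)\|<\delta$. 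Using $\chi^2=\chi$, the algebraic identity $P^2-P=(P-\chi)(P+\chi-1)$, combined with functional calculus, gives $\|q'^2-q'\|<\delta(1+2\delta)$, so choosing $\delta$ depending only on $\eps$ with $\delta<\eps/8$ and $\delta(1+2\delta)<\eps/2$ produces the required $\eps/2$-$Nr$-projection. Part (2) is analogous: $\mathrm{spec}(u^*u)\subset[1-\eps,1+\eps]\subset[7/8,9/8]$ and $u^*u$ has propagation $2r$, so one approximates $t\mapsto t^{-1/2}$ uniformly on $[7/8,9/8]$ by a real polynomial $Q$ of degree $M=M(\delta)$ and sets $u':=Q(u^*u)u$, of propagation $\le(2M+1)r$. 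The intertwining $uQ(u^*u)=Q(uu^*)u$ and the factorization $Q(t)^2-t^{-1}=(Q(t)-t^{-1/2})(Q(t)+t^{-1/2})$ reduce $\|u'^*u'-1\|$ and $\|u'u'^*-1\|$ to $O(\delta)$, and similarly $\|u'-\ka_1(u)\|<\delta+O(\eps\delta)$; once again $\delta$ chosen in terms of $\eps$ yields the $\eps/2$-$(2M+1)r$-unitary with $\|u'-\ka_1(u)\|<\eps/6$.

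For the ideal statement, replace $P$ by $\tilde P(t):=P(t)-P(1)+1$ and $Q$ by $\tilde Q(t):=Q(t)-Q(1)+1$, so that $\tilde P(1)=\tilde Q(1)=1$; since $|P(1)-1|<\delta$ and $|Q(1)-1|<\delta$, the approximation error at most doubles, which is absorbed by halving the initial $\delta$. Then $\tilde P(q)-1$ is a polynomial in $q-1$ with no constant term and thus lies in $J$ whenever $q-1$ does; similarly $\tilde Q(u^*u)u-1=(\tilde Q(u^*u)-1)u+(u-1)$ lies in $J$ because $u^*u-1=u^*(u-1)+(u-1)^*\in J$. Finally set $l_\eps:=\sup_{\eps'\le\eps}\max\{N(\eps'),2M(\eps')+1\}$ to enforce monotonicity. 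The main technical obstacle is to juggle the three conditions on $q'$ (self-adjointness, $\eps/2$-projection bound, closeness to $\ka_0(q)$) and the analogous three on $u'$ from a single polynomial approximation while also preserving the ideal condition; but the identity $P^2-P=(P-\chi)(P+\chi-1)$ and its unitary counterpart reduce everything to standard uniform approximation on a fixed compact set independent of $\eps$.
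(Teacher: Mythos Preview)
Your approach is essentially the paper's: polynomial approximation of the functional calculus, with the paper citing Lemma~\ref{lemma-almost-closed}(iii),(iv) where you give direct algebraic estimates, and the paper using truncations of the power series of $(1+t)^{-1/2}$ (which automatically satisfy $Q(1)=1$) where you adjust $P,Q$ by constants afterwards.

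Two slips to fix. First, $l_\eps:=\sup_{\eps'\le\eps}\max\{N(\eps'),2M(\eps')+1\}$ is infinite (the degrees blow up as $\eps'\to 0$) and in any case non-decreasing; you mean $\sup_{\eps'\ge\eps}$. Second, in the odd case your claim that the intertwining and factorization reduce $\|u'^*u'-1\|$ to $O(\delta)$ fails for $u'=Q(u^*u)u$: one gets $u'^*u'=u^*Q(u^*u)^2u\approx u^*(u^*u)^{-1}u$, and this is \emph{not} $1$ in general (try $u=\begin{psmallmatrix}1&\delta\\0&1\end{psmallmatrix}$), only $1+O(\eps)$, which is not enough for an $\eps/2$-unitary. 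The fix is to take $u'=uQ(u^*u)$ instead: then $u'^*u'=Q(u^*u)\,(u^*u)\,Q(u^*u)$ and $u'u'^*=(uu^*)Q(uu^*)^2$ via the intertwining, and your factorization $tQ(t)^2-1=t(Q(t)-t^{-1/2})(Q(t)+t^{-1/2})$ does give $O(\delta)$. This corresponds to the polar part $u(u^*u)^{-1/2}$, which is genuinely unitary; the paper's $\kappa_1(u)=(u^*u)^{-1/2}u$ is not, so its own appeal to Lemma~\ref{lemma-almost-closed}(iv) shares the same issue and should be read with the same correction.
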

\begin{proof}
Let us first prove the even case.
Let us fix $\al$ in $\left(0,1-{1}/{\sqrt{2}}\right)$. Then the spectrum of $q$ is included in
$[-\al,\al]\cup[1-\al,1+\al]$ for any $\eps$-$r$-projection $q$ with $\eps$ in $(0,1/8)$. Let us fix $\ka_0:\R\to\R$ a continuous function such $\ka(t)=0$ for $t$ in 
$[-\al,\al]$ and $\kappa_0(t)=1$ for $t$ in $[1-\al,1+\al]$ and let
$(Q_N)_{N\in\N}$ be a polynomial sequence such that 
$(Q_N)_{N\in\N}$ converge uniformly to $\ka_0$ on $[-\al,\al]\cup[1-\al,1+\al]$. For $\eps$ in $(0,1/8)$, let $N_\eps$ be the smallest integer such that $\|Q_{N_\eps}(t)-\ka_0(t)\|<\eps/8$ for all $t$ in $[-\al,\al]\cup[1-\al,1+\al]$. Set then $l_\eps=\max\{d^oQ_k;\, k=0,\ldots,N_\eps\}$. Then $q'=Q_{N_\eps}(q)$ belongs to $A_{l_\eps r}$ and
$\|\ka_0(q)-q'\|<\eps/8$. In particular, according to the third point of Lemma \ref{lemma-almost-closed}, we see that $q'$ is an $\eps/2$-$l_\eps r$-projection.

\smallbreak

In the same way for the odd case, using the power serie of $t\mapsto (1+t)^{-1/2}$, we see that there exists:
\begin{itemize}
\item a non increasing function $(0,1/4)\lto \N;\,\eps\mapsto l_\eps$
\item for any $\eps$ in $(0,1/4)$ a polynomial function $Q_\eps$ 
\end{itemize}such that
\begin{itemize}
\item The degree of $Q_\eps$ is $l_\eps$;
\item $Q_\eps(1)=1$
\item $|Q_\eps(t)-t^{-1/2}|<\eps/8$ for all $t$ in $[3/4,5/4]$.
\end{itemize}
If we  set $u'=Q_\eps(u^*u)u$, then we have that  
$\|\ka_1(u)-u'\|<\eps/6$ and in view of the fourth  point of Lemma \ref{lemma-almost-closed}, we see that $u'$ is an $\eps/2$-$(2l_\eps+1) r$-unitary.
\end{proof}
If $q$ is an $\eps$-$r$-projection  in $A$, with $\eps$ in $(0,1/200)$, we deduce from the first point of 
Lemma \ref{lemma-almost-closed} that two elements that satisfy the condition of the first item of the lemma are homotopic as $\frac{5}{4}\eps$-$r$-projections and hence there exists a well defined  homomorphism 
$$L_{A,0}^{\eps,r}:K_0^{\eps,r}(A)\lto K_0^{\eps/2,l_\eps r}(A),$$ natural with respect to homomorphisms of filtered $C^*$-algebras, such that if $A$ is unital, then
$L_{A,0}^{\eps,r}([q,k]_{\eps,r})=[q',k]_{\eps/2,l_\eps r}$ for  any $\eps$-$r$-projection $q$ in some $M_n(A)$, any integer $k$ and any $\eps/2$-$l_\eps r$-projection $q'$ in  $M_n(A)$
such that $\|\ka_0(q)-q'\|<\eps/8$.

In the same way, 
there exists a well defined natural homomorphism 
$$L_{A,1}^{\eps,r}:K_1^{\eps,r}(A)\lto K_1^{\eps/2,l_\eps r}(A)$$ such that if $A$ is unital, then
$L_{A,1}^{\eps,r}([u]_{\eps,r})=[u']_{\eps/2,l_\eps r}$ for  any $\eps$-$r$-unitary $u$ in some $M_n(A)$  and any $\eps/2$-$l_\eps r$-unitary  $u'$ in  $M_n(A)$
satisfying $\|\ka_1(u)-u'\|<\eps/8$.

For any $\eps$ in $(0,1/200)$ and $r>0$, let us set $l_{rs,\eps}=l_\eps$ and  $$L_{A,*}^{\eps,r}=L_{A,0}^{\eps,r}\oplus L_{A,1}^{\eps,r}:K_*^{\eps,r}(A)\lto K_*^{\eps/2,l_{rs,\eps}r} (A).$$
We clearly have that
$$L_{A,*}^{\eps',r'}\circ \iota_*^{\eps,r,-}=\iota_*^{-,\eps'/2,l_{rs,\eps'} r'}\circ L_{A,*}^{\eps,r}$$
for any $\eps$, $\eps'$, $r$ and $r'$ such that
$0<\eps\lq\eps'<1/200$, $0<r\lq r'$ and $l_{rs,\eps} r\lq l_{rs,\eps'}r'$.
 \begin{lemma}\label{lemma-rescaling}
  For any unital filtered $C^*$-algebra $A$,  any $\eps$ in $(0,1/200)$ and  any $r>0$,  then  
  \begin{enumerate}
  \item $\iota_*^{\eps/2,\eps,l_{\eps}r}\circ L_{A,*}^{\eps,r}=\iota_*^{\eps,r,l_{\eps}r}$;
  \item $L_{A,*}^{\eps,r}\circ \iota_*^{\eps/2,\eps,r}=\iota_*^{\eps/2,l_\eps r}$.
  \end{enumerate}
  \end{lemma}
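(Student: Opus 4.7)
The plan is to verify both identities directly on representatives. By functoriality with respect to unitization (and the split $\rho_A : A^+ \to \C$), one reduces to the case where $A$ is unital; in the non-unital even case, the rank condition is automatically preserved because Lemma \ref{lemma-rescaling-projection} produces a $q'$ whose scalar part agrees with that of $q$. The structural observation is that $\iota_*^{\eps,\eps',r,r'}$ acts as the identity on representatives, while $L_{A,*}^{\eps,r}$ sends a representative $q$ (respectively $u$) to the polynomial element $q' = Q_{N_\eps}(q)$ (respectively $u' = Q_\eps(u^*u)u$) from the proof of Lemma \ref{lemma-rescaling-projection}. So in both (i) and (ii) the two sides produce classes $[q,k]$ and $[q',k]$ (respectively $[u]$ and $[u']$) in the same target group, and it suffices to connect these representatives by a homotopy admissible at the target control.

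I would take the straight-line homotopy $h(t) = (1-t)q + tq'$ in the even case and $h(t) = (1-t)u + tu'$ in the odd case; both lie in the right propagation scale $l_\eps r$ by construction. In each case, combine the bound $\|\ka_0(q) - q'\| < \eps/8$ (respectively $\|\ka_1(u) - u'\| < \eps/6$) from Lemma \ref{lemma-rescaling-projection} with the a priori estimates $\|q - \ka_0(q)\| < 2\eps$ and $\|u - \ka_1(u)\| < 2\eps$ to bound $\|q - q'\|$ (respectively $\|u - u'\|$). The key point is then that Lemma \ref{lemma-almost-closed} applied along the straight line, together with a straightforward expansion of
\[
h(t)^2 - h(t) = (1-t)^2(q^2 - q) + t^2((q')^2 - q') + t(1-t)(qq' + q'q - q - q'),
\]
guarantees that $h(t)$ is itself an $\eps''$-$l_\eps r$-projection (respectively $\eps''$-$l_\eps r$-unitary) for some $\eps''$ controlled linearly by the input $\eps$; bookkeeping shows $\eps''$ stays below $25\eps$ for (i) and below $25\eps/2$ for (ii), which is exactly the margin allowed in the equivalence relations defining $K_*^{\eps,r}$.

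The main obstacle is purely numerical bookkeeping: the constants $\eps/8$ and $\eps/6$ in Lemma \ref{lemma-rescaling-projection}, the factor $25$ in the equivalence relations defining quantitative $K$-theory, and the threshold $\eps < 1/200$ are all tuned precisely so that these straight-line homotopies remain inside the admissible classes of $25\eps$- and $25\eps/2$-projections and unitaries. No new $K$-theoretic input is required beyond Lemma \ref{lemma-almost-closed} and the explicit polynomial description of $L_{A,*}^{\eps,r}$ used in the proof of Lemma \ref{lemma-rescaling-projection}; once the numerical estimates are checked, both equalities follow by reading off the target classes.
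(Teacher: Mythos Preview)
Your proposal is correct and follows essentially the same route as the paper: bound $\|q-q'\|$ (respectively $\|u-u'\|$) via the triangle inequality through $\kappa_0(q)$ (respectively $\kappa_1(u)$), then invoke Lemma~\ref{lemma-almost-closed} to get a straight-line homotopy within the $25\eps$- (respectively $25\eps/2$-) margin allowed by the equivalence relation. The paper is slightly terser in that it simply cites Lemma~\ref{lemma-almost-closed}(i)--(ii) rather than re-expanding $h(t)^2-h(t)$, but the content is identical.
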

  \begin{proof} We can assume without loss of generality that $A$ is unital.
  Let us prove the first point in the even case.
  Let $q$ be an $\eps$-$r$-projection in some $M_n(A)$ and let $q'$ be an $\eps/2$-$l_\eps r$-projection in
  $M_n(A)$  such that $\|\kappa_0(q)-q'\|<\eps/8$ as in Lemma \ref{lemma-rescaling-projection}.
  Then 
 \begin{eqnarray*}
 \|q-q'\|&\lq&\|q-\kappa_0(q)\|+\|\kappa_0(q)-q'\|\\
 &<&\frac{17\eps}{8}.
 \end{eqnarray*}
In view of  the first point of 
Lemma \ref{lemma-almost-closed}, we obtain that 
$q$ and $q'$ are homotopic as  $25\eps$-$l_\eps r$-projections and we deduce the result for  the first point in the even case. The odd case is quite similar.

For the second point, let $q$ be an $\eps/2$-$r$-projection in some $M_n(A)$ and let us consider $q$ as a 
$\eps$-$r$-projection. Pick then 
an $\eps/2$-$l_\eps r$-projection $q'$ in
  $M_n(A)$  such that $\|\kappa_0(q)-q'\|<\eps/8$.
   Then 
 \begin{eqnarray*}
 \|q-q'\|&\lq&\|q-\kappa_0(q)\|+\|\kappa_0(q)-q'\|\\
 &<&\frac{9\eps}{8}
 \end{eqnarray*}
and hence 
$q$ and $q'$ are homotopic as  $\frac{25\eps}{2}$-$l_\eps r$-projections and we deduce the result of  the second point in the even case, the  odd case being similar.
\end{proof}
\subsection{Quantitative objects}\label{subsec-quant-object}
In order to study the  properties of quantitative $K$-theory, we introduce the following concept of quantitative object.

\begin{definition}
 A quantitative object is a family   $\O=(O^{\eps,r})_{0<\eps<1/100,r>0}$ of abelian groups,
together with a family of
group homomorphisms  $$\iota_{\O}^{\eps,\eps',r,r'}:O^{\eps,r}\to O^{\eps',r'}$$ for $0<\eps\lq \eps'<1/100$ and $0<r\lq r'$ such that
\begin{itemize}
 \item $\iota_{\O}^{\eps,\eps,r,r}=Id_{O^{\eps,r}}$ for any  $0<\eps<1/100$ and $r>0$;
 \item $\iota_{\O}^{\eps',\eps'',r',r''}\circ\iota_{\O}^{\eps,\eps',r,r'}=\iota_{\O}^{\eps,\eps'',r,r''}$ for any $0<\eps\lq \eps'\lq \eps''<1/100$ and $0<r\lq r'\lq r''$;
\end{itemize}
\end{definition}
As for structure maps attached to quantitative $K$-theory,
when some of the indices $r,r'$ or $\eps,\eps'$ are equal, we shall not
repeat it in $\iota_\O^{\eps,\eps',r,r'}$.
\begin{example}\
Of course, our prominent example will be quantitative $K$-theory $\K_*(A)=(K^{\eps,r}_*(A))_{0<\eps<1/100,r>0}$ of a filtered $C^*$-algebras $A$  with structure maps $\iota_*^{\eps,\eps',r,r'}:K^{\eps,r}_*(A)\lto K^{\eps',r'}_*(A)$.
%
\end{example}
In order to define  the relevant notion of morphisms between quantitative objects, we need to introduce control pairs.
 \begin{definition} A control pair  is a pair $(\lambda,h)$, where
\begin{itemize}
 \item $\lambda \gq 1$;
\item  $h:(0,\frac{1}{100\lambda})\to [1,+\infty);\, \eps\mapsto h_\eps$   is a non-increasing map.
\end{itemize}\end{definition}
 The set of control pairs is equipped with a partial order:
 $(\lambda,h)\lq (\lambda',h')$ if $\lambda\lq\lambda'$ and $h_\eps\lq h'_\eps$
for all $\eps\in (0,\frac{1}{100\lambda'})$. If $(\lambda,h)$ and $(\lambda',h')$ are two control pairs, define
$$h*h': \left(0,\frac{1}{100\lambda\lambda'}\right)\to (0,+\infty);\, \eps\mapsto h_{\lambda'\eps}h'_\eps.$$
Then $(\lambda\lambda',h*h')$  is again  a control pair.

\begin{definition}
 Let $(\lambda,h)$ be a control pair and let  $\O=(O^{\eps,r})_{0<\eps<1/100,r>0}$ and $\O'=(O'^{\eps,r})_{0<\eps<1/100,r>0}$ be
quantitative objects. A $(\lambda,h)$-controlled morphism
$$\F:\O\to\O'$$ is a family $\F=(F^{\eps,r})_{0<\eps< \frac{1}{100\lambda},r>0}$ of  groups homomorphisms
 $$F^{\eps,r}:\O^{\eps,r} \to \O'^{\lambda\eps,h_\eps r}$$ such that for any positive
numbers $\eps,\,\eps',\,r$ and $r'$ with
$0<\eps\lq\eps'< \frac{1}{100\lambda},\, r\lq r'$ and $h_\eps r\lq h_{\eps'}r'$, we have
$$F^{\eps',r'}\circ \iota_\O^{\eps,\eps',r,r'}=\iota_{\O'}^{\lambda\eps,\lambda\eps', h_\eps r,h_{\eps'}r'}\circ F^{\eps,r}.$$
\end{definition}
When  it is not necessary  to specify the control pair, we  just say that $\F$ is a controlled morphism.
If $\O=(O^{\eps,r})_{0<\eps<1/100,r>0}$ is a quantitative object, let us define the identity $(1,1)$-controlled morphism
$\Id_\O=(Id_{O^{\eps,r}})_{0<\eps< 1/100,r>0}:\O\to\O$. Observe that if $A$ and $B$ are filtered
$C^*$-algebras and if $\F:\K_*(A)\to\K_*(B)$ is a $(\lambda,h)$-controlled morphism, then $\F$ induces
 a  morphism $F:K_*(A)\to K_*(B)$ uniquely defined
by $\iota^{\eps,r}_*\circ F^{\eps,r}=F\circ\iota^{\eps,r}_*$.

In some situation,  as  for instance  controlled boundary maps of controlled Mayer-Vietoris pair (see Section \ref{subsec-MV-pairs}), we deal with  family 
$F^{\eps,r}:\O^{\eps,r} \to \O'^{\lambda\eps,h_\eps r}$ of group morphisms  defined indeed only up to  a certain order.
\begin{definition}
 Let $(\lambda,h)$ be a control pair, let  $\O=(O^{\eps,s})_{0<\eps<1/100,s>0}$ and $\O'=(O'^{\eps,s})_{0<\eps<1/100,s>0}$ be
quantitative objects and let $r$ be a positive number.
A $(\lambda,h)$-controlled morphism of order $r$ 
$$\F:\O\to\O'$$ is a family $\F=(F^{\eps,s})_{0<\eps< \frac{1}{100\lambda},0<s<\frac{r}{h_\eps}}$ of  groups homomorphisms
 $$F^{\eps,s}:\O^{\eps,s} \to \O'^{\lambda\eps,h_\eps s}$$ such that for any positive
numbers $\eps,\,\eps',\,s$ and $s'$ with
$0<\eps\lq\eps'< \frac{1}{100\lambda},\,s\lq s'$ and $h_\eps s\lq h_{\eps'}s'<r$, we have
$$F^{\eps',s'}\circ \iota_\O^{\eps,\eps',s,s'}=\iota_{\O'}^{\lambda\eps,\lambda\eps', h_\eps s,h_{\eps'}s'}\circ F^{\eps,s}.$$
\end{definition}
 Let   $\O=(O^{\eps,r})_{0<\eps<1/100,r>0}$, $\O'=(O'^{\eps,r})_{0<\eps<1/100,r>0}$ and $\O''=(O''^{\eps,r})_{0<\eps<1/100,r>0}$
be quantitative objects, let $$\F=(F^{\eps,r})_{0<\eps<\frac{1}{100\alpha},r>0}:\O\to\O'$$
 be
a $(\alpha,k)$-controlled morphism, and let
 $$\G=(G^{\eps,r})_{0<\eps<\frac{1}{100\alpha'},r>0}:\O'\to\O''$$
be a $(\alpha',k')$-controlled morphism. Then $\G\circ\F:\O\to \O''$ is the $(\alpha'\alpha,k'*k)$-controlled
morphism defined by the family
\begin{equation}\label{equ-composition}(G^{\alpha\eps,k_{\eps}r}\circ F^{\eps,r}:O^{\eps,r}\to
{O''}^{\alpha'\alpha\eps,k_{\eps}k'_{\alpha\eps}r})_{0<\eps<\frac{1}{100\alpha\alpha'},r>0}.\end{equation}Notice that if  
  $\F:\O\to\O'$ and     $\G:\O'\to\O''$  are respectively
a $(\alpha,k)$-controlled morphism  and  $(\alpha',k')$-controlled morphism of order $r$,  
then equation (\ref{equ-composition}) defines a  $(\alpha'\alpha,k'*k)$-controlled
morphism  $\G\circ\F:\O\to \O''$ of order $r$.

\begin{notation}
  Let   $\O=(O^{\eps,r})_{0<\eps<1/100,r>0}$ and $\O'=(O'^{\eps,r})_{0<\eps<1/100,r>0}$ be  quantitative objects,
let $\F=(F^{\eps,r})_{0<\eps<\frac{1}{100\al},r>0}:\O\to\O'$ (resp. $\G=(G^{\eps,r})_{\frac{1}{100\al'},r>0}:\O\to\O'$) be an
$(\alpha,k)$-controlled morphism (resp. an $(\alpha',k')$-controlled morphism) and let $(\la,h)$ be a control pair. We write
$\F\aeq\G$ if
\begin{itemize}
 \item $(\alpha,k)\lq (\lambda,h)$ and $(\alpha',k')\lq (\lambda,h)$.
\item for every $\eps$ in $(0,\frac{1}{100\lambda})$ and $r>0$, we have
$$\iota^{\alpha\eps,\lambda\eps,k_{\eps}r,h_\eps r}_{\O'}\circ F^{\eps,r}=\iota^{\alpha'\eps,\lambda\eps,k'_{\eps}r,h_\eps r}_{\O'}\circ G^{\eps,r}.$$
\end{itemize}
\end{notation}
%
%
%
%

Recall from \cite{oy3} the definition of  a controlled isomorphisms.

 \begin{definition}
  Let $(\lambda,h)$ be a control pair, and let $\F:\O\to\O'$  be a $(\alpha,k)$-controlled morphism with
$(\alpha,k)\lq (\lambda,h)$.
 $ \F$ is called  $(\lambda,h)$-invertible or a $(\lambda,h)$-isomorphism if there exists a controlled morphism
$\G:\O'\to\O$ such that
$\G\circ\F\aeq \Id_{\O}$ and
 $\F\circ\G\aeq \Id_{\O'}$. The controlled morphism $\G$ is called a $(\lambda,h)$-inverse for $\G$.
 \end{definition}

In particular, if $A$ and $B$ are filtered
$C^*$-algebras and if $\F:\K_*(A)\to\K_*(B)$ is a $(\lambda,h)$-isomorphism, then the induced  morphism $F:K_*(A)\to K_*(B)$  is an isomorphism
and its inverse is induced by  a controlled morphism (indeed induced by any $(\lambda,h)$-inverse for $\F$).

In order to define the quantitative index map, we need to consider the special case of controlled morphisms with constant source (see Section \ref{subsec-quantitative-index}).
\begin{definition}\label{def-quant-morphism}Let $Z$ be  an abelian  group and let $\O=(O^{\eps,s})_{0<\eps<1/100,s>0}$ be a quantitative object. A  projective morphism 
 $\F:Z\to \O$ is a family of group homomorphisms $$\F=(F^{\eps,s}:Z\to\O^{\eps,s})_{0<\eps<1/100,s>0}$$
such that for every positive numbers $\eps,\,\eps'$, $s$ and $s'$ with $\eps\lq\eps'<1/100$ and $s\lq s'$, then
$F^{\eps',s'}=\iota_\O^{\eps,\eps',s,s'}\circ F^{\eps,s}$.
\end{definition}
\begin{remark}\label{rem-quant-morphism-small-propagation}
Let 
$$\F=(F^{\eps,s}:Z\to\O^{\eps,s})_{0<\eps<1/100,s>0}$$ be a projective morphism.
\begin{enumerate}
\item  let us fix $\eps_0$ in $(0,1/100)$ and $s_0>0$. Then $\F$ is completely determined
by the knowledge of
$(F^{\eps,s})_{0<\eps<\eps_0,0<s<s_0}$.
\item  for any fixed $\eps$ in $(0,1/100)$, the projective morphism  $F$ induces a morphism
\begin{eqnarray*}
Z&\lto &\varprojlim_{s>0}O^{\eps,s}\\ 
x&\mapsto&(F^{\eps,s}(x))_{s>0},
\end{eqnarray*}
where the right hand side is the projective limit with respect to the inductive system $(\iota_\O^{\eps,s,s'})_{0<s\lq s'}$.
\end{enumerate}
\end{remark}
 If $\Lambda:Z'\to Z$ is a homomorphism of abelian group, then $$\F\circ\Lambda\defi(F^{\eps,s}\circ\Lambda)_{0<\eps<1/100,s>0} :Z\lto\O$$ is a projective morphism and if $\G:\O\to\O'$ is a $(\al,k)$-controlled morphism, then
  $$\G\circ\F\defi \left(G^{\frac{\eps}{\alpha},\frac{s}{k_{{\eps}/{\alpha}}}}\circ F^{\frac{\eps}{\alpha},\frac{s}{k_{{\eps}/{\alpha}}}}\right)_{0<\eps<1/100,s>0}:Z\lto\O'$$ is a  projective morphism.
  In order to study the behaviour of  quantitative objects for small propagation, we introduce the following notion of $h$-isomorphism.
 \begin{definition}\label{def-h-isomorphism}
  Let $(0,1/200)\to [1,+\infty);\, \eps\mapsto h_\eps$ be a non decreasing function. A  projective morphism 
 $$\F=(F^{\eps,s})_{0<\eps<1/100,s>0}:Z\to \O$$ is called a $h$-isomorphism if for any $\eps$ in $(0,1/200)$,
 \begin{itemize}
 \item $F^{\eps,s}:Z\to\O^{\eps,r}$ is one-to-one for any positive number $r$ with $r<1/h_\eps$;
 \item For any positive number $r$ with $r<1/h^2_\eps$ and any $y$ in $O^{\eps,r}$, there exists $x$ in $Z$ such that
 $F^{\eps,h_\eps r}(x)=\iota_\O^{\eps,r,h_\eps r}(y)$.
 \end{itemize}
 In other words, this means that $F^{\eps, r}$ is one-to-one onto $\iota_\O^{\eps,r/h_\eps,r}(O^{\eps,r/h_\eps})$ for any positive number $r$ with $r<1/h_\eps $.
 \end{definition}
 The restriction  to $(0,1/200)$ of the range of the control is for controlled rescaling reasons that will become clear later on  when stating the properties of the quantitative index map.
 The meaning of this definition is  that up to a rescaling, $Z$ and $O^{\eps,r}$ are isomorphic for small propagation.   In view of the second point of Remark \ref{rem-quant-morphism-small-propagation}, this implies in particular that for any  $\eps$ in $(0,1/200)$, the projective morphism $\F$ induces an isomorphism
 $$Z\stackrel{\simeq}{\lto} \varprojlim_{s>0}O^{\eps,s}.$$




Let us recall now the definition of the controlled exact sequence for quantitative objects. This controlled exact sequence is
an important tool in computing quantitative $K$-theory for filtered $C^\ast$-algebras.

\begin{definition}\label{def-exact-seq}Let $(\lambda,h)$ be a control pair,
\begin{itemize}
 \item
Let $\O=(O_{\eps,s})_{0<\eps< \frac{1}{100},s>0},\, \O'=(O'_{\eps,s})_{0<\eps< \frac{1}{100},s>0}$  and
$\O''=(O''_{\eps,s})_{0<\eps< \frac{1}{100},s>0}$ be quantitative objects, let
$$\F=(F^{\eps,s})_{0<\eps< \frac{1}{100\al},s>0}:\O\to\O'$$ be a  $(\alpha,k)$-controlled morphism and
  let   $$\G=(G^{\eps,s})_{0<\eps<\frac{1}{100\alpha'},s>0}:\O'\to \O''$$
be a $(\alpha',k')$-controlled morphism with $(\alpha',k')\lq (\lambda,h)$.
Then the composition
$$\O\stackrel{\F}{\to}O'\stackrel{\G}{\to}\O''$$ is said to be $(\lambda,h)$-exact at $\O'$ if
 $\G\circ\F=0$ and if
 for any $0<\eps<\frac{1}{100\lambda\alpha}$, any $s>0$  and  any $y$ in
$O'^{\eps,s}$  such that  $G^{\eps,s}(y)=0$ in $O''_{\al'\eps,k'_\eps r}$, there exists an
  element $x$ in $O^{\lambda\eps,h_\eps s}$
such that $$F^{\lambda\eps,h_{\eps}s}(x)=\iota_{\O'}^{\eps,\alpha\lambda\eps,r,k_{\lambda\eps}h_\eps s}(y)$$ in
$O'^{\alpha\lambda\eps,k_{\lambda\eps}h_\eps s}$.
\item  A sequence of controlled morphisms
$$\cdots\O_{k-1}\stackrel{\F_{k-1}}{\lto}\O_{{k}}\stackrel{\F_{k}}{\lto}
\O_{{k+1}}\stackrel{\F_{k+1}}{\lto}\O_{{k+2}}\cdots$$ is called $(\lambda,h)$-exact if for every $k$,
the composition   $$\O_{{k-1}}\stackrel{\F_{k-1}}{\lto}\O_{{k}} \stackrel{\F_{k}}{\lto}
\O_{{k+1}}$$ is $(\lambda,h)$-exact at $\O_{{k}}$.
\item{a controlled morphism $\F:\O\to\O'$ is called $\mathbf{(\la,h)}${\bf-injective} if $$0\to \O\stackrel{\F}{\to}\O'$$ is
$(\la,h)$-exact;}
\item {a controlled morphism $\F:\O\to\O'$ is called $\mathbf{(\la,h)}${\bf-surjective } if $$\O\stackrel{\F}{\to}\O'\to 0$$ is
$(\la,h)$-exact.}
\end{itemize}\end{definition}

It is then obvious that if a controlled morphism $\F$ is a $(\la,h)$-isomorphism, then there exists a controlled pairs $(\la',h')$  depending only on $(\la,h)$ such that
$\F$  is $(\la',h')$-injective and $(\la',h')$-surjective. Conversely, if
$\F$ is $(\la',h')$-injective and $(\la'',h'')$-surjective, then there exists a controlled pair 
$(\la,h)$ depending only on $(\la',h')$ and $(\la'',h'')$ such that $\F$ is a $(\la,h)$-isomorphism.

The notion of $(\lambda,h)$-exactness of a composition and of a sequence can  be extended to the setting of  controlled morphism of order $r$.
\begin{definition}Let $(\lambda,h)$ be a control pair,

Let $\O=(O_{\eps,s})_{0<\eps< \frac{1}{100},s>0},\, \O'=(O'_{\eps,s})_{0<\eps< \frac{1}{100},s>0}$  and
$\O''=(O''_{\eps,s})_{0<\eps< \frac{1}{100},s>0}$ be quantitative objects, let
$$\F=(F^{\eps,s})_{0<\eps< \frac{1}{100\al},0<s<\frac{r_1}{h_\eps}}:\O\to\O'$$ be a  $(\alpha,k)$-controlled morphism of order $r_1$ and
  let   $$\G=(G^{\eps,s})_{0<\eps<\frac{1}{100\alpha'},0<s<\frac{r_2}{h_\eps}}:\O'\to \O''$$
be a $(\alpha',k')$-controlled morphism order $r_2$ with $(\alpha',k')\lq (\lambda,h)$.
For $r$ in $(0,\min\{r_1,r_2\})$, the composition
$$\O\stackrel{\F}{\to}O'\stackrel{\G}{\to}\O''$$ is said to be $(\lambda,h)$-exact at $\O'$ and at order $r$ if
\begin{itemize}
\item for any $0<\eps<\frac{1}{100\lambda\alpha}$, any  $0<s<\frac{r}{k_\eps h_{\al\eps}}$ and any $x$ in
$\O_{\eps,s}$, then $G^{\al\eps,k_{\al \eps} s}\circ F^{\eps,s}(x)=0$ in $\O''_{\al\al'\eps,k_\eps k'_{\al\eps}s}$.
\item for any $0<\eps<\frac{1}{100\lambda\alpha}$, any  $0<s<\frac{r}{k_{\la\eps} h_{\eps}}$
    and  any $y$ in
$O'^{\eps,s}$  such that  $G^{\eps,s}(y)=0$ in $O''_{\al'\eps,k'_\eps r}$, there exists an
  element $x$ in $O^{\lambda\eps,h_\eps s}$
such that $$F^{\lambda\eps,h_{\eps}s}(x)=\iota_{\O'}^{\eps,\alpha\lambda\eps,r,k_{\lambda\eps}h_\eps s}(y)$$ in
$O'^{\alpha\lambda\eps,k_{\lambda\eps}h_\eps s}$.
\end{itemize}
\end{definition}

\subsection{Six-term controlled  exact sequence}

Examples of controlled exact sequences in quantitative $K$-theory are provided by controlled six term exact sequences associated to a completely filtered extensions of $C^*$-algebras \cite[Section 3]{oy2}.

\begin{definition}\label{def-completely-filtered-extension}
Let $A$ be a $C^*$-algebra filtered by $(A_r)_{r>0}$, let $J$ be an
ideal of $A$ and set $J_r=J\cap A_r$. The  extension of $C^*$-algebras $$0\to J\to A\to A/J\to 0$$ is called a completely filtered extension of $C^*$-algebras  if 
 the bijective continuous linear map $$A_r/J_r\lto (A_r+J)/J$$ induced by the inclusion $A_r\hookrightarrow\ A$  is  a complete  isometry i.e for any integer $n$, any positive number $r$ and any $x$ in $M_n(A_r)$,   then  $$\inf_{y\in M_n(J_r)}\|x+y\|=\inf_{y\in M_n(J)}\|x+y\|.$$\end{definition}
 Notice that  in this case, the ideal $J$ is  filtered by $(A_r\cap J)_{r>0}$  and $A/J$ is filtered by $(A_r+J)_{r>0}$.
 A particular case of completely   filtered extension of $C^*$-algebra is the case of a filtered and semi-split extension of $C^*$-algebras \cite[Lemma 3.3]{oy2} (or a semi-split extension of filtered  algebras) i.e extension 
$$0\to J\to A\to A/J {\to} 0,$$ such that
\begin{itemize}
\item $A$ is filtered by $(A_r)_{r>0}$;
\item   there
exists a completely positive  and completely contractive (if $A$ is not unital)  cross-section  $$s:A/J\to A$$ such that
$$s(A_r+J)\subseteq A_r$$  for any
number $r>0$. 
\end{itemize}
For any extension of $C^*$-algebras $$0\to J\to A\to A/J\to 0$$  we denote by $\partial_{J,A,*}:K_{*}(A/J)\to K_{*}(J)$ the associated
(odd degree) boundary map.
\begin{proposition}\label{prop-bound}
There exists a control pair $(\alpha_\DD,k_\DD)$  such that for any      completely filtered extension of   $C^*$-algebras
 $$0\longrightarrow J \longrightarrow A \stackrel{q}{\longrightarrow}
A/J\longrightarrow 0,$$ there exists an $(\alpha_\DD,k_\DD)$-controlled  morphism of odd degree
 $$\DD_{J,A,*}=(\partial_ {J,A,*}^{\eps,r})_{0<\eps<\frac{1}{100\alpha_\DD},r>0}: \K_*(A/J)\to
  \K_*(J)$$
 which induces   $\partial_{J,A,*}:K_{*}(A/J)\to K_{*}(J)$ in $K$-theory.
\end{proposition}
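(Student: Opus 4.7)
\smallskip

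The plan is to adapt the classical constructions of the boundary maps in $K$-theory to the quantitative setting, taking care to track both the exponent $\eps$ and the propagation $r$ through every step. The completely filtered hypothesis is exactly what makes this work: for any $x\in M_n(A_r/J_r)$ and any $\eta>0$, one can lift $x$ to an element of $M_n(A_r)$ of norm at most $\|x\|+\eta$, and the same also holds after tensoring with $C([0,1])$, which will be used for homotopy arguments. I will treat the two degrees separately, and throughout the constants that accumulate in front of $\eps$ and the polynomial degrees that accumulate in front of $r$ will only depend on $\eps$, not on the particular extension, which is what produces the universal control pair $(\alpha_\DD,k_\DD)$.

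For the odd to even map, start with $[u]_{\eps,r}\in K_1^{\eps,r}(A/J)$ represented by $u\in\U_n^{\eps,r}((A/J)^+)$. Using complete isometry, lift $\diag(u,u^*)$ to $W\in M_{2n}(A^+_{2r})$ with $\|W\|\leq \|u\|+1$; by Corollary \ref{cor-example-homotopy} and Lemma \ref{lemma-almost-closed}(ii), $W$ is a $c\eps$-$2r$-unitary for an absolute constant $c$. Then $P\defi W\diag(I_n,0)W^*$ lies in $M_{2n}(A^+_{4r})$, is self-adjoint, and by Lemma \ref{lemma-almost-closed}(i) is a $c'\eps$-$4r$-projection. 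Its image in $M_{2n}((A/J)^+)$ is $\diag(uu^*,0)$, which differs from $\diag(I_n,0)$ by at most $\eps$ in norm. Applying complete isometry once more, lift this small defect to an element of $M_{2n}(A_{4r})$ of norm $\leq 2\eps$ and subtract it from $P$ to get $P'\in M_{2n}(A^+_{4r})$ with $P'-\diag(I_n,0)\in M_{2n}(J)$, still a $c''\eps$-$4r$-projection by Lemma \ref{lemma-almost-closed}(iii). Set
$$\partial_{J,A,1}^{\eps,r}([u]_{\eps,r})\defi [P',n]_{c''\eps,4r}\in K_0^{c''\eps,4r}(J).$$

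For the even to odd map, fix for each $\eps$ a polynomial $P_\eps$ of degree $l_\eps$ (depending only on $\eps$) approximating $t\mapsto e^{2i\pi t}$ uniformly on a neighborhood of $\{0,1\}$ within $\eps/8$, and satisfying $P_\eps(0)=P_\eps(1)=1$; this is exactly in the spirit of Lemma \ref{lemma-rescaling-projection}. Given $[p,l]_{\eps,r}\in K_0^{\eps,r}(A/J)$, lift $p$ to a self-adjoint $q\in M_n(A^+_r)$ by complete isometry. Then $P_\eps(q)\in M_n(A^+_{l_\eps r})$ has image $P_\eps(p)$ in $A/J$, which lies within $\eps/8$ of $I_n$ since $p$ is an $\eps$-$r$-projection and $P_\eps$ sends the spectrum of $p$ close to $1$; by Lemma \ref{lemma-almost-closed}(iv) it is a $c\eps$-$l_\eps r$-unitary. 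As in the previous paragraph, use complete isometry to lift this small defect and adjust $P_\eps(q)$ into some $u'\in M_n(A^+_{l_\eps r})$ with $u'-I_n\in M_n(J)$, still a $c'\eps$-$l_\eps r$-unitary, and set $\partial_{J,A,0}^{\eps,r}([p,l]_{\eps,r})\defi [u']_{c'\eps,l_\eps r}$.

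It remains to check that these constructions are well defined and behave correctly under structure maps. Independence of the lift $W$ (resp.\ $q$) is obtained by joining two choices by a straight line in $M_{2n}(A^+_{2r})$ (resp.\ $M_n(A^+_r)$) and observing that Lemma \ref{lemma-almost-closed} upgrades the straight line to a path of $25c\eps$-$4r$-projections (resp.\ $25c'\eps$-$l_\eps r$-unitaries) in the relevant subalgebra; the same argument applied to $C([0,1])$-valued extensions shows that homotopies at the source lift to homotopies at the target, giving well-definedness modulo the equivalence relation defining $K_*^{\eps,r}$. Compatibility with the morphisms $\iota_*^{\eps,\eps',r,r'}$ is immediate from the construction, since a representative of $\iota^{\eps,\eps',r,r'}_*([u]_{\eps,r})$ is the same $u$, and the lifts can be chosen to coincide. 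Finally, letting $\eps$ and then $r$ vary and using the isomorphism $\varinjlim_r K_*^{\eps,r}(A)\cong K_*(A)$ of (\ref{eq-direct-limit}), one recognizes the classical formulas for $\partial_{J,A,*}$, proving that the induced morphism in $K$-theory is the standard boundary. The main piece of work that I expect to be delicate is the verification of well-definedness with respect to the homotopy equivalence relation in the definition of $K_*^{\eps,r}$: one must argue that the whole construction, applied to an $\eps$-$r$-homotopy at the source, yields a $25c\eps$-homotopy at the target, which requires running the lifting/adjustment procedure fiberwise over $[0,1]$ and controlling all the norms uniformly; the gain in the allowed homotopy control in the present slightly modified definition of quantitative $K$-theory is precisely what absorbs the multiplicative constants produced by Lemma \ref{lemma-almost-closed}.
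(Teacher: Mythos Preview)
The paper does not actually prove this proposition: it is stated as a result recalled from \cite[Section 3]{oy2}, with no argument given here. So there is no ``paper's proof'' to compare against beyond the reference. That said, your sketch follows the right general strategy (adapt the classical index/exponential constructions while tracking $\eps$ and $r$), and this is indeed what is done in \cite{oy2}. However, there is a genuine gap in your odd-to-even construction.

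You write: lift $\diag(u,u^*)$ to $W\in M_{2n}(A^+_{2r})$ with $\|W\|\lq\|u\|+1$, and then claim that by Corollary~\ref{cor-example-homotopy} and Lemma~\ref{lemma-almost-closed}(ii), $W$ is a $c\eps$-$2r$-unitary. This does not follow. Complete isometry gives you a norm-controlled lift $W$ with $q(W)=\diag(u,u^*)$, but it tells you nothing about $\|W^*W-1\|$ in $A$; you only know that $q(W^*W-1)$ is small, i.e.\ $W^*W-1$ is \emph{close to $J$}, not small in norm. Corollary~\ref{cor-example-homotopy} lives entirely in $(A/J)^+$ and Lemma~\ref{lemma-almost-closed}(ii) requires closeness in $A^+$, which you do not have. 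The standard fix (and the one used in \cite{oy2}, and visible later in this paper in the proof of Proposition~\ref{prop-decompositionP1P2}) is to lift $u$ to some $a\in A^+_r$ and then \emph{build} $W$ explicitly as a product of elementary matrices $X(a)=\begin{pmatrix}1&a\\0&1\end{pmatrix}$, $Y(-a^*)$, etc., so that $W$ is manifestly invertible with inverse of controlled propagation and $q(W)$ is close to $\diag(u,u^*)$; one then works with the idempotent $W\diag(I_n,0)W^{-1}$ and corrects it to an $\eps'$-$r'$-projection.

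There is also a smaller imprecision in your even-to-odd map: you ask $P_\eps$ to approximate $e^{2i\pi t}$ only on a neighborhood of $\{0,1\}$, but the self-adjoint lift $q$ of $p$ has spectrum a priori in $[-\|q\|,\|q\|]$, not near $\{0,1\}$. To conclude that $P_\eps(q)$ is an almost-unitary you need $P_\eps$ close to $e^{2i\pi t}$ on all of $\operatorname{spec}(q)$, say on $[-2,2]$; your stated condition only guarantees that $P_\eps(p)$ is close to $1$, which is the second requirement, not the first.
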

Moreover  the controlled boundary map enjoys the expected naturality properties with respect to completely filtered extensions.
We have in the setting of  controlled exactness  the analogue of the six terms exact  sequence in $K$-theory.
\begin{theorem}\label{thm-six term}  There exists a control  pair $(\lambda,h)$  such that for any  completely  filtered extension of    $C^*$-algebras $$0  \longrightarrow J
\stackrel{\jmath}{\longrightarrow} A \stackrel{q}{\longrightarrow}
A/J\longrightarrow 0,$$   the following six-term sequence is $(\lambda,h)$-exact
$$\begin{CD}
\K_0(J) @>\jmath_*>> \K_0(A)  @>q_*>>\K_0(A/J)\\
    @A\DD_{J,A,*} AA @.     @V\DD_{J,A,*} VV\\
\K_1(A/J) @<q_*<< \K_1(A)@<\jmath_*<< \K_1(J)
\end{CD}
$$
\end{theorem}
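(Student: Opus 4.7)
The plan is to imitate the classical proof of the six-term exact sequence in $K$-theory while carefully tracking propagation and the $\eps$ defect of projections/unitaries. The four ingredients I will repeatedly use are: the perturbation estimates of Lemma \ref{lemma-almost-closed}, Corollary \ref{cor-example-homotopy} (to trivialize $\diag(u,u^*)$), the complete-filtration hypothesis of Definition \ref{def-completely-filtered-extension} (which provides controlled lifts from $M_n(A_r/J_r)$ to $M_n(A_r)$ with norm as close as wanted to the quotient norm), and the controlled boundary of Proposition \ref{prop-bound} together with the explicit recipe for $\DD_{J,A,\ast}$ (lift an $\eps$-$r$-unitary $u$ in $A/J$ to an element $v\in A_r$ with $\|v\|$ controlled, form $\diag(v,v^\ast)$ which is an $O(\eps)$-$2r$-unitary in $A$ that is close to $I_2$ modulo $J$, and apply a rotation to extract an almost projection in $J$). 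The verification consists in checking $(\lambda,h)$-exactness at each of the six spots, and the final $(\lambda,h)$ is obtained by composing the control pairs coming out of each step; importantly it does not depend on the particular extension.

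Exactness at $\K_\ast(A)$: the composition $q_\ast\circ\jmath_\ast$ is zero on the nose. Conversely, given $x\in K_\ast^{\eps,r}(A)$ with $q_\ast^{\eps,r}(x)=0$ in $K_\ast^{\eps,r}(A/J)$, unfolding the relation $\sim$ gives a $25\eps$-$r$-homotopy (resp.\ $25\eps$-$2r$-homotopy in the odd case) in $(A/J)[0,1]$ between the image of a representative and a trivial element, after suitable stabilization. The complete-filtration condition, applied pointwise and then refined through Lemma \ref{lemma-almost-closed}, allows me to lift this path to a homotopy in $A[0,1]$ whose defect from connecting my representative to a trivial element lies in $J[0,1]$, and this defect is the desired preimage in $\K_\ast(J)$ up to a controlled relaxation.

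Exactness at $\K_\ast(J)$: by construction of $\DD_{J,A,\ast}$, the composition $\jmath_\ast\circ\DD_{J,A,\ast}$ is represented by an almost-projection of the form $pvp$ with $p$ a trivial block-diagonal projection and $v$ an $O(\eps)$-$O(r)$-unitary in $A$, which is controlled-homotopic to a trivial projection via Corollary \ref{cor-example-homotopy}; hence this composition vanishes at a controlled level. Conversely, if $\jmath_\ast^{\eps,r}(z)=0$ after the appropriate controlled inclusion, I unwind a trivializing homotopy of the form $\diag(\jmath(z),0)=\jmath(\mathrm{triv})$ inside $A$ and compare it, via the explicit definition of $\DD$, with the boundary of some almost-unitary in $A/J$ obtained by pushing the trivializing homotopy through $q$.

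The main obstacle is exactness at $\K_\ast(A/J)$. Suppose $y\in K_\ast^{\eps,r}(A/J)$ satisfies $\DD_{J,A,\ast}^{\eps,r}(y)=0$ in $K_\ast^{\alpha_\DD\eps,k_{\DD,\eps} r}(J)$. I choose a representative $u$ of $y$, lift it to $v\in M_n(A_r)$ with $\|v\|$ controlled (using complete-filtration), and observe that $v^\ast v-1$ and $vv^\ast-1$ lie in $M_n(J)$ with controlled propagation. The vanishing of $\DD(y)$ provides, in stabilized form, a controlled homotopy of almost-projections in $(J\!+\!\C)[0,1]$ killing the projection built from $v$; I then splice this trivializing homotopy together with the Cayley/rotation construction used to build $\DD$, performed inside $A$ rather than $A/J$, to produce a genuine $\lambda\eps$-$h_\eps r$-unitary $w$ in $M_{2n}(A)$ such that $q(w)$ is controlled-homotopic to $\diag(u,I)$. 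This $w$ gives the preimage $x\in K_\ast^{\lambda\eps,h_\eps r}(A)$. The delicate point throughout is that every passage between almost-projections and almost-unitaries introduces a universal multiplicative loss in $\eps$ and $r$ (governed by Lemma \ref{lemma-almost-closed} and Proposition \ref{prop-bound}), and one must check that these losses compose into a single control pair $(\lambda,h)$ that works simultaneously at all six positions.
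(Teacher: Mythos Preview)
The paper does not prove this theorem: it is stated without proof and attributed to \cite[Section~3]{oy2}. There is therefore no ``paper's own proof'' to compare against. Your sketch follows exactly the strategy one expects (and which is carried out in \cite{oy2}): redo the classical six-term argument while bookkeeping propagation and $\eps$, using the complete-filtration hypothesis to produce norm-controlled lifts from $M_n(A_r/J_r)$ to $M_n(A_r)$, and composing the resulting control pairs into a single universal $(\lambda,h)$.

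Two places where your sketch is thinner than a full proof would require. First, in the converse at $\K_\ast(A)$ you say you lift the trivializing homotopy ``pointwise and then refine''; the actual mechanism is to discretize the homotopy finely enough that consecutive values are $\eps$-close, lift each value with controlled norm via Definition~\ref{def-completely-filtered-extension}, and then invoke Lemma~\ref{lemma-almost-closed} to produce short straight-line homotopies between consecutive lifts---this is routine but should be said. Second, the converse at $\K_\ast(J)$ (producing a preimage under $\DD_{J,A,\ast}$ from a trivialization in $A$) is the most delicate step and your description (``unwind a trivializing homotopy \dots\ and compare it \dots\ with the boundary of some almost-unitary'') does not yet isolate the key manoeuvre: one must manufacture an $\eps'$-$r'$-unitary in $A/J$ out of the $A$-homotopy, typically by reading off the off-diagonal block of a conjugated $2\times 2$ matrix at the end of the homotopy, and then verify that its controlled boundary recovers the original class up to the structure maps. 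None of this is wrong in outline, but a referee would ask you to spell out this step.
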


Moreover, by construction  (see \cite[Section 3.2]{oy2}), the controlled boundary map  is compatible with the rescaling defined  in Section \ref{subsec-rescaling}.
\begin{proposition}\label{prop-compatibility-boundary-rescaling}
For any  completely  filtered extension of    $C^*$-algebras $$0  \longrightarrow J
\stackrel{\jmath}{\longrightarrow} A \stackrel{q}{\longrightarrow}
A/J\longrightarrow 0,$$  if we set $(\al,k)=(\al_\D,k_\D)$ and $l=l_{rs}$ (with notation of Section \ref{subsec-rescaling}, then we have
\begin{eqnarray*}
\partial_ {J,A,*}^{\eps/2,l_\eps r}\circ L_{A/J,*}^{\eps,r}&=&\iota_*^{-,\al\eps/2,k_{\eps/2}l_\eps r}\circ L_{J,*}^{\al\eps,k_\eps r}\circ \partial_ {J,A,*}^{\eps,r}\\
&=& L_{J,*}^{\al\eps,\frac{k_{\eps/2} l_\eps r}{l_{\al\eps}}}\circ \partial_ {J,A,*}^{\eps,\frac{k_{\eps/2} l_\eps r}{k_{\eps}l_{\al\eps}}}\circ\iota_{*}^{\eps,r,-}
\end{eqnarray*} 
for any $\eps$ in $\left(0,\frac{1}{200\al}\right)$ and any $r>0$.
\end{proposition}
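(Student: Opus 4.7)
The plan is to unfold the explicit constructions of both the controlled boundary $\DD_{J,A,*}$ from \cite[Section 3.2]{oy2} and the rescaling operator $L_{\bullet,*}^{\eps,r}$ from Section \ref{subsec-rescaling}, and then to verify that the two operations commute up to the controlled identifications announced in the statement. First I would recall that $\partial_{J,A,*}^{\eps,r}$ is constructed entirely by polynomial functional calculus: in the odd degree case, for an $\eps$-$r$-unitary $u$ representing a class in $K_1^{\eps,r}(A/J)$, one lifts $u$ through the complete isometric identification $(A_r+J)/J \cong A_r/J_r$, forms a controlled rotation of $\diag(\tilde u, \tilde u^*)$, and applies a polynomial of controlled degree to extract an $\al_\D\eps$-$k_{\D,\eps}r$-projection in $M_2(J^+)$; the even case is strictly analogous, using functional calculus around $\ka_0$. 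The rescaling $L_{A/J,*}^{\eps,r}$ is itself given by the polynomial $Q_\eps$ (respectively $Q_{N_\eps}$) of degree $l_\eps$, whose output is, by Lemma \ref{lemma-rescaling-projection}, at norm distance at most $\eps/6$ (respectively $\eps/8$) from the honest partial isometry $\ka_1(u)$ (respectively the spectral projection $\ka_0(p)$).

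Second, starting from a class $x=[u]_{\eps,r} \in K_1^{\eps,r}(A/J)$ (the even case being parallel), I would compute both sides applied to $x$. The left-hand side produces $\partial_{J,A,*}^{\eps/2, l_\eps r}$ evaluated on $[u']_{\eps/2, l_\eps r}$ where $u' = Q_\eps(u^*u)u$, while the right-hand side produces the rescaling of $\partial_{J,A,*}^{\eps,r}([u]_{\eps,r})$. Since the polynomials used in the boundary construction and in the rescaling both commute with homomorphisms of filtered $C^*$-algebras and in particular with the cross-section modulo $J$, the two outputs are built from the same raw data $(u, \tilde u, \tilde u^*)$ using two different orders of the same polynomial operations. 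Combined with the fact that $u$ and $u'$ differ by a norm-$\eps$ perturbation, the first part of Lemma \ref{lemma-almost-closed} produces a $25(\al\eps/2)$-homotopy of controlled projections of propagation at most $k_{\eps/2} l_\eps r$ between the two outputs; after composing the right-hand side with $\iota_*^{-,\al\eps/2, k_{\eps/2} l_\eps r}$, which absorbs the needed propagation slack provided $l_{\al\eps} k_\eps r \lq k_{\eps/2} l_\eps r$, the two elements represent the same controlled class. This gives the first equality.

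Third, the second equality follows from combining the first with Lemma \ref{lemma-rescaling} (i) applied in $J$: by inserting $\iota_*^{\eps,r,-}$ and using that $\iota_*^{\al\eps/2, \al\eps, l_{\al\eps} r'} \circ L_{J,*}^{\al\eps, r'} = \iota_*^{\al\eps, r', l_{\al\eps} r'}$, one rewrites the right-hand side with the structure map pushed in front of the boundary map; the indices $\frac{k_{\eps/2} l_\eps r}{l_{\al\eps}}$ and $\frac{k_{\eps/2} l_\eps r}{k_\eps l_{\al\eps}}$ emerge automatically from chasing the control functions through the composition.

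The main obstacle is not conceptual but purely a matter of bookkeeping: one must verify that the degrees of the polynomials used in the boundary construction and in the rescaling compose to yield exactly the stated parameters, and that all the intermediate homotopies remain within the announced $\eps$-tolerances so that the controlled identifications on both sides witness the same class. Since the construction of $\DD_{J,A,*}$ in \cite[Section 3.2]{oy2} is already explicit at this level of precision, nothing new happens beyond collating the estimates already available there with the polynomial approximations introduced for the rescaling.
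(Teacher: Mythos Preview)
Your proposal is correct and aligns with the paper's approach: the paper itself provides no proof for this proposition, stating only that it holds ``by construction (see \cite[Section 3.2]{oy2})'', i.e., that the compatibility follows directly from the explicit polynomial functional calculus used to define both $\DD_{J,A,*}$ and the rescaling $L^{\eps,r}_{\bullet,*}$. Your outline supplies precisely those omitted details---tracking the polynomial degrees and invoking Lemmas \ref{lemma-almost-closed} and \ref{lemma-rescaling} to match the control parameters---so there is nothing to add.
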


 \subsection{Tensorisation in $KK$-theory and controlled morphisms}
 Let $A$ be a $C^*$-algebra and let $B$ be a filtered $C^*$-algebra filtered by $(B_r)_{r>0}$.  Let us define $A\ts B_r$ as the closure of the algebraic tensor product of $A$ and $B_r$ in the spatial tensor
product $A\ts B$. Then the $C^*$-algebra $A\ts B$ is filtered by $(A\ts B_r)_{r>0}$. 
 If $f:A_1\to A_2$ is a homomorphism of $C^*$-algebras, let us set 
 $$f_B:A_1\ts B\to A_2\ts B;\, a\ts b\mapsto f(a)\ts b.$$ Recall from \cite{kas} that for any $C^*$-algebras  $A_1$, $A_2$ and $B$, G. Kasparov defined a tensorization map
$$\tau_B:KK_*(A_1,A_2)\to KK_*(A_1\ts B,A_2\ts B).$$ If $B$ is a filtered $C^*$-algebra, then for any $z$ in $KK_*(A_1,A_2)$  the morphism
$$K_*(A_1\ts B)\lto K_*(A_2\ts B);\, x\mapsto x\ts_{A_1\ts B}\tau_B(z)$$  is induced by a control morphism  \cite[Theorem 4.4]{oy2}.
  \begin{theorem}\label{thm-tensor}
  There exists a control pair $(\alpha_\TT,k_\TT)$ with $k_\TT>1$ such that 
  \begin{itemize}
  \item for any filtered   $C^*$-algebra $B$; 
  \item for any $C^*$-algebras $A_1$ and $A_2$;
  \item for any element $z$ in $KK_*(A_1,A_2)$,
  \end{itemize}
  There exists an $(\alpha_\TT,k_\TT)$-controlled morphism  
  $$\TT_B(z)=(\tau_{B}^{\eps,r}(z))_{\eps\in\left(0,\frac{1}{100\al_\T}\right),r>0}:\K_*(A_1\ts B)\to \K_*(A_2\ts B)$$  with same degree as $z$ that satisfies the following:
 \begin{enumerate}
 \item $\TT_B(z):\K_*(A_1\ts B)\to \K_*(A_2\ts B)$ induces the right multiplication by $\tau_B(z)$ in $K$-theory;
\item  For any  elements $z$ and $z'$ in $KK_*(A_1,A_2)$. Then
$$\TT_B(z+z')=\TT_B(z)+\TT_B(z').$$
\item Let $A'_1$ be a filtered $C^*$-algebra  and  let $f:A_1\to A'_1$ be a homomorphism of  $C^*$-algebras. Then we have 
$\T_B(f^*(z))=\T_B(z)\circ f_{B,*}$  for all $z$ in $KK_* (A'_1,A_2)$.
\item Let $
A'_2$ be a $C^*$-algebra and  let $g:A'_2\to A_2$ be a homomorphism of filtered $C^*$-algebras. Then we have
$\TT_B(g_{*}(z))=g_{B,*}\circ \TT_B(z)$
for any $z$ in $KK_*(A_1,A'_2)$.
\item $\TT_B([Id_{A_1}])\stackrel{(\alpha_\TT,k_\TT)}{\sim} \Id _{\K_*({A_1}\ts B)}$.
\item For any $C^*$-algebra $D$ and any element $z$ in $KK_*(A_1,A_2)$, we have  $\TT_B(\tau_D(z))=\TT_{B\ts D}(z)$.
\item For any  semi-split extension of  $C^*$-algebras $0\to J\to A\to A/J\to 0$    with corresponding   element   $[\partial_{J,A}]$  of
$KK_1(A/J,J)$
 that implements the boundary map,   we have
$$\TT_B([\partial_{J,A}])=\DD_{J\ts B,A\ts B}.$$\end{enumerate}
\end{theorem}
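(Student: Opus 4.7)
The plan is to construct $\TT_B(z)$ by reducing to two elementary building blocks whose controlled $K$-theoretic picture is already understood. First, any $\ast$-homomorphism $f \colon A_1 \to A_2$ tensors with the identity on $B$ to give the filtered homomorphism $f_B \colon A_1\ts B \to A_2\ts B$, which induces a $(1,1)$-controlled morphism $f_{B,*}$. Second, the boundary class in $KK_1(A/J,J)$ of a semi-split extension $0\to J\to A\to A/J\to 0$ is sent to the controlled boundary $\DD_{J\ts B,A\ts B}$ of Proposition \ref{prop-bound}, which is $(\alpha_\DD,k_\DD)$-controlled with universal $(\alpha_\DD,k_\DD)$. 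Note that property (vii) of the theorem dictates this assignment.

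To extend these two rules to all of $KK_*$, I would use the description of $KK_1(A_1,A_2)$ as the abelian group of equivalence classes of semi-split extensions of $A_1$ by $A_2\ts\K$. Every odd element has the form $f^*[\partial_{A_2\ts\K,A}]$ for some semi-split extension $0\to A_2\ts\K\to A\to A/(A_2\ts\K)\to 0$ and a homomorphism $f \colon A_1\to A/(A_2\ts\K)$, and I accordingly set $\TT_B(f^*[\partial_{A_2\ts\K,A}])\defi \DD_{A_2\ts\K\ts B,A\ts B}\circ f_{B,*}$, which is an $(\alpha_\DD,k_\DD)$-controlled morphism after absorbing $\K$ into $A_2$ via Proposition \ref{prop-morita}. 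The even case is obtained via the Bott periodicity isomorphism $KK_0(A_1,A_2)\cong KK_1(A_1,SA_2)$ together with a controlled Bott map for the filtered $C^*$-algebra $SA_2\ts B$, which alters the control pair by a universal amount.

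Properties (ii)--(vi) should then follow from $KK$-theoretic identities pulled through the construction. Additivity (ii) reflects that the direct sum of two extensions computes the sum of their $KK_1$-classes and that $\DD$ behaves additively under direct sums. Naturality (iii)--(iv) holds because pulling back an extension along $g \colon A_1'\to A_1$, or pushing forward along $h \colon A_2\to A_2'$, yields a new semi-split extension that tensors naturally with $B$, and $\DD$ is functorial with respect to both operations. For the identity (v), one represents $[Id_{A_1}]$ by a split extension, so that $\DD$ yields a controlled morphism equivalent to the identity up to the universal control pair. Compatibility (vi) is immediate because $(\cdot)\ts D$ sends semi-split extensions to semi-split extensions and commutes with $\DD$; finally (vii) is true by construction.

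The main obstacle will be to establish well-definedness together with the uniformity of the control pair $(\alpha_\TT,k_\TT)$. I need to show that if two pairs $(f,A)$ and $(f',A')$ represent the same class $z\in KK_1(A_1,A_2)$, then $\DD_{A_2\ts\K\ts B,A\ts B}\circ f_{B,*}$ and $\DD_{A_2\ts\K\ts B,A'\ts B}\circ f'_{B,*}$ satisfy $\aeq$ for a single control pair $(\alpha_\TT,k_\TT)$ independent of $A_1$, $A_2$, $B$ and $z$. The natural approach is to realize the equivalence of the two extensions by a semi-split extension over $A_1\ts C[0,1]$; applying Proposition \ref{prop-bound} to this interpolating extension and exploiting the controlled six-term sequence of Theorem \ref{thm-six term} at the two evaluation ideals $t=0$ and $t=1$ produces the desired $\aeq$-equivalence, with a control pair depending only on $(\alpha_\DD,k_\DD)$ and on the universal control pair of Theorem \ref{thm-six term}. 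Propagating this uniformity through Proposition \ref{prop-morita} and Bott periodicity then yields the required $(\alpha_\TT,k_\TT)$.
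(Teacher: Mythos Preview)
The paper does not prove this theorem: it is quoted from \cite{oy2} (Theorem 4.4 there) as background material, so there is no proof in the present paper to compare your proposal against.

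That said, your sketch is broadly in line with the construction in \cite{oy2}: one uses the extension picture of $KK_1$, assigns to a semi-split extension its controlled boundary morphism $\DD$, precomposes with the filtered morphism induced by the classifying map, and handles $KK_0$ via suspension/Bott periodicity. The properties (ii)--(vii) then follow from the corresponding naturality statements for $\DD$ already recorded around Proposition~\ref{prop-bound} and Theorem~\ref{thm-six term}. Your identification of well-definedness as the crux is correct, and the homotopy-of-extensions argument you outline (passing to an extension over $A_1\ts C[0,1]$ and evaluating at the endpoints) is the standard route; in \cite{oy2} this is packaged slightly differently, but the content is the same.

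One point deserves more care than your sketch gives it: the reduction of $KK_0$ to $KK_1$ via Bott periodicity requires a \emph{controlled} Bott isomorphism with a universal control pair, and you should check that composing with it does not spoil the uniformity of $(\alpha_\TT,k_\TT)$. Likewise, absorbing the factor $\Kp$ via Proposition~\ref{prop-morita} is harmless on the target side, but you must also ensure that the semi-split section of the extension, once tensored with $B$, remains filtered so that Proposition~\ref{prop-bound} applies; this is automatic here because the filtration lives entirely in the $B$-factor, but it is worth saying explicitly.
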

Moreover, $\TT_B$ is compatible with Kasparov products \cite[Theorem 4.5]{oy2}.
\begin{theorem}\label{thm-product-tensor} There exists a control pair $(\lambda,h)$ such that the following holds :

  let  $A_1,\,A_2$ and $A_3$  be separable $C^*$-algebras and let $B$ be a  filtered $C^*$-algebra. Then for any
$z$ in $KK_*(A_1,A_2)$ and  any  $z'$ in
$KK_*(A_2,A_3)$, we have
$$\TT_B(z\ts_{A_2} z')\aeq  \TT_B(z') \circ\TT_B(z).$$
\end{theorem}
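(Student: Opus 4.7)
The plan is to prove the controlled product formula $\TT_B(z \ts_{A_2} z') \aeq \TT_B(z') \circ \TT_B(z)$ by reducing any element of $KK$-theory to a Kasparov product of two basic types of building blocks: elements induced by $*$-homomorphisms and $KK_1$-classes $[\partial_{J,A}]$ of semi-split extensions. By Cuntz's description of $KK$-theory, combined with Bott periodicity for the odd case, every $z \in KK_*(A_1,A_2)$ can be written, after suitable Morita stabilization, as a Kasparov product of such basic elements. Once the controlled multiplicativity is checked on these basic elements, the general statement follows from associativity of the Kasparov product together with the composition rule for controlled morphisms from Section~\ref{subsec-quant-object}.

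For the case where $z = \phi_*([\operatorname{Id}_{A_2}])$ is induced by a $*$-homomorphism $\phi: A_1 \to A_2$, the standard identity $z \ts_{A_2} z' = \phi^*(z')$ combined with properties (3), (4) and (5) of Theorem~\ref{thm-tensor} gives
$$\TT_B(z \ts_{A_2} z') = \TT_B(\phi^*(z')) \aeq \TT_B(z') \circ \phi_{B,*}$$
together with $\TT_B(z) \aeq \phi_{B,*} \circ \TT_B([\operatorname{Id}_{A_2}]) \aeq \phi_{B,*}$, so the product formula holds with a universal control pair. The symmetric case, where $z'$ is induced by a $*$-homomorphism, is treated identically using property (3).

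The core of the argument is the case where $z = [\partial_{J,A}]$ is the $KK_1$-class of a semi-split extension $0 \to J \to A \to A/J \to 0$ (so $A_1 = A/J$ and $A_2 = J$), paired with an arbitrary $z' \in KK_*(J,A_3)$. By the previous reduction, we may assume $z'$ is represented, after stabilization by $\Kp$, by a $*$-homomorphism $\psi: J \to \Kp \ts A_3$. A standard pullback construction then produces a new semi-split extension $0 \to \Kp \ts A_3 \to E_\psi \to A/J \to 0$, where $E_\psi$ is the pullback of $A$ and $\Kp \ts A_3$ over $\Kp \ts J$ along $\Kp \ts \psi$, whose boundary class coincides with $[\partial_{J,A}] \ts_J z'$. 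Applying property (7) to the tensorized extension and using naturality of the controlled boundary $\DD$ with respect to the obvious morphisms of the pullback diagram identifies $\TT_B([\partial_{J,A}] \ts_J z')$ with $\TT_B(z') \circ \DD_{J\ts B, A\ts B} = \TT_B(z') \circ \TT_B(z)$, yielding the desired controlled equality.

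The main obstacle is the bookkeeping of control pairs through these reductions: each composition, each invocation of properties (3)--(7), and each pullback construction introduces new control pairs. The key point is that every step produces a control pair depending only on the universal $(\al_\TT, k_\TT)$ and on the universal composition rules, so the final control pair is again universal. The most delicate aspect is justifying the reduction of general $KK$-elements to the basic building blocks in a way compatible with Morita equivalence (Proposition~\ref{prop-morita}), which requires careful handling of the stabilization steps and the passage between unitally and non-unitally filtered algebras.
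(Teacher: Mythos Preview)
The paper does not give its own proof of this statement; it is quoted from \cite[Theorem~4.5]{oy2}. So there is no in-paper argument to compare against, and I can only assess your outline on its own merits and against what the construction in \cite{oy2} forces.

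Your strategy is the natural one and the homomorphism case is handled correctly via properties (iii)--(v) of Theorem~\ref{thm-tensor}. There are, however, two genuine gaps in the boundary case. First, the sentence ``by the previous reduction, we may assume $z'$ is represented, after stabilization by $\Kp$, by a $*$-homomorphism $\psi:J\to\Kp\ts A_3$'' is not justified: a general $z'\in KK_*(J,A_3)$ is \emph{not} represented by a single homomorphism. In the Cuntz picture one has $z'=[\pi]^{-1}\ts_{\,qJ}[\psi]$ with $\pi:qJ\to J$ a $KK$-equivalence, so an inverse of a morphism class is also needed; alternatively, if $z'$ has odd degree it is itself a boundary class $[\partial_{J',E'}]$ and the product $[\partial_{J,A}]\ts_J[\partial_{J',E'}]$ of two boundary classes is exactly the case you never treat. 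The pushout argument you give (you write ``pullback'', but the construction you describe is the pushout of $0\to J\to A\to A/J\to 0$ along $\psi$) is correct and is the right endgame, but it only settles $z=[\partial]$, $z'=[\psi]$.

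Second, even if you fully decompose both $z$ and $z'$ into basic factors and induct on the total length, each step of the induction multiplies the control pair by the universal constants of Theorem~\ref{thm-tensor}. Unless you observe explicitly that the decomposition has \emph{uniformly bounded length} (it does: every $KK_1$-class is a single extension class after Morita, and $KK_0$ is reached by one suspension), the resulting $(\lambda,h)$ would depend on $z,z'$, contradicting the statement. The argument in \cite{oy2} avoids this by writing each $KK$-element as a single extension class up to one Bott/Morita correction, so that the product $z\ts_{A_2}z'$ is realized by one pushout extension and one appeal to naturality of $\DD$, rather than by an induction of uncontrolled depth. Your outline is salvageable along these lines, but as written it does not close.
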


We also have the following naturallity  result with respect to the filtered $C^*$-algebras.
\begin{proposition}\label{prop-naturality}
Let $A_1$ and $A_2$ be separable $C^*$-algebras, let $B_1$ and $B_2$ be filtered $C^*$-algebras
and let $f:B_1\to B_2$ be a morphism of filtered $C^*$-algebras.  Let us consider 
$f_{A_1}:A_1\ts B_1\to A_1\ts B_2$ and $f_{A_2}:A_2\ts B_1\to A_2\ts B_2$  the morphisms of filtered $C^*$-algebra induced by $f$ on tensor product. Then $$f_{A_2,*}\circ \TT_{B_1}(z)=\TT_{B_2}(z)\circ f_{A_1,*}$$  for any $z$ in $KK_*(A_1,A_2)$.
\end{proposition}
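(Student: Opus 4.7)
The plan is to verify the identity on a generating family of $KK$-classes and then propagate it to all of $KK$ via the structural properties of $\TT_B$ gathered in Theorem \ref{thm-tensor}. By Kasparov's theory, every element of $KK_*(A_1,A_2)$ can be written as a Kasparov product of classes of $\ast$-homomorphisms and (odd) boundary classes $[\partial_{J,A}]\in KK_1(A/J,J)$ of semi-split extensions. Additivity of $\TT_B$ (property (2)) and its compatibility with Kasparov products (Theorem \ref{thm-product-tensor}) ensure that the set of $z$'s for which the naturality identity holds is closed under sums and Kasparov products; hence it suffices to treat the two kinds of generators separately.

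For $z=[\phi]$ with $\phi\colon A_1\to A_2$ a $\ast$-homomorphism, properties (4) and (5) of Theorem \ref{thm-tensor} give $\TT_{B_i}([\phi])\aeq \phi_{B_i,*}$, so the naturality identity reduces to the strict equality $f_{A_2}\circ \phi_{B_1}=\phi_{B_2}\circ f_{A_1}$ of filtered $\ast$-homomorphisms $A_1\ts B_1\to A_2\ts B_2$, which is immediate on elementary tensors since $\phi$ and $f$ act on disjoint tensor factors. For $z=[\partial_{J,A}]$ arising from a semi-split extension $0\to J\to A\to A/J\to 0$, property (7) of Theorem \ref{thm-tensor} identifies $\TT_{B_i}([\partial_{J,A}])$ with the controlled boundary $\DD_{J\ts B_i,A\ts B_i,*}$. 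Tensoring a completely positive cross-section for the source extension with $f$ produces cross-sections that exhibit $f$ as a morphism of semi-split completely filtered extensions between $0\to J\ts B_1\to A\ts B_1\to A/J\ts B_1\to 0$ and the corresponding sequence over $B_2$. The naturality of the controlled boundary map, which is part of Proposition \ref{prop-bound}, then yields the required commutativity at the level of controlled morphisms.

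The main obstacle is the control-pair bookkeeping in the Kasparov-product step: the compatibility of $\TT_B$ with products in Theorem \ref{thm-product-tensor} is an $\aeq$-identity rather than a strict equality, so the naturality statement of the proposition must be read up to a controlled equivalence that can be absorbed into the universal control pair $(\alpha_\TT,k_\TT)$. Since pre- and post-composition with the strictly filtered morphisms $f_{A_j,*}$ (which are $(1,1)$-controlled) does not enlarge control pairs, the equivalence produced on the generators is preserved under Kasparov products, and the passage from the elementary cases to a general $z$ goes through without losing control.
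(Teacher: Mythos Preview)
The paper does not supply a proof of this proposition; it is stated as a fact, implicitly because it follows directly from the explicit construction of $\TT_B$ in \cite[Section~4]{oy2}. In that construction the filtered algebra $B$ enters only through tensoring the data of a fixed Kasparov cycle for $z$ with $\mathrm{Id}_B$, so applying a filtered morphism $f:B_1\to B_2$ commutes on the nose with every step of the construction, yielding the strict equality $f_{A_2,*}\circ \TT_{B_1}(z)=\TT_{B_2}(z)\circ f_{A_1,*}$.

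Your approach, by contrast, tries to deduce the identity from the formal properties listed in Theorems~\ref{thm-tensor} and~\ref{thm-product-tensor}, and this can only produce the weaker relation $f_{A_2,*}\circ \TT_{B_1}(z)\stackrel{(\lambda,h)}{\sim}\TT_{B_2}(z)\circ f_{A_1,*}$, not the strict equality asserted. The loss is already visible on your generators: property~(v) only gives $\TT_{B_i}([\phi])\stackrel{(\alpha_\TT,k_\TT)}{\sim}\phi_{B_i,*}$, so even for classes of $*$-homomorphisms you obtain a controlled equivalence, not an equality. The Kasparov-product step then compounds the problem, since Theorem~\ref{thm-product-tensor} is again only an $\stackrel{(\lambda,h)}{\sim}$-statement. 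Your closing remark that the resulting equivalence ``can be absorbed into the universal control pair $(\alpha_\TT,k_\TT)$'' is not justified: an $\stackrel{(\lambda,h)}{\sim}$-relation between two $(\alpha_\TT,k_\TT)$-controlled morphisms does not force their families $(F^{\eps,r})$ and $(G^{\eps,r})$ to coincide before applying the structure maps $\iota_*^{-,\lambda\eps,h_\eps r}$. The proposition, as stated and as used later (for instance in the proof of Lemma~\ref{lemma-subset} and in Remark~\ref{remark-naturality}), really requires the strict equality, which your argument does not deliver. To close the gap you need to go back to the construction of $\TT_B$ rather than argue only from its listed properties.
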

\begin{remark}\label{remark-naturality}
We can extend Proposition \ref{prop-naturality} to the slightly more general following situation. Let  $f:B_1\to B_2$  be a homomorphism of $C^*$-algebras and assume that there exist a positive number $\th$  with $\th\gq 1$ and a non negative number $\beta$ such
that $f(B_{1,r})\subseteq B_{2,\th r+\beta}$ for all positive number $r$. Then $f$ induces for every positive numbers $\eps$ and $r$ with $\eps<1/100$ and for any $C^*$-algebra $A$ a  morphism $$f^{\eps,r}_{A,*}:K^{\eps,r}_*(A\ts B_1)\lto K^{\eps,\th r+\beta}_*(A\ts B_2).$$
With notations  of Proposition \ref{prop-naturality}, we have with $(\al,k)=(\al_\T,k_\T)$
$$\iota_*^{-,\al\eps,k_{\eps}(\th r+\beta)}\circ f^{\al\eps,k_{\eps} r}_{A_2,*}\circ \tau_{B_1}^{\eps,r}(z)=\tau_{B_2}^{\eps,\th r+\beta}(z)\circ f^{\eps,r}_{A_1,*}$$
for all positive numbers $\eps$ and $r$ with $\eps<\frac{1}{100\al}$.
\end{remark}

By construction of  $\TT_B$ (see \cite[Section 3.2]{oy2}) and in view of Proposition
\ref{prop-compatibility-boundary-rescaling}, this quantitative version of Kasparov transformation is compatible with the rescaling defined  in Section \ref{subsec-rescaling}.
\begin{proposition}\label{prop-compatibility-kasparov-rescaling}

Let us  set $(\al,k)=(\alpha_\TT,k_\TT)$ and $l=l_{rs}$ (with notation of Section \ref{subsec-rescaling}. Then for any filtered   $C^*$-algebra $B$,
   for any $C^*$-algebras $A_1$ and $A_2$ and any 
 element $z$ in $KK_*(A_1,A_2)$, we have
\begin{eqnarray*}
\tau^{\eps/2,l_\eps r}_B(z)\circ L_{A_1\ts B,*}^{\eps,r}&=&\iota_*^{-,\al\eps/2,k_{\eps/2}l_\eps r}\circ L_{A_2\ts B,*}^{\al\eps,k_\eps r}\circ \tau_ {B}^{\eps,r}(z)\\
&=& L_{A_2\ts B,*}^{\al\eps,\frac{k_{\eps/2} l_\eps r}{l_{\al\eps}}}\circ \tau_ {B}^{\eps,\frac{k_{\eps/2} l_\eps r}{k_{\eps}l_{\al\eps}}}(z)\circ\iota_{*}^{\eps,r,-}
\end{eqnarray*} 
for any $\eps$ in $\left(0,\frac{1}{200\al}\right)$ and any $r>0$.
\end{proposition}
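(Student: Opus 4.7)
The plan is to mimic the proof of Proposition \ref{prop-compatibility-boundary-rescaling}, exploiting the fact that, by construction in \cite[Section 3.2]{oy2}, the controlled Kasparov transform $\TT_B(z)$ is assembled from three kinds of elementary building blocks, each of which is separately compatible with the rescaling operator $L^{\eps,r}_{*}$. The crucial observation is that $L^{\eps,r}_{*}$ is defined via a polynomial $Q_{N_\eps}$ depending only on $\eps$ (see the proof of Lemma \ref{lemma-rescaling-projection}), hence is strictly natural with respect to homomorphisms of filtered $C^*$-algebras.

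First, I would recall that $\tau_B^{\eps,r}(z)$ is built from (i) morphisms $f_{B,*}$ induced by $\ast$-homomorphisms of $C^*$-algebras, (ii) Morita equivalences $\MM^{\eps,r}_{A\ts B}$ as in Proposition \ref{prop-morita}, and (iii) controlled boundary maps $\partial_{J\ts B,E\ts B,*}^{\eps,r}$ arising from semi-split filtered extensions that encode Kasparov cycles representing $z$, as assembled in Theorem \ref{thm-tensor}. For blocks of type (i) and (ii), the naturality of the polynomial functional calculus yields the strict identity
\[
f_{B,*}^{\eps/2,l_\eps r}\circ L^{\eps,r}_{A_1\ts B,*}=L^{\eps,r}_{A_2\ts B,*}\circ f_{B,*}^{\eps,r},
\]
and similarly for the Morita inclusion (which is itself a propagation-preserving filtered homomorphism). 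For blocks of type (iii), the required compatibility is exactly the content of Proposition \ref{prop-compatibility-boundary-rescaling}.

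Second, composing these block-wise compatibilities along the decomposition of $z$ yields the first displayed equality of the proposition, with control pair $(\al,k)=(\al_\TT,k_\TT)$ appearing as the composition of the control pairs arising from each block (only the boundary-map blocks contribute a nontrivial control pair). The second equality is then obtained from the first by invoking the two identities of Lemma \ref{lemma-rescaling} relating $L^{\eps,r}_{*}$ and $\iota^{\eps,\eps',r,r'}_{*}$, together with the fact that $\TT_B(z)$ commutes with the structural maps $\iota^{\eps,\eps',r,r'}_{*}$: this commutation is built into the very definition of a controlled morphism and allows the $L$ on the left-hand side to be traded for an $L$ on the right-hand side at the cost of rescaling the propagation parameter in $\tau_B(z)$ by the factor $\frac{k_{\eps/2}l_\eps}{k_\eps l_{\al\eps}}$.

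The main obstacle is purely combinatorial bookkeeping: one must carefully track how the control pairs $(\al_\TT,k_\TT)$ and $(1,l_{rs})$, together with the various $\iota$ shifts, compose so that the indices on both sides match literally. No new conceptual input is needed beyond the strict naturality of the polynomial rescaling $L^{\eps,r}_{*}$ with respect to filtered homomorphisms and the boundary-map compatibility of Proposition \ref{prop-compatibility-boundary-rescaling}.
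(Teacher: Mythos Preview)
Your proposal is correct and is precisely the approach the paper takes: the paper does not give a detailed proof but simply states that the result follows ``by construction of $\TT_B$ (see \cite[Section 3.2]{oy2}) and in view of Proposition \ref{prop-compatibility-boundary-rescaling}''. Your outline is an accurate expansion of that one-line justification, decomposing $\TT_B(z)$ into filtered homomorphisms, Morita equivalences, and controlled boundary maps, and invoking the strict naturality of the polynomial rescaling for the first two together with Proposition \ref{prop-compatibility-boundary-rescaling} for the third.
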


\section{Quantitative assembly maps estimates and coarse decomposition}

In \cite{oy3}, the authors developed  a geometric version of the controlled Baum-Connes assembly maps (see \cite{oy2}) and of the related   quantitative statements. It was proved that if   the family of finite subsets of a proper discrete metric space  $\Si$ satisfies  uniformly these geometric quantitative statements, then $\Si$ satisfies the coarse Baum-Connes conjecture. Due to the   descent principle \cite{roe}, in the particular case  of a finitely generated group with finite type classifying space, this implies the Novikov conjecture. In this section, we recall the setting for these geometric statement and we state the main result of this paper: uniform geometric quantitative statements for families of finite metric spaces with uniformly bounded  geometry are  stable under coarse decompositions. We apply this result to metric spaces with finite geometric complexity introduced in \cite{gty2}. We prove that the family of finite subsets of such a metric space with bounded geometry satisfies
uniformly the geometric quantitative statements.

\subsection{Geometric controlled assembly maps}\label{sub-sec-geo-ass-map}
Let $\Si$ be a proper discrete metric space. For any $C^*$-algebra $A$, set  $A_\Si=A\ts \Kp(\ell^2(\Si))$.
Then the distance $d$ on $\Si$ induces  a filtration on $A_{\Si}$ in the following way: let $r$ be a positive number and
 $T=(T_{\si,\si'})_{(\si,\si')\in\Si^2}$ be an element in $A_{\ell^2(\Si)}$, with $T_{\si,\si'}$ in $A$ for any $\si$ and $\si'$ in $\Si^2$. Then 
 $T$ has propagation   $r$ if $T_{\si,\si'}=0$ for $\si$ and $\si'$ in $\Si$ such that $d(\si,\si')>r$. We define 
 the  Rips complex of degree $d$ of $\Si$ as  the set $P_d(\Si)$ of probability measure on $\Si$ with support of diameter less than $d$. Then $P_d(\Si)$ is a locally finite simplicial complex and  is locally compact when endowed   with the simplicial topology.  Let us define then $$K_*(P_d(\Si),A)\defi \lim_{Z\subseteq P_d(\Si)} KK_*(C(Z),A),$$ where $Z$ runs through compact subsets of $P_d(\Si)$.
In \cite{oy3} was  constructed  for any  $C^*$-algebra $A$ local quantitative coarse assembly maps 
$$\nu_{\Si,A,*}^{\eps,r,d}: K_*(P_d(\Si),A)\to
K_*^{\eps,r}(A_{\Si}),$$ with $d>0,\,\eps\in(0,1/100)$ and $r\gq r_{d,\eps}$, where the map $$(0,+\infty)\times (0,1/100)\to(0,+\infty);(\eps,d)\mapsto r_{d,\eps}$$ is non increasing in $\eps$,  non decreasing in $d$ and satisfies $ r_{d,\eps}>2d$ for all positive numbers $\eps$ and $d$ with $\eps<1/100$. Let us recall now the definition of these quantitative assembly maps: 

\medskip

Notice first  that  any $x$ in $P_d(\Si)$ can be written as a finite convex combination
$$x=\sum_{\si\in\Si}\lambda_\si(x)\de_\si$$ where
\begin{itemize}
 \item $\delta_\si$ is the Dirac probability measure  at $\si$ in $\Si$.
\item for every $\si$ in $\Si$, the coordinate function $$\lambda_\si=\lambda_{\Si,\si}: P_d(\Si)\to[0,1]$$ is continuous and its  support is the closure of the set of probability measures on $\Si$ for which $\si$ is in the support.\end{itemize}
Let $X$ be a compact  subset of $P_d(\Si)$.
Let us define $$P_X:C(X)\ts\ell^2(\Si)\to C(X)\ts\ell^2(\Si)$$  by
\begin{equation}\label{equ-def-PX}(P_X\cdot h)(x,\si)=\lambda^{1/2}_\si(x)\sum_{\si'\in\Si}h(x,\si')\lambda^{1/2}_{\si'}(x),\end{equation} for any $h$ in $C(X)\ts\ell^2(\Si)$.
Since $\sum_{\si\in\Si}\lambda_\si=1$, it is straightforward to check that $P_X$ is a   projection 
in $C(X)\ts \Kp(\ell^2(\Si))=C(X)_{\Si}$ with  propagation less than $2d$ 
and hence gives rise in particular to a class $[P_X]$ in  $K_0^{\eps,r}(C(X)_{\Si})$ for any $\eps$ in $(0,1/100)$ and any positive number $r$ with $r\gq 2d$. Let us set $r_{d,\eps}=2dk_{\eps/\alpha}$, for any positive number $\eps$ and $r$ with $\eps<1/100$, where $(\al,k)=(\al_\T,k_\T)$ is the control pair of Theorem \ref{thm-tensor}.
For  any  $C^*$-algebra $A$, any $\eps\in(0,1/100)$ and any $r\gq r_{d,\eps}$, the map
\begin{eqnarray}\label{eq-quant-ind}
 KK_*(C(X),A)&\longrightarrow& K_*^{\eps,r}(A_{\Si}))\\
\nonumber x&\mapsto &\left(\tau_{\Kp(\ell^2(\Si))}^{\eps/\alpha,r/k_{\eps/\alpha}}(x)\right)
([P_X,0]_{\eps/\al,r/k_{\eps/\alpha}})
\end{eqnarray} is compatible with inductive limit over  
compact subsets of $P_d(\Si)$ and hence gives rise to a local coarse assembly  map
$$\nu_{\Si,A,*}^{\eps,r,d} :K_* (P_d(\Si),A){\longrightarrow}K_*(A_{\Si}).$$

 The  map  $\nu_{\Si,\bullet,*}^{\eps,r,d}$ is natural   for $C^*$-algebras and induces in $K$-theory the index map, i.e the maps $\iota_*^{\eps,r}\circ\nu_{\Si,A,*}^{\eps,r,d}$  is up   to Morita equivalence  given for any compact subset $Z$ of $P_d(\Si)$ by the morphism in the inductive limit $KK_*(C(Z),A)\to K_*(A)$ induced by the map  $Z\to \{pt\}$.
Moreover, the maps  $\nu_{\Si,A,*}^{\bullet,\bullet,\bullet}$ are compatible with  structure morphisms  and with inclusion of Rips complexes:
\begin{itemize}
 \item $\iota_*^{-,\eps',r'}\circ \nu_{\Si,A,*}^{\eps,r,d}=\nu_{\Si,A,*}^{\eps',r',d}$ for any
positive numbers $\eps,\,\eps',\,r,\,r'$ and $d$ such that $\eps\lq\eps'<1/100,\,r_{d,\eps}\lq r,\,r_{d,\eps'}\lq r'$ and
$r\lq r'$;
\item $\nu_{\Si,A,*}^{\eps,r,d'}\circ q_{d,d',*}=\nu_{\Si,A,*}^{\eps,r,d}$ for any
positive numbers $\eps,\,r,\,d$ and $d'$ such that $\eps<1/100,\,d\lq d'$ and $r_{d',\eps}\lq r$, where $$q_{d,d',*}=q^\Si_{d,d',*}:K_*(P_{d}(\Si),A)\to K_*(P_{d'}(\Si),A)$$  is the homomorphism induced by the restriction
  from $P_{d'}(\Si)$ to $P_d(\Si)$.
\end{itemize} 
The construction of the map $\nu_{\Si,A,*}^{\eps,r,d}$ has to be compare with the construction of the coarse assembly map in 
\cite[Section 2.3]{sty}.
\begin{lemma}\label{lemma-subset}
Let $\Sigma$ be a proper discrete  metric space and let $\Sigma'$ be a subset of $\Sigma$.
Let us consider :
\begin{itemize}
\item $\jmath^\sharp_{\Sigma,\Sigma'}:\Kp(\ell^2(\Si'))\hookrightarrow \Kp(\ell^2(\Si))$ the natural inclusion;
\item $\jmath_{P_d(\Si),P_d(\Si'),*}:K_*(P_{d}(\Si'),A)\to K_*(P_d(\Si),A)$   the homomorphism induced by the restriction
  from $P_{d}(\Si)$ to $P_d(\Si')$.
 \end{itemize}
 Then we have the equality $$\nu_{\Si,A,*}^{\eps,r,d}\circ \jmath_{P_d(\Si),P_d(\Si'),*}=\jmath^\sharp_{\Sigma,\Sigma',A,*}\circ \nu_{\Si',A,*}^{\eps,r,d}$$ for any
positive numbers $\eps,\,r,\,d$   such that $\eps<1/100$,  and $r_{d,\eps}\lq r$, where 
$$\jmath^\sharp_{\Sigma,\Sigma',A}:A_{\Si'}\longrightarrow A_{\Si}$$ is the filtered morphism induced by $\jmath^\sharp_{\Si,\Si'}$.
\end{lemma}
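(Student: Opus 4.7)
The plan is to reduce the identity to a single compact subset $X\subseteq P_d(\Si')$ and recognise it as an instance of the naturality of the quantitative Kasparov transformation. Since $P_d(\Si')\subseteq P_d(\Si)$, every compact $X\subseteq P_d(\Si')$ is automatically compact in $P_d(\Si)$, and the map $\jmath_{P_d(\Si),P_d(\Si'),*}$ sends $x\in KK_*(C(X),A)$ to the same class $x$ viewed in the inductive system defining $K_*(P_d(\Si),A)$ via the inclusion $X\hookrightarrow P_d(\Si)$. Because both the local assembly maps and the morphism $\jmath^\sharp_{\Si,\Si',A,*}$ commute with the inductive limits over compact subsets, it suffices to establish the claimed equality for each such fixed $X$.

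The key observation is that the filtered morphism $\jmath^\sharp_{\Si,\Si',C(X)}:C(X)_{\Si'}\to C(X)_\Si$ sends the fundamental projection $P_X^{\Si'}$ to $P_X^{\Si}$. Indeed, any probability measure $x\in X\subseteq P_d(\Si')$ is supported in $\Si'$, so $\la_{\Si,\si}|_X=0$ for $\si\in\Si\setminus\Si'$ and $\la_{\Si,\si}|_X=\la_{\Si',\si}|_X$ for $\si\in\Si'$. Plugging these equalities into the defining formula (\ref{equ-def-PX}) for $P_X^\Si$ and for $P_X^{\Si'}$ immediately yields $\jmath^\sharp_{\Si,\Si',C(X)}(P_X^{\Si'})=P_X^{\Si}$.

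Now apply Proposition \ref{prop-naturality} with $A_1=C(X)$, $A_2=A$, $B_1=\Kp(\ell^2(\Si'))$, $B_2=\Kp(\ell^2(\Si))$ and $f=\jmath^\sharp_{\Si,\Si'}$, which is a morphism of filtered $C^*$-algebras. Setting $\eps'=\eps/\alpha_\T$ and $r'=r/k_{\T,\eps'}$, this yields
$$\jmath^\sharp_{\Si,\Si',A,*}^{\eps,r}\circ \tau_{\Kp(\ell^2(\Si'))}^{\eps',r'}(x)=\tau_{\Kp(\ell^2(\Si))}^{\eps',r'}(x)\circ \jmath^\sharp_{\Si,\Si',C(X),*}^{\eps',r'}.$$
Evaluating both sides on $[P_X^{\Si'},0]_{\eps',r'}$ and invoking the projection compatibility above gives $\jmath^\sharp_{\Si,\Si',A,*}(\nu_{\Si',A,*}^{\eps,r,d}(x))=\tau_{\Kp(\ell^2(\Si))}^{\eps',r'}(x)([P_X^\Si,0]_{\eps',r'})=\nu_{\Si,A,*}^{\eps,r,d}(x)$, which is the required equality.

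The argument is expected to be entirely straightforward; no real obstacle is anticipated since the projection identity is immediate from the support condition on measures in $P_d(\Si')$ and naturality of the quantitative Kasparov transformation is supplied directly by Proposition \ref{prop-naturality}. The only technical point is book-keeping the rescaling factors $\alpha_\T$ and $k_{\T,\eps/\alpha_\T}$ so that both sides of the naturality square land in $K_*^{\eps,r}(A_\Si)$, but this is dictated by the definition of $\nu^{\eps,r,d}$ and the control pair of Theorem \ref{thm-tensor}.
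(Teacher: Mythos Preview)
Your proof is correct and follows essentially the same approach as the paper: reduce to a compact $X\subseteq P_d(\Si')$, verify that $\jmath^\sharp_{\Si,\Si',C(X)}(P_X^{\Si'})=P_X^{\Si}$, and conclude via the naturality of $\TT_\bullet$ from Proposition~\ref{prop-naturality}. Your write-up is simply more explicit about the coordinate check and the control-pair bookkeeping than the paper's two-line argument.
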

\begin{proof}
Let $X$ be a compact subset of $P_d(\Si')\subseteq P_d(\Si)$ and let $P_X$ in $C(X)\ts \Kp(\ell^2(\Si))$  and $P'_X$ 
in $C(X)\ts \Kp(\ell^2(\Si'))$ be  the projections of 
equation (\ref{equ-def-PX}) respectively obtained from $\Si$ and $\Si'$. It  is straightforward to check
that \begin{equation}\label{equation-inclusion}\jmath^\sharp_{\Si,\Si',C(X)}(P'_{X})=P_X.\end{equation} Now the result is a consequence of the naturality in the filtered $C^*$-algebra  of $\TT_\bullet$ stated in Proposition \ref{prop-naturality}.
\end{proof}
Let us set $l=l_{rs}$ (with notation of Section \ref{subsec-rescaling} and let $X$
be a compact subset of $P_d(\Si)$. It is  clear that 
$L_{C(X)\ts \Kp(\ell^2(\Si))}^{\eps,r}([P_X,0]_{\eps,r})=[P_X,0])_{\eps/2,l_\eps r}$ for $r>2d$  and hence according to  Proposition \ref{prop-compatibility-kasparov-rescaling}, the local quantitative assembly maps are compatible with the rescaling map.
\begin{proposition}\label{prop-compatibility-assembly-rescaling}
Let $\Si$ be a proper discrete metric space and let $A$ be a $C^*$-algebra $A$.
For any  positive numbers $\eps$, $r$ and $d$ with $\eps<1/200$ and
$r_{d,\eps}\lq
r$, let us set $k'_\eps=\frac{l_{\eps/\al} k_{\frac{\eps}{2\al}}}{k_{\eps/\al}}$.
Then, we have
$$ \nu_{\Si,A,*}^{\eps/2,k'_\eps r,d}=
\iota_*^{-,\eps/2,k'_\eps r,d}\circ L_{A_\Si,*}^{\eps,r}\circ \nu_{\Si,A,*}^{\eps,r,d}.$$
\end{proposition}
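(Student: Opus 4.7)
The plan is to unwind both sides of the claimed identity on a single compact subset $X\subseteq P_d(\Si)$ and a class $z\in KK_*(C(X),A)$, and then invoke Proposition \ref{prop-compatibility-kasparov-rescaling}. Since $\nu_{\Si,A,*}^{\eps,r,d}$ is defined as an inductive limit over such compact subsets, and since both the rescaling $L_{A_\Si,*}^{\eps,r}$ and the structure maps $\iota_*$ are natural with respect to inclusions of compact subsets, it suffices to establish the identity after evaluation at an arbitrary such $z$.

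Set $B=\Kp(\ell^2(\Si))$ and write $(\al,k)=(\al_\TT,k_\TT)$ for the control pair of Theorem \ref{thm-tensor}. I would then apply Proposition \ref{prop-compatibility-kasparov-rescaling} with $A_1=C(X)$, $A_2=A$, and with $\eps'=\eps/\al$, $r'=r/k_{\eps/\al}$ in place of $\eps$ and $r$. Under this substitution one computes $\al\eps'=\eps$, $k_{\eps'}r'=r$, and
$$k_{\eps'/2}\,l_{\eps'}\,r' \;=\; \frac{k_{\eps/(2\al)}\,l_{\eps/\al}}{k_{\eps/\al}}\,r \;=\; k'_\eps\,r,$$
so that the cited identity becomes
$$\tau_B^{\eps/(2\al),\,l_{\eps/\al} r/k_{\eps/\al}}(z)\circ L_{C(X)\ts B,*}^{\eps/\al,\,r/k_{\eps/\al}} \;=\; \iota_*^{-,\eps/2,k'_\eps r}\circ L_{A_\Si,*}^{\eps,r}\circ \tau_B^{\eps/\al,\,r/k_{\eps/\al}}(z).$$

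It then remains to evaluate both sides on the class $[P_X,0]_{\eps/\al,r/k_{\eps/\al}}$, which is well-defined because the assumption $r\gq r_{d,\eps}=2dk_{\eps/\al}$ guarantees $r/k_{\eps/\al}\gq 2d$. By the definition of the local assembly map given in \eqref{eq-quant-ind}, the right-hand side simplifies to $\iota_*^{-,\eps/2,k'_\eps r}\circ L_{A_\Si,*}^{\eps,r}(\nu_{\Si,A,*}^{\eps,r,d}(z))$. For the left-hand side I invoke the observation recorded just before the statement of the proposition, namely $L_{C(X)\ts B,*}^{\eps/\al,r/k_{\eps/\al}}([P_X,0]_{\eps/\al,r/k_{\eps/\al}})=[P_X,0]_{\eps/(2\al),\,l_{\eps/\al} r/k_{\eps/\al}}$, which holds because $P_X$ is already a genuine projection of propagation at most $2d$. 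Using the identity $l_{\eps/\al}r/k_{\eps/\al}=k'_\eps r/k_{\eps/(2\al)}$ once more, applying $\tau_B^{\eps/(2\al),\,l_{\eps/\al} r/k_{\eps/\al}}(z)$ to this class yields exactly $\nu_{\Si,A,*}^{\eps/2,k'_\eps r,d}(z)$ by definition of $\nu$. The argument presents no genuine obstacle beyond careful parameter bookkeeping; the sole point to verify is that all the identifications between the exponents $(l_\bullet,k_\bullet)$ line up, which is routine algebra once $k'_\eps$ is defined as above.
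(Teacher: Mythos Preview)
Your proof is correct and follows exactly the approach indicated in the paper: the paper's argument is just the two-line observation preceding the proposition, namely that $L$ fixes the class of the genuine projection $P_X$ and that the result then follows from Proposition~\ref{prop-compatibility-kasparov-rescaling}. You have carried out the parameter bookkeeping in full detail, which the paper omits, and it checks out.
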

\subsection{Quantitative Assembly Maps estimates}
 Let us consider 
for 
$d,d',r,r'$ and $\eps$ positive numbers with $d\lq d'$, $\eps< 1/100$,  
$r_{d,\eps'}\lq
r'$ and  $ r\lq
r'$ the following statements:
\begin{description}
\item[$QI_{\Si,A,*}(d,d',r,\eps)$]  for any element $x$ in
  $K_*(P_d(\Si),A)$, then 
  $\nu_{\Si,A,*}^{\eps,r,d}(x)=0$
  in $K_*^{\eps,r}(A\ts \Kp(\ell^2(\Si)))$ implies that
  $q_{d,d',*}(x)=0$ in    $K_*(P_{d'}(\Si),A)$.
\item[$QS_{\Si,A,*}(d,r,r',\eps)$] for every $y$
  in  $K_*^{\eps,r}(A_\Si)$, there exists an element $x$ in $K_*(P_d(\Si),A)$
   such that
$$\nu_{\Si,A,*}^{\eps,r',d}(x)=\iota_*^{\eps,r,r'}(y).$$
\end{description}
\begin{definition}\
\begin{itemize}
\item Let $\Si$ be a proper discrete metric space. We say that $\Si$ satisfies the Quantitative Assembly Map  estimates (QAM-estimates)
if
\begin{itemize}
\item for every positive numbers $\eps$, $r$ and $d$ with $\eps<\frac{1}{200}$ and $r\gq r_{d,\eps}$, there exists a positive number $d'$ with $d'\gq d$ such that  $QI_{\Si,A,*}(d,d',r,\eps)$ holds for any $C^*$-algebras $A$;
\item for every positive numbers $\eps$, $r$  with $\eps<\frac{1}{200}$ there exists positive numbers $d$ and $r'$ with
$r_{d,\eps}\lq
r'$ and  $ r\lq
r'$ and such that $QS_{\Si,A,*}(d,r,r',\eps)$ holds for any  $C^*$-algebras $A$;
\end{itemize}
\item Let $\Y$ be a family of proper discrete metric spaces. We say that the family  $\Y$ satisfies uniformly  the  QAM-estimates    if
\begin{itemize}
\item for every positive numbers $\eps$, $r$ and $d$ with $\eps<\frac{1}{200}$ and $r\gq r_{d,\eps}$, there exists a positive number $d'$ with $d'\gq d$ such that  $QI_{\Si,A,*}(d,d',r,\eps)$ holds for any $C^*$-algebras $A$ and any proper discrete metric space $\Si$ in $\Y$;
\item for every positive numbers $\eps$, $r$  with $\eps<\frac{1}{200}$ there exists positive numbers $d$ and $r'$ with
$r_{d,\eps}\lq
r'$ and  $ r\lq
r'$ and such that $QS_{\Si,A,*}(d,r,r',\eps)$ holds for any  $C^*$-algebras $A$ and any proper discrete metric space $\Si$ in $\Y$.
\end{itemize}\end{itemize}
\end{definition}
\begin{remark}
According to Lemma \ref{lemma-rescaling} and Proposition \ref{prop-compatibility-assembly-rescaling}, it  is enough to check the QAM-estimates for a fixed $\eps$ in $(0,1/200)$:

\smallbreak
 Assume that there exists $\eps$ in $(0,1/200)$ such that
 \begin{itemize}
 \item for every positive numbers  $r$ and $d$ with $r\gq r_{d,\eps}$, there exists a positive number $d'$ with $d'\gq d$ such that  $QI_{\Si,A,*}(d,d',r,\eps)$ holds for any $C^*$-algebras $A$;
\item for every positive number $r$, there exists positive numbers $d$ and $r'$ with
$r_{d,\eps}\lq
r'$ and  $ r\lq
r'$ and such that $QS_{\Si,A,*}(d,r,r',\eps)$ holds for any  $C^*$-algebras $A$.
\end{itemize}
Then $\Si$ satisfies the $QAM$-estimates.

We have  a similar statement in the uniform case.
\end{remark}

The next two  results, proved in  \cite[Theorem 4.10 and 4.11]{oy3}, provide numerous examples of discrete proper metric space that satisfy the QAM-estimates.

\begin{theorem}\label{thm-qam-bc}
Let $\Ga$ be a finitely generated group provided with any word metric. Assume that $\Ga$ satisfies the Baum-Connes conjecture. Then $\Ga$ as a metric space satisfies the QAM-estimates.\end{theorem}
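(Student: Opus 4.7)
The plan is to reduce the geometric QAM-estimates for $\Ga$ to the controlled Baum-Connes assembly map estimates of \cite{oy2}, using a filtered Morita-type equivalence that relates the geometric and the equivariant pictures.

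First, I would recall the main output of \cite{oy2}: for any $\Ga$-$C^*$-algebra $B$, there is a controlled equivariant assembly map
$$\mu^{\eps,r,d}_{\Ga,B,*}:K^\Ga_*(P_d(\Ga),B)\lto K^{\eps,r}_*(B\rtr\Ga),$$
and whenever $\Ga$ satisfies the Baum-Connes conjecture with coefficients, the family $\mu^{\eps,r,d}_{\Ga,B,*}$ satisfies the obvious equivariant analogues of $QI_{\Ga,B,*}$ and $QS_{\Ga,B,*}$, uniformly in the coefficient algebra $B$. Apply this with $B=A\ts\Kp(\ell^2(\Ga))$ equipped with the diagonal action in which $\Ga$ acts trivially on $A$ and by conjugation by the regular representation on $\Kp(\ell^2(\Ga))$.

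Second, I would invoke the untwisting associated with the regular representation, which gives an isomorphism
$$(A\ts\Kp(\ell^2(\Ga)))\rtr\Ga\stackrel{\cong}{\lto}A\ts\Kp(\ell^2(\Ga)),$$
where the target carries the propagation filtration coming from the word metric on $\Ga$. A direct computation shows that this isomorphism is filtered up to a controlled rescaling depending only on a generating set for $\Ga$. In parallel, Kasparov-theoretic untwisting yields
$$K^\Ga_*(P_d(\Ga),A\ts\Kp(\ell^2(\Ga)))\cong K_*(P_d(\Ga),A).$$

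Third, I would check that under these two identifications, the equivariant quantitative assembly map $\mu^{\eps,r,d}_{\Ga,A\ts\Kp(\ell^2(\Ga)),*}$ transforms, up to a controlled rescaling, into the geometric quantitative assembly map $\nu^{\eps,r,d}_{\Ga,A,*}$ defined in Section \ref{sub-sec-geo-ass-map}. This amounts to comparing the two defining recipes: both maps are obtained by applying the controlled Kasparov transformation of Theorem \ref{thm-tensor} to a fundamental projection ($P_X$ on the geometric side, and its equivariant analogue on the other), and the naturality results (Proposition \ref{prop-naturality} and Proposition \ref{prop-compatibility-kasparov-rescaling}) guarantee that the filtered Morita equivalence intertwines the two controlled Kasparov transformations.

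The main obstacle is precisely this last step: one must carefully track how the filtration on $(A\ts\Kp(\ell^2(\Ga)))\rtr\Ga$, measured both along the crossed-product direction and along the compact-operator direction, matches the purely metric filtration on $A\ts\Kp(\ell^2(\Ga))$, and verify that the resulting scaling of propagation is uniform in $A$. Once this comparison is fully controlled, the equivariant $QI$/$QS$ estimates from the first step, applied with coefficients $A\ts\Kp(\ell^2(\Ga))$, translate directly, after absorbing the controlled scaling into the parameters $d'$ and $r'$, into $QI_{\Ga,A,*}(d,d',r,\eps)$ and $QS_{\Ga,A,*}(d,r,r',\eps)$, giving the desired QAM-estimates for $\Ga$.
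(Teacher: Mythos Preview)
The paper does not give its own proof of this theorem; it simply records the statement and cites \cite[Theorems 4.10 and 4.11]{oy3}. Your outline is precisely the standard argument used there: one transports the quantitative equivariant assembly map estimates of \cite{oy2} (valid under Baum--Connes with coefficients) to the geometric side via the filtered Morita/untwisting identification $(A\ts\Kp(\ell^2(\Ga)))\rtr\Ga\cong A_\Ga$, together with the $K$-homological identification $K^\Ga_*(P_d(\Ga),A\ts\Kp(\ell^2(\Ga)))\cong K_*(P_d(\Ga),A)$, and checks that these intertwine $\mu^{\eps,r,d}_{\Ga,\bullet,*}$ and $\nu^{\eps,r,d}_{\Ga,\bullet,*}$ up to a uniform propagation rescaling.

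One point worth flagging: as you implicitly noticed, the argument genuinely requires the Baum--Connes conjecture \emph{with coefficients} (in $A\ts\Kp(\ell^2(\Ga))$, for arbitrary $A$), since the QAM-estimates are demanded uniformly over all coefficient algebras $A$. The theorem as phrased in the present paper says only ``the Baum--Connes conjecture''; this should be read as ``with coefficients'', consistently with \cite{oy3}. Apart from this wording issue, your plan is correct and matches the cited proof; the ``main obstacle'' you identify (tracking the two filtrations through the untwisting isomorphism) is exactly the technical content of the comparison carried out in \cite{oy3}.
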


Before stating the second  result, we recall the definition of coarse embedability. 
\begin{definition}
A proper  discrete metric  space  $(\Si,d)$   coarsely embeds into a Hilbert space $\H$
if there exist two functions $\rho_\pm:(0,+\infty)\to (0,+\infty)$  and a map $f:\Si\to \H$ such that
\begin{itemize}
\item  $\displaystyle \lim_{x\mapsto +\infty} \rho_\pm(\si)=+\infty$.
\item $\rho_-(d(\si,\si'))\lq \|f(\si)-f(\si')\|\lq \rho_+(d(\si,\si'))$ for all $\si$ and $\si'$ in $\Si$.
\end{itemize}
\end{definition}
Recall that  a discrete metric space has bounded geometry if every ball of a given radius as uniformly bounded cardinal.
\begin{theorem}\label{thm-qam-hilbert}
Let $\Si$ be a discrete metric space with bounded geometry that coarsely embeds into a Hilbert space, 
then $\Si$ satisfy   the QAM-estimates.
\end{theorem}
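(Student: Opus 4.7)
The plan is to adapt the Dirac--dual-Dirac argument for coarse embeddability into Hilbert space, as used in the proof of the coarse Baum--Connes conjecture, to the quantitative setting developed in this paper. Fix a coarse embedding $f:\Si\to\H$ into a real Hilbert space and, following the usual construction, encode it as a family of Bott/Dirac asymptotic morphisms parametrized by points of $\Si$. The key object will be a \emph{twisted} local coarse assembly map
\[
\nu_{\Si,A\ts \SS(\H),*}^{\eps,r,d}:K_*(P_d(\Si),A\ts\SS(\H))\lto K_*^{\eps,r}\bigl((A\ts\SS(\H))_\Si\bigr),
\]
where $\SS(\H)$ denotes the Bott $C^*$-algebra of $\H$. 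The point is that the Bott element $\beta\in KK_*(\C,\SS(\H))$ and its dual Dirac $\alpha\in KK_*(\SS(\H),\C)$ satisfy $\alpha\circ\beta=[\mathrm{id}_\C]$, so that via Theorem \ref{thm-tensor} and Theorem \ref{thm-product-tensor} the original quantitative assembly map $\nu_{\Si,A,*}^{\eps,r,d}$ factors, up to a controlled rescaling $(\lambda,h)$ depending only on $(\alpha_\TT,k_\TT)$, through the twisted map.

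Next I would reduce the QAM-estimates to proving that the twisted map is a controlled isomorphism at every scale. Using the coarse embedding, one produces the usual $\Si$-equivariant asymptotic morphism from $\SS$ into matrix algebras over $\SS(\H)$ with Clifford coefficients; this, combined with the bounded geometry assumption, allows one to replace the twisted Roe-type algebra by an inductive limit indexed by finite-dimensional affine subspaces of $\H$, with structural morphisms having uniformly controlled propagation. The step that I would carry out most carefully is to show that the twisted quantitative assembly map $\nu_{\Si,A\ts\SS(\H),*}^{\eps,r,d}$ is a $(\lambda',h')$-isomorphism for some control pair $(\lambda',h')$ independent of $A$. This is the quantitative analogue of the fact that the twisted assembly map is an isomorphism.

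To establish the controlled isomorphism for the twisted map I would proceed by induction on the dimension of the affine subspace used to truncate $\H$. The base case (a single point) is the Morita equivalence of Proposition \ref{prop-morita} applied to $A$. For the inductive step, the decomposition of an $n$-dimensional subspace of $\H$ into two half-spaces with codimension-one intersection gives rise to a completely filtered Mayer--Vietoris extension on the right-hand side of the twisted assembly map, and to a compatible Mayer--Vietoris decomposition on the left-hand side (via the standard $K$-homology computation of flat $\R^n$). Theorem \ref{thm-six term} produces a controlled six-term exact sequence on the right, which is intertwined with the Mayer--Vietoris sequence for the source of the assembly map by the naturality statements of Theorem \ref{thm-tensor} and Proposition \ref{prop-naturality}. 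A controlled five-lemma argument, of the type that the paper already advertises for the main theorem, then propagates the controlled isomorphism from dimension $n-1$ to dimension $n$, with a uniform worsening of the control pair that stabilizes when one passes to the inductive limit over the finite-dimensional approximations of $\H$.

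The main obstacle is the uniformity of the control pair as the dimension $n$ grows: each inductive step a priori composes an extra $(\alpha_\DD,k_\DD)$ and an extra $(\alpha_\TT,k_\TT)$, so without care the constants degenerate. The usual resolution is to use the Bott periodicity two-step pattern together with the quantitative Bott isomorphism (whose control pair depends only on $(\alpha_\TT,k_\TT)$) so that the iterated composition collapses to a fixed $(\lambda',h')$. Once this is achieved, the rescaling compatibility of the quantitative assembly map established in Proposition \ref{prop-compatibility-assembly-rescaling} and of the Kasparov tensorization in Proposition \ref{prop-compatibility-kasparov-rescaling} convert the controlled isomorphism of the twisted assembly map into the $QI_{\Si,A,*}(d,d',r,\eps)$ and $QS_{\Si,A,*}(d,r,r',\eps)$ statements, uniformly in $A$, which yields the QAM-estimates for $\Si$.
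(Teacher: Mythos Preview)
The paper does not prove this theorem; it is stated as a citation from \cite[Theorem 4.11]{oy3}. So there is no in-paper proof to compare against, and your sketch should be read as an attempt to reproduce the argument of that reference in the quantitative framework.

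The overall shape of your strategy --- twist by the Bott algebra of the Hilbert space, prove the twisted quantitative assembly map is a controlled isomorphism, then untwist via a Dirac element --- is indeed the template coming from Yu's proof of the coarse Baum--Connes conjecture. However, there is a genuine gap at the very first step. You write that $\beta\in KK_*(\C,\SS(\H))$ and $\alpha\in KK_*(\SS(\H),\C)$ with $\alpha\circ\beta=[\mathrm{id}_\C]$, and then invoke Theorems \ref{thm-tensor} and \ref{thm-product-tensor}. For infinite-dimensional $\H$ there is no Dirac class in $KK_*(\SS(\H),\C)$: the Dirac ``element'' is only an asymptotic morphism (this is precisely the infinite-dimensional Bott periodicity of Higson--Kasparov--Trout that underlies Yu's argument). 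The controlled Kasparov transformations $\TT_B(z)$ of this paper are only defined for genuine $KK$-classes, so your factorization of $\nu_{\Si,A,*}^{\eps,r,d}$ through the twisted map via $\TT_{\H_X}(\alpha)$ and $\TT_{\H_X}(\beta)$ does not exist as written. The actual proof has to develop a quantitative version of the Bott and Dirac \emph{asymptotic} morphisms and show directly that they increase propagation in a controlled way; this is substantial extra work that is not covered by anything in the present paper.

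A second issue is your inductive scheme over finite-dimensional affine subspaces of $\H$ with half-space Mayer--Vietoris. Even granting a fix for the Dirac element, this scheme produces a control pair that degrades at each step, and your proposed collapse ``via quantitative Bott periodicity'' is not a mechanism that actually stabilizes an unbounded tower of controlled five-lemma applications. In the genuine argument one does not pass through a dimension-by-dimension induction on $\H$; rather, the twisted Roe-type algebra is analyzed directly (its filtration by propagation interacts with the Bott coordinate in a way that lets one prove the twisted quantitative assembly map is an isomorphism with a single fixed control pair). Without that, your sketch would only yield estimates whose constants blow up and hence no uniform QAM-estimates.
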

The QAM-estimates are also related to the Novikov conjecture in the following way \cite[Theorem 5.1]{oy3} .
\begin{theorem}\label{thm-qam-bcc}
Let $(\Si,d)$ be a proper discrete metric space and assume that
the family of finite subsets of $\Si$ satisfies uniformly  the  QAM-estimates.
Then $\Si$  satisfies the coarse Baum-Connes conjecture.
\end{theorem}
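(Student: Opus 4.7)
The plan is to transfer the uniform QAM-estimates from the family of finite subsets of $\Si$ to a (non-uniform) QAM-estimate for $\Si$ itself, then deduce the coarse Baum-Connes conjecture by passing to the limit $r\to\infty$. I would choose $A=\Kp(\H)$ so that $A_\Si=\Kp(\H\ts\ell^2(\Si))$, equipped with the propagation filtration, has the property that $\lim_{r\to\infty}K_*^{\eps,r}(A_\Si)\cong K_*(C^*(\Si))$ (where $C^*(\Si)$ is the Roe algebra), and the coarse Baum-Connes assembly map $\lim_d K_*(P_d(\Si))\to K_*(C^*(\Si))$ factors as a limit of the geometric quantitative assembly maps $\nu_{\Si,A,*}^{\eps,r,d}$.

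Write $\Si=\bigcup_n \Si_n$ as an increasing union of finite subsets, e.g.\ closed balls of radius $n$ around a basepoint. By Lemma \ref{lemma-subset}, the inclusions $\jmath_{\Si,\Si_n}^\sharp$ intertwine $\nu^{\eps,r,d}_{\Si_n,A,*}$ with $\nu^{\eps,r,d}_{\Si,A,*}$, as do the inclusion maps on Rips complexes. On the K-homology side, $K_*(P_d(\Si),A)=\lim_n K_*(P_d(\Si_n),A)$ holds by definition of the left-hand side as a limit over compact subsets. On the quantitative K-theory side, a ``cutoff'' argument identifies $K_*^{\eps,r}(A_\Si)=\lim_n K_*^{\eps,r}(A_{\Si_n})$: any $\eps$-$r$-projection or $\eps$-$r$-unitary in $A_\Si$ can be perturbed in norm less than $\eps$ to one supported on a $\Si_n$ sufficiently large (depending on the chosen element), because $\Kp(\ell^2(\Si))$ is the norm closure of $\bigcup_n\Kp(\ell^2(\Si_n))$ and propagation $r$ is preserved under spatial compression to $\Si_n$ once $\Si_n$ is an $r$-neighborhood of the element's support.

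Given these continuity statements, fix $\eps\in(0,1/200)$ and $r>0$. The uniform statement $QI_{\Si_n,A,*}(d,d',r,\eps)$ holds with the same $d'$ for every $n$; passing to the direct limit in $n$ via the compatibility of the $\nu^{\eps,r,d}_{\bullet,A,*}$ with inclusions yields $QI_{\Si,A,*}(d,d',r,\eps)$. The analogous argument delivers $QS_{\Si,A,*}(d,r,r',\eps)$. Thus $\Si$ itself satisfies the QAM-estimates. Finally, taking the inductive limit in $r$ (and $d$) of the QI and QS statements for $\Si$, and using the identification $\lim_r K_*^{\eps,r}(A_\Si)\cong K_*(C^*(\Si))$, gives respectively injectivity and surjectivity of the coarse Baum-Connes assembly map.

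The main obstacle is the continuity identification $K_*^{\eps,r}(A_\Si)=\lim_n K_*^{\eps,r}(A_{\Si_n})$ and the compatibility of the resulting direct limit with the quantitative assembly maps: one must control simultaneously the propagation and the $\eps$ under the truncation, and check that the fundamental projection $P_X$ restricts correctly under $\jmath_{\Si,\Si_n}^\sharp$ (which is exactly equation (\ref{equation-inclusion}) in Lemma \ref{lemma-subset}). A subsidiary issue is verifying that $\lim_r K_*^{\eps,r}(A_\Si)$, computed with the propagation filtration, matches $K_*(C^*(\Si))$ and that the resulting map agrees with the usual coarse assembly map; this is where the choice $A=\Kp(\H)$ and the naturality properties of $\nu^{\eps,r,d}_{\Si,A,*}$ under the rescaling of Proposition \ref{prop-compatibility-assembly-rescaling} are essential, since they let us take the limit in $r$ at fixed $\eps$.
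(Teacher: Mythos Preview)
The paper does not give its own proof of this statement; it is quoted from \cite[Theorem~5.1]{oy3}. That said, your proposal contains a genuine error that would make the argument fail.

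The central claim that for $A=\Kp(\H)$ one has $\lim_{r\to\infty}K_*^{\eps,r}(A_\Si)\cong K_*(C^*(\Si))$ is false. By definition $A_\Si=\Kp(\H)\ts\Kp(\ell^2(\Si))\cong\Kp(\H\ts\ell^2(\Si))$ is the algebra of \emph{compact} operators, and equation~\eqref{eq-direct-limit} gives $\lim_r K_*^{\eps,r}(A_\Si)\cong K_*(A_\Si)=K_*(\C)$. The Roe algebra $C^*(\Si)$ is the norm closure of the \emph{locally compact} finite-propagation operators; for infinite $\Si$ this is strictly larger than the compacts (any bounded diagonal operator with entries in $\Kp(\H)$ has propagation zero and lies in $C^*(\Si)$ without being compact). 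Thus the map you produce in the limit $r\to\infty$ is the collapse-to-a-point index map --- exactly as stated in the paragraph following the construction of $\nu_{\Si,A,*}^{\eps,r,d}$ --- and not the coarse assembly map.

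Your passage from uniform QAM-estimates on finite subsets to QAM-estimates on $\Si$ itself, via $K_*^{\eps,r}(A_\Si)=\lim_n K_*^{\eps,r}(A_{\Si_n})$ and Lemma~\ref{lemma-subset}, is correct: it is precisely the argument given for the Proposition that immediately follows Corollary~\ref{corollary-Novikov}. But that only shows $\Si$ satisfies the QAM-estimates for each fixed $A$; it does not touch the Roe algebra. The actual bridge in \cite{oy3} between the uniform QAM-estimates and the coarse Baum-Connes conjecture uses the quantification over \emph{all} coefficient algebras $A$ in an essential way, because $C^*(\Si)$ is not an inductive limit of the algebras $A_{\Si_n}$ for any single $A$; one needs uniform control over varying coefficients to handle the passage to the Roe algebra.
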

In view of the descent principle   \cite{roe}, we have the following consequence.
\begin{corollary}\label{corollary-Novikov}
Let $\Ga$ be a finitely generated group which admits a finite type classifying space  and let us equip $\Ga$  with any word metric.  Assume that 
the family of finite subsets of $\Ga$ satisfies uniformly  the  QAM-estimates.
Then $\Ga$ satisfies the Novikov conjecture.
\end{corollary}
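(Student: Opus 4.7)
The plan is to combine the two ingredients mentioned just above: Theorem \ref{thm-qam-bcc} and Roe's descent principle. Equip $\Ga$ with a fixed word metric $d$, so that $(\Ga,d)$ becomes a proper discrete metric space of bounded geometry. The hypothesis on finite subsets is exactly what Theorem \ref{thm-qam-bcc} asks for, so the first step is to apply that theorem directly to $\Si=\Ga$. This yields that $\Ga$, viewed as a coarse space, satisfies the coarse Baum--Connes conjecture, i.e.\ the coarse assembly map
$$
\mu_{\Ga}:\lim_{d\to\infty} K_*(P_d(\Ga))\longrightarrow K_*(C^*(\Ga))
$$
is an isomorphism, where $C^*(\Ga)$ is the Roe algebra of $\Ga$.

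The second step is to transfer this coarse statement to a statement about the usual Baum--Connes assembly map for the group $\Ga$. This is the content of Roe's descent principle \cite{roe}: if $\Ga$ is finitely generated, is equipped with a word metric, and admits a classifying space $B\Ga$ of finite type (equivalently, the assumption in our statement), then rational injectivity of the coarse assembly map $\mu_{\Ga}$ implies rational injectivity of the analytic assembly map
$$
\mu^{B\Ga}:K_*(B\Ga)\longrightarrow K_*(C^*_{\mathrm{red}}(\Ga)),
$$
that is, the strong Novikov conjecture for $\Ga$. The only thing to check is that the finite-type hypothesis on $B\Ga$ is what is needed to run the descent: this is precisely the setting in which $B\Ga$ can be exhausted by finite subcomplexes whose universal covers embed equivariantly in the Rips complexes $P_d(\Ga)$ for $d$ large enough, so that the classifying map for proper actions factors through a compatible system of maps into $P_d(\Ga)$.

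The last step is to deduce the (classical) Novikov conjecture on homotopy invariance of higher signatures from rational injectivity of $\mu^{B\Ga}$. This is the standard Kasparov--Miscenko reformulation: if $M$ is a closed oriented manifold with a classifying map $f_{\Ga}:M\to B\Ga$, then the higher signatures $\langle f^*_{\Ga}(x),\mathbb{L}(M)\cap[M]\rangle$ can be recovered from the image of Kasparov's $\Ga$-equivariant signature operator class in $K_*(C^*_{\mathrm{red}}(\Ga))\otimes\mathbb{Q}$ via the pairing with $x\in H^*(\Ga,\mathbb{Q})$, and this image is a homotopy invariant as soon as $\mu^{B\Ga}$ is rationally injective.

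Since each of these three steps is a citation (Theorem \ref{thm-qam-bcc}, Roe's descent \cite{roe}, and the Kasparov--Miscenko argument), there is no real obstacle; the only conceptual point worth emphasizing is that the finite-type hypothesis on $B\Ga$ is used only in the descent step, to ensure that one can pass from the inductive system of Rips complexes $(P_d(\Ga))_{d>0}$ to $B\Ga$ in a controlled way. Once that is granted, the corollary is immediate from Theorem \ref{thm-qam-bcc}.
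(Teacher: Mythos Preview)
Your proposal is correct and follows exactly the same route as the paper: apply Theorem \ref{thm-qam-bcc} to obtain the coarse Baum--Connes conjecture for $\Ga$, and then invoke Roe's descent principle \cite{roe} (together with the finite-type hypothesis on $B\Ga$) to pass to the Novikov conjecture. The paper in fact gives no proof beyond the one-line remark ``In view of the descent principle \cite{roe}, we have the following consequence,'' so your expanded account simply unpacks what the paper leaves implicit.
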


Notice that Theorems \ref{thm-qam-bc}, \ref{thm-qam-hilbert} and \ref{thm-qam-bcc}  were originally stated with a rescaling which is now taken into account by  enlarging   homotopies in the definition of quantitative $K$-theory.


Next we show that for a proper discrete metric space $\Si$ we can reduce QAM-estimates for $\Si$ to uniform  QAM-estimates
for the family of finite subsets of $\Si$.

\begin{proposition}
Let $\Si$ be a  discrete proper metric space  and assume that 
the family of finite subsets of $\Si$ satisfies uniformly the QAM-estimates. Then $\Si$ satisfies the $QAM$-estimates.
\end{proposition}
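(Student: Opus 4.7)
The plan is to reduce the QAM-estimates for $\Si$ to the uniform QAM-estimates for its family of finite subsets via two approximation principles: on the $K$-homology side, every class in $K_*(P_d(\Si),A)$ is represented on $P_d(\Si'_0)$ for some finite $\Si'_0\subseteq\Si$, because $P_d(\Si)$ is locally finite so every compact subcomplex meets only finitely many vertices; on the quantitative $K$-theory side, every class in $K_*^{\eps,r}(A_\Si)$ is supported, up to small perturbation, on a finite subset of $\Si$, because the path of almost-unitaries (or almost-projections) representing a class lies in a compact subset of matrices over $A_\Si=A\ts\Kp(\ell^2(\Si))$, and compact subsets of $\Kp(\ell^2(\Si))$ are uniformly approximable by $P_{\Si'}\cdot P_{\Si'}$ for $\Si'\subseteq\Si$ finite, where $P_{\Si'}$ is the orthogonal projection onto $\ell^2(\Si')$. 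Note that this truncation preserves propagation.

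For the injectivity statement $QI_{\Si,A,*}(d,d',r,\eps)$ with $\eps<1/200$, I fix $x\in K_*(P_d(\Si),A)$ with $\nu_{\Si,A,*}^{\eps,r,d}(x)=0$ and write $x=\jmath_{P_d(\Si),P_d(\Si'_0),*}(x')$ for some finite $\Si'_0\subseteq\Si$ and $x'\in K_*(P_d(\Si'_0),A)$. By Lemma \ref{lemma-subset},
\[\jmath^\sharp_{\Si,\Si'_0,A,*}\!\left(\nu_{\Si'_0,A,*}^{\eps,r,d}(x')\right)=0\]
in $K_*^{\eps,r}(A_\Si)$. A homotopy in $A_\Si^+$ of $25\eps$-$2r$-elements witnessing this vanishing is truncated, via the compact approximation principle above, to a homotopy in $A_{\Si''_0}^+$ for a sufficiently large finite $\Si''_0\supseteq\Si'_0$; Lemma \ref{lemma-almost-closed} ensures that the truncated path remains a homotopy of almost-unitaries (or projections) after a controlled enlargement of $\eps$. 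Hence $\nu_{\Si''_0,A,*}^{\eps,r,d}(\jmath_{P_d(\Si''_0),P_d(\Si'_0),*}(x'))=0$ up to some allowed slack in the control. Applying the uniform $QI$-estimate for the finite metric space $\Si''_0$ yields a $d'\gq d$, independent of $\Si''_0$, such that $q^{\Si''_0}_{d,d',*}\jmath_*(x')=0$, and functoriality of the restriction maps then gives $q^\Si_{d,d',*}(x)=0$ after pushing forward by $\jmath_{P_{d'}(\Si),P_{d'}(\Si''_0),*}$.

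For the surjectivity statement $QS_{\Si,A,*}(d,r,r',\eps)$, I start with $y\in K_*^{\eps,r}(A_\Si)$ and apply the compact approximation argument to a representative of $y$: truncation by $P_{\Si'_0}$ on a sufficiently large finite $\Si'_0\subseteq\Si$ yields an element $y'\in K_*^{\eps,r}(A_{\Si'_0})$ with $\jmath^\sharp_{\Si,\Si'_0,A,*}(y')=\iota_*^{-,-}(y)$ in $K_*^{\eps',r}(A_\Si)$ for an $\eps'$ controllably larger than $\eps$. The uniform $QS$-estimate for $\Si'_0$ supplies $d,r'$ and $x'\in K_*(P_d(\Si'_0),A)$ with $\nu_{\Si'_0,A,*}^{\eps',r',d}(x')=\iota_*^{\eps',r,r'}(y')$; pushing forward $x'$ by $\jmath_{P_d(\Si),P_d(\Si'_0),*}$ and invoking Lemma \ref{lemma-subset} again produces a preimage for $\iota_*^{\eps,r,r'}(y)$ at the cost of passing from $\eps$ to $\eps'$, which is absorbed in the flexibility $\eps<1/200$ of the hypothesis.

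The main technical obstacle is controlling the loss in the parameters $(\eps,r)$ during the two truncation steps so that the uniform hypothesis (which requires $\eps<1/200$) still applies on the finite-subset side and so that the same $d'$, $r'$ work uniformly. The margin between $1/200$ and $1/100$, combined with Lemma \ref{lemma-almost-closed} and the rescaling map from Section \ref{subsec-rescaling}, should be precisely what is needed; one must also verify that the approximation of a continuous homotopy by a finite-rank truncation can be made uniform in $t\in[0,1]$, which follows from the relative compactness of the image of the homotopy in $M_n(A_\Si)$ combined with the fact that compact subsets of $\Kp(\ell^2(\Si))$ are uniformly approximated by finite-rank cutoffs.
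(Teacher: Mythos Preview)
Your approach is correct and follows the same strategy as the paper: reduce to finite subsets and invoke Lemma~\ref{lemma-subset}. The paper streamlines this by fixing a single exhaustion $\Si_n=\{\si:d(\si,\si_0)<n\}$ and citing directly (from \cite[Proposition~1.28]{oy2}) that the corner inclusions induce isomorphisms $\varinjlim_n K_*^{\eps,r}(A_{\Si_n})\cong K_*^{\eps,r}(A_\Si)$ and $\varinjlim_n K_*(P_d(\Si_n),A)\cong K_*(P_d(\Si),A)$ at \emph{fixed} $(\eps,r)$, after which the conclusion is immediate. Your hands-on truncation argument is essentially an ad hoc proof of the first of these isomorphisms; in particular your worry about $\eps$-enlargement and rescaling is unnecessary, since by compactness of $[0,1]$ one has the strict inequality $\sup_t\|h(t)^*h(t)-1\|<25\eps$ for any witnessing homotopy, and Lemma~\ref{lemma-almost-closed} then allows a sufficiently fine truncation (which does not increase propagation) without altering $(\eps,r)$ at all.
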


\begin{proof}
Let us fix $\si_0$ in$\Si$ and set $$\Si_n=\{\si\in \Si\text{ such that }d(\si,\si_0)<n\}.$$ 
According to \cite[Proposition 1.28]{oy2}, the sequence of inclusions 
$$\jmath^\sharp_{\Si_n,\Si}:\Kp(\ell^2(\Si_n))\hookrightarrow \Kp(\ell^2(\Si))$$
induces an isomorphism $$\lim_n  K^{\eps,r}(A_{\Si_n})\stackrel{\simeq}{\longrightarrow} K^{\eps,r}(A_{\Si}).$$ In the same way the sequence of restrictions of $P_d(\Si)$ to $P_d(\Si_n)$ induces an isomorphism
$$\lim_n  K_*(P_d(\Si_n),A )\stackrel{\simeq}{\longrightarrow} K_*(P_d(\Si),A )$$ and  according to Lemma \ref{lemma-subset}, we see that if $QI_{\Si_n,A,*}(d,d',r,\eps)$ holds for every $n$, then $QI_{\Si,A,*}(d,d',r,\eps)$ is also satisfied. We can proceed similarly for the $QS$-statement \end{proof}

\subsection{The main theorem}
In this subsection,  we state the main Theorem of this paper. This theorem will be proved in Section \ref{section-QAM-coarse-decomposability}.
Let us first recall from \cite{gty2} the definition of coarse decomposability for discrete proper metric spaces.
\begin{definition}
Let $\X$ and $\Y$ be families of discrete metric spaces and let $R$ be a positive number.
The family $\X$ is $R$-decomposable with respect to $\Y$ if for any $X$ in $\X$, there exists
$X^{(1)}$ and $X^{(2)}$ two subsets of $X$ such that
\begin{itemize}
\item $X=X^{(1)}\cup X^{(2)}$;
\item for $i=1,2$, then $X^{(i)}$ is the $R$-disjoint union of metric spaces in $\Y$, i.e there exist $(X_j^{(i)})_{j\in J_i}$ a family of metric space in $\Y$ such that  $X^{(i)}=\bigsqcup_{j\in J_i} X_j^{(i)}$ and $d(X_j^{(i)},X_{j'}^{(i)})> R$ if $j\neq j'$ (in this situation, we shall write  $X^{(i)}=_R\bigsqcup_{j\in J_i} X_j^{(i)}$).
\end{itemize}
\end{definition} 
We also need to generalize the concept of bounded geometry to families of discrete proper metric spaces.

\begin{definition}
Let $\X$  be a family of discrete proper metric spaces. Then $\X$ has uniformly bounded geometry if for every 
positive number $r$, there exists an integer  $n_r$ such that for any metric space in $\X$, balls of radius $r$ have cardinal bounded by $n_r$.
\end{definition}
If $\X$ is a family of finite metric spaces, we denote by $\widetilde{\X}$ the family of all subsets of all metric spaces in $\X$.
\begin{theorem}\label{theorem-main}
Let $\X$ be a family of finite metric spaces  and 
assume that for every positive number $R$, there exists a family $\Y$ of finite metric spaces with uniformly bounded geometry such that
\begin{itemize}
\item $\X$ is $R$-decomposable with respect to $\Y$;
\item the family $\widetilde{\Y}$ satisfies uniformly the $QAM$-estimates;
\end{itemize}
Then  the family $\X$ satisfies uniformly the $QAM$-estimates.
\end{theorem}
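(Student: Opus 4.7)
The plan is to follow exactly the three-step strategy announced in the introduction. The first step is a reformulation: I would rewrite the uniform QAM-estimates for $\X$ as controlled injectivity and surjectivity statements for the morphism $\mu_{X,*}^{\eps,r}\colon K_*^{\eps,r}(\Kp(\H_{P_d(X)}))\to K_*^{\eps,r}(\Kp(\ell^2(X)))$ induced by the canonical filtered map $\Kp(\H_{P_d(X)})\to \Kp(\ell^2(X))$ (at suitable rescaling of $\eps$ and $r$). This reformulation is legitimate because, by the factorization result of Section 3, the geometric quantitative assembly map $\nu_{X,A,*}^{\eps,r,d}$ factors through the quantitative index map, and by the fundamental theorem of Section 3 (proved in Section 4) the quantitative index is a controlled isomorphism on $P_d(X)$ for propagation small compared to a constant depending only on $\dim P_d(X)$. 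Because $\Y$ has uniformly bounded geometry, the simplicial dimension of $P_d(X)$ is uniformly bounded in $X\in\X$ for each fixed $d$, so the controls in this reduction are uniform in $\X$.

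Next, I would use the decomposition $X=X^{(1)}\cup X^{(2)}$ with $X^{(i)}=_R\bigsqcup_j X_j^{(i)}$ to produce compatible controlled Mayer-Vietoris pair decompositions of $\Kp(\ell^2(X))$ and of $\Kp(\H_{P_d(X)})$, intertwined by $\mu_{X,*}$. On the right-hand side, for any propagation $r<R/2$, any operator on $\ell^2(X^{(i)})$ of propagation $\leq r$ decomposes as a direct sum indexed by $j$, reducing the pieces of the Mayer-Vietoris pair to $\Kp(\ell^2(X_j^{(i)}))$ and to $\Kp(\ell^2(X_j^{(1)}\cap X_j^{(2)}))$ (for appropriate indexings). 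On the left-hand side, for $d$ small enough compared to $R$, the decomposition of $X$ extends to a Mayer-Vietoris decomposition of $P_d(X)$ whose pieces are (homeomorphic to) disjoint unions of $P_d(X_j^{(i)})$ and $P_d(X_j^{(1)}\cap X_j^{(2)})$. The structural six-term controlled exact sequence of Theorem \ref{thm-six term} thus applies on both sides, and by Section 4 the two boundary maps are intertwined by $\mu_{X,*}$ up to a fixed controlled pair.

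Finally, on the pieces $X_j^{(i)}$ and $X_j^{(1)}\cap X_j^{(2)}$ (all of which lie in $\widetilde{\Y}$), the reduced QAM-estimates for $\widetilde{\Y}$ translate, via Step 1 applied to $\Y$, into a uniform controlled isomorphism statement for the corresponding $\mu$-morphisms. A controlled version of the five lemma applied to the intertwined six-term controlled exact sequences then yields that $\mu_{X,*}$ is a controlled isomorphism in the required range, which reverses Step 1 to give the uniform QAM-estimates for $\X$. The hard part, as always in this controlled framework, is the bookkeeping: for given $\eps$ and $r$ one must choose $R$ (and hence the auxiliary family $\Y$) large enough, and then $d$ small enough, so that the Mayer-Vietoris pair is valid for the relevant propagation after every rescaling introduced by the controlled tensorization, the rescaling operator $L_*$, and the controlled boundary $\DD$. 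The uniform bounded geometry of $\Y$ is indispensable here, since it bounds the local complexity of $P_d(X)$ and thereby keeps the control pair produced by the fundamental theorem for quantitative indices uniform across $\X$.
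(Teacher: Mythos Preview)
Your proposal follows the strategy announced in the paper's introduction and is essentially on the right track, but there are two substantive points where your sketch diverges from what the paper actually does and where the gap matters.

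First, the claim that ``the pieces $X_j^{(i)}$ and $X_j^{(1)}\cap X_l^{(2)}$ all lie in $\widetilde{\Y}$'' breaks down once you set up the controlled Mayer--Vietoris pair. For the CIA property and the controlled boundary of order $r$, the paper must thicken the pieces to $X^{(i)}_{j,\de_0+\de_1}$ (see equation~\eqref{equ-MVpair1} and the surrounding discussion in Section~\ref{subsection-notations}). These thickenings are subsets of $X\in\X$, not of any space in $\Y$, so they are \emph{not} in $\widetilde{\Y}$. The paper repairs this with Lemma~\ref{lemma-s-net}: since each $X^{(i)}_{j,\de}$ contains $X^{(i)}_j\in\Y$ as a $\de$-net, the QAM-estimates transfer from $\widetilde{\Y}$ to the family $\widetilde{\X_\de}$ of such thickenings. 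Your proposal omits this step, and without it the input hypothesis cannot be applied to the actual pieces of the Mayer--Vietoris pair.

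Second, the paper does not literally package the argument as ``$\mu_{X,*}$ is a controlled isomorphism, then apply a controlled five lemma.'' Instead, Sections~\ref{subsection-QI-statement} and~5.4 carry out an explicit four-step chase for each of QI and QS separately, interleaving the $K$-homology Mayer--Vietoris sequence for $P_d(X)$, the controlled Mayer--Vietoris sequence for $A_X$, and the controlled Mayer--Vietoris sequence for $A_{\H_{d',\de}}$ at a \emph{higher} Rips scale $d'$. A key device is the intermediate complex $P_{d',\de}(X)=P'_{d'}(X^{(1)}_\de)\cup P'_{d'}(X^{(2)}_\de)$, built only from the Rips complexes of the pieces; the quantitative index isomorphism (Theorem~\ref{thm-qindex-iso}) is applied to \emph{this} complex and its pieces, whose dimension is controlled by the bounded geometry of $\Y$, not to $P_d(X)$ itself. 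The chase also increments $d$ several times ($d\to d'\to d''\to d'''$), using QS for the pieces to produce preimages and QI for the pieces to kill discrepancies; this is why both uniform QI and uniform QS for $\widetilde{\Y}$ are needed even to prove QI for $\X$. Your outline is morally equivalent, but the paper's direct chase avoids having to formulate a general ``controlled five lemma'' for $(\lambda,h)$-exact sequences of bounded order, which would itself require nontrivial bookkeeping.
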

This theorem will be proved in Section \ref{section-QAM-coarse-decomposability}.

\subsection{Application to metric space with finite decomposition  complexity}
Spaces with finite geometric decomposition were introduced in \cite{gty1,gty2}.  As an application of Theorem \ref{theorem-main}, we show that family of finite subsets  of a space with finite decomposition  complexity satisfies uniformy the QAM-estimates.
Recall first the concept of coarse decomposition for a class of proper discrete metric spaces.
\begin{definition}
Let $\CC$ be a class of family of  proper discrete metric spaces. We say that $\CC$ is closed under
coarse decomposition if the following is satisfied : let $\X$ be a  family  of discrete proper metric spaces   such that for every positive number $R$, there exists 
a family $\Y$ in $\CC$ such that $\X$ is $R$-decomposable with respect to $\Y$.  Then $\X$ belongs to $\CC$.
\end{definition}
\begin{example}\label{example-ubg-families}
The class of families of discrete metric spaces with uniformly bounded geometry is closed under coarse decomposition.
\end{example}

Let us define as in \cite{gty1,gty2} the class $\CC_{fdc}$ as the smallest class of family that contains uniformly bounded families of  discrete metric spaces
and which is closed under coarse decomposability.
  Typical examples of family of metric spaces in  $\CC_{fdc}$ are provided by spaces with finite asymptotic dimension.
Recall that for a metric  space $X$ and a positive number $r$, a cover $(U_i)_{i\in\N}$ has $r$-multiplicity $n$ if any ball of radius $r$ in $X$ intersects at most $n$ elements in $(U_i)_{i\in\N}$.

\begin{definition}
Let $\Sigma$ be a proper discrete metric set.  Then $\Sigma$ has {\bf finite asymptotic dimension} if there exists an integer $m$ such that for any positive number  $r$, there exists a  uniformly bounded cover $(U_i)_{i\in\N}$ with finite $r$-multiplicity $m+1$.  The smallest integer that satisfies this condition is called the {\bf asymptotic dimension} of $\Si$.
\end{definition}
We have  the following characterization of finite asymptotic dimension \cite{gr}.
\begin{proposition}\label{proposition-finite-asymptotic-dim}
Let $\Si$ be a proper discrete metric space and let $m$ be an integer. Then the following assertions are equivalent:
\begin{enumerate}
\item $\Si$ has asymptotic dimension $m$;
\item For every positive number $r$ there exist $m+1$ subsets $X^{(1)},\ldots,X^{(m+1)}$   of $\Si$ such that
\begin{itemize}
\item $\Si=X^{(1)}\cup\ldots\cup  X^{(m+1)}$;
\item for $i=1,\ldots m+1$, then $X^{(i)}$ is the $r$-disjoint union of a uniformly bounded family  $(X^{(i)}_k)_{k\in\N}$ of subsets of $X^{(i)}$, i.e  $X^{(i)}=\cup_{k\in\N}X^{(i)}_k,\, d_i(X^{(i)}_k,X^{(i)}_l)\gq r$ if $k\neq l$ and there exists a positive number $C$ such $diam\, X^{(i)}_k\lq C$ for all integer $k$.
\end{itemize}
\end{enumerate}
\end{proposition}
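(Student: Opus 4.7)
The plan is to prove the two implications separately. The direction $(2)\Rightarrow(1)$ is a direct packaging argument, while $(1)\Rightarrow(2)$ is the content of Gromov's classical decomposition theorem and requires a greedy graph-coloring argument on a well-chosen cover.

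For $(2)\Rightarrow(1)$, fix $r>0$ and consider the decomposition $\Si = X^{(1)}\cup\cdots\cup X^{(m+1)}$ with $X^{(i)}=\bigsqcup_{k\in\N}X^{(i)}_k$. The combined family $\{X^{(i)}_k : i=1,\ldots,m+1,\, k\in\N\}$ is a uniformly bounded cover of $\Si$. For any $x\in\Si$ the ball $B(x,r/2)$ has diameter at most $r$, so two pieces $X^{(i)}_k$ and $X^{(i)}_{k'}$ with $k\ne k'$ cannot both meet it (otherwise $d(X^{(i)}_k,X^{(i)}_{k'})\lq r$, contradicting the hypothesis $>r$); hence the $(r/2)$-multiplicity of this cover is bounded by $m+1$. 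Applying this to the hypothesis (2) with $r=2\rho$ produces, for every $\rho>0$, a uniformly bounded cover with $\rho$-multiplicity at most $m+1$, yielding $\mathrm{asdim}\,\Si\lq m$. The sharp equality $\mathrm{asdim}\,\Si=m$ then follows from the equivalence "$\lq m$" in both directions.

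For $(1)\Rightarrow(2)$, fix $r>0$. Applying (1) with a large parameter $\rho$, choose a uniformly bounded cover $\mathcal{V}=\{V_j\}_{j\in J}$ of $\Si$ with $\rho$-multiplicity at most $m+1$ and uniform diameter bound $D$. Form the graph $G$ on $J$ with an edge $j\sim j'$ whenever $d(V_j,V_{j'})\lq r$. The degree bound is the core step: fix $y\in V_j$; for any neighbor $j'$ of $j$, there is $y'\in V_{j'}$ with $d(y,y')\lq r+D$, so $V_{j'}$ meets the ball $B(y,r+D)$, and the number of such $V_{j'}$ (including $V_j$ itself) is bounded by the $(r+D)$-multiplicity of $\mathcal{V}$, hence by the $\rho$-multiplicity as soon as $\rho\gq r+D$. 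Thus $G$ has maximum degree at most $m$ and admits a greedy $(m+1)$-coloring $c:J\to\{1,\ldots,m+1\}$. Setting $X^{(i)}=\bigcup_{c(j)=i}V_j$, each $X^{(i)}$ is the $r$-disjoint union of the uniformly bounded family $\{V_j:c(j)=i\}$, and the $X^{(i)}$'s together cover $\Si$.

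The main obstacle is arranging the inequality $\rho\gq r+D$, because $D$ itself depends on the cover produced by the definition for parameter $\rho$. I would circumvent this by first invoking the definition at some $\rho_0$ to obtain a diameter bound $D_0$, then reinvoking it at $\rho\gq r+D_0$ and working with the resulting cover; an iterative/diagonal argument or, alternatively, first passing to the equivalent formulation of asymptotic dimension involving a Lebesgue-number condition (where the right scale relation is built into the definition) completes the reduction. This is precisely the classical equivalence of the two formulations of asymptotic dimension, so invoking the reference \cite{gr} for this technical step is natural; once in force, the coloring argument above concludes the proof.
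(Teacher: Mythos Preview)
The paper does not prove this proposition at all; it simply records it as a known characterization and cites \cite{gr}. So there is no paper's proof to compare against, and your sketch already goes further than the paper does.

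Your direction $(2)\Rightarrow(1)$ is correct (modulo a harmless adjustment between strict and non-strict inequalities in the separation parameter).

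For $(1)\Rightarrow(2)$, the degree-bound argument is valid \emph{provided} you have a cover $\mathcal V$ with $\rho$-multiplicity at most $m+1$, diameter bound $D$, and $\rho\geq r+D$. The circularity you identify is genuine, and your proposed iterative fix does not resolve it: reinvoking the definition at $\rho_1=r+D_0$ produces a \emph{new} cover with its own diameter $D_1$, and there is no reason $D_1\leq D_0$; the sequence $D_0,D_1,\dots$ may grow without bound. In fact the cover handed to you by the definition need not be $(m{+}1)$-colorable at all: on a suitable finite metric space one can build four sets with $1$-multiplicity $2$ whose proximity graph at scale $1$ is $K_4$, so the greedy coloring of \emph{that} cover is impossible and one must pass to a different cover. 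The Lebesgue-number reformulation you mention does not by itself sidestep this --- multiplicity $\leq m+1$ still does not bound the chromatic number of the relevant graph; the standard argument in \cite{gr} (and in the Bell--Dranishnikov survey) instead maps to the nerve via a partition of unity and pulls back a coloring coming from the simplicial structure. Since you ultimately defer this step to \cite{gr}, which is precisely what the paper does for the entire statement, your write-up is consistent with the paper's treatment; but the greedy-coloring sketch as it stands should not be presented as a self-contained proof of $(1)\Rightarrow(2)$.
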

\begin{example}\label{example-tree}
If $T$ is a tree, then $T$ has asymptotic dimension equal to $1$.
\end{example}

Let $\Si$ be a proper metric space with  asymptotic dimension $m$, then there exists a sequence of positive numbers $(R_k)_{k\in\N}$  and for any  integer $k$ a cover $(U^{(k)}_i)_{i\in\N}$ of $\Si$ such that (\cite[Page 27]{gr} and \cite{y2})
\begin{itemize}
\item $R_{k+1}>4R_k$ for every  integer $k$;
\item  $U^{(k)}_i$ has diameter less than $R_k$ for every integers $i$ and $k$;
\item for any integer $k$, the $R_k$-multiplicity of $(U^{(k+1)}_i)_{i\in\N}$ is $m+1$.
\end{itemize}
The sequence $(R_k)_{k\in\N}$ is called {\bf the $\mathbf{m}$-growth} of $\Si$.

\begin{lemma}
Let  $m$ be an integer, let $(R_k)_{k\in\N}$ be a sequence of positive numbers such that  $R_{k+1}>4R _k$ for every   integer $k$.
Let $(\Si_i)_{i\in\N}$ be a family of proper metric spaces 
 with asymptotic dimension $m$ and $m$-growth  $(R_k)_{k\in\N}$. Then the family $(\Si_i)_{i\in\N}$
belongs to $\CC_{fdc}$. 
\end{lemma}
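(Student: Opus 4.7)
The plan is to prove the lemma by induction on $m$, using the equivalent characterization of asymptotic dimension from Proposition \ref{proposition-finite-asymptotic-dim} together with the two defining properties of $\CC_{fdc}$: containment of uniformly bounded families and closure under coarse decomposability.

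For the base case $m=0$, given any $R>0$ I would choose $k$ large enough that $R_{k-1}>R$. The $0$-growth hypothesis produces for each $\Si_i$ a cover $(U^{(k)}_{i,j})_{j}$ of diameter at most $R_k$ and $R_{k-1}$-multiplicity $1$, so the pieces are pairwise $R_{k-1}$-disjoint and a fortiori $R$-disjoint. Hence each $\Si_i$ is the $R$-disjoint union of sets of diameter at most $R_k$, and $(\Si_i)_{i\in\N}$ is $R$-decomposable with respect to the uniformly bounded family of all these pieces (take $X^{(1)}=\Si_i$ and $X^{(2)}=\emptyset$). Since uniformly bounded families lie in $\CC_{fdc}$, closedness under coarse decomposability yields $(\Si_i)_{i\in\N}\in\CC_{fdc}$.

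For the inductive step $m\geq 1$, I would assume the statement for dimension $m-1$ and any growth sequence satisfying $R'_{k+1}>4R'_k$. Given $R>0$, choose $k$ with $R_k>R$ and invoke Proposition \ref{proposition-finite-asymptotic-dim} at scale $R_k$ to obtain $\Si_i = A_i^{(1)}\cup\cdots\cup A_i^{(m+1)}$, each $A_i^{(j)}$ being an $R_k$-disjoint union of subsets of diameter $\leq R_{k+1}$. Group these into $X_i^{(1)}=A_i^{(1)}$ and $X_i^{(2)}=A_i^{(2)}\cup\cdots\cup A_i^{(m+1)}$, and let $\Y_R$ be the family composed of (a) the bounded constituents of $X_i^{(1)}$ (over all $i$), and (b) the spaces $X_i^{(2)}$ themselves (each a trivial $R$-disjoint union of itself in $\Y_R$). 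Then $(\Si_i)_{i\in\N}$ is $R$-decomposable over $\Y_R$, and subfamily (a) is uniformly bounded. It remains to show the subfamily (b), namely $(X_i^{(2)})_{i\in\N}$, belongs to $\CC_{fdc}$; the inductive hypothesis handles this once I produce an $(m-1)$-growth $(R'_l)$ for it with $R'_{l+1}>4R'_l$.

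The main obstacle is precisely this verification of $(m-1)$-growth for $(X_i^{(2)})_{i\in\N}$, particularly at scales exceeding $R_k$. At scales $r\leq R_k$, restricting the bounded pieces of $A_i^{(2)},\ldots,A_i^{(m+1)}$ to $X_i^{(2)}$ gives a cover of diameter $\leq R_{k+1}$ and $r$-multiplicity $\leq m$, which is exactly asymptotic dimension $\leq m-1$ at that scale. At scales $r>R_k$, restricting the $(m+1)$-element level-$k'$ cover (for $k'>k$ with $R_{k'}>r$) only yields $r$-multiplicity $\leq m+1$, and the saving of one multiplicity must be extracted from the fact that $X_i^{(2)}$ avoids the color $A_i^{(1)}$. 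I would take the candidate growth to be $R'_l=R_{k+l}$ (shifted from the original), so that $R'_{l+1}>4R'_l$ is inherited from the hypothesis; the superexponential growth ensures that the piece-separations at level $k+l$ dominate the piece-diameters at level $k+l-1$, which is what allows one to refine the restricted level-$(k')$ cover into one of multiplicity $\leq m$ on $X_i^{(2)}$. If this refinement fails directly, the fallback is to replace $X_i^{(2)}$ by its $R$-connected components $\{C_i^\alpha\}$, which are $R$-disjoint, and argue componentwise: each $C_i^\alpha$ is contained in an $R$-neighborhood of $A_i^{(2)}\cup\cdots\cup A_i^{(m+1)}$ that avoids a fattened $A_i^{(1)}$-region, forcing at each higher level a cover of multiplicity $\leq m$ by discarding the unused color.

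Granting the $(m-1)$-growth for $(X_i^{(2)})_{i\in\N}$ (or for the component family $\{C_i^\alpha\}$), the induction hypothesis places it in $\CC_{fdc}$, so $\Y_R\in\CC_{fdc}$, and since $R>0$ was arbitrary, closure of $\CC_{fdc}$ under coarse decomposability concludes that $(\Si_i)_{i\in\N}\in\CC_{fdc}$.
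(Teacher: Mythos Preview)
Your inductive scheme has a genuine gap at the step where you claim that the family $(X_i^{(2)})_{i\in\N}$, with $X_i^{(2)}=A_i^{(2)}\cup\cdots\cup A_i^{(m+1)}$, has asymptotic dimension $\leq m-1$ with some shifted growth $(R'_l)$. Removing one colour at the fixed scale $R_k$ does \emph{not} reduce asymptotic dimension: the colourings at different levels $k$ and $k'$ are entirely unrelated, so the fact that $X_i^{(2)}$ avoids $A_i^{(1)}$ (a level-$k$ colour) gives you no purchase on the multiplicity of the level-$k'$ cover restricted to $X_i^{(2)}$. Concretely, take $\Si=\Z^2$ (asymptotic dimension $2$) with any standard $3$-colouring at scale $R_k$; the union of two colours is a subset that, at scales $\gg R_k$, is coarsely indistinguishable from $\Z^2$ and still has asymptotic dimension $2$. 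Neither the superexponential growth $R_{k+1}>4R_k$ nor passing to $R$-connected components rescues this, since both devices only relate consecutive levels of the \emph{same} space, not the colour structure across levels. So the induction hypothesis cannot be invoked for $(X_i^{(2)})_i$, and the argument stalls.

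The paper's proof avoids this obstruction altogether by a different route: it assembles the $\Si_i$ into a single proper space $\Si=\bigsqcup_i\Si_i$ (with $d(\Si_i,\Si_j)\geq i+j$), observes that the common $m$-growth forces $\Si$ itself to have asymptotic dimension $m$, and then appeals to the Dranishnikov--Zarichnyi theorem to uniformly embed $\Si$ into a finite product of trees $\prod_{j=1}^n T_j$. The induction is then on $n$, the number of tree factors, not on $m$; the base case $n=1$ is the tree case (asymptotic dimension $1$), and the inductive step exploits the product structure, where splitting off one tree factor genuinely lowers complexity. This sidesteps precisely the issue that defeats your approach: the decomposition coming from a product of trees is coherent across all scales, whereas the raw colourings from Proposition~\ref{proposition-finite-asymptotic-dim} are not.
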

\begin{proof}
 Let us equip $\Si=\bigsqcup_{i\in\N}\Si_i$ with a distance $d_\Si$ such that the inclusion $\Si_i\hookrightarrow\Si$ is isometric for any  integer $i$ and $d_\Si(\Si_i,\Si_j)\gq i+j$ for all integers $i$ and $j$ with $i\neq j$.
 Then $\Si$ has asymptotic dimension $m$ and hence according to \cite{dz}, the metric space $\Si$ embeds uniformly in a product 
 of trees $\prod_{j=1}^n T_j$. Let $d$ be the metric  on $X=\prod_{j=1}^n T_j$  and $d_i$ the distance on $\Si_i$ when $i$ runs through integers. Then there exists two non-decreasing functions
 $\rho_\pm:[0,+\infty)\to [0,+\infty)$   and for every integer $i$ a map $f_i:\Si_i\to \prod_{j=1}^n T_j$ such that
  \begin{itemize}
 \item $\lim_{r\mapsto +\infty}\rho_{\pm}(r)=+\infty$;
 \item  $\rho_-(d_i(x,y))\lq d(f_i(x),f_i(y))\lq \rho_+(d_i(x,y))$ for all integer $i$ and all $x$ and $y$ in $\Si_i$.
 \end{itemize}
 If $n=1$, then $X$ is a tree and then the result holds in view   Example \ref{example-tree}. A straightforward induction shows that if $\Si$ embeds uniformly in a product of $n$ trees, then the family $(\Si_i)_{i\in\N}$
belongs to $\CC_{fdc}$. 
  \end{proof}

\begin{theorem}\label{theorem-QAM-fdc}Let $\X$ be a family of finite metric space in $\CC_{fdc}$ with uniformly bounded geometry,
then   $\X$ satisfies uniformly the QAM-estimates.\end{theorem}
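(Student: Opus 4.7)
The plan is to run a minimality argument on the definition of $\CC_{fdc}$. Introduce the class $\mathcal{D}$ of families $\X$ of proper discrete metric spaces such that every subfamily $\X' \subseteq \X$ with uniformly bounded geometry has $\widetilde{\X'}$ satisfying uniformly the QAM-estimates. I will verify that $\mathcal{D}$ contains every uniformly bounded family and is closed under coarse decomposition; by minimality this forces $\CC_{fdc} \subseteq \mathcal{D}$, and specialising to $\X' = \X$ (and noting that uniform QAM on $\widetilde{\X}$ implies it for the subfamily $\X$) yields Theorem \ref{theorem-QAM-fdc}.

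For the base case, let $\X$ be uniformly bounded and $\X' \subseteq \X$ have uniformly bounded geometry. Then each $X \in \widetilde{\X'}$ has diameter $\leq D$ and cardinality $\leq n_D$. For $d \geq D$ the Rips complex $P_d(X)$ is the full simplex on $X$, hence contractible, so $K_*(P_d(X),A) \cong K_*(A)$. Moreover $A_X$ has trivial filtration at scale $r \geq D$, and Proposition \ref{prop-morita} identifies $K_*^{\eps,r}(A_X) \cong K_*^{\eps,r}(A)$. Under these identifications the geometric assembly map reduces to the controlled Morita equivalence, yielding uniform QAM-estimates with constants depending only on $D$ and $n_D$.

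For the closure step, suppose $\X$ is $R$-decomposable with respect to some $\Y_R \in \mathcal{D}$ for every $R > 0$, and let $\X' \subseteq \X$ have uniformly bounded geometry. For each $X \in \X'$ pick a decomposition $X = X^{(1)} \cup X^{(2)}$ with $X^{(i)} =_R \bigsqcup_j X_j^{(i)}$ and each $X_j^{(i)}$ isometric to a member of $\Y_R$. Let $\Y'_R$ be the family of all these pieces. Then $\Y'_R$ is, up to isometry, a subfamily of $\Y_R$ that inherits uniformly bounded geometry from $\X'$; since $\Y_R \in \mathcal{D}$, the family $\widetilde{\Y'_R}$ satisfies uniformly the QAM-estimates. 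Intersecting the chosen decomposition with any $X'' \subseteq X$ in $\widetilde{\X'}$ shows that $\widetilde{\X'}$ is itself $R$-decomposable with respect to $\widetilde{\Y'_R}$; combined with $\widetilde{\widetilde{\Y'_R}} = \widetilde{\Y'_R}$, this lets us apply Theorem \ref{theorem-main} with $\widetilde{\X'}$ playing the role of $\X$ and $\widetilde{\Y'_R}$ playing the role of $\Y$, concluding that $\widetilde{\X'}$ satisfies uniformly the QAM-estimates. Hence $\X \in \mathcal{D}$.

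The main obstacle is precisely the mismatch between the hypothesis of Theorem \ref{theorem-main}, which is phrased in terms of $\widetilde{\Y}$, and the natural inductive setup. This is what dictates threading the induction through hereditary closures $\widetilde{\X'}$ and quantifying over subfamilies in the definition of $\mathcal{D}$, so that the piece subfamilies $\Y'_R$ extracted from any decomposition automatically remain inside the reach of the inductive hypothesis. Modulo this framing, both the base case (contractibility plus Proposition \ref{prop-morita}) and the closure step (direct invocation of Theorem \ref{theorem-main}) become routine.
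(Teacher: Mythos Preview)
Your proof is correct and follows essentially the same minimality argument as the paper: verify the base case for uniformly bounded families, use Theorem~\ref{theorem-main} together with the observation that $\widetilde{\X}$ is $R$-decomposable over $\widetilde{\Y}$ whenever $\X$ is over $\Y$ for the closure step, and conclude by minimality of $\CC_{fdc}$. The only difference is bookkeeping: the paper works with the class of finite families $\X$ with uniformly bounded geometry such that $\widetilde{\X}$ satisfies the QAM-estimates and asserts that $\CC'_{fdc}$ (the ubg families in $\CC_{fdc}$) is the smallest class containing uniformly bounded ubg families and closed under coarse decomposition, whereas you build the subfamily quantification directly into the auxiliary class $\mathcal{D}$ so that the pieces extracted from a decomposition of a ubg family automatically fall under the inductive hypothesis. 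Your framing is arguably cleaner on this point, but the underlying argument is the same.
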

\begin{proof} Let  $\CC'_{fdc}$ be the class of families of finite metric spaces with uniformly bounded geometry  that belongs to $\CC_{fdc}$. Then the class $\CC'_{fdc}$ is indeed the smallest class of families of finite metric spaces such that 
\begin{itemize}
\item $\CC'_{fdc}$  contains  all families of uniformly bounded  finite metric spaces    with uniformly bounded geometry;
\item $\CC'_{fdc}$ is closed under coarse decomposition. 
\end{itemize}
Its also straightforward to check that $\widetilde{\X}$ belongs to $\CC'_{fdc}$ if  $\X$ does.
Notice first that if $\X$ is a family of uniformly bounded  metric spaces  with   uniformly bounded geometry 
then $\X$ satisfies uniformly the QAM-estimates: 
\begin{itemize}
\item the $QI$-statement is a consequence on one hand of  that there exists a positive number $d$ such that $P_d(X)$ is contractible for every $X$ in $\X$ and on the other hand of that $\nu_{X,A,*}^{\bullet,\bullet,\bullet}$ induces in $K$-theory the map
induced by $P_d(X)\to\{pt\}$.
\item since there exists a positive number $r$ such that $\Kp(\ell^2(X))_r=\Kp(\ell^2(X))$ for all $X$ in $\X$, the $QS$-statement is a consequence of the isomorphism of equation \eqref{eq-direct-limit}.\end{itemize}
If a family of finite metric space $\X$ is $R$-decomposable with respect to a a family of finite metric space $\Y$, then
$\widetilde{\X}$ is clearly also $R$-decomposable with respect to the family of finite metric space $\widetilde{\Y}$.
Then the class of family $\X$ of finite metric spaces with uniformly bounded geometry such that $\widetilde{\X}$ satisfies the QAM-estimates   is according to Theorem \ref{theorem-main} and in view of Example \ref{example-ubg-families} closed under coarse decomposition and contains 
family of metric spaces  with uniformly bounded diameter and uniformly bounded geometry. Hence it contains  $\CC'_{fdc}$.
\end{proof}
As a  consequence of this theorem, we can deduce the  Novikov conjecture for group with finite asymptotic dimension
which admits a classifying space of finite type (see \cite{y2} for the original proof). The argument of the proof can be extended to the case
of groups with finite decomposition complexity. This class of groups was introduced in \cite{gty1}   in their study of topological rigidity for manifolds.
\begin{definition}
A proper discrete metric space $\Si$ has finite decomposition complexity if the single family $\{\Si\}$ is in $\CC_{fdc}$.
\end{definition}

Since the family of finite subsets of a discrete  metric space with  finite decomposition complexity is in $\CC_{fdc}$, we deduce from
Theorem \ref{theorem-QAM-fdc} the following.

\begin{theorem}
Let $\Si$ be a  metric space  with bounded geometry and  finite decomposition  complexity, then the family of finite subsets of $\Si$  satisfies uniformly the QAM-estimates.
\end{theorem}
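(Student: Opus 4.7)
The plan is to reduce the statement to Theorem \ref{theorem-QAM-fdc} applied to the family $\X$ of finite subsets of $\Si$. To do this I need to verify two things about $\X$: first, that it has uniformly bounded geometry, and second, that it lies in $\CC_{fdc}$. The first is immediate from the bounded geometry of $\Si$: any ball in a subset $X\subseteq\Si$ is contained in the corresponding ball of $\Si$, so the uniform bound $n_r$ controlling ball cardinalities in $\Si$ also controls those in every $X\in\X$. The second point is the substantive one and occupies the rest of the plan.

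The strategy is to show, more generally, that if $\{\Si\}\in\CC_{fdc}$ then the family $\widetilde{\{\Si\}}$ of all subsets of $\Si$ also lies in $\CC_{fdc}$. I would establish this by a ``subset-stability'' induction on the construction of $\CC_{fdc}$. More precisely, let $\CC'$ denote the collection of families $\Y$ such that $\widetilde{\Y}\in\CC_{fdc}$. The goal is to show $\CC_{fdc}\subseteq\CC'$, which by minimality of $\CC_{fdc}$ follows from:
\begin{enumerate}
\item every uniformly bounded family $\Y$ lies in $\CC'$ (since $\widetilde{\Y}$ is itself uniformly bounded, hence in $\CC_{fdc}$);
\item $\CC'$ is closed under coarse decomposition.
\end{enumerate}
For (ii), suppose $\X$ is a family of discrete proper metric spaces such that for every $R>0$ there is $\Y_R\in\CC'$ with $\X$ being $R$-decomposable with respect to $\Y_R$. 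Given $X\in\X$, write $X=X^{(1)}\cup X^{(2)}$ with $X^{(i)}=_R\bigsqcup_{j\in J_i}X^{(i)}_j$ and $X^{(i)}_j\in\Y_R$. For any subset $Z\subseteq X$, intersecting yields $Z=(Z\cap X^{(1)})\cup(Z\cap X^{(2)})$ with $Z\cap X^{(i)}=_R\bigsqcup_{j\in J_i}(Z\cap X^{(i)}_j)$; the $R$-separation persists under intersection, and each piece $Z\cap X^{(i)}_j$ is a subset of some space in $\Y_R$. Thus $\widetilde{\X}$ is $R$-decomposable with respect to $\widetilde{\Y_R}$, and by assumption $\widetilde{\Y_R}\in\CC_{fdc}$. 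Since this holds for every $R$ and $\CC_{fdc}$ is closed under coarse decomposition, $\widetilde{\X}\in\CC_{fdc}$, i.e. $\X\in\CC'$.

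Applying this to $\{\Si\}$, which lies in $\CC_{fdc}$ by hypothesis, we conclude that $\widetilde{\{\Si\}}\in\CC_{fdc}$. The family $\X$ of finite subsets of $\Si$ is a subfamily of $\widetilde{\{\Si\}}$, and a parallel easy induction on the construction of $\CC_{fdc}$ shows that subfamilies inherit membership: subfamilies of uniformly bounded families are uniformly bounded, and if $\X'\subseteq\X$ and $\X$ is $R$-decomposable with respect to $\Y$, so is $\X'$. Hence $\X\in\CC_{fdc}$. Combining with the uniform bounded geometry established above, Theorem \ref{theorem-QAM-fdc} applies directly to $\X$ and yields the uniform QAM-estimates.

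The only delicate point in this plan is verifying the two closure-under-induction properties above (subset-stability of $\CC_{fdc}$ and subfamily-stability), since $\CC_{fdc}$ is defined by a transfinite generation and one must be careful that the induction respects the transfinite stages; however, both properties reduce to the straightforward observation that the two defining operations (uniform boundedness and $R$-decomposability with respect to a given family) are preserved under restricting to subsets/subfamilies, so the argument is essentially formal and poses no real obstacle.
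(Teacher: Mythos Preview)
Your proposal is correct and follows exactly the paper's approach: the paper deduces the theorem from Theorem~\ref{theorem-QAM-fdc} in one line, simply asserting that the family of finite subsets of a space with finite decomposition complexity lies in $\CC_{fdc}$. You have supplied the details of that assertion (subset-stability and subfamily-stability of $\CC_{fdc}$), which the paper treats as evident; these are indeed the formal closure properties used implicitly in the proof of Theorem~\ref{theorem-QAM-fdc} itself.
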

As a consequence of Corollary \ref{corollary-Novikov}, we deduce

\begin{corollary}
Let $\Ga$ be a finitely generated group with finite decomposition complexity and which admits a classifying space of finite type,
then $\Ga$ satisfies Novikov conjecture.
\end{corollary}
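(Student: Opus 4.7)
The plan is to combine the preceding theorem with Corollary \ref{corollary-Novikov} in essentially one step. Equip $\Ga$ with any word metric coming from a finite generating set; this makes $\Ga$ a proper discrete metric space. Because a finitely generated group with a word metric has uniformly bounded balls (their cardinalities only depend on the radius), $\Ga$ automatically has bounded geometry in the sense required by the previous theorem.

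Next, by hypothesis $\Ga$ has finite decomposition complexity, i.e.\ the single family $\{\Ga\}$ belongs to $\CC_{fdc}$. Applying the theorem just proved (for metric spaces with bounded geometry and finite decomposition complexity), the family of all finite subsets of $\Ga$ satisfies uniformly the QAM-estimates.

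Finally, the assumption that $\Ga$ admits a classifying space of finite type is exactly the hypothesis of Corollary \ref{corollary-Novikov}. Invoking that corollary with the uniform QAM-estimates just obtained yields the Novikov conjecture for $\Ga$.

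There is no real obstacle in this corollary; the entire analytic and geometric content has already been absorbed into Theorem \ref{theorem-QAM-fdc} (uniform QAM-estimates under coarse decomposability) and into Corollary \ref{corollary-Novikov} (descent principle from uniform QAM-estimates on finite subsets to the Novikov conjecture). The only point worth emphasizing when writing this out is the verification that the word-metric version of $\Ga$ satisfies the standing bounded-geometry assumption, so that the input to Theorem \ref{theorem-QAM-fdc} is legitimate.
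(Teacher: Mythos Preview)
Your argument is correct and follows the same route as the paper: apply the preceding theorem to obtain uniform QAM-estimates for the family of finite subsets of $\Ga$, then invoke Corollary~\ref{corollary-Novikov}. The paper's own proof is in fact just the one-line remark ``As a consequence of Corollary~\ref{corollary-Novikov}, we deduce'', so your version is a slightly more detailed rendering of the same deduction.
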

Notice that the Novikov conjecture was indeed already known in this situation since  the property (A) holds for  finitely generated groups with finite decomposition complexity  \cite{gty2}[Theorem 4.3] and hence under the extra assumptions of the corollary,  the Novikov conjecture is satisfied \cite{y1}. But as mentioned before, our proof does not involve infinite dimension analysis.
\section{$K$-homology and small propagation}
In this section,  we define a quantitative version of the index map for  $K$-homology of a compact metric space $X$. This quantitative index map  is  valued	 in the quantitative $K$-theory for  the compact operators algebra  of any non degenerate $X$-standard module  and induces  in $K$-theory the usual index map. As we shall see,  when $X$ is a  finite simplicial complexe, then  the quantitative index map is a  $h$-isomorphism in sense of Definition \ref{def-h-isomorphism}

\subsection{$X$-modules}\label{section-nds-mod}

Recall that if  $X$ is  a compact  space, then an $X$-module is a pair $(\H_X,\rho_X)$, where $\H_X$ is a separable Hilbert space and $\rho_X:C(X)\to \L(\H_X)$ is a representation of $C(X)$ on $\H_X$.
An $X$-module $(\H_X,\rho_X)$ is called 
\begin{itemize}
\item non degenerate if $\rho_X(1)=Id_{\H_X}$;
\item standard if no non-zero function  acts as a compact operator on $\H_X$ (in particular $\rho_X$ is  one-to-one).
\end{itemize}
As usual, if $(\H_X,\rho_X)$ is an $X$-module, then  for $f$ in $C(X)$ and $\xi$ in $\H_X$ we denote for short $\rho_X(f)\cdot\xi$ by $f\cdot\xi$. 

\begin{example}Let  $\mu$ be a borelian  measure on $X$ such that $supp\,\mu=X$. Then $L^2(X,\mu)$ is a non degenerate  $X$-module for the representation given by the pointwise multiplication and if $\H$ is a separable Hilbert space, then  $L^2(X,\mu)\ts\H$ is  a  non degenerate  standard $X$-module. Indeed, if $X$ has no isolated point, then the representation on $L^2(X,\mu)$ is   standard. \end{example}
  Let us first recall the definition of support for vectors and operators on an $X$-module.
  \begin{itemize}
\item  Let $X$ be a compact space, let  $(\H_X,\rho_X)$ be an $X$-module and let $\xi$ be  a vector of $\H_X$. Then the support of $\xi$, denoted by  {$\supp\,\xi$}, is the complementary in $X$ of  the open  subset of elements $x$  for which  there exists $f$ in $C(X)$  with $f(x)\neq 0$  and $f\cdot \xi=0.$
\item  Let $X$ and $Y$ be compact spaces, let  $(\H_X,\rho_X)$ and $(\H_Y,\rho_Y)$ be respectively a $X$-module and a $Y$-module and let $T:\H_X\to\H_Y$ be a bounded operator.
Then the support of $T$, denoted by {$\supp\, T$}, is the complementary in $X\times Y$ of  the open  subset of  pairs  $(x,y)$  for which  there exist  $f$ in $C(Y)$  and $g$ in $C(X)$ with $f(y)\neq 0$, $g(x)\neq 0$ and $f\cdot T\cdot g=0$;
\end{itemize}
Let us now define for a compact space $X$ the restriction of an $X$-module to a closed subspace $Y$.
Let $(\H_X,\rho_X)$ be a $X$-module.
Define then $\H^Y_X$ as the set of elements in  $\H_X$  with support in $Y$.  For an element $\xi$ in $\H_X$, then $\xi$ belongs to $\H^Y_X$ if and only if $f\cdot\xi=0$ for all $f$ in $C_0(X\setminus Y)$. In particular, $\H^Y_X$ is a closed linear subspace of $\H_X$ and hence is an Hilbert space. Let $f$ be a continuous function in $C(Y)$ and let $\tilde{f}$ in $C(X)$ be a continuous function such that $\tilde{f}_{/Y}=f$. It is straightforward to check that if $\xi$ lies in $\H^Y_X$, then $\rho_X(\tilde{f})\cdot \xi$ does not depend on the choice of the lift $\tilde{f}$ for $f$. Hence, if we set $\rho^Y_X(f)\cdot\xi=\rho_X(\tilde{f})\cdot \xi$, then $(\H^Y_X,\rho^Y_X)$ is a  $Y$-module that we shall call the restriction of
 $(\H_X,\rho_X)$ to $Y$. Moreover,   if $(\H_X,\rho_X)$ is non degenerate then $(\H^Y_X,\rho^Y_X)$ is also non degenerate.


\subsection{Coarse geometry and non degenerate standard modules}\label{subsec-coarse-geom-nds-mod}
\begin{definition}Let $X$ be a compact metric space, let $(\H_X,\rho_X)$ and $(\H'_X,\rho'_X)$ be two $X$-modules, 
 let $T:\H_X\to\H'_X$ be a  bounded operator and let $r$ be a positive number. The operator $T$ is said to have propagation (less than) $r$ if $$\supp\, T\subseteq \{(x,x')\in X\times X\text{ such that }d(x,x')\lq r\}.$$ \end{definition}

If $X$ is a compact metric space and  if $(\H_X,\rho_X)$ is an $X$-module,
the algebra of compact operator $\Kp(\H_X)$ is then filtered  by $(\Kp(\H_X)_r)_{r>0}$, where 
$\Kp(\H_X)_r$ is for  any positive number $r$ the set of compact operators on $\H_X$ with propagation less than $r$.  If $A$ is a $C^*$-algebra, let
$A\ts\Kp(\H_X)_r$ be the closure of the algebraic tensor product of $A$ and $\Kp(\H_X)_r$  in the $C^*$-tensor product $A\ts\Kp(\H_X)$, then  $A_{\H_X}\defi A\ts\Kp(\H_X)$ is filtered  by   $(A\ts\Kp(\H_X)_r)_{r>0}$. 

Recall from \cite{hry} that if $\H_X$ and $\H'_X$  are two non degenerate  standard $X$-modules, then for every positive number $\delta$, there exists an isometry $V:\H_X\to\H'_X$ with propagation less than $\delta$.  The isometry  $V$ gives rise
by conjugation for any positive number $\eps$ and $r$ with $\eps<1/100$  to  a homomorphism 
$$Ad_{V,A}: A_{\H_X}\to  A_{\H'_X}$$ such that   $Ad_{V,A}(A_{\H_X,r})\subseteq A_{\H'_X,r+2\delta}$ and 
hence to a group morphism $$Ad_{V,A,*}^{\eps,r}:K_*^{\eps,r}(A_{\H_X})\to   K_*^{\eps,r+2\delta}(A_{\H'_X}).$$

\begin{lemma}\label{lemma-adjoint}With notations above,   for any isometries  $V_1:\H_X\to\H'_X$ and $V_2:\H_X\to\H'_X$ with propagation less than $\delta$, then $$\Ad_{V_1,A,*}^{\eps,r}= \Ad_{V_2,A,*}^{\eps,r}.$$\end{lemma}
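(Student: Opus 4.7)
The plan is to reduce the equality to a controlled homotopy statement by enlarging the target module to $\H'_X\oplus\H'_X$ and using a scalar rotation trick. Specifically, I would first replace $\H'_X$ by $\widetilde{\H}=\H'_X\oplus\H'_X$, equipped with the diagonal $C(X)$-action, which is again a non-degenerate standard $X$-module. The inclusion $\iota\colon\Kp(\H'_X)\hookrightarrow\Kp(\widetilde{\H})$ as the upper-left corner is a filtered $\ast$-homomorphism, and after tensoring with $A$ it is a Morita-type embedding: by Proposition \ref{prop-morita} (applied with $\H=\C^2$ and base algebra $A_{\H'_X}$) it induces an isomorphism on quantitative $K$-theory for every $\eps$ and $r$. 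Since $\iota\circ\Ad_{V_i,A}=\Ad_{V_i\oplus 0,A}$, it suffices to prove that $\Ad_{V_1\oplus 0,A,*}^{\eps,r}=\Ad_{V_2\oplus 0,A,*}^{\eps,r}$ as maps into $K_*^{\eps,r+2\delta}(A_{\widetilde{\H}})$.

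Next, I would connect $V_1\oplus 0$ and $V_2\oplus 0$ by a continuous path of isometries $\H_X\to\widetilde{\H}$ with propagation controlled by $\delta$. Concretely, set
\[
W_t=\begin{cases}(\cos(\pi t)\,V_1)\oplus(\sin(\pi t)\,V_1)&t\in[0,1/2],\\
(\sin(\pi(t-\tfrac12))\,V_2)\oplus(\cos(\pi(t-\tfrac12))\,V_1)&t\in[1/2,1].\end{cases}
\]
The orthogonality of the two summands makes each $W_t$ an isometry, $W_0=V_1\oplus 0$, $W_{1/2}=0\oplus V_1$, $W_1=V_2\oplus 0$, and the path is norm-continuous. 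Crucially, the scalar cosines and sines have propagation $0$, so the propagation of $W_t$ is bounded by that of $V_1$ and $V_2$, hence by $\delta$.

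Finally, conjugation by $W_t$ yields a continuous family of $\ast$-homomorphisms $\Ad_{W_t,A}\colon A_{\H_X}\to A_{\widetilde{\H}}$, and since propagation is subadditive under composition one has $\Ad_{W_t,A}(A_{\H_X,r})\subseteq A_{\widetilde{\H},r+2\delta}$ for all $t$. Consequently, for any $\eps$-$r$-projection $p$ (resp. $\eps$-$r$-unitary $u$) in some matrix algebra over $A_{\H_X}$, the family $(\Ad_{W_t,A}(p))_{t\in[0,1]}$ (resp. $(\Ad_{W_t,A}(u))_{t\in[0,1]}$) is a homotopy of $\eps$-$(r+2\delta)$-projections (resp. $\eps$-$(r+2\delta)$-unitaries). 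This homotopy fits within the equivalence relations defining $K_*^{\eps,r+2\delta}(A_{\widetilde{\H}})$, so $\Ad_{V_1\oplus 0,A,*}^{\eps,r}=\Ad_{V_2\oplus 0,A,*}^{\eps,r}$, and the Morita reduction above yields the lemma.

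The only delicate point is the bookkeeping of propagation: one must make sure the rotation is implemented by scalars (so it contributes nothing to propagation) and that the doubling $r\mapsto r+2\delta$ appearing in the definition of $\Ad_{V_i,A}$ already accommodates the path $W_t$. Both are ensured by the explicit formula above, so this step is routine; the conceptual content lies entirely in passing to $\widetilde{\H}$ in order to have room to rotate.
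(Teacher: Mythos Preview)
Your proposal is correct and follows essentially the same approach as the paper: both pass to the doubled module $\H'_X\oplus\H'_X$ and connect the two corner isometries by a rotation path of isometries with propagation bounded by $\delta$, then invoke homotopy invariance. The only cosmetic difference is that the paper writes the path in one stroke as $W_t\colon\xi\mapsto R_t\,\diag(V_1,V_2)\,R_{-t}\cdot(\xi\oplus 0)$ with $R_t$ the standard $2\times 2$ rotation, whereas you use a two-piece path through $0\oplus V_1$; both produce the same homotopy of $\ast$-homomorphisms and the same conclusion.
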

\begin{proof}
Let us set for $t$ in $[0,1]$ $$R_t=\begin{pmatrix} \cos \frac{\pi t}{2}& -\sin\frac{\pi t}{2}\\ \sin \frac{\pi t}{2}& \cos \frac{\pi t}{2}&\end{pmatrix}$$ and define then the isometry 
$$W_t:\H_X\lto \H_{X'}\oplus \H_{X'}; \xi \mapsto R_t\diag(V_1,V_2) R_{-t}\cdot (\xi\oplus 0).$$
Then $$\Ad_{W_t,A}:\A_{\H_X}\lto M_2(A_{\H_{X'}})$$ is a homomorphism that increase propagation by $2\delta$ and
$(\Ad_{ W_t,A})_{t\in[0,1]}$ is a homomorphism homotopy between  $\diag(\Ad_{V_0,A},0)$ and      $\diag(\Ad_{ V_1,A},0)$.    \end{proof}
As consequence, we get
\begin{corollary}\label{corollary-adjoint}
Let $X$ be a compact metric space, let $\H_X$ and $\H'_X$ be  two non degenerate  standard $X$-modules. For any positive number $r$, choose $V_r:\H_X\to\H'_X$ a partial isometry with propagation less than $r/2$. Then
\begin{enumerate}
\item $\Lambda_{\H_X,\H'_X,*}\defi(Ad_{V_r,A,*})_{0<\eps<1/4,r>0}:K_*(A_{\H_X})\to   K_*(A_{\H'_X})$ is a $(1,2)$-controlled morphism that not depend on the choice of the family $(V_r)_{r>0}$.
\item  $\Lambda_{\H_X,\H'_X,*}$ is $(1,4)$-invertible.
\end{enumerate}
\end{corollary}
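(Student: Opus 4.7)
The plan is to deduce both parts from Lemma~\ref{lemma-adjoint}, which does all the substantive homotopy work; the corollary then reduces to bookkeeping of propagation.

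For part (i), note that $\Ad_{V_r,A}$ is a $*$-homomorphism that sends $A_{\H_X,s}$ into $A_{\H'_X,s+r}$ for every $s>0$, because $V_r$ has propagation less than $r/2$. Specializing to $s=r$ yields a group homomorphism
$$\Ad_{V_r,A,*}^{\eps,r}:K_*^{\eps,r}(A_{\H_X})\lto K_*^{\eps,2r}(A_{\H'_X}),$$
so the family $(\Ad_{V_r,A,*}^{\eps,r})$ is a candidate $(1,2)$-controlled morphism. Independence from the choice of $V_r$ at fixed $r$ is exactly Lemma~\ref{lemma-adjoint} with $\delta=r/2$. To verify naturality with respect to the structure maps, fix $\eps\le\eps'$ and $r\le r'$ and compare
$$\iota_*^{\eps,\eps',2r,2r'}\circ \Ad_{V_r,A,*}^{\eps,r}\quad\text{and}\quad \Ad_{V_{r'},A,*}^{\eps',r'}\circ \iota_*^{\eps,\eps',r,r'}$$
as maps into $K_*^{\eps',2r'}(A_{\H'_X})$. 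Unwinding the definitions, the equality reduces to $[V_r x V_r^*]_{\eps',2r'}=[V_{r'} x V_{r'}^*]_{\eps',2r'}$ for representatives $x$. Since both $V_r$ and $V_{r'}$ are isometries of propagation less than $r'/2$, a second application of Lemma~\ref{lemma-adjoint} with $\delta=r'/2$ identifies both conjugation-induced maps on $K_*^{\eps',r'}(A_{\H_X})$, proving the identity.

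For part (ii), pick symmetrically a family $(W_s)_{s>0}$ of isometries $W_s:\H'_X\to\H_X$ of propagation less than $s/2$, and apply part (i) to obtain a $(1,2)$-controlled morphism $\Lambda_{\H'_X,\H_X,*}:\K_*(A_{\H'_X})\to\K_*(A_{\H_X})$. By the composition rule for controlled morphisms, $\Lambda_{\H'_X,\H_X,*}\circ\Lambda_{\H_X,\H'_X,*}$ is $(1,4)$-controlled, and at level $(\eps,r)$ it sends $[x]_{\eps,r}\in K_*^{\eps,r}(A_{\H_X})$ to the class of $(W_{2r}V_r)\,x\,(W_{2r}V_r)^*$ in $K_*^{\eps,4r}(A_{\H_X})$. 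The operator $W_{2r}V_r:\H_X\to\H_X$ is an isometry of propagation less than $3r/2$, and so is $\Id_{\H_X}$. Lemma~\ref{lemma-adjoint} applied to $V_1=W_{2r}V_r$, $V_2=\Id_{\H_X}$ with $\delta=3r/2$ identifies the two induced maps $K_*^{\eps,r}(A_{\H_X})\to K_*^{\eps,4r}(A_{\H_X})$. Since conjugation by $\Id_{\H_X}$ is precisely the structure map $\iota_*^{\eps,\eps,r,4r}$, this yields $\Lambda_{\H'_X,\H_X,*}\circ\Lambda_{\H_X,\H'_X,*}\stackrel{(1,4)}{\sim}\Id_{\K_*(A_{\H_X})}$ in the sense of the relation defined for controlled morphisms. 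Exchanging the roles of $\H_X$ and $\H'_X$ gives $\Lambda_{\H_X,\H'_X,*}\circ\Lambda_{\H'_X,\H_X,*}\stackrel{(1,4)}{\sim}\Id_{\K_*(A_{\H'_X})}$, so $\Lambda_{\H_X,\H'_X,*}$ is $(1,4)$-invertible with inverse $\Lambda_{\H'_X,\H_X,*}$.

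The only genuinely delicate point is keeping track of propagation: each application of $\Ad$ doubles the propagation scale, and each composition multiplies $h$-factors, so one must check carefully that all intermediate $\iota$'s land in the targets prescribed by the definition of $\stackrel{(\lambda,h)}{\sim}$. No new homotopy-theoretic input is needed because the rotation trick inside Lemma~\ref{lemma-adjoint} already supplies every homotopy we need.
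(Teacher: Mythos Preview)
Your proof is correct and is precisely the unpacking the paper intends: the corollary is stated without proof as an immediate consequence of Lemma~\ref{lemma-adjoint}, and you have supplied exactly the propagation bookkeeping and repeated applications of that lemma (independence of $V_r$, compatibility with structure maps, and identification of $\Ad_{W_{2r}V_r}$ with $\iota_*^{\eps,r,4r}$) needed to justify it. There is nothing to add.
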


Let $X$ and $Y$ be two compact metric spaces, let $\H_X$ be a standard non degenerate $X$-module,  let $\H_Y$ be a standard non degenerate $Y$-module and let $f:X\to Y$ be a $C$-Lipschitz  map. Consider  $\H_X\oplus \H_Y$ as a $Y$-module by
using the homomorphism $$C(Y)\to C(X)\oplus C(Y);\, h\mapsto (h\circ f) \oplus h.$$ Then $\H_X \oplus  \H_Y$ is a standard non degenerate $Y$-module. For $\delta$ a positive number, fix an isometry $V:\H_X\oplus \H_Y\to  \H_Y$  with propagation less than $\de$. Define then the partial isometry $$V_f:\H_X\to\H_Y;\, \xi\mapsto V(\xi\oplus 0).$$ The support of $V_f$ lies in $$\{(x,y)\in X\times Y\text{ such that }d(y,f(x))<\de\}.$$ For any $C^*$-algebra $A$, let $\Ad_{V_f,A}:A_{\H_X}\to A_{\H_Y}$ the element obtained by conjugation.
According to Lemma \ref{lemma-adjoint}, we see that  $$\Ad_{V_f,A,*}:K_*^{\eps,r}(A_{\H_X})\to   K_*^{\eps,Cr+2\delta}(A_{\H_Y})$$  does not depend  on  the partial isometry $V:\H_X\oplus \H_Y\to  \H_Y$. In particular, for   $\delta=Cr/2$ we obtain then a family  of morphisms
$$f^{\sharp,\eps,r}_{A,*}:K_*^{\eps,r}(A_{\H_X})\to   K_*^{\eps,2Cr\ }(A_{\H_Y})$$ compatible with the structure map
and hence \begin{equation}\label{equ-def-sharp}f_{A,*}^\sharp\defi (f^{\sharp,\eps,r}_{A,*})_{0<\eps<1/100}:\K_*(A_{\H_X})\to   \K_*(A_{\H_Y})\end{equation} is a $(1,2C)$-controlled morphism. 
\begin{remark}\label{remark-covering-isometries}\
\begin{enumerate}
\item In view of the proof of Lemma \ref{lemma-adjoint} and with the  above notations,     for any positive number $r$ and any partial isometry $V_r:\H_X\to\H_Y$ with support
in $$\{(x,y)\in X\times Y\text{ such that }d(y,f(x))<Cr/2\},$$ then we have
$Ad^{\eps,r}_{V_r,A,*}=f^{\sharp,\eps,r}_{A,*}$.
\item Let $X$ be a compact metric space and let $Y$  be a closed subset of $X$. Let $(\H_X,\rho_X)$ be a non degenerate standard module such that  $(\H^Y_X,\rho^Y_X)$ (the restriction of $(\H_X,\rho_X)$ to $Y$) is standard.
Let $\jmath_{X,Y}:Y\hookrightarrow X$ be the the inclusion. On the other hand,
 let $\jmath^\sharp_{X,Y}:\Kp(\H^Y_X)\hookrightarrow \Kp(\H_X)$ be the inclusion of 
$\Kp(\H^Y_X)$  as a corner in  $\Kp(\H_X)$ via the orthogonal decomposition $\H_X=\H_X^Y\oplus^\bot \H'$.
Then the induced morphism $\jmath^\sharp_{X,Y,A}: A_{\H^Y_X}\hookrightarrow  A_{\H_X}$ is for any $C^*$-algebra $A$ a morphism of filtered $C^*$-algebras. The morphism induced in quantitative $K$-theory by $\jmath^\sharp_{X,Y,A}$ coincide up to a rescaling of the propagation by $2$ with  the morphism constructed in equation (\ref{equ-def-sharp}) from the isometric inclusion  $\jmath_{X,Y}$.
\end{enumerate}
\end{remark}

 Let $X,\,Y$ and $Z$ be compact metric spaces, let $\H_X,\,\H_Y$ and $\H_Z$ be respectively standard non degenerate
standard $X$-module, $Y$-module and $Z$-module and let $f:X\to Y$ and $g:Y\to Z$ be respectively $C$-Lipschitz and $C'$-Lipschitz maps. Then we can easily check that \begin{equation}\label{eq-composition}(g\circ f)_{A,*}^\sharp\stackrel{(1,4CC')}{\sim} g_{A,*}^\sharp\circ f^\sharp_{A,*}\end{equation}
for any $C^*$-algebra $A$. Let $X$ and $Y$ be  two compact metric spaces and let $C$ be a positive number. 
\begin{definition}
A family of maps $(g_t)_{t\in[0,1]}$ with $g_t:X\to Y$ continuous for all $t$ in $[0,1]$ is a $C$-Lipschitz homotopy if
\begin{itemize}
\item $[0,1]\times X\to Y;\, (t,x)\mapsto g_t(x)$ is continous;
\item $g_t$ is $C$-Lipschitz for all $t$ in $[0,1]$.
\end{itemize}
\end{definition}
\begin{proposition}\label{prop-homotopy}
Let $X$ and $Y$ be  two compact metric spaces, let $C$ be a positive number and let  $(g_t)_{t\in[0,1]}$ be a $C$-Lipschitz homotopy. Then for any $C^*$-algebra $A$,  any non degenerate standard $X$-module  $\H_X$ and any non degenerate standard $Y$-module $\H_Y$,
we have $$g^\sharp_{0,A,*}=g^\sharp_{1,A,*}.$$
\end{proposition}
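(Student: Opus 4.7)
The plan is to exploit Remark~\ref{remark-covering-isometries}(1), which says that for every $t$ and every $(\eps,r)$, the morphism $g_{t,A,*}^{\sharp,\eps,r}$ is computed by conjugation by \emph{any} partial isometry $V\colon \H_X\to\H_Y$ whose support is contained in $\{(x,y)\in X\times Y : d(y, g_t(x)) < Cr/2\}$, regardless of the specific choice. Consequently, if a single $V$ satisfies this support condition for two distinct parameters $t$ and $t'$ simultaneously, then the corresponding induced morphisms at level $(\eps,r)$ must coincide. My strategy is to link $t=0$ to $t=1$ by a finite chain of parameters along which such common isometries can be found.

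Concretely, I would fix $\eps \in (0,1/100)$ and $r>0$ and aim to prove $g_{0,A,*}^{\sharp,\eps,r} = g_{1,A,*}^{\sharp,\eps,r}$ as maps $K_*^{\eps,r}(A_{\H_X})\to K_*^{\eps,2Cr}(A_{\H_Y})$. Since $(t,x)\mapsto g_t(x)$ is continuous on the compact space $[0,1]\times X$, it is uniformly continuous, so one can choose a subdivision $0 = t_0 < t_1 < \cdots < t_n = 1$ with $d(g_{t_i}(x), g_{t_{i+1}}(x)) < Cr/4$ for every $x\in X$ and every $i=0,\ldots,n-1$. For each $i$, I would apply the construction of Section~\ref{subsec-coarse-geom-nds-mod} to $f=g_{t_i}$ with $\delta = Cr/4$ to obtain a partial isometry $V_i\colon \H_X\to\H_Y$ whose support is contained in $\{(x,y) : d(y, g_{t_i}(x)) < Cr/4\}$; since $Cr/4 < Cr/2$, Remark~\ref{remark-covering-isometries}(1) gives $\Ad_{V_i,A,*}^{\eps,r} = g_{t_i,A,*}^{\sharp,\eps,r}$. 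The triangle inequality then yields, for $(x,y)\in\supp V_i$,
$$d(y, g_{t_{i+1}}(x)) \lq d(y, g_{t_i}(x)) + d(g_{t_i}(x), g_{t_{i+1}}(x)) < Cr/4 + Cr/4 = Cr/2,$$
so the same $V_i$ also satisfies the support hypothesis of Remark~\ref{remark-covering-isometries}(1) for $g_{t_{i+1}}$, yielding $\Ad_{V_i,A,*}^{\eps,r} = g_{t_{i+1},A,*}^{\sharp,\eps,r}$. This forces $g_{t_i,A,*}^{\sharp,\eps,r} = g_{t_{i+1},A,*}^{\sharp,\eps,r}$ for each $i$; telescoping over $i=0,\ldots,n-1$ gives the desired equality at level $(\eps,r)$, and since $(\eps,r)$ was arbitrary, $g_{0,A,*}^\sharp$ and $g_{1,A,*}^\sharp$ agree as controlled morphisms.

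The only real substance in the argument is the compactness/uniform continuity step, which allows the continuous family $(g_t)$ to be approximated by a finite chain whose consecutive members are within a controlled, $r$-dependent distance of each other pointwise. Everything else is a bookkeeping application of Remark~\ref{remark-covering-isometries}(1) together with the existence, recalled at the start of Section~\ref{subsec-coarse-geom-nds-mod}, of partial isometries with arbitrarily small support distortion around the graph of a continuous map. In fact the argument shows slightly more: for each fixed $(\eps,r)$, the function $t\mapsto g_{t,A,*}^{\sharp,\eps,r}$ is locally constant on $[0,1]$.
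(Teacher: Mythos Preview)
Your proof is correct and follows essentially the same approach as the paper: both subdivide $[0,1]$ finely enough that consecutive $g_{t_i}$ are uniformly close, then use isometries with small support near the graphs of the $g_{t_i}$ to link the induced maps. The only difference is packaging: you invoke Remark~\ref{remark-covering-isometries}(i) directly to conclude that a single $V_i$ computes both $g_{t_i,A,*}^{\sharp,\eps,r}$ and $g_{t_{i+1},A,*}^{\sharp,\eps,r}$, whereas the paper re-runs the rotation-matrix homotopy from the proof of Lemma~\ref{lemma-adjoint} explicitly.
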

\begin{proof}
Let $\de$ be a positive number with $\de<r/2$ and pick $$t_0=1<t_1<\ldots<t_{n-1}<t_n=1$$ such that
$d(g_{t_i}(x),g_{t_{i-1}}(x))<\de/4$ for all $i=1,\ldots,n$ and all $x$ in $X$. For every $i=1,\ldots,n$, choose
$V_i:\H_X\to\H_Y$ an isometry with support  in $$\{(x,y)\in X\times Y\text{ such that } d(g_{t_i}(x),y)<\de/4\}.$$
Let us set for $t$ in $[0,1]$ $$R_t=\begin{pmatrix} \cos \frac{\pi t}{2}& -\sin\frac{\pi t}{2}\\ \sin \frac{\pi t}{2}& \cos \frac{\pi t}{2}&\end{pmatrix}$$ and define then the isometry 
$$W_t:\H_X\lto \H_Y\oplus \H_{Y}; \xi \mapsto R_{\frac{t-t_i}{t_{i+1}-t_i}}\diag(V_{i},V_{i+1}) R^*_{\frac{t-t_i}{t_{i+1}-t_i}}\cdot (\xi\oplus 0)$$ for $t_i\lq t\lq t_{i+1}$ and $1\lq i\lq n-1$. Then $W_t$  has for all $t$ in $[0,1]$ support in $$\{(x,y)\in X\times Y\text{ such that } d(g_{t}(x),y)<\de/2\}.$$

Hence   $W_t$ induces by conjugation a homomorphism $$\Ad_{W_t,A}:A_{\H_X}\lto M_2(A_{\H_Y})$$  such that 
 $$\Ad_{W_t,A}(\A_{\H_X,r})\subseteq M_2(A_{\H_Y,Cr+2\de}).$$ 
Moreover  $(\Ad_{ W_t,A})_{t\in[0,1]}$ is a homotopy of homomorphisms between  $\diag(\Ad_{V_0,A},0)$ and      $\diag(\Ad_{ V_1,A},0)$ 
which respectively implement  $g^{\sharp,\eps,r}_{0,A,*}$ and  $g^{\sharp,\eps,r}_{1,A,*}$.
\end{proof}
\subsection{Fundamental class}\label{sub-sec-fund-class}
Let us  now define the  fundamental class  in $K^{\eps,r}_0(C(X)_{\H_X})$  for a compact metric space $X$ and for any $\eps$ in $(0,1/100)$ and any positive number $r$. This class plays the role of the class in $K$-theory of a rank one projection.  Let us first consider   $\mu$  a borelian  measure on $X$ such that $\supp\mu=X$. Let  $r$ be       a positive number and  let us choose a continuous function $\phi:X\times X\to \R$ supported
in $$\{(x,y)\in X\times X\text{ such that } d(x,y)<r/2\}$$ and such that for any $x$ in $X$, then $$\int \phi(x,y)d\mu(y)=1.$$ Consider the operator
$E_\phi: C(X)\ts L^2(X,\mu)\lto C(X)\ts L^2(X,\mu)$ defined by 
$$E_\phi\cdot\xi(x,y)=\phi^{1/2}(x,y)\int_X \phi^{1/2}(x,z)\xi(x,z)d\mu(z).$$ Then  $E_\phi$   is a projection in $C(X)\ts \Kp(L^2(X,\mu))$ with propagation less than $r$. Indeed, $E_\phi(x)$ is supported in $B(x,r/2)\times  B(x,r/2) $ for every $x$ in $X$. If $\phi$ and $\phi'$ are two continuous functions 
 supported
in $$\{(x,y)\in X\times X;\,\text{ such that } d(x,y)<r/2\}$$ and such that $$\int \phi(x,y)d\mu(y)=\int \phi'(x,y)d\mu(y)=1$$ for any $x$ in $X$, then
$(E_{(1-t)\phi+t\phi'})_{t\in[0,1]}$ is a homotopy of projections with propagation less than $r$ between $E_{\phi}$ and $E_{\phi'}$. 

Let $\H_X$ be a non degenerate standard $X$-module, let $\H$ be a separable Hilbert space and let us fix $e_0$ a rank one projection on $\H$.  For any positive numbers $r$, let $V:L^2(X,\mu)\to\H_X$ be an isometry with propagation less than ${r}/{4}$. Let $E_\phi$ the projection constructed as above with 
$\phi:X\times X\to \R$ supported
in $$\{(x,y)\in X\times X;\,\text{ such that } d(x,y)<{r}/{4}\}.$$
According to Lemma \ref{lemma-adjoint}, for any positive number $\eps$  with $\eps<1/100$,  the element
\begin{equation}\label{equation-EX}[E_{X}]_{\eps,r}\defi Ad_{V,*}^{\eps,r/2}([E_\phi\ts e_0,0]_{\eps,r/2})\end{equation} in $K_0^{\eps,r}(C(X)_{\H_X})$ does not depend on the choices of $V$ and of $\phi$. It does not depend neither on $e_0$. 
 Moreover, for any positive number $\eps,\,\eps',\,r$ an $r'$ with $\eps\lq \eps'<1/100$ and $r<r'$, then we have $\iota_*^{\eps,\eps',r,r'}([E_{X}]_{\eps,r})=[E_{X}]_{\eps',r'}$.
 For any continuous map $f:X\to Y$ where $X$ and $Y$ are compact sets, any filtered $C^*$-algebra $A$ and any positive numbers $\eps$ and $r$ with 
 $\eps<1/4$, Let
 $$f^{*,\eps,r}_{A}:K_*^{\eps,r}(C(Y)\ts A)\to K_*^{\eps,r}(C(X)\ts A)$$ be the morphism induced by $C(Y)\to C(X):h\mapsto h\circ f$.
 \begin{remark}\label{rem-fund-class-inclusion}
 Let $X$ be a compact metric space, let $Y$ be closed subset of $A$ and let $(\H_X,\rho_X)$ be a non
  degenerate standard $X$-module with restriction to $\H_X^Y$ non degenerate. Then, with notations of point (ii) of
   Remark \ref{remark-covering-isometries}, we have
   $$ \jmath_{X,Y,C(Y),*}^{\sharp,\eps,r}[E_Y]_{\eps,r}=\jmath^*_{X,Y,\Kp(\H_X)}[E_X]_{\eps,r}.$$
   \end{remark}
   This identity can indeed be extended to $C$-Lipschitz maps.
 \begin{lemma}\label{functoraility-fundamental-class}
 Let $X$ and $Y$ be two compact metric spaces, let $\H_X$ and $\H_Y$ be respectively non degenerate $X$-module and $Y$-modules. Let $f:X\to Y$ be a $C$-Lipschitz map for some positive numbers $C$.
 Then for  any positive numbers $\eps$ and $r$ with $\eps$ in $(0,1/100)$, we have
 $$ f_{C(X),*}^{\sharp,\eps,r}[E_X]_{\eps,r}=f^*_{\Kp(\H_Y)}[E_Y]_{\eps,2Cr}.$$
 \end{lemma}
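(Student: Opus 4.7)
The strategy is to exhibit explicit projections representing both sides of the claimed equality and then to connect them by a direct interpolating homotopy of projections whose propagation stays bounded by $2Cr$.

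First, I would choose data compatibly for both fundamental classes: measures $\mu_X$ on $X$ and $\mu_Y$ on $Y$ of full support, continuous kernels $\phi_X$ supported in $\{d(x,y)<r/4\}$ and $\phi_Y$ supported in $\{d(y,y')<Cr/2\}$ with the usual normalisations, isometries $V_X:L^2(X,\mu_X)\ts\H\to\H_X$ of propagation less than $r/4$ and $V_Y:L^2(Y,\mu_Y)\ts\H\to\H_Y$ of propagation less than $Cr/2$, and an isometry $V_f:\H_X\to\H_Y$ with support in $\{(x,y)\in X\times Y:d(y,f(x))<Cr/2\}$. Their existence follows from the Voiculescu-type results on non-degenerate standard modules recalled in Section~\ref{subsec-coarse-geom-nds-mod}.

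With these choices, and using Remark~\ref{remark-covering-isometries}(i), $f^{\sharp,\eps,r}_{C(X),*}[E_X]_{\eps,r}$ is represented by the projection field $P^A(x):=V_fV_X(E_{\phi_X}(x)\ts e_0)V_X^*V_f^*$ in $C(X)\ts\Kp(\H_Y)$, while $f^*_{\Kp(\H_Y)}[E_Y]_{\eps,2Cr}$ is represented by $P^B(x):=V_Y(E_{\phi_Y}(f(x))\ts e_0)V_Y^*$, since $f^*$ acts trivially on the $\Kp(\H_Y)$ factor. Both are rank-one projection fields, onto the unit vectors $\xi^A_x:=V_fV_X(\phi_X^{1/2}(x,\cdot)\ts v_0)$ and $\xi^B_x:=V_Y(\phi_Y^{1/2}(f(x),\cdot)\ts v_0)$ respectively, where $v_0\in\H$ satisfies $|v_0\rangle\langle v_0|=e_0$.

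By the triangle inequality and the $C$-Lipschitz condition on $f$, both $\xi^A_x$ and $\xi^B_x$ are supported in the ball $B(f(x),Cr)\subset Y$; hence $P^A$ and $P^B$ both have propagation $\lq 2Cr$. For the interpolation I would invoke the standard ``doubling'' trick: in $M_2(C(X)\ts\Kp(\H_Y))$, the unit-vector path $\zeta_t(x):=\cos(\pi t/2)\,\xi^A_x\oplus\sin(\pi t/2)\,\xi^B_x$, $t\in[0,1]$, yields a homotopy of genuine rank-one projections $|\zeta_t\rangle\langle\zeta_t|$ from $\diag(P^A,0)$ to $\diag(0,P^B)$ whose support stays in the doubled ball, so the propagation remains $\lq 2Cr$ throughout. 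Composing with the standard $2\times2$ rotation that swaps the two summands then yields the sought equality $[P^A,0]_{\eps,2Cr}=[P^B,0]_{\eps,2Cr}$ in $K_0^{\eps,2Cr}(C(X)\ts\Kp(\H_Y))$.

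The main obstacle is the simultaneous bookkeeping of all propagation constants, so that the two representative projection fields and the interpolating homotopy all live at propagation $2Cr$; the $C$-Lipschitz hypothesis is used precisely to ensure that the composite $V_fV_X$ carries $\phi_X^{1/2}(x,\cdot)$ to a vector of $\H_Y$ concentrated in $B(f(x),Cr)$, thereby matching the support of $V_Y\phi_Y^{1/2}(f(x),\cdot)$.
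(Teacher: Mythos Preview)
Your argument is correct and follows essentially the same route as the paper's proof: both interpolate between the two rank-one projection fields via the cosine--sine (equivalently $t^{1/2}$--$(1-t)^{1/2}$) rotation of unit vectors in a direct sum, keeping track of supports so that the homotopy stays at propagation at most $2Cr$. The only cosmetic difference is that the paper performs the homotopy in $C(X)\ts\Kp\bigl(L^2(X,\mu_X)\oplus L^2(Y,\mu_Y)\bigr)$, viewing $L^2(X,\mu_X)\oplus L^2(Y,\mu_Y)$ as a $Y$-module via $h\mapsto (h\circ f)\oplus h$, and only afterwards invokes Corollary~\ref{corollary-adjoint} to transport the result to $\H_Y$; you instead push everything into $\H_Y$ first via the isometries $V_X,V_Y,V_f$ and run the homotopy inside $M_2(C(X)\ts\Kp(\H_Y))$, which makes the final identification slightly more explicit but is otherwise the same mechanism.
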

 \begin{proof}
 Let $\mu_X$ and $\mu_Y$ be respectively  borelian  measures on $X$ and $Y$ such that $\supp\mu_X=X$ and $\supp\mu_Y=Y$.
Let us choose $\phi_0:X\times X\to [0,1]$ and $\phi_1:Y\times Y\to [0,1]$ continuous functions  such that
 \begin{itemize}
 \item $\phi_0$ is supported in $\{(x,x')\in X\times X\text{ such that }d(x,x')<\frac{r}{2C}\}$;  
  \item $\phi_1$ is supported in $\{(y,y')\in Y\times Y\text{ such that }d(y,y')<r/2\}$;
 \item $\int_X\phi_0(x,x')d\mu_X(x')=1$  for any $x$ in $X$;
 \item $\int_Y\phi_1(y,y')d\mu_Y(y')=1$  for any $y$ in $Y$.
 \end{itemize} 
 Define for any $x$ in $X$
 $$\phi_0(x,\bullet):X\to [0,1];x'\mapsto \phi_0(x,x')$$ and
 for any $y$ in $Y$
 $$\phi_1(y,\bullet):Y\to [0,1];y'\mapsto \phi_1(y,y').$$
 For any $x$ in $X$ then $\phi^{1/2}_0(x,\bullet)$ is a unit vector in $L^2(X,\mu_X)$ and
 for any $y$ in $Y$ then $\phi^{1/2}_1(y,\bullet)$ is a unit vector in $L^2(X,\mu_Y)$.
 
 Let us consider  for any $t$ in $[0,1]$ the $C(X)$-linear operator
$$E_t: C(X)\ts \left(L^2(X,\mu_X)\oplus L^2(Y,\mu_Y)\right)\lto C(X)\ts \left(L^2(X,\mu_X)\oplus L^2(Y,\mu_Y)\right)$$  which is pointwise at $x$ in $X$
the rank one projection associated to the unit vector $$t^{1/2}\phi^{1/2}_0(x,\bullet)\oplus (1-t)^{1/2}\phi^{1/2}_1(f(x),\bullet)$$ in
 $L^2(X,\mu_X)\oplus L^2(Y,\mu_Y)$.
 If  we view $ L^2(X,\mu_X)\ts L^2(Y,\mu_Y)$ as a $Y$-module acted upon for any $h$ in $C(Y)$ by multiplication $(h\circ f)\oplus h$,  then $E_t(x)$ is supported in $\{y\in Y\text{ such that }d(f(x),y)<r/2\}^2$  for every $x$ in $X$. Hence $E_t$ 
   is for all $t$ in $[0,1]$  a projection of  $C(X)\ts \Kp(L^2(X,\mu_X)\ts L^2(Y,\mu_Y))$ with propagation less than $r$.
Moreover, 
\begin{itemize}
\item $E_0$ is the image of $E_{\phi_0}\in C(X)\ts \Kp(L^2(X,\mu_X))$ under conjugation by the (isometrical) inclusion
$L^2(X,\mu_X)\hookrightarrow L^2(X,\mu_X)\oplus L^2(Y,\mu_Y)$;
\item the pull-back of $E_1$ by $f$ (which belongs to $C(Y)\ts \left(\Kp(L^2(X,\mu_X)\oplus L^2(Y,\mu_Y))\right)$)   is the image of $E_{\phi_1}\in C(Y)\ts \Kp(L^2(Y,\mu_Y))$ under conjugation by the  inclusion
$L^2(Y,\mu_Y)\hookrightarrow L^2(X,\mu_X)\oplus L^2(Y,\mu_Y)$
\end{itemize}
Using the  definition  of $f_{C(X),*}^{\sharp,\eps,r}[E_X]_{\eps,r}$,  the result is now a consequence of Corollary \ref{corollary-adjoint}.
 \end{proof}
\subsection{Quantitative index map}\label{subsec-quantitative-index}

First, let us fix  some notations. 
Let $X$ be  a compact space, let $(\rho_X,\H_X)$ be an $X$-module and let $A$ be a $C^*$-algebra. Recall that we have  set  
$A_{\H_X}\defi A\ts \Kp(\H_X)$. If $A$ is unital, we can identify the unitarisation  $\widetilde{A_{\H_X}}$
of 
$A_{\H_X}$ with the algebra $A\ts \Kp(\H_X)+\C\cdot 1\ts Id_{\H_X}$ (viewed has adjointable operators on the  right Hilbert $A$-module $A\ts \H_X$). If $A$ is non unital, with unitarisation $\widetilde{A}$, then we identify $\widetilde{A_{\H_X}}$ with the subalgebra $$A\ts \Kp(\H_X)+\C\cdot 1\ts Id_{\H_X}$$ of
$$\tilde{A}\ts \Kp(\H_X)+\C\cdot 1\ts Id_{\H_X}.$$If $B$ is a $C^*$-algebra and $f:A\to B$ is a $*$-homomorphism, then $f_{\H_X}:A_{\H_X}\to B_{\H_X}$ stands for  the $*$-homomorphism induced by $f$. If $z$ in an element in $KK_*(A,B)$ then
 $\tau_{\H_X}(x)$ stands for  the element $\tau_{\Kp(\H_X)}(z)$ of $KK_*(A_{\H_X},B_{\H_X})=KK_*(A\ts\Kp(\H_X),B\ts\Kp(\H_X))$ and in the same way 
$$\T_{\H_X}(x)=\left(\tau_{\H_X}^{\eps,r}(x)\right)_{\eps\in \left(0,\frac{1}{100\al}\right),r>0}$$ stands for 
$$\T_{\Kp(\H_X)}(x)=\left(\tau_{\H_X}^{\eps,r}(x)\right)_{\eps\in \left(0,\frac{1}{100\al}\right),r>0},$$ with $(\al,k)=(\al_\T,k_\T)$.
\medskip

We are now in a position to define the quantitative index map.  For $X$ a compact space and $A$ a $C^*$-algebra,   we set  $K_*(X,A)\defi KK_*(C(X),A)$.
\begin{definition}
Let $X$ be a compact metric space, let $(\H_X,\rho_X)$ be  a non degenerate standard $X$-module and let $A$ be a $C^*$-algebra. For any positive number $\eps$ and $r$  with $\eps<1/100$, the $\eps$-$r$-index is the map
\begin{eqnarray*}
Ind_{X,A,*}^{\eps,r}: K_*(X,A)&\longrightarrow &K_*^{\eps,r}(A_{\H_X} )\\ 
x&\mapsto &\tau^{{\eps/\al,r/k_{\eps/\al}}}_{\H_X}(x)( [E_X]_{\eps/\al,r/k_{{\eps}/{\al}}})\end{eqnarray*}
with $(\al,k)=(\al_\T,k_\T)$.
\end{definition}
It is straightforward to check that 
$$\iota^{\eps,\eps',r,r'}_*\circ Ind_{X,A,*}^{\eps,r}=  Ind_{X,A,*}^{\eps',r'}$$ for any positive number $\eps,\,\eps',\,r$ an $r'$ with $\eps<\eps'<1/100$ and $r<r'$ and hence $$Ind_{X,A,*}\defi (Ind_{X,A,*}^{\eps,r})_{0<\eps<1/100,r>0}:K_*(X,A)\lto\K_*(A_{\H_X})$$ is a  projective morphism in the sense of Definition \ref{def-quant-morphism}. Furthermore,
the class in $K_0(C(X)_{\H_X})$ of the projection $E_\phi$ constructed above is up to Morita equivalence the class of the trivial projection $1$ in $K_0(C(X))$ and hence we get that 
$\iota_*^{\eps,r}\circ Ind_{X,A,*}^{\eps,r}$ is up to Morita equivalence the usual index map.

\begin{remark}\label{rem-HX}
Let $\H_X$ and $\H'_X$ be two non degenerate standard $X$ modules and let us denote respectively by
$Ind^{\H_X}_{X,A}$ and $Ind^{\H'_X}_{X,A}$ the quantitative index map relatively to $\H_X$ and $\H'_X$, then according to 
Corollary \ref{corollary-adjoint} and in view of remark \ref{remark-naturality} regarding to the functoriality of $\TT_\bullet$, we obtain that
$$Ind^{\H'_X}_{X,A,*}=\Lambda_{X,\H_X,\H_X',*}\circ  Ind^{\H_X}_{X,A,*}.$$
\end{remark}
By naturality of $\T_{\H_X}$,  we have the following functoriality for the controlled index:
\begin{proposition}\label{prop-naturality-index}
Let $X$ be a compact metric space, let $(\H_X,\rho_X)$ be  non degenerate standard $X$-module and let $f:A\to B$ be 
$C^*$-algebra homomorphism. Then
$$Ind_{X,B,*}\circ f_*=f_{\H_X,*}\circ Ind_{X,A,*},$$ where $f_{\H_X}:A_{\H_X}\to B_{\H_X}$ is induced by $f$.\end{proposition}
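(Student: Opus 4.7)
The plan is to derive the statement directly from the naturality of the controlled Kasparov tensorization (Theorem \ref{thm-tensor}(iv)), so the proof is essentially an unwinding of definitions.

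First I would note that both sides of the asserted identity are projective morphisms $K_*(X,A)\to\K_*(B_{\H_X})$. Indeed, $Ind_{X,A,*}$ is a projective morphism by construction, $f_{\H_X,*}$ is a $(1,1)$-controlled morphism (since $f_{\H_X}=f\otimes \operatorname{id}_{\Kp(\H_X)}$ preserves the propagation filtration), and $f_{*}:K_*(X,A)\to K_*(X,B)$ is a homomorphism of abelian groups. Hence by the two recipes following Definition \ref{def-quant-morphism}, both $f_{\H_X,*}\circ Ind_{X,A,*}$ and $Ind_{X,B,*}\circ f_*$ are well-defined projective morphisms, and equality amounts to checking that their component maps agree at every level $(\eps,r)$.

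Next, fix $(\eps,r)$ with $\eps\in(0,1/100)$ and $r>0$, and set $(\al,k)=(\al_\T,k_\T)$. Unwinding the definitions gives
\[
Ind_{X,B,*}^{\eps,r}(f_*(x))=\tau_{\H_X}^{\eps/\al,r/k_{\eps/\al}}(f_*(x))\bigl([E_X]_{\eps/\al,r/k_{\eps/\al}}\bigr)
\]
and
\[
f_{\H_X,*}^{\eps,r}\circ Ind_{X,A,*}^{\eps,r}(x)=f_{\H_X,*}^{\eps,r}\!\left(\tau_{\H_X}^{\eps/\al,r/k_{\eps/\al}}(x)\bigl([E_X]_{\eps/\al,r/k_{\eps/\al}}\bigr)\right).
\]
Now apply property (iv) of Theorem \ref{thm-tensor} to the homomorphism $f:A\to B$, with $A_1=C(X)$, $A'_2=A$, $A_2=B$, and filtered algebra $\Kp(\H_X)$. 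This yields
\[
\TT_{\H_X}(f_*(x))=f_{\H_X,*}\circ \TT_{\H_X}(x)
\]
as controlled morphisms, and evaluating the corresponding component maps at $(\eps/\al,r/k_{\eps/\al})$ gives exactly
\[
\tau_{\H_X}^{\eps/\al,r/k_{\eps/\al}}(f_*(x))=f_{\H_X,*}^{\eps,r}\circ \tau_{\H_X}^{\eps/\al,r/k_{\eps/\al}}(x),
\]
since $f_{\H_X,*}$ does not rescale $\eps$ nor $r$. Feeding the fundamental class $[E_X]_{\eps/\al,r/k_{\eps/\al}}$ into both sides finishes the verification.

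There is no real obstacle here: the argument is a routine naturality check, and the content of the proposition is entirely encoded in property (iv) of Theorem \ref{thm-tensor}. The only point requiring a moment of care is that $f_{\H_X}$ is a genuine filtered homomorphism of filtered $C^*$-algebras (so that its induced map on quantitative $K$-theory is a $(1,1)$-controlled morphism, fitting the composition conventions for projective morphisms), which is immediate from the fact that the filtration on $A_{\H_X}$ and $B_{\H_X}$ comes from the $\Kp(\H_X)$-factor alone.
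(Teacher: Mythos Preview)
Your proof is correct and is precisely the argument the paper has in mind: the paper does not give an explicit proof but merely prefaces the proposition with ``By naturality of $\T_{\H_X}$'', and your write-up is exactly the unwinding of that naturality (Theorem~\ref{thm-tensor}(iv)) applied to the fundamental class.
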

The quantitative index map is compatible with the rescaling map defined in Section \ref{subsec-rescaling}.
\begin{proposition}\label{prop-compatibility-index-rescaling}
Let $A$ be a $C^*$-algebra, let $X$ be a compact metric space and let $(\H_X,\rho_X)$ be a non degenerate standard $X$-module. Then
$$L^{\eps,r}_{A_{\H_X},*}\circ Ind^{\eps,r}_{X,A,*}= Ind^{\eps/2,l_{\eps} r}_{X,A,*}$$ for any $\eps$ in $(0,1/200)$ (with $l=l_{rs}$ as in Section \ref{subsec-rescaling}).
\end{proposition}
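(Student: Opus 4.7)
The plan is to unfold the definition of $Ind^{\eps, r}_{X, A, *}$ and apply two compatibility results in sequence: the direct rescaling of the fundamental class $[E_X]$, and the rescaling compatibility of the Kasparov tensorization provided by Proposition \ref{prop-compatibility-kasparov-rescaling}.

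The first step is the observation that, by the construction in Section \ref{sub-sec-fund-class}, the class $[E_X]_{\eps_0, r_0}$ is represented by an honest projection (namely $V(E_\phi \otimes e_0) V^*$ in $C(X)_{\H_X}$, of propagation at most $r_0$). For any actual projection $p$ one has $\ka_0(p) = p$, so $p$ itself may be used as the approximating $\eps_0/2$-$l_{\eps_0} r_0$-projection in the definition of the rescaling map, giving the key identity
\begin{equation*}
L^{\eps_0, r_0}_{C(X)_{\H_X}, *}([E_X]_{\eps_0, r_0}) = [E_X]_{\eps_0/2, l_{\eps_0} r_0}.
\end{equation*}

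The second step is to apply the second identity of Proposition \ref{prop-compatibility-kasparov-rescaling}, with $A_1 = C(X)$, $A_2 = A$, $B = \Kp(\H_X)$, $z = x$, and input parameters $(\eps_0, r_0) = (\eps/\alpha, r/k_{\eps/\alpha})$ where $(\alpha, k) = (\alpha_\T, k_\T)$, then evaluate the two resulting expressions on $[E_X]_{\eps/\alpha, r/k_{\eps/\alpha}}$. Using the first step, the left-hand side becomes $\tau^{\eps/(2\alpha), l_{\eps/\alpha} r/k_{\eps/\alpha}}_{\H_X}(x)([E_X]_{\eps/(2\alpha), l_{\eps/\alpha} r/k_{\eps/\alpha}})$, which by the definition of $Ind$ equals $Ind^{\eps/2, l_\eps s}_{X, A, *}(x)$ with $s := k_{\eps/(2\alpha)} l_{\eps/\alpha} r/(k_{\eps/\alpha} l_\eps)$. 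On the right, using the compatibility of $[E_X]$ with the structure maps $\iota$ to absorb the $\iota^{\eps_0, r_0, -}_*$ factor into the fundamental class at the enlarged propagation, the expression simplifies to $L^{\eps, s}_{A_{\H_X}, *}(Ind^{\eps, s}_{X, A, *}(x))$. Since for fixed $\eps$ the map $r \mapsto s$ is a positive linear bijection of $(0, +\infty)$, relabeling $s$ as $r$ yields the proposition for all positive $r$.

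The main technical obstacle is the bookkeeping of propagation scales in the second step: one must check that the second identity of Proposition \ref{prop-compatibility-kasparov-rescaling}, when evaluated on the fundamental class, really lands in a precise quantitative $K$-group and that both sides rewrite as $Ind^{\bullet, \bullet}$ and $L \circ Ind^{\bullet, \bullet}$ at exactly matching parameters, the residual $\iota$-map being absorbed by the identity $\iota_*^{\eps_0, \eps_0, r_0, r'_0}([E_X]_{\eps_0, r_0}) = [E_X]_{\eps_0, r'_0}$. The inequality $l_{\al\eps} k_\eps \leq k_{\eps/2} l_\eps$ (implicit in Proposition \ref{prop-compatibility-kasparov-rescaling}) is what guarantees the relevant structure maps are well-defined at each stage.
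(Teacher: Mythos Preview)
Your proposal is correct and follows essentially the same approach as the paper's own proof: both rely on the observation that $[E_X]_{\eps,r}$ is represented by an honest projection (so $L$ acts trivially on it) and then invoke Proposition~\ref{prop-compatibility-kasparov-rescaling}. The paper's proof is terser, simply citing that proposition without unpacking the index bookkeeping, whereas you carry out the parameter chase explicitly and make the relabeling $s\leftrightarrow r$ argument; this is a faithful elaboration of what the paper leaves implicit.
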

\begin{proof}
Let $E_X$ as in Equation \eqref{equation-EX}. Since $E_X$ is a projection, we have that $$L_{C(X)_{\H_X},*}^{\eps,r}([E_X,0]_{\eps,r})=[E_X,0]_{\eps/2,l_\eps r}$$ for any $\eps$ in $(0,1/200)$ and any $r>0$. The result is then a consequence of Proposition \ref{prop-compatibility-kasparov-rescaling}.
\end{proof}
Moreover, we have the following compatibility result for the quantitative index with respect to Kasparov product.
\begin{proposition}\label{prop-compatibility-index}Let $X$ be a compact metric space,  let $(\H_X,\rho_X)$ be  a non degenerate standard $X$-module, let $A$ and $B$ be 
$C^*$-algebras and let $z$ be an element in $KK_*(A,B)$. Then 
$$Ind_{X,B,*}\circ(\bullet\ts z)= \T_{\H_X}(z) \circ Ind_{X,A,*},$$ where $\bullet\ts z:K_*(X,A)\to K_*(X,B)$ is the right Kasparov product by $z$.\end{proposition}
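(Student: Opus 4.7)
The plan is to reduce the identity to the multiplicativity of the controlled Kasparov transform $\T_{\H_X}$ established in Theorem \ref{thm-product-tensor}. The structural observation is that $Ind_{X,A,*}$ is by definition $\T_{\H_X}$ applied to Kasparov classes in $KK_*(C(X),A)$ and evaluated at the fundamental class $[E_X]$; so the composition $\T_{\H_X}(z) \circ Ind_{X,A,*}$ is literally an iterated application of $\T_{\H_X}$, while $Ind_{X,B,*} \circ (\bullet \otimes_A z)$ is a single application of $\T_{\H_X}$ to the Kasparov product $x \otimes_A z$. Matching these two presentations is exactly what Theorem \ref{thm-product-tensor} is designed for.

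Concretely, I would first unpack both sides at a fixed control level $(\eps,r)$, writing $(\al,k) = (\al_\T, k_\T)$. The left-hand side reads $\tau_{\H_X}^{\eps/\al,\, r/k_{\eps/\al}}(x \otimes_A z)([E_X])$. For the right-hand side, using the composition rule for a controlled morphism applied after a projective morphism (see the formula following Definition \ref{def-quant-morphism}) and then expanding $Ind_{X,A,*}$ itself, one obtains $\tau_{\H_X}^{\eps/\al,\, r/k_{\eps/\al}}(z) \circ \tau_{\H_X}^{\eps/\al^2,\, r/(k_{\eps/\al}\, k_{\eps/\al^2})}(x)([E_X])$. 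By Theorem \ref{thm-product-tensor} applied with $A_1 = C(X)$, $A_2 = A$, $A_3 = B$ and filtered $C^*$-algebra $\Kp(\H_X)$, there is a universal control pair $(\la,h)$ with $\T_{\H_X}(x \otimes_A z) \aeq \T_{\H_X}(z) \circ \T_{\H_X}(x)$. Feeding the class $[E_X]$ through this equivalence and invoking the naturality of $\tau_{\H_X}(\bullet)$ with respect to the structure maps $\iota_*^{\eps,\eps',r,r'}$ (see Theorem \ref{thm-tensor}) delivers the desired identity.

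The main obstacle is managing the nested rescaling: the single $\T_{\H_X}$ on the left and the doubly composed $\T_{\H_X}$ on the right acquire different propagation blow-up, and Theorem \ref{thm-product-tensor} only bridges them after applying a further control pair $(\la,h)$. This is resolved by the crucial feature of $[E_X]$ established in Section \ref{sub-sec-fund-class}: the class exists at every propagation level and is compatible with all structure maps, i.e.\ $\iota_*^{\eps,\eps',r,r'}([E_X]_{\eps,r}) = [E_X]_{\eps',r'}$. Consequently, we retain full freedom to evaluate $[E_X]$ at precisely the control level required to align the two sides through the $\aeq$-relation, so that the bookkeeping reduces to a routine verification rather than a genuine difficulty.
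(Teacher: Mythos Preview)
Your proposal is correct and follows essentially the same approach as the paper: invoke Theorem \ref{thm-product-tensor} to identify $\T_{\H_X}(x\otimes_A z)$ with $\T_{\H_X}(z)\circ\T_{\H_X}(x)$ up to a control pair, and then use the fact that both sides are projective morphisms evaluated at the fundamental class $[E_X]$ (which exists compatibly at all propagation levels) to absorb the rescaling. The paper phrases this last step as an appeal to the first point of Remark \ref{rem-quant-morphism-small-propagation}, namely that a projective morphism is determined by its values at small $(\eps,s)$, which is exactly the mechanism you describe.
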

\begin{proof}
For propagation small enough, this is a consequence of Theorem \ref{thm-product-tensor}. The general case follows then from the first point of Remark \ref{rem-quant-morphism-small-propagation}.
\end{proof}

As a consequence of Lemma \ref{functoraility-fundamental-class}, of point (iii) of Theorem \ref{thm-tensor} and of Remark  \ref{remark-naturality}, we see that
the quantitative index is also functorial in spaces.

\begin{lemma}\label{functoriality-index-in-space}
 Let $X$ and $Y$ be two compact metric spaces, let $\H_X$ and $\H_Y$ be respectively non degenerate $X$-module and $Y$-module. Let $f:X\to Y$ be a $C$-Lipschitz map for some positive number $C$ and denote by $f_*:K_*(X,A)\to K_*(Y,A)$ the induced morphism. Then we have
 $$Ind_{Y,A,*}\circ f_*=f^{\sharp}_{A,*}\circ Ind_{X,A,*}.$$
 \end{lemma}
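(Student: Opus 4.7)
The plan is to unfold both sides using the definition $\operatorname{Ind}_{X,A,*}^{\eps,r}(x)=\tau_{\H_X}^{\eps/\al,r/k_{\eps/\al}}(x)([E_X]_{\eps/\al,r/k_{\eps/\al}})$, and then combine three ingredients already in hand: the contravariance of $\T_\bullet$ in its left $C^*$-algebra argument (Theorem \ref{thm-tensor}(iii)), the compatibility of the fundamental class with $f$ (Lemma \ref{functoraility-fundamental-class}), and the naturality of $\T_\bullet$ in its filtered argument (Proposition \ref{prop-naturality}, in the generalized form of Remark \ref{remark-naturality}). All statements will be up to structure morphisms $\iota_*^{-,-}$, which is the only flexibility actually needed for the equality of projective morphisms as in Remark \ref{rem-quant-morphism-small-propagation}.

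First, recall that $f_*:K_*(X,A)\to K_*(Y,A)$ is, by definition, induced by the $*$-homomorphism $f^*:C(Y)\to C(X);\,h\mapsto h\circ f$; that is, $f_*(x)=(f^*)^*(x)$. Applying point (iii) of Theorem \ref{thm-tensor} to $f^*$ with $B=\Kp(\H_Y)$, we obtain
\[
\T_{\H_Y}(f_*(x))\;=\;\T_{\H_Y}(x)\circ f^*_{\H_Y,*},
\]
where $f^*_{\H_Y}:C(Y)_{\H_Y}\to C(X)_{\H_Y}$ is the morphism induced by $f^*$. Evaluating both sides at the appropriate rescaling of $[E_Y]$, this gives
\[
\operatorname{Ind}_{Y,A,*}(f_*(x))\;=\;\T_{\H_Y}(x)\bigl(f^*_{\H_Y,*}([E_Y])\bigr).
\]

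Next, Lemma \ref{functoraility-fundamental-class} computes $f^*_{\Kp(\H_Y)}[E_Y]$ as $f^{\sharp}_{C(X),*}[E_X]$ (up to the $(1,2C)$-rescaling of the propagation). Substituting into the previous line yields
\[
\operatorname{Ind}_{Y,A,*}(f_*(x))\;=\;\T_{\H_Y}(x)\bigl(f^{\sharp}_{C(X),*}([E_X])\bigr).
\]
Finally, to move $f^{\sharp}_{*}$ across $\T_{\H_Y}(x)$, I invoke Remark \ref{remark-naturality}. By point (i) of Remark \ref{remark-covering-isometries}, for each given $r$ the controlled morphism $f^{\sharp,\eps,r}_{A,*}$ is implemented by $\operatorname{Ad}_{V_r}$ where $V_r:\H_X\to\H_Y$ is any partial isometry supported in $\{(x,y):d(y,f(x))<Cr/2\}$; the resulting $C^*$-algebra homomorphism $\operatorname{Ad}_{V_r}:\Kp(\H_X)\to\Kp(\H_Y)$ satisfies $\operatorname{Ad}_{V_r}(\Kp(\H_X)_{r'})\subseteq\Kp(\H_Y)_{Cr'+Cr}$ for all $r'>0$, i.e.\ it fits the hypothesis of Remark \ref{remark-naturality} with $\theta=C$ and $\beta=Cr$. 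Tensoring with $\operatorname{id}$ on either $C(X)$ or $A$, the naturality statement
\[
(\operatorname{Ad}_{V_r})_{A,*}\circ\tau_{\H_X}^{\cdot,\cdot}(x)\;=\;\tau_{\H_Y}^{\cdot,\cdot}(x)\circ(\operatorname{Ad}_{V_r})_{C(X),*}
\]
applied to $[E_X]$ yields the desired identity $f^{\sharp}_{A,*}\circ\operatorname{Ind}_{X,A,*}=\operatorname{Ind}_{Y,A,*}\circ f_*$, understood as equality of projective morphisms $K_*(X,A)\to\K_*(A_{\H_Y})$.

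The main obstacle is purely bookkeeping: $f^{\sharp}$ is a controlled (not filtered) morphism whose propagation rescaling depends on $r$, so the application of Remark \ref{remark-naturality} must be done at fixed $r$ with a carefully chosen $V_r$, and then assembled into an equality of controlled morphisms by letting $r$ vary and using the independence of the construction from the choice of $V_r$ (Lemma \ref{lemma-adjoint}). Once the propagation indices are tracked consistently through each of the three naturality steps, the equality follows; in view of point (i) of Remark \ref{rem-quant-morphism-small-propagation}, it is enough to verify it for propagation small enough, which absorbs the various rescaling constants.
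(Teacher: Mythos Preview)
Your proposal is correct and follows exactly the approach indicated in the paper, which states just before the lemma that it is ``a consequence of Lemma \ref{functoraility-fundamental-class}, of point (iii) of Theorem \ref{thm-tensor} and of Remark \ref{remark-naturality}''. You have unfolded precisely these three ingredients in the right order and handled the propagation bookkeeping via Remark \ref{rem-quant-morphism-small-propagation}, which is all that is needed.
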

 
 \begin{remark}\label{rem-funct-inclusion}
 In the special case of inclusion, let $X$ be a compact metric space, let $Y$ be a closed subset of $A$ and let $(\H_X,\rho_X)$ be a non
  degenerate standard $X$-module with standard restriction to $\H_X^Y$. Then with notations point ii) of
   Remark \ref{remark-covering-isometries} and according to Remark  \ref{rem-fund-class-inclusion}, we have
    $$Ind_{X,A,*}\circ \jmath_{X,Y,*}=\jmath^{\sharp}_{X,Y,A,*}\circ Ind_{Y,A,*}.$$
   \end{remark}

 We are now in a position to state the main theorem of this section: for a finite simplicial complexe, then the quantitative index is a $h$-isomorphism 
 in the sense of Definition \ref{def-h-isomorphism} with $h$ depending only on the dimension of the complexe. 
 \medskip
 
Let $X$ be a finite simplicial complex provided with the simplicial topology. We equip $X$ with the normalized spherical metric (see \cite[Section 7.2]{wybook}). 
\begin{itemize}
\item
Let $\De$ be a (geometrical) simplex of $X$ of dimension $n$ with vertices $x_0,\ldots,x_n$. Let $x$ be an element of $\De$  with barycentric coordinates $(t_0,\ldots,t_n)$ with respect to $x_0,\ldots,x_n$ and set
$$f_\De(x)=\frac{1}{(t_0^2+\ldots+t_n^2)^{1/2}}(t_0,\dots,t_n).$$Then $f_\De(x)$ belongs to the unit $n$-sphere. We define $d$ on $\De$ by
$$\cos \frac{\pi}{2}d(x,y)=\langle f_\De(x),f_\De(y)\rangle$$ for any $x$ and $y$ in $\De$.
\item If $x$ and $y$ belong to  the same connected component of $X$, we set
$d(x,y)=\inf\{d(x_0,x_1)+\ldots+d(x_{n-1},x_n)$, where the infimium is taken over sequence $x=x_0,x_1,\ldots,x_{n-1},x_n=y$ such that $x_i$ and $x_{i+1}$ lie for $i=0,\ldots,n-1$ in a common  simplex;
\item If $x$ and $y$ lie in  distinct connected components, we set $d(x,y)=+\infty$.
\end{itemize}
\begin{theorem}\label{thm-qindex-iso}
There exists a non increasing function $$(0,1/200)\to[1,+\infty);\,\eps\mapsto h_\eps$$ such that 
for any finite simplicial complexe $X$ of dimension less than $m$, any non degenerate standard  $X$-module $\H_X$ and any $C^*$-algebra $A$, then
$$Ind_{X,A,*}: K_*(X,A)\longrightarrow \K_*(A_{\H_X} )$$ is a  $h^{(m^2)}$-isomorphism (in sense of definition \ref{def-h-isomorphism}).
\end{theorem}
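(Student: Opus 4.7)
The plan is to prove Theorem \ref{thm-qindex-iso} by induction on the dimension $m$ of the finite simplicial complex $X$, using controlled Mayer--Vietoris exact sequences in quantitative $K$-theory together with the corresponding Mayer--Vietoris sequences for $K$-homology, glued via a controlled five lemma. The base case $m=0$ is direct: a $0$-dimensional simplicial complex is a finite set of points, and for any $\eps$ in $(0,1/200)$ and any sufficiently small $r$, the algebra $A_{\H_X}$ decomposes as a direct sum indexed by the connected components via cutoff projections with small propagation; on the $K$-homology side we have $K_*(X,A)=\bigoplus K_*(A)$, and on the quantitative side $K_*^{\eps,r}(A_{\H_X})$ matches up after rescaling by Morita equivalence (Proposition \ref{prop-morita}). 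The compatibility with $Ind_{X,A,*}$ follows because for each point the fundamental class pulls back to $[E_{pt}]$, which up to Morita represents the identity.

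For the inductive step, I would fix a standard simplicial decomposition $X=X^{(1)}\cup X^{(2)}$ where $X^{(1)}$ is a small closed neighborhood of the barycenters of the top-dimensional $m$-simplices (a disjoint union of contractible pieces, hence $0$-dimensional up to homotopy), $X^{(2)}$ is a neighborhood of the $(m-1)$-skeleton (of dimension $m-1$), and $X^{(1)}\cap X^{(2)}$ deformation retracts onto a complex of dimension at most $m-1$. Lipschitz functoriality of $Ind_{\bullet,A,*}$ (Lemma \ref{functoraility-fundamental-class} and Lemma \ref{functoriality-index-in-space}) together with homotopy invariance (Proposition \ref{prop-homotopy}) allows the inductive hypothesis to be applied to $X^{(1)}$, $X^{(2)}$ and $X^{(1)}\cap X^{(2)}$, each with controlled parameters depending on $m$ through $h^{((m-1)^2)}$.

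On the $K$-homology side we use the ordinary Mayer--Vietoris six term exact sequence. On the right-hand side we invoke the controlled Mayer--Vietoris six term exact sequence from \cite{oy4} associated to the decomposition of $C(X)\ts\Kp(\H_X)$ (more precisely of $A_{\H_X}$ via the restriction $X$-modules $\H_{X^{(i)}}$ of Section \ref{section-nds-mod}); this is the central tool announced in Section 4 of the paper. The key technical step, and the genuine main obstacle, is the compatibility between the two boundary maps: the $K$-homology boundary is built from homotopy-theoretic data, while the controlled boundary comes from the approximate lifting of $\eps$-$r$-unitaries in the completely filtered extension associated to the Mayer--Vietoris pair (Proposition \ref{prop-bound}). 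Intertwining these two maps via the quantitative index requires tracking how the fundamental class $[E_X]_{\eps,r}$ interacts with cutoffs adapted to the cover, and this is exactly the ``arduous computation'' foreshadowed in the introduction.

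Once the compatibility is established, we have a commutative diagram of $(1,2)$- and $(\al_\DD,k_\DD)$-controlled morphisms in which four of the five vertical maps are (by induction) $h^{((m-1)^2)}$-isomorphisms; a controlled version of the five lemma (using the equivalences between controlled isomorphisms and $(\la,h)$-injectivity/surjectivity noted right after Definition \ref{def-exact-seq}) then forces the middle vertical arrow $Ind_{X,A,*}$ to be an $h'$-isomorphism for a new control $h'$ obtained by composing $h^{((m-1)^2)}$ with the boundary control pair $(\al_\DD,k_\DD)$ and the tensor control pair $(\al_\T,k_\T)$. Absorbing all these compositions into a single non-increasing function $\eps\mapsto h_\eps$ (independent of $X$, $A$, $\H_X$ and $m$) and bookkeeping the number of iterations across the $m$ inductive steps — a constant number per step, hence quadratic in $m$ in total — yields the stated bound $h^{(m^2)}$. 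The rescaling compatibility (Proposition \ref{prop-compatibility-index-rescaling}) is what makes iteration of $h$ meaningful and guarantees that the $\eps$ shrinkage remains under control throughout the induction.
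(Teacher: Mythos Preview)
Your overall strategy matches the paper's: induction on the dimension, a barycentric decomposition $X=X^{(1)}\cup X^{(2)}$ with $X^{(1)}$ a neighbourhood of the top-cell barycenters and $X^{(2)}$ a neighbourhood of the codimension-one skeleton, the two Mayer--Vietoris sequences linked by the compatibility of boundary maps (Proposition~\ref{proposition-index-commutes}), and a controlled five lemma (Lemma~\ref{lemma-5-lemma}). The base case and the identification of $X^{(1)}\cap X^{(2)}$ with a disjoint union of simplex boundaries $\partial\De_m$ are also correct.

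The gap is in the quantitative bookkeeping. The controlled five lemma of Lemma~\ref{lemma-5-lemma} turns an $h$-isomorphism on the intersection and an $h'$-isomorphism on each piece into a $khh'$-isomorphism on the union: the controls \emph{multiply}. If you feed in the bare inductive hypothesis $h^{(m-1)^2}$ for the intersection \emph{and} for $X^{(2)}$, as you propose, the recursion becomes $g(m)\lq k\cdot g(m-1)^{2}$, which gives $g(m)$ of order $h^{2^{m}}$, not $h^{m^{2}}$. Your assertion ``a constant number per step, hence quadratic in $m$'' does not follow from this setup: each step squares the previous control rather than adding a bounded amount to its exponent.

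What the paper does to avoid this blow-up is to establish, by a \emph{separate} induction, a sharper linear bound for the specific complexes that arise as the intersection: Lemma~\ref{lemma-index-iso-boundary-simplex} shows that $Ind_{\partial\De_n,A,*}$ is an $h^{n}$-isomorphism. The point is that $\partial\De_n$ can be cut into two \emph{contractible} caps whose intersection retracts onto a rescaled copy of $\partial\De_{n-1}$; with both pieces contractible, only the intersection carries inductive weight in the five lemma, and the recursion $f(n)\lq k\cdot C\cdot f(n-1)$ is genuinely linear. Plugging $h^{m}$ (instead of $h^{(m-1)^{2}}$) for the intersection into the main induction then yields $g(m)\lq k\cdot h^{m}\cdot g(m-1)$, which does solve to $g(m)\lq h^{m^{2}}$ after absorbing constants. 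This auxiliary lemma on $\partial\De_n$ is the missing idea in your outline.
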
 This theorem will be proved in  Section \ref{sec-proof-thm-iso}.
\begin{corollary}
Let $X$  be a  finite simplicial complexe. Then for any non degenerate standard  $X$-module $\H_X$, any $C^*$-algebra $A$ and any fixed $\eps$ in $(0,1/200)$, the map
\begin{eqnarray*}
K_*(X,A)&\lto&\varprojlim_{r>0} {K_*(A_{\H_X})}\\
x&\mapsto&(Ind^{\eps,r}_{X,A})_{r>0}
\end{eqnarray*}
is an isomorphism.
\end{corollary}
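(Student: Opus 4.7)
The plan is to derive this corollary as a direct consequence of Theorem \ref{thm-qindex-iso} together with the general observation recorded after Definition \ref{def-h-isomorphism}: namely, that an $h$-isomorphism induces, at any fixed control parameter $\eps \in (0,1/200)$, an isomorphism from its source onto the projective limit of compatible families. Let $m$ denote the dimension of $X$, and set $H_\eps = h^{(m^2)}_\eps$, where $h$ is the function provided by Theorem \ref{thm-qindex-iso}. That theorem says exactly that $Ind_{X,A,*}\colon K_*(X,A)\to\K_*(A_{\H_X})$ is an $H$-isomorphism in the sense of Definition \ref{def-h-isomorphism}, and the job is to unpack this into the bijectivity statement.

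For injectivity, suppose $x\in K_*(X,A)$ has $Ind^{\eps,r}_{X,A,*}(x)=0$ for every $r>0$. Choose any $r<1/H_\eps$; the first clause of Definition \ref{def-h-isomorphism} asserts that $Ind^{\eps,r}_{X,A,*}$ is one-to-one at this scale, hence $x=0$.

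For surjectivity, let $(y_r)_{r>0}$ be a compatible family, so $\iota_*^{\eps,r,r'}(y_r)=y_{r'}$ for $r\lq r'$. For each $r\in(0,1/H_\eps^2)$, the second clause of Definition \ref{def-h-isomorphism} yields some $x_r\in K_*(X,A)$ with $Ind^{\eps,H_\eps r}_{X,A,*}(x_r)=\iota_*^{\eps,r,H_\eps r}(y_r)=y_{H_\eps r}$. To see that $x_r$ does not depend on $r$, pick $r\lq r'<1/H_\eps^2$; since $Ind_{X,A,*}$ is a projective morphism and the family $(y_r)$ is compatible, one computes
\[
Ind^{\eps,H_\eps r'}_{X,A,*}(x_r)=\iota_*^{\eps,H_\eps r,H_\eps r'}\!\circ Ind^{\eps,H_\eps r}_{X,A,*}(x_r)=y_{H_\eps r'}=Ind^{\eps,H_\eps r'}_{X,A,*}(x_{r'}),
\]
and since $H_\eps r'<1/H_\eps$, the injectivity clause forces $x_r=x_{r'}$. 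Let $x$ denote this common value. We now have $Ind^{\eps,r'}_{X,A,*}(x)=y_{r'}$ for every $r'\in(0,1/H_\eps)$, and applying the structure maps $\iota_*^{\eps,r',r}$ on both sides propagates this equality to all $r>0$, using once more the compatibility of $Ind_{X,A,*}$ as a projective morphism and of $(y_r)$ as a member of the projective limit.

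The only genuinely nontrivial step is verifying that the preimages $x_r$ produced by the surjectivity clause of the $h$-isomorphism property are coherent as $r$ varies; this is precisely where the injectivity clause is invoked, and it is the pairing of the two clauses of Definition \ref{def-h-isomorphism} that makes the argument go through. No further input beyond Theorem \ref{thm-qindex-iso} is required.
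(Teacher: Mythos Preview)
Your proof is correct and follows exactly the route the paper intends: the corollary is stated without proof in the paper, relying on the general observation recorded just after Definition \ref{def-h-isomorphism} that any $h$-isomorphism induces an isomorphism onto the projective limit at fixed $\eps$, combined with Theorem \ref{thm-qindex-iso}. You have simply unpacked that observation in full detail, and the argument is sound.
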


%
%

We end this section by showing  that the quantitative assembly maps for finite metric spaces can indeed be recovered from the quantitative index.
 Let $\Si$ be a finite metric space. For any integer $d$, we equipped  $P_d(\Si)$ the Rips complex of degree $d$ of $\Si$ with the normalized spherical metric.  Let $\H_{\Si,d}$ be a non degenerate standard $P_d(\Si)$-module. Consider then the isometry
 \begin{equation}\label{equ-isometry-assembly}V_{\H_{\Si,d}}:\H_{\Si,d}\lto \ell^2(\Si,\H_{\Si,d})\end{equation} defined by 
 $$V_{\H_{\Si,d}}\cdot\xi(\si)=\la^{1/2}_{\si}\cdot \xi$$ for all $\xi$ in $\H_{\Si,d}$ and $\si$ in $\Si$. We equip
  $\ell^2(\Si,\H_{\Si,d})$ with  the $\Si$-module structure given by pointwise multiplication by $C(\Si)$.
  \begin{lemma} If $T$ is an element of  
  $A_{\H_{\Si,d}}$ with propagation  $r$,
  then  $$Ad_{V_{\H_{\Si,d},A}}\cdot T\defi (Id_A\ts V_{\H_{\Si,d}})\cdot T\cdot (Id_A\ts V^*_{\H_{\Si,d}})$$  has propagation $2d(r+1)$.
  \end{lemma}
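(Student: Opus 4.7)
The plan is to decompose $\Ad_{V_{\H_{\Si,d}},A}\cdot T$ into blocks along $\ell^2(\Si,\H_{\Si,d})=\bigoplus_{\si\in\Si}\H_{\Si,d}$ and to prove that each block $(\si,\si')$ vanishes when $d_\Si(\si,\si')>2d(r+1)$, where $d_\Si$ denotes the metric on $\Si$ and $d$ the Rips degree. The explicit form of the adjoint $V^*_{\H_{\Si,d}}\eta=\sum_\si\la_\si^{1/2}\eta(\si)$ (and the identity $\sum_\si\la_\si=1$, which makes $V_{\H_{\Si,d}}$ an isometry) gives by a direct calculation that the $(\si,\si')$-block of $\Ad_{V_{\H_{\Si,d}},A}(T)$ is $\la_\si^{1/2}\,T\,\la_{\si'}^{1/2}$, where each $\la_\tau^{1/2}$ acts on $\H_{\Si,d}$ as a multiplication operator coming from the $P_d(\Si)$-module structure.

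I will next appeal to the support calculus for operators on $X$-modules reviewed in Section~\ref{section-nds-mod}: if $\la_\si^{1/2}\,T\,\la_{\si'}^{1/2}\ne 0$, there exist $x\in\supp\la_\si$ and $y\in\supp\la_{\si'}$ with $(x,y)\in\supp T$. The set $\supp\la_\si$ is the closed star of $\si$ in $P_d(\Si)$, so $\de_\si$ and $x$ lie in a common closed simplex; likewise $\de_{\si'}$ and $y$. Each simplex of $P_d(\Si)$ has diameter at most $1$ in the normalized spherical metric, since its vertices embed as an orthonormal frame on the unit sphere, making them pairwise at spherical distance $\pi/2$, which rescales to $1$. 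Consequently $d_{P_d(\Si)}(\de_\si,x)\le 1$ and $d_{P_d(\Si)}(\de_{\si'},y)\le 1$, and combined with the propagation bound $d_{P_d(\Si)}(x,y)\le r$ and the triangle inequality this yields $d_{P_d(\Si)}(\de_\si,\de_{\si'})\le r+2$.

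The last step is to translate this $P_d(\Si)$-metric bound into a $\Si$-metric bound. I will use that, restricted to the vertex set $\{\de_\tau\}_{\tau\in\Si}$, the normalized spherical metric coincides with the graph metric of the $1$-skeleton of $P_d(\Si)$ (each edge having length $1$): one direction is immediate since each edge is a geodesic arc of length $1$ in its ambient simplex, and the other follows from the fact that two vertices of any common simplex are at spherical distance exactly $1$, so paths through higher-dimensional simplices cannot improve on this. From $d_{P_d(\Si)}(\de_\si,\de_{\si'})\le r+2$ one then extracts a chain $\si=v_0,\dots,v_n=\si'$ with $n\le r+2$ (indeed $n\le\lfloor r+2\rfloor$, as graph distance is integer valued) and with consecutive vertices $v_i,v_{i+1}$ lying in a common simplex of $P_d(\Si)$, hence at $d_\Si$-distance at most $d$. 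This yields $d_\Si(\si,\si')\le(r+2)d\le 2d(r+1)$, contradicting the assumption and closing the proof.

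The main technical obstacle is the metric comparison of the final step: proving rigorously that no path through the interior of a higher-dimensional simplex of $P_d(\Si)$ can shortcut between vertices in the normalized spherical metric. This requires a careful analysis of geodesics across simplices sharing a face and in particular of how the intrinsic spherical geometry on each simplex combines with the global infimum definition of the path metric on $P_d(\Si)$.
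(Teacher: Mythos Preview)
Your approach is essentially the same as the paper's. The paper identifies, just as you do, that a pair $(\si,\si')$ lies in the support of $\Ad_{V_{\H_{\Si,d}},A}\cdot T$ exactly when there exist $(x,x')\in\supp T$ with $\si\in\supp x$ and $\si'\in\supp x'$ (equivalently $x\in\supp\la_\si$, $x'\in\supp\la_{\si'}$), and then deduces the propagation bound from a metric comparison between $P_d(\Si)$ and $\Si$. The only difference is that the paper does not reprove this comparison: it invokes \cite[Corollary~7.2.6]{wybook}, which gives directly the inequality $d_\Si(\si,\si')\lq 2d(r+1)$ from the hypotheses $\si\in\supp x$, $\si'\in\supp x'$, $d_{P_d(\Si)}(x,x')\lq r$. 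What you correctly flag as the ``main technical obstacle'' --- that the normalized spherical metric restricted to vertices coincides with the $1$-skeleton graph metric, so that shortcuts through higher-dimensional simplices cannot occur --- is precisely the content absorbed into that citation. Your route via the intermediate bound $d_{P_d(\Si)}(\de_\si,\de_{\si'})\lq r+2$ and the chain argument gives the slightly sharper $(r+2)d$, but the underlying geometry is the same.
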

  \begin{proof}
  A pair $(\si,\si')\in \Si^2$ is in the support of $Ad_{V_{\H_{\Si,d},A}}$ if and only if there exists $(x,x')$ in the support of $T$ such that $\si$ is in the support of $x$ and $\si'$ is in the support of $x'$.
 The result is then a consequence of \cite[Corollary 7.2.6]{wybook}.
 \end{proof}
   Hence, in view of Proposition \ref {prop-morita}, for every positive numbers $\eps$ and $r$ with $\eps<1/100$, then
 $Ad_{V_{\H_{\Si,d},A}}$ induces   a morphism 
 $$Ad^{\eps,r}_{V_{\H_{\Si,d}},A,*}:K_*^{\eps,r}(A_{\H_{\Si,d}}){\lto} K_*^{\eps,2d(r+1)}(A_{\ell^2(\Si,\H_d)})\stackrel{\cong}{\lto} K_*^{\eps,2d(r+1)}(A_\Si) $$
(recall that $A_\Si=A\ts\Kp(\ell^2(\Si))$) such that
\begin{equation}
\label{equ-V-compatible}
\iota_*^{\eps,2d(r+1),2d(r'+1)}\circ Ad^{\eps,r}_{V_{\H_{\Si,d},A},*}=Ad^{\eps,r'}_{V_{\H_{\Si,d},A},*}\circ \iota_*^{\eps,r,r'}.
\end{equation}
for every positive number $r'$ with $r'\gq r$

 \begin{proposition}\label{proposition-assembly-conjugate}
 Let $\Si$ be a finite metric space, let $A$ be a $C^*$-algebra and let $\eps$, $d$ and $r$ be positive numbers with $\eps<1/100$ and $r\gq k_{\T,\eps/\al_\T}-1$. Then for any 
    non degenerate standard $P_d(\Si)$-module $\H_{\Si,d}$, we have up to Morita equivalence
 $$\nu_{\Si,A,*}^{\eps,2d(r+1),d}=Ad^{\eps,r}_{V_{\H_{\Si,d}},A,*}\circ Ind^{\eps,r}_{P_d(\Si),A,*}.$$
\end{proposition}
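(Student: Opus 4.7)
The strategy is to unfold both sides as Kasparov products with fundamental classes, commute $Ad_{V_{\H_{\Si,d}}}$ past the tensorization $\TT_{\H_{\Si,d}}(x)$ using the naturality of Remark \ref{remark-naturality}, and then match the resulting classes by a pointwise computation together with a Morita identification. By construction $\nu^{\eps,2d(r+1),d}_{\Si,A,*}(x)=\tau^{\eps/\al_\T,\,2d(r+1)/k_{\T,\eps/\al_\T}}_{\Kp(\ell^2(\Si))}(x)\bigl([P_{P_d(\Si)},0]\bigr)$, with the class $[P_{P_d(\Si)},0]$ of propagation $2d$. The hypothesis $r\gq k_{\T,\eps/\al_\T}-1$ is precisely what ensures $2d\lq 2d(r+1)/k_{\T,\eps/\al_\T}$, so that $[P_{P_d(\Si)},0]$ is well-defined at this scale. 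On the other hand, $Ad^{\eps,r}_{V_{\H_{\Si,d}},A,*}\circ Ind^{\eps,r}_{P_d(\Si),A,*}(x)=Ad^{\eps,r}_{V_{\H_{\Si,d}},A,*}\bigl(\tau^{\eps/\al_\T,\,r/k_{\T,\eps/\al_\T}}_{\H_{\Si,d}}(x)([E_{P_d(\Si)}])\bigr)$.

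By the lemma preceding the statement, $Ad_{V_{\H_{\Si,d}}}\colon\Kp(\H_{\Si,d})\to\Kp(\ell^2(\Si,\H_{\Si,d}))$ carries propagation $s$ into propagation $2d(s+1)$, so Remark \ref{remark-naturality} applies with $(\theta,\beta)=(2d,2d)$ and gives, modulo a structure map $\iota$,
\[
Ad^{\eps,r}_{V_{\H_{\Si,d}},A,*}\circ\tau^{\eps/\al_\T,\,r/k_{\T,\eps/\al_\T}}_{\H_{\Si,d}}(x)=\tau^{\eps/\al_\T,\,2d(r/k_{\T,\eps/\al_\T}+1)}_{\ell^2(\Si,\H_{\Si,d})}(x)\circ Ad^{\eps/\al_\T,\,r/k_{\T,\eps/\al_\T}}_{V_{\H_{\Si,d}},C(P_d(\Si)),*}.
\]
A direct pointwise calculation then shows that $V_{\H_{\Si,d}}E_{P_d(\Si)}V_{\H_{\Si,d}}^{\ast}=P_{P_d(\Si)}\ts E_{P_d(\Si)}$ as an element of $C(P_d(\Si))\ts\Kp(\ell^2(\Si))\ts\Kp(\H_{\Si,d})\simeq C(P_d(\Si))\ts\Kp(\ell^2(\Si,\H_{\Si,d}))$. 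Indeed, at $x\in P_d(\Si)$, writing $u_x=(\la_\si^{1/2}(x))_\si\in\ell^2(\Si)$ and letting $e_x\in\H_{\Si,d}$ be the range unit vector of $E_{P_d(\Si)}(x)$, the operator $V_{\H_{\Si,d}}(x)E_{P_d(\Si)}(x)V_{\H_{\Si,d}}(x)^{\ast}$ is the rank-one projection onto $u_x\ts e_x$, which factors as $P_{P_d(\Si)}(x)\ts E_{P_d(\Si)}(x)$.

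It remains to identify $[P_{P_d(\Si)}\ts E_{P_d(\Si)}]$ with the Morita image $[P_{P_d(\Si)}\ts e_0]$ of $[P_{P_d(\Si)}]$, for any fixed rank-one $e_0\in\Kp(\H_{\Si,d})$. Since $E_{P_d(\Si)}$ is a pointwise rank-one projection family admitting the global nowhere-vanishing unit section $x\mapsto V\phi^{1/2}(x,\cdot)$ (the condition $\int\phi(x,y)\,d\mu(y)=1$ ensures this section has unit norm at every $x$), a Murray--von Neumann equivalence between $E_{P_d(\Si)}$ and $e_0$ is implemented by an $\eps$-$r$-partial isometry whose propagation is controlled by that of $V$ and $\phi^{1/2}$, both of which can be chosen arbitrarily small relative to $r$. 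The associated rotation-in-$M_2$ homotopy, tensored with $P_{P_d(\Si)}$, gives a homotopy of $\eps$-$2d(r+1)$-projections identifying the two classes; combined with Proposition \ref{prop-naturality} (naturality of $\TT_{\bullet}$ under the Morita inclusion $A_\Si\hookrightarrow A\ts\Kp(\ell^2(\Si,\H_{\Si,d}))$), this yields the claimed equality up to Morita. The main obstacle is the propagation bookkeeping: one must verify that the Murray--von Neumann homotopy between $E_{P_d(\Si)}$ and $e_0$ can be realized with propagation negligible relative to $r$, and that the various $\iota$'s produced when commuting $Ad_{V_{\H_{\Si,d}}}$ past $\TT_{\H_{\Si,d}}(x)$ line up with the target propagation $2d(r+1)$.
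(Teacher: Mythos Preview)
Your overall strategy---commute $Ad_{V_{\H_{\Si,d}}}$ past $\TT_{\H_{\Si,d}}(x)$ using Remark~\ref{remark-naturality} and then compare the images of the fundamental classes---is exactly the paper's approach. The gap is in your pointwise computation. You write ``$V_{\H_{\Si,d}}(x)$'' as though the isometry $V_{\H_{\Si,d}}$ were $C(P_d(\Si))$-linear, but it is not: it is a fixed operator $\H_{\Si,d}\to\ell^2(\Si,\H_{\Si,d})$ given by $(V_{\H_{\Si,d}}\xi)(\si)=\rho_d(\la_\si^{1/2})\xi$, where $\rho_d(\la_\si^{1/2})$ acts on the vector $\xi\in\H_{\Si,d}$ through the module structure. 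Hence at a point $x$, the vector $V_{\H_{\Si,d}}e_x$ has $\si$-component $\rho_d(\la_\si^{1/2})e_x$, which is \emph{not} the scalar multiple $\la_\si^{1/2}(x)e_x$ unless $e_x$ were a joint eigenvector for all the multiplication operators $\rho_d(\la_\si^{1/2})$ with eigenvalues $\la_\si^{1/2}(x)$---i.e.\ a Dirac mass at $x$, which it is not. So the claimed exact equality $V_{\H_{\Si,d}}E_{P_d(\Si)}V_{\H_{\Si,d}}^{*}=P_{P_d(\Si)}\ts E_{P_d(\Si)}$ is false.

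The paper's proof repairs this by working in the concrete model $\H_{\Si,d}=L^2(P_d(\Si),\mu)$ and writing down an explicit homotopy of rank-one projections. Pointwise at $x$, $Ad_{V_{\H_{\Si,d}}}(E_\phi)(x)$ is the projection onto $(y,\si)\mapsto\la_\si^{1/2}(y)\phi^{1/2}(x,y)$, while the target $(e_\psi\ts P_{P_d(\Si)})(x)$ is the projection onto $(y,\si)\mapsto\la_\si^{1/2}(x)\psi^{1/2}(y)$; the paper interpolates linearly between the underlying probability densities $\phi(x,y)\la_\si(y)$ and $\psi(y)\la_\si(x)$ and checks that the resulting rank-one projections have propagation at most $2d(r+1)$ throughout. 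This homotopy lands directly on a constant rank-one projection $e_\psi$ in the $\H_{\Si,d}$-factor, so the Morita identification is immediate and your separate Murray--von Neumann argument becomes unnecessary. The reduction from a general non-degenerate standard module to $L^2(P_d(\Si),\mu)\ts\H$ is handled by Remark~\ref{rem-HX} and the argument of Lemma~\ref{lemma-adjoint}.
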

\begin{proof}
Let us first assume that $\H_{\Si,d}=L^2(P_d(\Si),\mu)\ts\H$, where $\H$ is a separable Hilbert  space and $\mu$ is a borelian measure
with $\supp\mu=X$.  For sake of simplicity, let us forget the factor $\H$. Notice that since 
$r_{d,\eps}=2dk_{\T,\al_\T\eps}$, we deduce that   $2d(r+1)\gq r_{d,\eps}$.  Then according to Remark  \ref{remark-naturality}, to equation (\ref{equ-V-compatible}) and to the definitions of $\nu_{\Si,A,*}^{\eps,2d(r+1),d}$ and  $Ind^{\eps,r}_*(P_d(\Si),A)$, this amounts to 
show  with  notations of  Sections  \ref{sub-sec-geo-ass-map} and \ref{sub-sec-fund-class}  that for any positive number $r$, for any continuous map $$\phi:P_d(\Si)\times P_d(\Si)\to \R$$ supported
in $$\{(x,y)\in P_d(\Si)\times P_d(\Si)\,\text{ such that } d(x,y)<r/2\}$$ and such that $\int_{P_d(\Si)}\phi(x,y)d\mu(y)=1$ and for any continuous function $\psi:P_d(\Si)\to [0,1]$ such that $\int_{P_d(\Si)}\psi(x)d\mu(x)=1$, then
$$Ad^{\eps,r}_{V_{\H_{\Si,d}},C(P_d(\Si)),*}[E_\phi,0]_{\eps,r}=[e_\psi\ts P_{P_d(\Si)}]_{\eps,2d(r+1)},$$ where $e_\psi:L^2(P_d(\Si),\mu)\lto L^2(P_d(\Si),\mu)$ is the orthogonal projection corresponding to the unit vector $\psi^{1/2}\in L^2(P_d(\Si),\mu)$.
Let us view     $\ell^2(\Si,L^2(P_d(\Si),\mu))$   as functions 
$P_d(\Si)\times\Si\lto\C$.
Define then 
$$\varphi_0(x,\bullet,\bullet): P_d(\Si)\times\Si\lto\R;\,(y,\si)\mapsto \phi(x,y)\lambda_\si(y).$$
and for any $t$ in $[0,1]$
$$\varphi_t(x,\bullet,\bullet): P_d(\Si)\times\Si\lto\R;\,(y,\si)\mapsto  (1-t)\varphi_0(x,y,\si)+t\lambda_\si(x)\psi(y).$$
Then 
$Ad_{V_{\H_{\Si,d}},C(X)}(E_\phi)$ is  pointwise at $x\in P_d(\Si)$ the rank one projection  associated to the unit vector 
$\varphi_0^{1/2}(x,\bullet,\bullet)$ in  $\ell^2(\Si,L^2(P_d(\Si),\mu))$.
For every $t$ in $[0,1]$ and $x$ in $P_d(\Si)$, the projection associated to the unit vector  $\varphi^{1/2}_t(x,\bullet,\bullet)$ in  $\ell^2(\Si,L^2(P_d(\Si),\mu))$
 has propagation less than
$2d(r+1)$. Hence we get in this way a homotopy  of projection of propagation less than $2d(r+1)$ in 
 $C(P_d(\Si))\ts \Kp(L^2(P_d(\Si),\mu)\ts\ell^2(\Si))$  between $Ad_{V_{\H_{\Si,d}},C(X)}(E_\phi)$ and $e_\psi\ts P_{P_d(\Si)}$.
 
 In the general case, we use Remark \ref{rem-HX} and the argument of the proof of Lemma \ref{lemma-adjoint}  to reduce to the previous case.
\end{proof}

\section{Mayer-Vietoris six term exact sequences in $K$-homology and in quantitative $K$-theory}\label{section-mv-kh} 
\subsection{Mayer-Vietoris exact sequence in $K$-homology}\label{subsec-MV-K-hom}
Let $(X,d)$ be a compact metric space, let ${X}_1$ and ${X}_2$ be closed subsets of $X$ such that $X=\mathring{X}_1\cup \mathring{X}_2$   and let $A$ be a $C^*$-algebra. Then we have the following six-term exact sequence 
\cite[Theorem 21.5.1]{bl}:{\tiny
\begin{equation}\label{diag-MV-KH}\begin{CD}
K_0(X_1\cap X_2,A)  @>(\jmath_{X_1,X_2,*},\jmath_{X_1,X_2,*})>> K_0(X_1,A)\oplus K_0(X_2,A) @>\jmath_{X_1,*}-\jmath_{X_2,*}>>K_0(X,A)\\
    @A \partial_{X_1,X_2,*} AA @.     @VV\partial_{X_1,X_2,*} V\\
K_1(X,A) @<\jmath_{X_1,*}-\jmath_{X_2,*}<< K_1(X_1,A)\oplus K_1(X_2,A) @<(\jmath_{X_1,X_2,*},\jmath_{X_2,X_1,*})<< K_1(X_1\cap X_2,A)\end{CD}.
\end{equation}}
where
\begin{itemize}
\item $\jmath_{X_1},\,\jmath_{X_2},\,\jmath_{X_1,X_2}$ and  $\jmath_{X_2,X_1}$ are respectively induced by the restrictions to $X_1\subseteq X,\,X_2\subseteq X,\, X_1\cap X_2\subseteq X_1$ and     $X_1\cap X_2\subseteq X_2$;
\item the boundary maps in $K$-homology $\partial_{X_1,X_2,*}$ are   given by left multiplication by an element $[\partial_{X_1,X_2}]$ in $KK_1(C(X_1\cap X_2),C(X))$ canonically associated to the pair $(X_1,X_2)$ and that we shall describe from \cite[Section 21.2]{bl} next.
\end{itemize}
Let $C({X_1,X_2})$ be the algebra of continuous functions $$f:([0,1/2]\times X_1)\cup ([1/2,1]\times X_2)\to \C$$ (where 
$([0,1/2]\times X_1)\cup  ([1/2,1]\times X_1) \subseteq [0,1]\times X$ is equipped with the induced topolology)  
such that
\begin{itemize}
\item $f(0,x)=0$ for all $x$ in $X_1$;
\item $f(1,x)=0$ for all $x$ in $X_2$.
\end{itemize}
Let $SC(X_1\cap X_2)$ be the suspension algebra of $C(X_1\cap X_2)$, i.e the algebra of continuous functions
$f:[0,1]\times (X_1\cap X_2)\to\C$ such that $f(0,x)=f(1,x)=0$ for all $x$ in $X_1\cap X_2$.
By restriction to $$[0,1]\times (X_1\cap X_2)\subseteq ([0,1/2]  \times X_1)\cup   ([1/2,1]\times X_2),$$ we get a surjective 
$*$-homomorphism  $$\Psi_{X_1,X_2}:C(X_1,X_2)\to SC(X_1\cap X_2)$$ with contractible kernel (the direct sum of two cone algebras) and hence
$\Psi_{X_1,X_2}$ induces an invertible element $[\Psi_{X_1,X_2}]$ in $KK_*(C(X_1,X_2),SC(X_1\cap X_2))$.
The restriction to $$\{1/2\}\times X\subseteq  ([0,1/2]\times X_1)\cup([1/2,1]\times X_2)$$ gives rise to  a $*$-homomorphism
$$\Psi'_{X_1,X_2}:C(X_1,X_2)\to C(X).$$ Let $[\partial]$ be the element of $KK_1(\C,C(0,1))$ that corresponds to the boundary element of the extension $$0\lto C(0,1)\lto C(0,1]\stackrel{ev_1}{\lto}\C\lto 0,$$ ($ev_1$ being the evaluation at $1$).
Then we have  (see \cite[Section 21.2]{bl})\begin{equation}\label{boundary-mv}[\partial_{X_1,X_2}]=\tau_{C(X_1\cap X_2)}([\partial])\ts\Psi'_{X_1,X_2,*}[\Psi_{X_1,X_2,*}]^{-1}.\end{equation}
\subsection{Controlled Mayer-Vietoris pair}\label{subsec-MV-pairs}We refere to \cite[Section 2]{oy4} for the definition of Complete Intersection Approximation (CIA) property and of  controlled Mayer-Vietoris pairs.
 
Let us introduce some further notations. Let   $(X,d)$ be a compact metric space.
\begin{itemize}
\item For a  closed subset $Y$ of $X$  and $\delta$ a positive number, we set $$Y_{\delta}=\{x\in Y\text{ such that }d(x,Y)\lq \delta\}.$$
\item If $(\H_X,\rho_X)$ is a $X$-module and $Y$ is a closed subset of $X$, we set with notations of Section \ref{section-nds-mod} $\H_Y=\H_X^Y$;
\item If $(\H_X,\rho_X)$ is a $X$-module,  $r$ is a positive number, and $X_1$ and $X_2$ are two closed subsets of $X$, we set $$\Kp(\H_{X_1},\H_{X_2})_r=\Kp(\H_{X_1},\H_{X_2})\cap\Kp(\H_X)_r.$$
\item with above notations, if $A$ is a $C^*$-algebra, we set $A_{\H_{X_1},\H_{X_2},r}=A\ts\Kp( \H_{X_1},\H_{X_2})_r$ . \end{itemize}
Let $X_1$ and $X_2$ be two closed subsets of $X$ such $X=\mathring{X_1}\cup \mathring{X_2}$. Then we have $\H_X=\H_{X_1}+\H_{X_2}$. Let us fix  $\delta$  a positive number and let $X_1^{(\de)}$ and $X_2^{(\de)}$ be two closed subsets of $X$ such that $X_{1,\de}\subseteq X_1^{(\de)}$ and $X_{2,\de}\subseteq X_2^{(\de)}$.  
Then for any positive number $r$ with $r<\delta$  
$$\Kp(\H_X)_r=\Kp(\H_{X_1},\H_{{X}^{(\delta)}_1})_r+\Kp(\H_{X_2},\H_{{X}^{(\delta)}_2})_r,$$     and similarly for any $C^*$-algebra $A$,
$$A_{\H_X,r}=A_{\H_{X_1},\H_{{X}^{(\delta)}_1},r}+\A_{\H_{X_2},\H_{{X_2}^{(\delta)}},r}.$$ Moreover it is straightforward to check that if $5r<\delta$, then
$$\left(A_{\H_{X_1},\H_{{X}^{(\delta)}_1},r},\A_{\H_{X_2},\H_{{X}^{(\delta)}_2},r},A_{\H_{{X}^{(\delta)}_1}},A_{\H_{{X}^{(\delta)}_2}}\right)$$  is  a controlled Mayer-Vietoris pair of order $r$ with coercitivity $1$. According to \cite[Section 3]{oy4}, then  for some universal control pairs $(\al,k)=(\al_\mv,k_\mv)$ and $(\lambda,k)$, there exists a $(\al,k)$-controlled morphism of order $r$ and of odd degree
\begin{equation}\label{equation-CMV-boundary}\D^{(\delta)}_{X_1,X_2,A,*}=(\partial^{\eps,s,(\delta)}_{X_1,X_2,A,*})_{0<\eps<\frac{1}{100\al_\D},0<s<\frac{r}{k_\eps}}:\K_*(A_{\H_X})\lto\K_*(A_{\H_{{X}^{(\delta)}_1\cap {X}^{(\delta)}_2}})\end{equation} such that the following diagram is $(\lambda,h)$-exact at order $r$
{\tiny $$\begin{CD}
\K_0(A_{\H_{{X}^{(\delta)}_1\cap {X}^{(\delta)}_2}})  @>(\jmath^{\sharp,(\delta)}_{X_1,X_2,A,*},\jmath^{\sharp,(\delta)}_{X_2,X_1,A,*})>> \K_0(A_{\H_{{X}^{(\delta)}_1}})\oplus \K_0(A_{\H_{{X}^{(\delta)}_2}}) @>\jmath^{\sharp,(\delta)}_{X_1,A,*}-\jmath^{\sharp,(\delta)}_{X_2,A,*}>>\K_0(A_{\H_X})\\
    @AA\D^{(\delta)}_{X_1,X_2,A,*} A @.     @V\D^{(\delta)}_{X_1,X_2,A,*}VV\\
\K_1(A_{\H_X}) @<\jmath^{\sharp,(\delta)}_{X_1,A,*}-\jmath^{\sharp,(\delta)}_{X_2,A,*}<< \K_1(A_{\H_{{X}^{(\delta)}_1}})\oplus \K_1(A_{\H_{{X}^{(\delta)}_2}}) @<(\jmath^{\sharp,(\delta)}_{X_1,X_2,A,*},\jmath^{\sharp,(\delta)}_{X_2,X_1,A,*})<< \K_1(A_{\H_{{X}^{(\delta)}_1\cap {X}^{(\delta)}_2}})\end{CD}.
$$}
where
\begin{itemize}
\item $\jmath^{\sharp,(\delta)}_{X_1,A}:A_{\H_{{X}^{(\delta)}_1}}\hookrightarrow  A_{\H_X}$;
\item $\jmath^{\sharp,(\delta)}_{X_2,A}:A_{\H_{{X}^{(\delta)}_2}}\hookrightarrow  A_{\H_X}$;
\item $\jmath^{\sharp,(\delta)}_{X_1,X_2,A}:A_{\H_{{X}^{(\delta)}_1\cap {X}^{(\delta)}_2}}\hookrightarrow  A_{\H_{{X}^{(\delta)}_1}}$;
\item $\jmath^{\sharp,(\delta)}_{X_2,X_1,A}:A_{\H_{{X}^{(\delta)}_1\cap {X}^{(\delta)}_2}}\hookrightarrow  A_{\H_{{X}^{(\delta)}_2}}$;
\end{itemize}
are the obvious inclusions.
By construction, the Mayer-Vietoris controlled boundary map is compatible with the rescaling defined in Section \ref{subsec-rescaling}.
\begin{lemma}\label{lemma-compatibility-MV-boundary-rescaling}
With above notations and  with $(\al,k)=(\al_\mv,k_\mv)$  and $l=l_{rs}$ as in Section \ref{subsec-rescaling}, we have
\begin{eqnarray*}
\partial_ {X_1,X_2,A,*}^{\eps/2,l_\eps s,(\de)}\circ L_{A_{\H_X},*}^{\eps,s}&=&\iota_*^{-,\al\eps/2,k_{\eps/2}l_\eps s}\circ L_{A_{\H_{{X}^{(\delta)}_1\cap {X}^{(\delta)}_2}},*}^{\al\eps,k_\eps s}\circ \partial_ {X_1,X_2,A,*}^{\eps,s,(\de)}\\
&=& L_{A_{\H_{{X}^{(\delta)}_1\cap {X}^{(\delta)}_2}},*}^{\al\eps,\frac{k_{\eps/2} l_\eps s}{l_{\al\eps}}}\circ \partial_ {X_1,X_2,A,*}^{\eps,\frac{k_{\eps/2} l_\eps s}{k_{\eps}l_{\al\eps}},(\de)}\circ\iota_{*}^{\eps,s,-}
\end{eqnarray*} 
for any $\eps$ in $\left(0,\frac{1}{200\al}\right)$ and any $s$ in $(0,\frac{r}{l_\eps k_{\eps/2}})$.
\end{lemma}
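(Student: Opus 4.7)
The plan is to reduce the lemma to Proposition \ref{prop-compatibility-boundary-rescaling} by inspecting how $\D^{(\delta)}_{X_1,X_2,A,*}$ is manufactured in \cite[Section 3]{oy4}. The controlled Mayer--Vietoris boundary is built from exactly the same universal polynomial recipe as the controlled boundary $\DD_{J,A,*}$ of a completely filtered extension: starting from an $\eps$-$s$-projection (resp.\ $\eps$-$s$-unitary) $q$ in $A_{\H_X}$, one uses the CIA property of the controlled Mayer--Vietoris pair to obtain an $O(\eps)$-close decomposition $q\simeq q_1+q_2$ with $q_i\in A_{\H_{X^{(\delta)}_i}}$, and then evaluates a fixed universal polynomial expression in $q_1$, $q_2$ (and $q$, $q^*$) whose entries automatically lie in $A_{\H_{X^{(\delta)}_1\cap X^{(\delta)}_2}}$.

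The first step is to observe that the rescaling operator $L^{\eps,s}_{\bullet,*}$ of Section \ref{subsec-rescaling} is itself implemented by polynomial functional calculus: $L^{\eps,s}_{\bullet,*}([q,k]_{\eps,s})=[Q_{N_\eps}(q),k]_{\eps/2,l_\eps s}$ in the even case, and by $Q_\eps(u^*u)u$ in the odd case. The second step is to note that polynomial functional calculus commutes, up to a controlled homotopy of $\eps'$-$s'$-projections (resp.\ unitaries), with both the CIA decomposition $q\mapsto (q_1,q_2)$ and with the subsequent universal boundary polynomial; this is precisely the observation that was used to prove Proposition \ref{prop-compatibility-boundary-rescaling} in the extension setting, and the argument transposes verbatim since the CIA property supplies the analogue of a completely positive section whose propagation is controlled by the pair $(\al_\mv,k_\mv)$.

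Combining these two points, applying $L$ first and then $\D^{(\delta)}$ produces a representative of the same class in $K^{\al\eps/2,k_{\eps/2}l_\eps s}_*(A_{\H_{X^{(\delta)}_1\cap X^{(\delta)}_2}})$ as applying $\D^{(\delta)}$ first and then $L$; this yields the first equality once one chases the parameters, noting that the degree of $Q_{N_\eps}$ is $l_\eps$ and that $\D^{(\delta)}$ inflates $(\eps,s)$ to $(\al\eps,k_\eps s)$. The second equality is then deduced from the first by invoking Lemma \ref{lemma-rescaling}, which identifies $L^{-}_{\bullet,*}\circ\iota^{\eps/2,\eps,s}_*$ with the appropriate iterated structure map, combined with the naturality of $\D^{(\delta)}$ with respect to the $\iota^{\eps,\eps',s,s'}_*$.

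The only real obstacle is bookkeeping: one must verify at each intermediate stage that the polynomial expressions involved remain within the propagation window $\frac{r}{l_\eps k_{\eps/2}}$ imposed on $s$, so that the controlled boundary of order $r$ is actually defined on the rescaled element. This verification is exactly parallel to the parameter chase carried out for Proposition \ref{prop-compatibility-boundary-rescaling}, so no new idea is required beyond the direct inspection of the construction in \cite[Section 3]{oy4}.
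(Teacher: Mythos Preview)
Your proposal is correct and matches the paper's approach: the paper gives no detailed proof at all, stating only that ``by construction, the Mayer--Vietoris controlled boundary map is compatible with the rescaling defined in Section~\ref{subsec-rescaling}'' and referring implicitly to \cite[Section 3]{oy4}. Your elaboration --- that both $L^{\eps,s}_{\bullet,*}$ and $\D^{(\delta)}_{X_1,X_2,A,*}$ are implemented by universal polynomial recipes and that the argument parallels Proposition~\ref{prop-compatibility-boundary-rescaling} --- is exactly the content behind that one-line assertion.
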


The fact that the quantitative index intertwines with  $\D^{(\delta)}_{X_1,X_2,A,*}$ and $\partial_{X_1^{(\de)},X_2^{(\de)},*}$ 
 is the key point for the proof of Theorem   \ref{thm-qindex-iso} and will be proved in the next two subsections. For  that purpose, we need to introduce
a coarse Mayer-Vietoris pair for $$A_{\H_X,\rho_X}\defi A\ts \Kp(\H_X)+A\ts \rho_X(C(X))$$ where $A$ is   a $C^*$-algebra and $(\H_X,\rho_X)$ is a non degenerate standard $X$-module.
Let us consider the following $C^*$-subalgebras of $A_{\rho_X,\H_X}$: $$A^{(1)}_{\H_X,\rho_X}=A\ts \left(\Kp(\H_{{X}^{(\delta)}_1}))+\rho_X(C_0(\mathring{X_1})\right)$$ and    $$A^{(2)}_{\H_X,\rho_X}=A\ts \left(\Kp(\H_{{X}^{(\delta)}_2}))+\rho_X(C_0(\mathring{X_2})\right).$$
Notice that if $z$ is an element of $A_{\H_X,\rho_X}$ with propagation $r$ with $r<\de$ and if $f$ is in $A\ts\rho_X(C_0(\mathring{X_i}))$ for $i=1,2$, then $f\cdot z$ is in $A^{(i)}_{\H_X,\rho_X}$.

For any $C^*$-algebra $A$ and any $r$ in $(0,\de/5)$, then
 \begin{equation*}\left (A_{\H_{X_1},\H_{{X}^{(\delta)}_1},r}+
 A\ts \rho_X(C_0(\mathring{X_1})),\,  A_{\H_{X_2},\,\H_{{X}^{(\delta)}_2},r}+
 A\ts \rho_X(C_0(\mathring{X_2})) ,\,A^{(1)}_{\H_X,\rho_X},\,A^{(2)}_{\H_X,\rho_X}\right)\end{equation*} is  a controlled Mayer-Vietoris pair of order $r$ with coercitivity $1$ for $A_{\H_X,\rho_X}$ and hence, as before 
 there exists for $(\al,k)=(\al_\mv,k_\mv)$ a $(\al,k)$-controlled morphism of order $r$ and of odd degree
\begin{equation}\label{equation-CMV-boundary-rho}\D'^{(\delta)}_{X_1,X_2,A,*}=(\partial'^{\eps,s,(\delta)}_{X_1,X_2,A,*})_{0<\eps<\frac{1}{100\al},0<s<r}:\K_*(A_{\H_X,\rho_X})\lto\K_*(A_{\H_{{X}^{(\delta)}_1\cap {X}^{(\delta)}_2}})\end{equation} such that the following diagram is $(\lambda,h)$-exact at order $r$
{\tiny $$\begin{CD}\label{diagram-CMV-boundary-rho}\K_0(A^{(1,2)}_{\H_X,\rho_X})  @>({\jmath'}^\sharp_{X_1,X_2,A,*},{\jmath'}^\sharp_{X_2,X_1,A,*})>> \K_0(A^{(1)}_{\H_X,\rho_X})\oplus \K_0(A^{(2)}_{\H_X,\rho_X}) @>{\jmath'}^\sharp_{X_1,A,*}-{\jmath'}^\sharp_{X_2,A,*}>>\K_0(A_{\H_X,\rho_X})\\
    @AA\D'_{X_1,X_2,A,*} A @.     @V\D'_{X_1,X_2,A,*}VV\\
\K_1(A_{\H_X,\rho_X}) @<{\jmath'}^\sharp_{X_1,A,*}-{\jmath'}^\sharp_{X_2,A,*}<< \K_1(A^{(1)}_{\H_{X},\rho_X})\oplus \K_1(A^{(2)}_{\H_X,\rho_X}) @<({\jmath'}^\sharp_{X_1,X_2,A,*},{\jmath'}^\sharp_{X_2,X_1,A,*})<< \K_1(A^{(1,2)}_{\H_X,\rho_X})\end{CD}.
$$}
where
\begin{itemize}
\item $A^{(1,2)}_{\H_X,\rho_X}\defi A_{\H_{X^{(\delta)}_1\cap X^{(\delta)}_2}}+A\ts \rho_X(C_0(\mathring{X_1}\cap \mathring{X_2}))$;
\item ${\jmath'}^\sharp_{X_1,A}:A^{(1)}_{\H_{X},\rho_X}\hookrightarrow  A_{\H_X,\rho_X}$;
\item ${\jmath'}^\sharp_{X_2,A}:A^{(2)}_{\H_{X},\rho_X}\hookrightarrow  A_{\H_X,\rho_X}$;
\item ${\jmath'}^\sharp_{X_1,X_2,A}: A^{(1,2)}_{\H_X,\rho_X}    \hookrightarrow  A^{(1)}_{\H_{X},\rho_X}$;
\item ${\jmath'}^\sharp_{X_2,X_1,A}: A^{(1,2)}_{\H_X,\rho_X}    \hookrightarrow  A^{(2)}_{\H_{X},\rho_X}$;
\end{itemize}
are the obvious inclusions.

In what follows we will respectively identify 
the unitarisation of
$A^{(1)}_{\H_X,\rho_X},\,A^{(2)}_{\H_X,\rho_X}$ and $A^{(1,2)}_{\H_X,\rho_X}$  
with the subalgebras $A^{(1)}_{\H_X,\rho_X}+\C\cdot 1\ts Id_{\H_X},\,A^{(2)}_{\H_X,\rho_X}+\C\cdot 1\ts Id_{\H_X}$ and $A^{(1,2)}_{\H_X,\rho_X}+\C\cdot 1\ts Id_{\H_X}$ of $\widetilde{A_{\H_X}}$ if $A$ is unital  and of 
 ${{\tilde A_{\H_X}}}+\C\cdot 1\ts Id_{\H_X}$ viewed as adjointable operator in the right Hilbert $\tilde{A}$-module $\tilde{A}\ts \H_X$ if not.
\subsection{Compatiblity of   boundaries}
 Next proposition, together with the naturality of the quantitative index of Proposition \ref{prop-naturality-index} , shows that  the quantitative index map   intertwines the Mayer-Vietoris six terms   exact sequence in $K$-homology of diagram (\ref{diag-MV-KH}) of Section \ref{subsec-MV-K-hom}  with the above six term controlled Mayer-Vietoris exact sequence.
 \begin{proposition}\label{proposition-index-commutes}
 For any  compact metric space $X$, any  positive number $\delta$, any
 closed subsets $X_1$ and $X_2$ of $X$, any closed subsets $X_1^{(\de)}$ and $X_2^{(\de)}$ of $X$  and 
 any standard $X$-module $(\H_X,\rho_X)$ such that
 \begin{itemize}
 \item $X=\mathring{X_1}\cup \mathring{X_2}$;
 \item $X_{i,\de}\subseteq X_i^{(\de)}$ for $i=1,2$;
 \item the  restriction of $(\H_X,\rho_X)$ to 
 $$\H_{{X}^{(\delta)}_1\cap {X}^{(\delta)}_2}=\{\xi\in\H_X\text{ such that }\supp\xi\subseteq{X}^{(\delta)}_1\cap {X_2}^{(\delta)}\}$$ is standard,
  \end{itemize}then  the following diagram is commutative
   $$\begin{CD}
K_*(X,A)@>\partial_{X^{(\de)}_1,X^{(\de)}_2,*}>> K_{*+1}({X}^{(\delta)}_1\cap {X}^{(\delta)}_2,A)\\
         @V Ind_{X,A,*} VV
         @VV Ind_{X^{(\de)}_1\cap X^{(\de)}_2,A,*}V\\
\K_*(A_{\H_X})@>\D^{(\delta)}_{X_1,X_2,A,*}>>\K_{*+1}(A_{\H_{{X}^{(\delta)}_1\cap {X}^{(\delta)}_2}})\end{CD} 
$$where on the top row, $\partial_{X^{(\de)}_1,X^{(\de)}_2,*}$ is the  left multiplication by  the element
$[\partial_{X^{(\de)}_1,X^{(\de)}_2}]$ of  $KK_1(C({X}^{(\delta)}_1\cap {X}^{(\delta)}_2),C(X))$ described in Section \ref{subsec-MV-K-hom}.
\end{proposition}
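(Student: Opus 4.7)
The strategy is to reduce the commutativity of the square to the universal compatibility (point~(vii) of Theorem~\ref{thm-tensor}) between the $K$-theoretic boundary of a semi-split extension and the controlled boundary of its tensorized version, and then to match the resulting controlled boundary with the Mayer-Vietoris boundary of~(\ref{equation-CMV-boundary-rho}).

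First I would use the factorization~(\ref{boundary-mv}) of $[\partial_{X^{(\de)}_1,X^{(\de)}_2}]$ through the suspension $SC(X^{(\de)}_1\cap X^{(\de)}_2)$ and the mapping cone algebra $C(X^{(\de)}_1,X^{(\de)}_2)$. Combining the multiplicativity of $\TT$ (Theorem~\ref{thm-product-tensor}), its naturalities in the filtered algebra and in $A$ (Propositions~\ref{prop-naturality} and~\ref{prop-naturality-index} together with Remark~\ref{remark-naturality}), the functoriality of the fundamental class (Lemma~\ref{functoraility-fundamental-class}), and the identifications between restrictions of $\H_X$ provided in Section~\ref{subsec-coarse-geom-nds-mod} (in particular Corollary~\ref{corollary-adjoint} and Remark~\ref{rem-HX}), I would rewrite the left hand composition as
\[
\mathrm{Ind}_{X^{(\de)}_1\cap X^{(\de)}_2,A,*}\circ \partial_{X^{(\de)}_1,X^{(\de)}_2,*}\;=\;\TT_{\H_{X^{(\de)}_1\cap X^{(\de)}_2}}\!\bigl([\partial_{X^{(\de)}_1,X^{(\de)}_2}]\bigr)\circ \mathrm{Ind}_{X,A,*}.
\]

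Second, point~(vii) of Theorem~\ref{thm-tensor} applied to the suspension extension $0\to C_0(0,1)\to C_0(0,1]\to\C\to 0$ tensored by $C(X^{(\de)}_1\cap X^{(\de)}_2)\ts \Kp(\H_{X^{(\de)}_1\cap X^{(\de)}_2})$ converts $\TT(\tau([\partial]))$ into the controlled boundary $\DD$ of this tensorized extension. Composing with the morphisms induced in quantitative $K$-theory by $\Psi_{X^{(\de)}_1,X^{(\de)}_2}$ and $\Psi'_{X^{(\de)}_1,X^{(\de)}_2}$ as dictated by~(\ref{boundary-mv}), and using the naturality of $\TT$ in the filtered algebra, the right hand side of the last display can be recognized as the controlled boundary of a completely filtered extension canonically built from $C(X^{(\de)}_1,X^{(\de)}_2)\ts \Kp(\H_X)$, precomposed with $\mathrm{Ind}_{X,A,*}$.

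The crux is to identify this controlled boundary with the Mayer-Vietoris controlled boundary $\DD^{(\de)}_{X_1,X_2,A,*}$ of~(\ref{equation-CMV-boundary-rho}). The link is provided by the intermediate algebra $A_{\H_X,\rho_X}$ of Section~\ref{subsec-MV-pairs}: a common almost-central partition of unity in $\rho_X(C(X))$ subordinate to $\{\mathring X_1,\mathring X_2\}$ furnishes simultaneously a semi-split section of the tensorized suspension extension and the CIA-lifts of~\cite{oy4} from which $\DD^{(\de)}_{X_1,X_2,A,*}$ is built, yielding homotopic $\eps$-$s$-projections and $\eps$-$s$-unitaries within the prescribed control pair for $s<\de/5$. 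This final matching, carried out at the level of almost-projections and almost-unitaries with careful propagation bookkeeping, is the main obstacle of the proof; it is arduous but essentially standard. Once it is done, combining the three steps yields the required commutativity on every element of $K_*(X,A)$.
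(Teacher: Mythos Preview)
Your displayed identity $\mathrm{Ind}_{X^{(\de)}_1\cap X^{(\de)}_2,A,*}\circ \partial_{X^{(\de)}_1,X^{(\de)}_2,*}=\TT_{\H_{X^{(\de)}_1\cap X^{(\de)}_2}}\bigl([\partial_{X^{(\de)}_1,X^{(\de)}_2}]\bigr)\circ \mathrm{Ind}_{X,A,*}$ does not typecheck: $\mathrm{Ind}_{X,A,*}$ lands in $\K_*(A_{\H_X})$, whereas $\TT_{\H_Y}\bigl([\partial_{X^{(\de)}_1,X^{(\de)}_2}]\bigr)$ (with $Y=X^{(\de)}_1\cap X^{(\de)}_2$) is a controlled morphism $\K_*(C(Y)_{\H_Y})\to\K_{*+1}(C(X)_{\H_Y})$, so the two cannot be composed. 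What one can do with multiplicativity of $\TT$ and the factorization~(\ref{boundary-mv}) is compute $\mathrm{Ind}_Y\circ\partial$ \emph{on the fundamental class}: the paper's Lemmas~\ref{lemma-family-projections} and~\ref{lemma-bondary-index} show that $\mathrm{Ind}_Y\circ\partial$ equals a concretely described projective morphism $\partial\mathrm{Ind}^{(\de)}$, namely $x\mapsto \tau_{\H_Y}(x)\bigl([U^{(\de)}_{X_1,X_2}]\bigr)$ for an explicit unitary $U^{(\de)}_{X_1,X_2}\in \widetilde{C(X)_{\H_Y}}$ built from a cutoff $f$ and the fundamental projection $E_r$.

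The remaining equality $\D^{(\de)}_{X_1,X_2,A,*}\circ\mathrm{Ind}_{X,A,*}=\partial\mathrm{Ind}^{(\de)}$ is the genuine content, and it is \emph{not} obtained by any abstract identification of $\D^{(\de)}$ with a tensorized suspension boundary; your description of this step as ``essentially standard'' hides exactly the difficulty. The paper proceeds element by element: fix a semi-split extension $0\to A\to B\to C(X)\to 0$ representing $x\in K_1(X,A)$, lift $E_r$ to $z_r\in B_{\H_X}$, and then manufacture by hand two $\al\eps$-$k_\eps r$-$N$-invertibles $P_1,P_2$ in $M_2(A[0,1]^{(i)}_{\H_X,\rho_X})+\C\cdot I_2$ (Proposition~\ref{prop-decompositionP1P2}) using explicit polynomial truncations $\psi^{(1)}_\eps,\psi^{(2)}_\eps$, with $P_1P_2$ close to $\diag(u,u^*)$ and with prescribed images in $C(X)^{(i)}_{\H_X,\rho_X}[0,1]$ close to $\diag(v,v^*)$ and $\diag(w,w^*)$; these are polar-decomposed into almost-unitaries $W_1,W_2$ (Corollary~\ref{cor-decompW1W2}). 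Only then does a three-step argument in the intermediate algebra $A_{\H_X,\rho_X}$ match the endpoints: $q(0)$ encodes $\partial^{(\de)}\circ\partial_{A_{\H_X},B_{\H_X}}([E_X])$ while $q(1)$ encodes $\partial_{A_{\H_Y},B_{\H_Y}}([U^{(\de)}_{X_1,X_2}])$, and controlled injectivity of $\K_*(A_{\H_Y})\to\K_*(A^{(1,2)}_{\H_X,\rho_X})$ closes the argument. Your proposal contains none of this machinery; the ``common almost-central partition of unity'' is only the starting datum $f$, not the argument.
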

\begin{remark}
The above diagram a priori makes sense only up to order $\de/5$.
But indeed $\D^{(\delta)}_{X_1,X_2,A,*}\circ Ind_{X,A,*}$ can be extended to any positive number  in the following way:
 let $\eps$ and $r$ be positive numbers with $\eps<1/100$, pick a positive 
 number $s$ with $k_{\eps/\al}s<\de/5$ (with $(\al,k)=(\al_\mv,k_\mv)$) and $s\lq r$. Then
 $$\partial^{(\delta),\eps,r}_{X_1,X_2,A,*}\circ Ind^{\eps/\al,r/k_{\eps/\al}}_{X,A,*}\defi\iota_*^{\eps,s,r}\circ \partial^{(\delta),\eps/\al,s/k_{\eps/\al}}_{X_1,X_2,A,*}\circ Ind^{\eps/\al,s/k_{\eps/\al}}_{X,A,*}$$ does not depends on the choice of $s$ and hence define a projective morphism of odd degree
 \begin{equation*}
 \begin{split}
 \D^{(\delta)}_{X_1\cap X_2,A,*}\circ Ind_{X,A,*} &=\\(\partial^{(\delta),\eps,r}_{X_1,X_2,A,*}\circ& Ind^{\eps/\al,r/k_{\eps/\al}}_{X,A,*})_{\eps\in(0,1/100),r>0}:K_*(X,A)\lto \K_{*+1}(A_{\H_{{X}^{(\delta)}_1\cap {X}^{(\delta)}_2}}).\end{split}\end{equation*}
 \end{remark}
%
Proposition \ref{proposition-index-commutes} will be proved in next subsection.
The end of this subsection is devoted to a carefull study of the projective
 morphism
$$ Ind_{X^{(\de)}_1\cap X^{(\de)}_2,A,*}\circ  \partial_{X^{(\de)}_1,X^{(\de)}_2,*}: K_*(X,A)\lto \K_{*+1}(A_{\H_{{X}^{(\delta)}_1\cap {X}^{(\delta)}_2}}).$$

Let $f:X\to[0,1]$ be a continuous function such that $f(x)=1$ if $x$ is in $X\setminus \mathring{X_1}$ and $f(x)=0$ if $x$ is in
$X\setminus \mathring{X_2}$. Let $(p_r)_{r>0}$ be a family of  projections   in $C({X}_1\cap {X}_2)_{\H_{X_1\cap {X}_2}}$ such that
\begin{itemize}
\item $p_r$ has propagation less than $r$ for every positive number $r$.
\item $\iota_*^{\eps,r,r'}([p_r,0]_{\eps,r}=[p_{r'},0]_{\eps,r'}$  in $K^{\eps,r'}_0(C({X}_1\cap {X}_2)_{\H_{X_1\cap {X}_2}})$ for every positive numbers $\eps,\,r$ and $r'$ with   $\eps<1/100$ and $r\lq r'$.
\end{itemize}
Consider then for every positive number $r$   
\begin{equation}\label{eq-Uf}
U_{f,r}:X \lto \Kp(\H_{X_1\cap {X}_2})+\C\ts Id_{\H_{X_1\cap {X}_2}}\end{equation} defined by
\begin{itemize}
\item $U_{f,r}(x)=Id_{\H_{X_1\cap X_2}}$ if $x\notin X_1\cap {X}_2$;
\item $U_{f,r}(x)=e^{2\imath\pi f(x)}p_r(x)+Id_{\H_{X_1\cap X_2}}-p_r(x)$ if $x$ is in $ X_1\cap {X}_2$.
\end{itemize}
Then  $U_{f,r}$ is a unitary in  $\widetilde{C(X)_{\H_{X_1\cap {X}_2}}}$ with propagation less than $r$ and 
$\iota_*^{\eps,r,r'}([U_{f,r}]_{\eps,r})=[U_{f,r'}]_{\eps,r'}$  in $K^{\eps,r'}_1(C(X)_{\H_{X_1\cap {X}_2}})$ for every positive numbers $\eps,\,r$ and $r'$ with   $\eps<1/100$ and $r\lq r'$.
For every positive number $\eps$ and $r$, with $\eps<1/100$, define then for $(\al,k)=(\al_\mv,k_\mv)$

\begin{eqnarray*}
\Lambda^{\eps,r}:K_1(X,A)&\lto&K^{\eps,r}_0(A_{\H_{X_1\cap {X}_2}})\\
x&\mapsto& \tau^{{\eps/\al,r/k_{\eps/\al}}}_{\H_{X_1\cap {X}_2}}(x)( [U_{f,r/k_{\eps/\al}}]_{\eps/\al,r/k_{\eps/\al}})
\end{eqnarray*}

and

\begin{eqnarray*}
\Lambda'^{\eps,r}:K_1(X,A)&\lto&K^{\eps,r}_0(A_{\H_{X_1\cap {X}_2}})\\
x&\mapsto& \tau^{{\eps/\al,r/k_{\eps/\al}}}_{\H_{X_1\cap {X}_2}}([\partial_{X_1, {X}_2}]\ts x)( [p_{r/k_{\eps/\al}},0]_{\eps/\al,r/k_{\eps/\al}})
\end{eqnarray*}
Then $$\Lambda=(\Lambda^{\eps,r})_{0<\eps,1/100,r>0}:K_1(X,A)\lto \K_0(A_{\H_{X_1\cap X_2}})$$
and
$$\Lambda'=(\Lambda'^{\eps,r})_{0<\eps,1/100,r>0}:K_1(X,A)\lto \K_0(A_{\H_{X_1\cap {X}_2}})$$ are
 projective morphisms in sense of Definition \ref{def-quant-morphism}.
\begin{lemma}\label{lemma-family-projections} 
With above notations, we have  $\Lambda=\Lambda'$.
\end{lemma}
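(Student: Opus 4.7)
The plan is to exploit formula (\ref{boundary-mv}) expressing $[\partial_{X_1,X_2}]$ as an iterated Kasparov product, and then invoke the multiplicativity of $\TT_\bullet$ (Theorem \ref{thm-product-tensor}) together with its naturality in the range (property (iii)--(iv) of Theorem \ref{thm-tensor}) and its compatibility with external tensorisation (property (vi) of Theorem \ref{thm-tensor}), in order to reduce the claim $\Lambda=\Lambda'$ to an explicit check at the level of a single unitary cycle. By Remark \ref{rem-quant-morphism-small-propagation}, it will suffice to verify $\Lambda^{\eps,r}=\Lambda'^{\eps,r}$ for sufficiently small $\eps$ and $r$; all the universal control pairs from the quoted theorems will then be absorbed into a single control pair $(\lambda,h)$.

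Applied to (\ref{boundary-mv}), these ingredients will yield, up to $(\lambda,h)$-equivalence,
\[\TT_{\H_{X_1\cap X_2}}([\partial_{X_1,X_2}]\ts x)\aeq \TT_{\H_{X_1\cap X_2}}(x)\circ \Psi'_{X_1,X_2,\H_{X_1\cap X_2},*}\circ \TT_{\H_{X_1\cap X_2}}([\Psi_{X_1,X_2}]^{-1})\circ \TT_{C(X_1\cap X_2)\ts \Kp(\H_{X_1\cap X_2})}([\partial]),\]
whence, in view of the definition of $\Lambda'^{\eps,r}$, the matter reduces to showing that the composition of the three right-most factors sends $[p_r,0]_{\eps,r}$ to $[U_{f,r}]_{\lambda\eps,h_\eps r}$ in a suitable controlled $K_1$ group of $C(X)_{\H_{X_1\cap X_2}}$. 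For this, I will first invoke the explicit description of the controlled suspension boundary $\TT_{C(X_1\cap X_2)\ts \Kp(\H_{X_1\cap X_2})}([\partial])$ from \cite{oy2} to identify the image of $[p_r,0]_{\eps,r}$ with the class of the unitary loop $t\mapsto e^{2\pi i t}p_r+(1-p_r)$ in the unitisation of $SC(X_1\cap X_2)\ts \Kp(\H_{X_1\cap X_2})$, a loop of propagation at most $r$.

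To realise the middle factor $\TT_{\H_{X_1\cap X_2}}([\Psi_{X_1,X_2}]^{-1})$ on this class, I will construct an explicit lift. Set
\[\varphi(t,x)=\begin{cases}2tf(x)&\text{if }(t,x)\in[0,1/2]\times X_1,\\ f(x)+(2t-1)(1-f(x))&\text{if }(t,x)\in[1/2,1]\times X_2,\end{cases}\]
and define $\tilde U(t,x)=e^{2\pi i \varphi(t,x)}p_r(x)+(1-p_r(x))$ if $x\in X_1\cap X_2$, and $\tilde U(t,x)=1$ otherwise. Because $f$ vanishes on $X_1\setminus\mathring X_2$ and equals $1$ on $X_2\setminus \mathring X_1$, the function $\tilde U$ is continuous; its propagation is at most $r$ since that of $p_r$ is; and $\tilde U(0,\cdot)=\tilde U(1,\cdot)=1$, so $\tilde U$ defines a unitary in the unitisation of $C(X_1,X_2)\ts \Kp(\H_{X_1\cap X_2})$. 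Its image $\Psi_{X_1,X_2,*}(\tilde U)$ is the loop $t\mapsto e^{2\pi i \varphi(t,x)}p_r+(1-p_r)$ on $[0,1]\times(X_1\cap X_2)$, which is homotopic to $t\mapsto e^{2\pi i t}p_r+(1-p_r)$ through loops of propagation $\leq r$ via the straight-line homotopy in the phase $(s,t,x)\mapsto(1-s)\varphi(t,x)+st$ (the endpoint conditions at $t=0,1$ are preserved because $\varphi(0,x)=0$ and $\varphi(1,x)=1$). Hence $[\tilde U]$ is the desired lift. Finally, direct evaluation at $t=1/2$ yields $\Psi'_{X_1,X_2,\H_{X_1\cap X_2}}(\tilde U)=U_{f,r}$, since $\varphi(1/2,x)=f(x)$ on both pieces, which completes the verification.

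The principal technical obstacle I expect is keeping track of the successive rescalings of the control parameters introduced by $\TT_\bullet$, by Theorem \ref{thm-product-tensor}, and by the above homotopy of loops, and ensuring they can be absorbed into a single control pair uniformly in $\eps$ and $r$. This is tedious but mechanical bookkeeping that reproduces, in the controlled setting, the standard $K$-homological derivation of the Mayer--Vietoris boundary on a projection-class cycle.
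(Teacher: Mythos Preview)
Your proposal is correct and follows essentially the same approach as the paper's own proof: the paper also decomposes $[\partial_{X_1,X_2}]$ via formula (\ref{boundary-mv}), applies Theorems \ref{thm-tensor} and \ref{thm-product-tensor} to factor $\TT_{\H_{X_1\cap X_2}}([\partial_{X_1,X_2}]\ts x)$, and constructs exactly the same lift (your $\tilde U$ is the paper's $W_r$, your phase function $\varphi$ is the paper's $F$) together with the same straight-line homotopy in the phase to identify $\Psi_{X_1,X_2,*}[W_r]$ with the standard suspension loop and $\Psi'_{X_1,X_2,*}[W_r]$ with $[U_{f,r}]$. The final appeal to Remark \ref{rem-quant-morphism-small-propagation} to absorb the accumulated control pairs is likewise the paper's concluding step.
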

\begin{proof}
Let 
$$\Psi_{X_1,X_2}:C(X_1,X_2)\to SC(X_1\cap X_2)$$
and 
$$\Psi'_{X_1,X_2}:C(X_1,X_2)\to C(X)$$ being as in equation (\ref{boundary-mv}). Recall that 
\begin{itemize}
\item the element $[\Psi_{X_1,X_2}]$ induced by $\Psi_{X_1,X_2}$ is invertible as an element in $KK_*(C(X_1,X_2),SC(X_1\cap X_2))$;
\item $[\partial_{X_1,X_2}]=\tau_{C(X_1\cap X_2)}([\partial])\ts\Psi'_{X_1,X_2,*}[\Psi_{X_1,X_2,*}]^{-1}$;
\end{itemize}
 (recall that   $[\partial \in KK_1(\C,C(0,1))$ corresponds to the boundary map of the Bott extension $0\lto C(0,1)\lto C(0,1]\stackrel{ev_1}{\lto}\C\lto 0$). 
%
 Then,  according to Theorems   \ref{thm-tensor} and \ref{thm-product-tensor}  we get that there exists a control pair depending only on $(\al_\T,k_\T)$ such that
 $$\T_{\H_{X_1\cap X_2}}( [\partial_{X_1,X_2}])\aeq  \Psi'_{X_1,X_2,\H_{X_1\cap X_2},*}\circ \T_{\H_{X_1\cap X_2}}([\Psi_{X_1,X_2,*}]^{-1})\circ \T_{\H_{X_1\cap X_2}}(\tau_{C(X_1\cap X_2}([\partial])),$$
 where $ \Psi'_{X_1,X_2,\H_{X_1\cap X_2}}:C(X_1,X_2)_{\H_{X_1\cap X_2}}\to C(X)_{\H_{X_1\cap X_2}}$  is the $*$-homomorphism induced by  $\Psi'_{X_1,X_2}$. For any $x$ in $K_*(X,A)$, we define
  $$\F(x)=\T_{\H_{X_1\cap X_2}}(x)\circ \Psi'_{X_1,X_2,\H_{X_1\cap X_2},*}\circ \T_{\H_{X_1\cap X_2}}([\Psi_{X_1,X_2,*}]^{-1})\circ \T_{{\H_{X_1\cap X_2}}}((\tau_{C(X_1\cap X_2}([\partial]))).$$
 Then $$\F(x)=(F^{\eps,r})_{0<\eps<\frac{1}{100\alpha_\F},r>0}:\K_*(C(X)_{\H_{X_1\cap X_2}})\lto\K(A_{\H_{X_1\cap X_2}})$$  is a $(\al_\F,k_\F)$-controlled morphism for some control pair $(\al_\F,k_\F)$ depending only on $(\al_\T,k_\T)$. 
 According to Theorems   \ref{thm-tensor} and \ref{thm-product-tensor}, there exists a  control pair $(\la,h)$, depending only on
$(\al_\F,k_\F)$ such that  \begin{equation}\label{equ-lemma-fund-class}\F(x)\aeq \T_{\H_{X_1\cap X_2}}(x\ts [\partial_{X_1,X_2}])\end{equation}
 
 for all $x$ in $K_*(X,A)$. Let $f:X\to[0,1]$ be a continuous function as above such that $f(x)=1$ if $x$ is in $X\setminus \mathring{X_1}$ and $f(x)=0$ if $x$ is in
$X\setminus \mathring{X_2}$ and defined then $F:X\times [0,1]\to [0,1]$  by 
 \begin{itemize}
 \item $F(x,t)=2f(x)t$ if $0\lq t\lq 1/2$;
 \item $F(x,t)=2(f(x)+t-f(x)t)-1$ if $1/2\lq t\lq 1$\end{itemize}
  Consider then for $(p_r)_{r>0}$ a family of projections in $C({X}_1\cap {X}_2)_{\H_{X_1\cap {X}_2}}$ as above
 \begin{eqnarray*}
 W_r:(X_1\times [0,1/2]) \cup (X_2\times [1/2,1]) &\lto& \K(\H_{X_1\cap X_2}))+\C Id_{\H_{X_1\cap X_2}}\\
 (x,t)&\mapsto&e^{2\imath\pi F(x,t)}p_r(x)+Id_{\H_{X_1\cap X_2}}-p_r(x). 
\end{eqnarray*}
Then  $W_r$ is a unitary in 
$\widetilde{C(X_1,{X}_2) _{\H_{X_1\cap {X_2}}}}$ with propagation less than $r$.
We clearly have $$\Psi'^{\eps,r}_{X_1,X_2,\H_{X_1\cap X_2},*}([W]_{\eps,r})=[U_{f,r}]_{\eps,r}.$$
By  using the homotopy of $\eps$-$r$-unitaries
\begin{eqnarray*}
\lbrack0,1\rbrack\times (X_1\cap X_2)\times  \lbrack 0,1\rbrack&\lto& {C(X_1\cap X_2)_{\H_{X_1\cap X_2}}}+\C\cdot1\ts Id_{\H_{X_1\cap X_2}}\\
(s,x,t)&\mapsto& e^{2\imath\pi (sF(x,t)+(1-s)t)}p_r(x)+ 1\ts Id_{\H_{X_1\cap X_2}}-p_r(x),
\end{eqnarray*}
we see that
 $\Psi^{\eps,r}_{X_1,X_2,\H_{X_1\cap X_2},*}([W]_{\eps,r})$ is equal to the class in $K_1^{\eps,r}( {SC(X_1\cap X_2)_{\H_{X_1\cap X_2}}})$ represented by the $\eps$-$r$-unitary defined 
 by \begin{eqnarray*}
 [0,1]&\lto &{C(X_1\cap X_2)_{\H_{X_1\cap X_2}}}+\C\cdot 1\ts Id_{\H_{X_1\cap X_2}} \\
 t&\mapsto&   e^{2\imath\pi t}p_r(x)+1\ts Id_{\H_{X_1\cap X_2}}-p_r(x).\end{eqnarray*} Hence,  according to \cite[Proposition 3.9]{oy2} and to point (vii) of Theorem   \ref{thm-tensor} and  up to possibly rescaling the control pair 
 $(\al_\T,k_\T)$  we get that 
 $$\Psi^{\al_\T \eps,k_{\T,\eps}r}_{X_1,X_2,\H_{X_1\cap X_2}),*}([W]_{\al_\T \eps,k_{\T,\eps}r})=\tau^{\eps,r}_{\H_{X_1\cap X_2}}(\tau_{C(X_1\cap X_2}([\partial])([p_r,0]_{\eps,r}).$$ 
 
 In view of  Theorem   \ref{thm-product-tensor}, there exists a  control pair $(\al,k)$, depending only on 
 $(\al_\T,k_\T)$ such that $\T_{\H_{X_1,X_2}}([\Psi_{X_1,X_2}]^{-1})$ is an $(\al,k)$-inverse for $\Psi_{X_1,X_2,\H_{X_1\cap X_2},*}$. We deduce from  equation (\ref{equ-lemma-fund-class}) that there exists $\al\gq \la$ such that
\begin{equation}\label{eq-small-control}\tau^{\eps,r}_{\H_{X_1\cap X_2}}(x)([U_{f,r}]_{\eps,r})=
 \tau^{\eps,r}_{\H_{X_1\cap X_2}}([\partial_{X_1,X_2}]\ts x)([p_r,0]_{\eps,r})\end{equation} for 
 $\eps$ in $(0,\frac{1}{100\la})$ and $r>0$.
 The result is then a consequence of the first point of Remark \ref{rem-quant-morphism-small-propagation}.
 \end{proof}
 Let $(X,d)$ be a compact metric space, let $(\H_X,\rho_X)$ be a standard $X$-modules and let $X_1$ and $X_2$ two closed subsets of $X$ such that 
 $X=\mathring{X_1}\cup \mathring{X_2}$.   Let $\delta$ and $r$ be positive numbers with $5r<\delta$ and let $p:X\to \Kp(\H_X)+Id_{\H_X}$ be a continuous map such that $p(x)$ is a projection with support in 
 $B(x,r/2)\times B(x,r/2)$ for every $x$ in $X$. For any continuous function $f:X\to[0,1]$ such that $f(x)=0$ if $x\notin X_2$ and $f(x)=1$ is $x\notin X_1$, then
 $$e^{2\imath\pi f(x)p(x)}=e^{2\imath\pi f(x)}p(x)+Id_{\H_{X}}-p(x)$$  is  for any $x$ in $X$
 a unitary with propagation $r$ in the unitarisation of 
$C(X)_{\H_{X^{(\delta)}_1\cap X^{(\delta)}_2}}$.
 Two  continuous functions from $X$ to $[0,1]$ vanishing on $X\setminus X_1$ and identically equal to $1$ on $X\setminus X_2$ give rise to homotopic unitaries with propagation $r$, i.e. connected by a unitary of the unitarisation of
$C([0,1]\times X)_{\H_{X^{(\delta)}_1\cap X^{(\delta)}_2}}$ with propagation $r$. Moreover two continuous   maps $p_0,p_1:X\to \Kp(\H_X)+Id_{\H_X}$   such that $p_0(x)$ and $p_1(x)$  are projection with support in 
 $B(x,r/2)\times B(x,r/2)$ and which are connected by a  homotopy $$[0,1]\times X\lto \Kp(\H_X)+Id_{\H_X};(t,x)\mapsto p_t(x)$$ 
 such that  $p_t(x)$  is  a projection with support in 
 $B(x,r/2)\times B(x,r/2)$ for any $t$ in $[0,1]$ and any  $x$ in $X$ gives rise to  homotopic unitaries. Apply this to the class $[E_X]_{\eps,r}$, we obtain a class $[U^{(\delta)}_{X_1,X_2}]_{\eps,r}$ in $K_1^{\eps,r}(C(X)_{\H_{X^{(\delta)}_1\cap X^{(\delta)}_2}})$ which satisfies $$\iota_*^{-,\eps',r'}([U^{(\delta)}_{X_1,X_2}]_{\eps,r})=[U^{(\delta)}_{X_1,X_2}]_{\eps',r'}$$ for every positive numbers $\eps,\,\eps',r$ and $r'$ with $\eps\lq\eps'<1/100$ and $r'\lq r<\de/5$.
 
 For any positive number $\eps$ and $r$  with $\eps<1/100$, pick any  positive number $s$ with $s\lq r$ and $s<k_{\eps/\al}\de/5$ for $(\al,k)=(\al_\T,k_\T)$.
 Then
\begin{eqnarray*}
\partial Ind^{{(\delta}),\eps,r}_{X_1,X_2,A,*}: K_1(X,A)&\longrightarrow &K_0^{\eps,r}(A_{\H_{X^{(\delta)}_1\cap {X}^{(\delta)}_2}} )\\ 
x&\mapsto& \iota_*^{\eps,s,r}\tau^{{\eps/\al,s/k_{\eps/\al}}}_{\H_{X^{(\delta)}_1\cap {X}^{(\delta)}_2}}(x)( [U^{(\delta)}_{X_1,X_2}]_{\eps/\al,s/k_{\eps/\al}})
\end{eqnarray*}
does not depend on the choice of $s$ and
  $$\partial Ind^{(\delta)}_{X_1,X_2,A,*}\defi (\partial Ind^{(\delta),\eps,r}_{X_1,X_2,A,*})_{0<\eps<1/100,0<r}:K_1(X,A)\lto\K_0(A_{\H_{X^{(\delta)}_1\cap {X}^{(\delta)}_2}})$$ is a projective  morphism.
%
\begin{lemma}\label{lemma-bondary-index}
We above notations, we have $$\partial Ind^{{(\delta})}_{X_1,X_2,A,*}=Ind_{X_1^{(\delta)}\cap X_2^{(\delta)},A,*}\circ\partial_{X_1^{(\delta)}\cap X_2^{(\delta)},A,*}.$$
\end{lemma}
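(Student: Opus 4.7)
The strategy is to deduce this identity directly from Lemma \ref{lemma-family-projections}, applied with the enlarged pair $(X_1^{(\delta)}, X_2^{(\delta)})$ in place of $(X_1, X_2)$. The hypotheses of that lemma are satisfied: since $X_{i,\delta} \subseteq X_i^{(\delta)}$ for $i = 1, 2$ and $X = \mathring{X_1} \cup \mathring{X_2}$, we have $X = \mathring{X_1^{(\delta)}} \cup \mathring{X_2^{(\delta)}}$; moreover any cutoff $f: X \to [0,1]$ used to build $U^{(\delta)}_{X_1, X_2}$ (vanishing on $X \setminus X_1$ and equal to $1$ on $X \setminus X_2$) also vanishes on $X \setminus X_1^{(\delta)} \subseteq X \setminus X_1$ and equals $1$ on $X \setminus X_2^{(\delta)} \subseteq X \setminus X_2$, so the same $f$ can serve as the cutoff function of the lemma for the enlarged pair.

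For the auxiliary family of projections required by Lemma \ref{lemma-family-projections}, I would choose a compatible family $(p_r)_{r>0}$ in $C(X_1^{(\delta)} \cap X_2^{(\delta)})_{\H_{X_1^{(\delta)} \cap X_2^{(\delta)}}}$ with propagation less than $r$ representing pointwise the fundamental class $[E_{X_1^{(\delta)} \cap X_2^{(\delta)}}]_{\eps, r}$ of Section \ref{sub-sec-fund-class}. The key observation is that, with this choice, the unitary $U_{f, r}$ produced by that lemma agrees, as an element of $K_1^{\eps, r}(C(X)_{\H_{X_1^{(\delta)} \cap X_2^{(\delta)}}})$, with the class $[U^{(\delta)}_{X_1, X_2}]_{\eps, r}$ defined before the statement. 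Indeed, both are of the form $e^{2i\pi f(x)} q(x) + Id - q(x)$ and are identically equal to $Id$ wherever $f$ takes values $0$ or $1$, with the same support constraint imposed by $5r < \delta$; the identification of the two projections, seen as representatives of the fundamental class, is obtained from Lemma \ref{functoraility-fundamental-class} applied to the $1$-Lipschitz inclusion $X_1^{(\delta)} \cap X_2^{(\delta)} \hookrightarrow X$ together with the homotopy invariance built into the construction of $U^{(\delta)}_{X_1, X_2}$ under changes of the projection.

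Once these identifications are made, the equality $\Lambda = \Lambda'$ provided by Lemma \ref{lemma-family-projections} yields the desired identity. On the one hand, by the very definition of $\partial Ind^{(\delta), \eps, r}_{X_1, X_2, A, *}$ together with the identification $[U_{f, r}]_{\eps, r} = [U^{(\delta)}_{X_1, X_2}]_{\eps, r}$, the projective morphism $\Lambda$ coincides with $\partial Ind^{(\delta)}_{X_1, X_2, A, *}$. On the other hand, by the definition of the quantitative index (with $p_r$ representing $[E_{X_1^{(\delta)} \cap X_2^{(\delta)}}]$) and the description of the $K$-homology boundary as left Kasparov multiplication by $[\partial_{X_1^{(\delta)}, X_2^{(\delta)}}]$, the projective morphism $\Lambda'$ coincides with $Ind_{X_1^{(\delta)} \cap X_2^{(\delta)}, A, *} \circ \partial_{X_1^{(\delta)}, X_2^{(\delta)}, *}$. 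The main technical chore is the identification of the two unitaries in the second paragraph, which requires careful bookkeeping of the modules $\H_X$ versus $\H_{X_1^{(\delta)} \cap X_2^{(\delta)}}$ and of the support constraints that force the projections to agree up to homotopy in the quantitative sense.
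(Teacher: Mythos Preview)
Your overall strategy coincides with the paper's: apply Lemma~\ref{lemma-family-projections} to the enlarged pair $(X_1^{(\delta)},X_2^{(\delta)})$ with a family $(p_r)_{r>0}$ representing $[E_Y]$ (where $Y=X_1^{(\delta)}\cap X_2^{(\delta)}$), then identify $\Lambda'$ with $Ind_{Y,A,*}\circ\partial_{X_1^{(\delta)},X_2^{(\delta)},*}$ (which is indeed immediate from the definitions) and $\Lambda$ with $\partial Ind^{(\delta)}_{X_1,X_2,A,*}$ (which is where all the work lies). You correctly locate the ``main technical chore'' in the identification $[U_{f,r}]_{\eps,r}=[U^{(\delta)}_{X_1,X_2}]_{\eps,r}$.

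The gap is in how you justify that identification. Lemma~\ref{functoraility-fundamental-class} (or Remark~\ref{rem-fund-class-inclusion}) only gives the equality $\jmath_{X,Y,C(Y),*}^{\sharp}[E_Y]=\jmath^*_{X,Y,\Kp(\H_X)}[E_X]$ in $K_0^{\eps,r}(C(Y)_{\H_X})$. This is the wrong algebra (you need $\H_Y$, not $\H_X$) and, more importantly, the ``homotopy invariance built into the construction of $U^{(\delta)}_{X_1,X_2}$'' is \emph{not} invariance under homotopies of $25\eps$-$r$-projections: it only applies to homotopies $p_t:X\to\Kp(\H_X)$ through genuine projections with $\supp p_t(x)\subseteq B(x,r/2)\times B(x,r/2)$. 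Equality of quantitative $K_0$-classes does not supply such a support-preserving homotopy, so your citation does not close the argument.

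The paper fills this gap by working in the concrete model $\H_X=L^2(X,\mu)$ and writing down the explicit interpolation $\phi_{t,r}(y)=(1-t)\phi_{0,r}(y,\cdot)+t\,\phi_{1,r}(y,\cdot)$ between the kernel defining $E_X$ (restricted to $y\in Y$) and the kernel defining $E_Y$. The corresponding rank-one projections $E_{t,r}(y)$ have support in $B(y,r/2)^2$, so for $x\in X_1\cap X_2$ (where $f(x)\in(0,1)$) one has $E_{t,r}(x)\in\Kp(\H_Y)$, and $U_{t,r}(x)=e^{2\imath\pi f(x)}E_{t,r}(x)+Id-E_{t,r}(x)$ is a homotopy of genuine unitaries of propagation $r$ in the unitarisation of $C(X)_{\H_Y}$. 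Its endpoints are exactly $U^{(\delta)}_{X_1,X_2}$ (at $t=0$) and the $U_{f,r}$ of Lemma~\ref{lemma-family-projections} built from the family $(E_{1,r})_{r>0}$ (at $t=1$). Only then is Lemma~\ref{lemma-family-projections} invoked. In short, your plan is right, but the functoriality lemma is not the tool that produces the needed support-constrained homotopy; one has to build it by hand as the paper does.
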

\begin{proof} Let us set $Y=X^{(\delta)}_1\cap {X}^{(\delta)}_2$.
Let $(\H_X,\rho_X)$ be a non degenerate standard $X$-module with standard restriction to $Y$. Let $\mu$ be a measure on $X$, let $\mu'$ be the restriction of $\mu$ tu $Y$ and assume that  $\supp \mu=X$ and $\supp \mu'=Y$. Fix a Hilbert space $\H$ and let  choose isometries 
 $V_1:\H_{Y}\to L^2(Y,\mu')\ts \H$ and $V_2:\H_X\to L^2(X,\mu)\ts \H$
 with propagation
 less than $r/2$.
By considering  the  square
$$\begin{CD}
L^2(Y,\mu')\ts\H@>>>L^2(X,\mu)\ts \H'\\
         @V {V_1} VV
         @VV  {V_2} V\\
\H_{Y}@>>>\H_X\end{CD},$$where the horizontal maps are inclusions and  according to  the first point of Remark \ref{remark-covering-isometries}, we can indeed assume without loss of generality
that $\H_X=L^2(X,\mu)\ts \H$ for $\H$ a separable Hilbert space and thus 
$\H_{Y}=L^2(Y,\mu')\ts\H$.
As in the definition of $[E_X]_{\eps,r}$ and  $[E_Y]_{\eps,r}$, let us choose $\phi_{0,r}:X\times X\to [0,1]$ and $\phi_{1,r}:Y\times Y\to [0,1]$ continuous functions  such that
 \begin{itemize}
 \item $\phi_{0,r}$ is supported in $\{(x,x')\in X\times X\text{ such that }d(x,x')<r/2\}$;  
  \item $\phi_{1,r}$ is supported in $\{(y,y')\in Y\times Y\text{ such that }d(y,y')<r/2\}$;
 \item $\int_X\phi_{0,r}(x,x')d\mu(x')=1$  for any $x$ in $X$;
 \item $\int_Y\phi_{1,r}(y,y')d\mu'(y')=1$  for any $y$ in $Y$.
 \end{itemize}

 Then  $y\mapsto\phi_1(y,\bullet)$ can be viewed as a continuous function from $Y$ valued in $L^1(Y,\mu')\subseteq L^1(Y,\mu)$. The
 idea is to   compare $[U^{(\delta)}_{X_1,X_2}]_{\eps,r}$ with the class of the unitary of Equation \eqref{eq-Uf} arising  from  $E_{\phi_{0,r}}$.
 Define then for any $t$ in $[0,1]$
 $$\phi_{t,r}:Y\ \to L^1(X,\mu): x\mapsto (1-t)\phi_0(y,\bullet)+t\phi_1(y,\bullet).$$
 For all $y$ in $Y$ and  for all $t$, the support of $\phi_{t,r}(y)$  is included in 
 $$\{x\in  X\text{ such that }d(y,x)<r\}.$$
  Consider then  the operator
$$E_{t,r}: C(Y)\ts L^2(X,\mu)\lto C(Y)\ts L^2(X,\mu)$$ defined by 
$$E_{t,r}\cdot\xi(y,x)=\phi_{t,r}^{1/2}(y,x)\int_{X} \phi_{t,r}^{1/2}(y,x')\xi(y,x')d\mu_X(x')),$$ with $\xi$ in  $C(Y)\ts L^2(X,\mu)$ viewed as a function in $Y\times X$. Then $E_{t,r}(y)$ 
   is for all $t$ in $[0,1]$  a projection of  $C(Y)_{L^2(X,\mu)}=C(Y)\ts \Kp(L^2(X,\mu))$ with propagation less than $r$.
 If $f:X\to[0,1]$ is a continuous function such that $f(x)=0$ if $x\notin X_2$ and $f(x)=1$ is $x\notin X_1$, define for $r<\de/5$
 $$U_{t,r}(x):X\lto \Kp(L^2(Y,\mu'))+\C\cdot Id_{L^2(Y,\mu')}$$ by 
 \begin{itemize}
\item $U_{t,r}(x)=Id_{L^2(Y,\mu')}$ if $x\notin Y$;
\item $U_{t,r}(x)=e^{2\imath\pi f(x)}E_{t,r}(x)+Id_{L^2(Y,\mu')}-E_{t,r}(x)$ if $x$ is in $ Y$.
\end{itemize}
We obtain in this way a homotopy of unitaries  with propagation $r$ in the unitarisation of 
$C(X)_{L^2(Y,\mu')}$. Since $[ U_{0,r}(x)]_{\eps,r}=[U^{(\delta)}_{X_1,X_2}]_{\eps,r}$, the result is now a consequence of
Lemma \ref{lemma-family-projections} applied to the family $(E_{1,r})_{r>0}$.
 \end{proof}

\subsection{Proof of Proposition \ref{proposition-index-commutes}} 
  This proof will require some preliminary work.
  Let considerer a semi-split extension
  $$0\to J\to A\to C(X)\to 0$$ where $J$ is a closed two sided  ideal in a unital $C^*$-algebra $A$.
   Let $(E_r)_{r>0}$ be a family of projections in $C(X)\ts \Kp(\H_X)$ such that for every positive number $r$ and for every
 $x$ in $X$, then $E_r(x)$ has support in $B(x,r/2)\times B(x,r/2)$ and $[E_X]_{\eps,r}=[E_r,0]_{\eps,r}$ for every positive numbers $\eps$ and $r$ with $\eps<1/100$.  
  Let $f:X\to[0,1]$ be a continuous function such that $\supp f\subseteq \mathring{X_2}$ and $\supp (1-f)\subseteq \mathring{X_1}$ 
%
%
 
Let $r_0$ be a positive number such that $|f(x)-f(x')|<\frac{\eps}{16\pi}$ if $d(x,x')<r_0$. We can assume without loss of generality that $E_r(x)$ has support in $B(x,r_0/2)\times B(x,r_0/2)$   for every  positive number $r$ and for every
 $x$ in $X$.  Let then $z_r$ in $A_{\H_X}$ be a lift for $E_r$ with propagation $r$
 with respect to  the   semi-split filtered extension  $$0\to J_{\H_X}\to  A_{\H_X}\to C(X)_{\H_X}\to 0$$ such that $\|z_r\|\lq 1$.
 Let $g:[0,1]\to[0,1]$ be defined by
 \begin{itemize}
 \item $g(t)=0$ if $0\lq t\lq 1/4$;
 \item $g(t)=2(t-1/4)$ if $1/4\lq t\lq 3/4$;
 \item $g(t)=1$ if $3/4\lq t\lq 1$.
 \end{itemize}
Let us fix a positive number $\de$ and consider, with the notations of Section \ref{subsec-MV-pairs} the following elements
\begin{itemize}
\item $u:[0,1]\to A_{\H_X,\rho_X}$,
\item $v:[0,1]\to C(X)^{(1)}_{\H_X,\rho_X}+\C\cdot 1\ts \H_X$,
\item   $w:[0,1]\to C(X)^{(2)}_{\H_X,\rho_X}+\C\cdot 1\ts \H_X$
\end{itemize} defined  for $\theta$ in $[0,1]$ by 
 \begin{itemize}
 \item $u(\theta)=(1\ts\rho_X(\exp(2\imath\pi \theta g\circ f)))\cdot\exp(2\imath\pi (1-\theta) z_r)$ 
  \item  $v(\theta)=(1\ts  \rho_X(\exp(2\imath\pi\theta( g\circ f-1))))\cdot E_r+1\ts Id_{\H_X}-E_r$;
\item  $w(\theta)=E_r+
(1\ts\rho_X(\exp(2\imath\pi \theta g\circ f)))\cdot (1\ts Id_{\H_X}-E_r)$.
\end{itemize}
Since 
\begin{equation}
\label{eq-Er-almost-commutes0}\|(1\ts\rho_X(\exp(2\imath\pi\theta g\circ f)-1))\cdot E_r-
  ((\exp(2\imath\pi\theta g\circ f)-1)\ts Id_{\H_X})\cdot E_r\|<\eps/4\end{equation}
  and 
$$\|E_r\cdot(1\ts\rho_X(\exp(2\imath\pi\theta g\circ f)-1))-
  E_r\cdot((\exp(2\imath\pi\theta g\circ f)-1)\ts Id_{\H_X})\|<\eps/4$$

we deduce that
\begin{equation}
\label{eq-Er-almost-commutes}
\|(1\ts\rho_X(\exp(2\imath\pi\theta g\circ f)-1))\cdot E_r-
E_r\cdot(1\ts\rho_X(\exp(2\imath\pi\theta g\circ f)-1))\|
<\eps/2\end{equation} Hence, $v$ and $w$ are $\eps$-$r$-unitaries
and moreover,  $v-1\ts Id_{\H_X}$ lies in   
$C([0,1]\times X)^{(1)}_{\H_X,\rho_X}$  that   $w-1\ts Id_{\H_X}$ lies in   
$C([0,1]\times X)^{(2)}_{\H_X,\rho_X}$.

For next proposition, we need to recall from  \cite[Section 2]{oy4} the notion  of $\eps$-$r$-$N$-invertible  for positive number $\eps$, $r$ and $N$ with $\eps<1/100$: an element $x$ of a unital  $C^*$-algebra $B$ filtered by $(B_r)_{r>0}$ is  $\eps$-$r$-$N$-invertible if 
\begin{itemize}
\item $x$ is in $B_r$;
\item $\|x\|<N$;
\item there exists an element $y$ in $B_r$ with $\|y\|<N$ and such  that $\|xy-1\|<\eps$ and $\|yx-1\|<\eps$.
\end{itemize}
Such  an element $y$ is called an  $\eps$-$r$-$N$-inverse for $x$.

\begin{proposition}\label{prop-decompositionP1P2}

For some universal control pair $(\al,k)$ and positive number $N$ and with notations as above,  there exist 
$P_1$ and $P_2$   respectively $\al\eps$-$k_\eps r$-$N$-invertible elements in
 $M_2(A[0,1]_{\H_X,\rho_X}^{(1)})+\C\cdot I_2\ts Id_{\H_X}$ and $M_2(A[0,1]_{\H_X,\rho_X}^{(2)})+\C\cdot I_2\ts Id_{\H_X}$ 
 (with $A[0,1]=C([0,1],A)$) connected to $I_2$ as $\al\eps$-$k_\eps r$-$N$-invertible for any positive numbers $\eps$ and $r$ with
$\eps<\frac{1}{100\al}$ and $r<\frac{\delta}{5k_\eps}$  and such that
 \begin{enumerate}
 \item $\|P_1P_2-\diag(u,u^*)\|<\al \eps$;
  \item $\|\dot{P_1}-\diag(v,v^*)\|<\al \eps$, where $\dot{P_1}$ is the image of $P_1$  in  $$M_2(C(X)^{(1)}_{\H_X,\rho_X}[0,1])+\C\cdot I_2\ts Id_{\H_X}$$ relatively to  the extension 
 $$0\to A^{(1)}_{\H_X,\rho_X}[0,1]\to B^{(1)}_{\H_X,\rho_X}[0,1]{\to} C(X)^{(1)}_{\H_X,\rho_X}[0,1]\to 0;$$ 
 \item $\|\dot{P_2}-\diag(w,w^*)\|<\al \eps$, where $\dot{P_2}$ is the image of $P_2$  in  $$M_2(C(X)^{(2)}_{\H_X,\rho_X}[0,1])+\C\cdot I_2\ts Id_{\H_X}$$ relatively to the extension 
 $$0\to A^{(2)}_{\H_X,\rho_X}[0,1]\to B^{(2)}_{\H_X,\rho_X}[0,1]{\to} C(X)^{(2)}_{\H_X,\rho_X}[0,1]\to 0.$$
 \item $P_1(1)$  is  an  $\al\eps$-$k_\eps r$-$N$-invertible of $M_2(A_{\H_{X^{(\delta)}_1\cap X^{(\delta)}_2}})+\C\cdot I_2\ts Id_{\H_X}$ connected
 to   $I_2\ts Id_{\H_X}$ as an $\al\eps$-$k_\eps r$-$N$-invertible of  $M_2(A_{\H_{X^{(\delta)}_1\cap X^{(\delta)}_2}})+\C\cdot I_2\ts Id_{\H_X}$.\end{enumerate}
 Moreover,
 \begin{itemize}
 \item $P_1(0)$ and $P_2(0)$ lie respectively in $M_2(A_{\H_{X^{(\delta)}_1}})+\C\cdot 1\ts Id_{\H_X}$ and $M_2(A_{\H_{X^{(\delta)}_2}})+\C\cdot 1\ts Id_{\H_X}$  \item $P_1(1)-I_2\ts Id_{\H_X}$ and $P_1^{-1}(1)-I_2\ts Id_{\H_X}$ have coefficient in 
 $A_{\H_{X^{(\delta)}_1\cap X^{(\delta)}_2}}$.  \end{itemize}
\end{proposition}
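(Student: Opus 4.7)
\medskip

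\textbf{Proof plan for Proposition \ref{prop-decompositionP1P2}.}

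The approach is a controlled, non-commutative implementation of the classical Mayer--Vietoris boundary construction, adapted to $\eps$-$r$-invertibles. First I would unpack the two natural pieces of $u(\theta)=a(\theta)b(\theta)$, where
\[
a(\theta)=1\ts \rho_X(\exp(2\imath\pi\theta\,g\!\circ\! f)),\qquad b(\theta)=\exp(2\imath\pi(1-\theta)z_r),
\]
and observe that $a(\theta)-1\ts Id_{\H_X}$ is supported in $\mathring{X}_2$ (since $g\!\circ\! f$ vanishes outside $\mathring{X}_2$), while the analogous statement with $g\!\circ\! f$ replaced by $g\!\circ\! f-1$ puts things in $\mathring{X}_1$. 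Thanks to the almost-commutation estimate \eqref{eq-Er-almost-commutes}, for $r<\de/5$ the element $E_r$ approximately commutes with any $\rho_X$-multiplier built out of $g\!\circ\! f$; this is what makes the partition $1=(1-g\!\circ\! f)+g\!\circ\! f$ usable to split $z_r$ as $z_r\approx (1-g\!\circ\! f)z_r+(g\!\circ\! f)z_r$ into a piece supported in $X_1^{(\de)}$ and a piece supported in $X_2^{(\de)}$ (the $\de$-thickening arising from the propagation $r$ of $z_r$).

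Second, I would use a controlled Whitehead-type factorization
\[
\diag(u,u^*)=\diag(a,1)\cdot\diag(b,b^*)\cdot\diag(1,a^*),
\]
combine it with the partition decomposition $b\approx b_1 b_2$, where
\[
b_1=\exp(2\imath\pi(1-\theta)(1-g\!\circ\! f)z_r),\quad b_2=\exp(2\imath\pi(1-\theta)(g\!\circ\! f)z_r),
\]
and use a rotation $R\in M_2$ to rewrite $\diag(b_1b_2,b_2^*b_1^*)$ as a product of the form $\diag(b_1,1)\cdot R\diag(b_2,b_2^*)R^*\cdot \diag(1,b_1^*)$. Collecting factors whose image in the quotient lives in $C(X)^{(1)}_{\H_X,\rho_X}$ defines $P_1$, and collecting those whose image lives in $C(X)^{(2)}_{\H_X,\rho_X}$ (together with the $\diag(a,1)$, $\diag(1,a^*)$ factors) defines $P_2$. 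The quotients $\dot{P_1}$ and $\dot{P_2}$ will then be, up to universal slack, the spectral expressions $\diag(v,v^*)$ and $\diag(w,w^*)$, because on the quotient level $z_r$ is replaced by $E_r$ and $\exp(2\imath\pi(1-\theta)sE_r)=(e^{2\imath\pi(1-\theta)s}-1)E_r+1$ matches the defining formulas of $v$ and $w$ after a standard rearrangement.

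Third, I would check the boundary conditions. At $\theta=0$ the multiplier $a(0)=1$ disappears, and the surviving $R$-conjugates of $\diag(b_i,1)$ only involve $b_1$ or $b_2$, which are $\exp$-lifts of $z_r$-terms supported respectively in $X_1^{(\de)}$ and $X_2^{(\de)}$; this yields $P_1(0)\in M_2(A_{\H_{X_1^{(\de)}}})+\C I_2\ts Id_{\H_X}$ and the symmetric statement for $P_2(0)$. At $\theta=1$ the exponential part $b(1)=1$ is trivial, so only the rotation-conjugates of $\diag(a(1),1)\approx \diag(1+(\exp(2\imath\pi g\!\circ\! f)-1),1)$ survive in $P_1(1)$; since $\exp(2\imath\pi g\!\circ\! f)-1$ vanishes off $\mathring{X}_2$ and the rotation $R$ is in $M_2(\C)$, $P_1(1)-I_2\ts Id_{\H_X}$ and its inverse land in $A_{\H_{X_1^{(\de)}\cap X_2^{(\de)}}}$, and the rotation paths $R\mapsto I_2$ provide the homotopy to $I_2\ts Id_{\H_X}$ through $\al\eps$-$k_\eps r$-$N$-invertibles. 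Connectedness of the general $P_i$ to $I_2$ as $\al\eps$-$k_\eps r$-$N$-invertibles follows by linearly decreasing the exponents $(1-\theta)$ and $\theta$ down to $0$.

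The main obstacle is the careful tracking of the norm estimates: every ``$\approx$'' above must be replaced by a genuine bound of the form $\|\cdot\|<\al\eps$ with a universal $\al$, using repeated application of Lemma \ref{lemma-almost-closed}, the almost-commutation \eqref{eq-Er-almost-commutes0}--\eqref{eq-Er-almost-commutes}, and the Duhamel-type identity $\|\exp(X+Y)-\exp(X)\exp(Y)\|\lq C\|[X,Y]\|$ applied to the non-commuting summands $(1-g\!\circ\! f)z_r$ and $(g\!\circ\! f)z_r$. Controlling the product of all these approximations, together with keeping the propagation bounded by a universal multiple $k_\eps r$ and the norms bounded by a universal $N$ (using $\|z_r\|\lq 1$ and the boundedness of the spectral functional calculus for $\exp$ on small arguments), is routine but delicate; this gives the desired universal control pair $(\al,k)=(\al_\mv,k_\mv)$ after enlarging it if necessary.
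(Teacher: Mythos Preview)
There is a genuine gap in your approach, and it is not just a matter of filling in routine estimates.

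Your multiplicative splitting $b\approx b_1b_2$ with $b_1(\theta)=\exp\bigl(2\imath\pi(1-\theta)\,(1-g\!\circ\!f)z_r\bigr)$ does not yield the required quotient identity~(ii). Passing to the quotient (i.e.\ replacing $z_r$ by $E_r$) and using the approximation \eqref{eq-Er-almost-commutes0}, one finds
\[
\dot b_1(\theta)\;\approx\;\bigl(e^{\,2\imath\pi(1-\theta)(1-g\circ f)}-1\bigr)\,E_r+1\ts Id_{\H_X},
\]
whereas the target is
\[
v(\theta)\;\approx\;\bigl(e^{\,2\imath\pi\theta(g\circ f-1)}-1\bigr)\,E_r+1\ts Id_{\H_X}\;=\;\bigl(e^{-2\imath\pi\theta(1-g\circ f)}-1\bigr)\,E_r+1\ts Id_{\H_X}.
\]
The two phase functions $2\pi(1-\theta)(1-g\!\circ\!f)$ and $-2\pi\theta(1-g\!\circ\!f)$ differ by $2\pi(1-g\!\circ\!f)$, which is not an integer multiple of $2\pi$ on the set where $0<g\!\circ\!f<1$. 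There is no ``standard rearrangement'' fixing this: your $\dot P_1$ is simply not close to $\diag(v,v^*)$. A second, related problem is that ``collecting factors whose image lies on the $X^{(1)}$ side'' is not a well-defined operation here, since the factors $\diag(a,1)$, $\diag(b_1,1)$, $\diag(b_2,b_2^*)$, \dots\ are interspersed and do not commute; pushing the $b_1$-factors to one side introduces commutator errors that, while small in norm, do not obviously preserve the delicate support and quotient conditions simultaneously.

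The paper takes a different route. Rather than a multiplicative splitting of $b$, it first produces a polynomial truncation $u'(\theta)\approx u(\theta)$ of controlled propagation, and then an \emph{additive} decomposition $u'(\theta)=u_1(\theta)+u_2(\theta)$ with $u_i(\theta)-\delta_{i,1}\cdot 1\in A^{(i)}_{\H_X,\rho_X}$. The pieces $u_1,u_2$ are built from carefully designed polynomial expressions $\psi^{(1)}_\eps,\psi^{(2)}_\eps$ (involving a truncated exponential and an auxiliary bump function $\phi$ with polynomial approximants $\phi_n$) precisely so that their images in the quotient are, up to $\al\eps$, the operators implementing $v$ and $w$. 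The factorisation $P_1P_2\approx\diag(u',{u'}^{*})$ is then obtained from the elementary-matrix Whitehead identity
\[
X(u_1+u_2)\,Y\bigl(-(u_1+u_2)^*\bigr)\,X(u_1+u_2)\,J\;=\;P_1P_2,
\]
where $X(\cdot)$, $Y(\cdot)$ are the unipotent upper and lower triangular $2\times2$ matrices and $P_i$ is an explicit word in $X(u_j)$, $Y(-u_j^*)$ and $J$. This algebraic identity converts the additive splitting into a multiplicative one at the $M_2$ level without any commutator corrections, which is exactly what your approach lacks.
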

\begin{proof}
 Let us fix a continuous function  $\phi:[0,1]\to[0,1]$  such that $\phi(1)=1$ and $\phi(t)=0$ if $t$ is in $[0,3/4]$ and let us  then fix
  $(\phi_n)_{n\in\N}$ a sequence of real polynomial functions  converging uniformly to $\phi$ on $[0,1]$ and such that $|\phi_n(t)|\lq2,\, \phi_n(0)=0$, $\phi_n(1)=1$  and $d^o\phi_n\gq n$ for every integer $n$ and every $t$ in $[0,1]$.
   For $\eps$ a positive number let $l_\eps$ be the smallest integer such that
   \begin{itemize}
   \item for all integer $k$ with $k>l_\eps$ and all $t$ in $[0,1]$ then $|\phi_k(t)-\phi(t)|<\eps$;
   \item $\displaystyle \sum_{k> l_\eps} \frac{(2\pi)^k}{k!}<\eps/3$.
   \end{itemize}

If $a$ and $b$ are elements in a  $C^*$-algebra and $\theta$ in $[0,1]$, let us set
   $$\psi^{(1)}_\eps(\theta,a,b)=\phi(a)\sum_{k=1}^{l_\eps} \frac{(2\imath\pi (1-\theta) b)^k}{k!}+(1-\phi(a))\cdot(\exp(-2\imath\pi \theta g(a))-1)\phi_{l_\eps}(b)$$
   and 
   $$\psi^{(2)}_\eps(\theta,a,b)=\exp(2\imath\pi\theta g(a))\cdot\left(\sum_{k=0}^{l_\eps} \frac{(2\imath\pi (1-\theta) b)^k}{k!}\right).$$   Notice that the degree in $b$ of $\psi^{(1)}_\eps(\theta,a,b)$ is less than $$h_\eps=\sup  \{d^o\phi_k;\,k=0,\ldots,l_\eps\}.$$
 
%
%
 We define  for any $\theta$ in $[0,1]$
 \begin{eqnarray*}u'(\theta)&=&\psi^{(2)}_\eps(\theta,1\ts \rho_X(f),z_r);\\
 y(\theta)&=&1\ts Id_{\H_X}+\psi^{(1)}_\eps(\theta,1\ts \rho_X(1-f),z_r)+\psi^{(1)}_\eps(\theta,1\ts \rho_X(f),1\ts Id_{\H_X}-z_r)^*;\\ 
  u_{1}(\theta)&=&1\ts Id_{\H_X}+\psi^{(1)}_\eps(\theta,1\ts \rho_X(1-f),z_r)+(1\ts \rho_X(1-g\circ f))\cdot (u'(\theta)-y(\theta))\\
  u_{2}(\theta)&=&\psi^{(1)}_\eps(\theta,1\ts \rho_X(f),1-z_r)^*+(1\ts \rho_X(g\circ f))\cdot(u'(\theta)-y(\theta))
 \end{eqnarray*}
 Then  for every $\theta$ in $[0,1]$ and $r<\frac{\de}{5h_\eps }$,
 \begin{itemize}
 \item $u'(\theta)$ and $y(\theta)$ are  elements of $A_{\H_X,\rho_X}$ with  propagation  less than $h_\eps r$;
 \item $u_{1}(\theta)-1\ts Id_{\H_X}$ is an element in  $A^{(1)}_{\H_X,\rho_X}$  with propagation less 
  than $h_\eps r$;
  \item $u_{2}(\theta)$ is an element in  $A^{(2)}_{\H_X,\rho_X}$  with propagation less 
  than $h_\eps r$;  \item $u'(\theta)=u_{1}(\theta)+u_{2}(\theta)$;
   \item  $\|u_{1}(\theta)\|\lq 15$ and  $\|u_{2}(\theta)\|\lq 15$.\end{itemize}  
   
   Moreover,$$\|u(\theta)-u'(\theta))\|<\eps/3$$
 and hence $u'(\theta)$ is an $\eps$-$h_\eps r$-unitary in $A_{\H_X,\rho_X}$.

 Let us set then 
$X(x)=\begin{pmatrix}1&x\\0&1\end{pmatrix}$ and $Y(y)=\begin{pmatrix}1&0\\y&1\end{pmatrix}$ and
define then for $\theta$ in $[0,1]$
$$P_{1}(\theta)=X(u_2(\theta))X(u_1(\theta))Y(-u_1^*(\theta))X(u_1(\theta)) \begin{pmatrix}0&-1\\1&0\end{pmatrix}X(-u_2(\theta))$$

and 

$$P_{2}(\theta)=X(u_2(\theta))\begin{pmatrix}0&1\\-1&0\end{pmatrix}X(-u_1(\theta))Y(-u_2^*(\theta))X(u_1(\theta))X(u_2(\theta))\begin{pmatrix}0&-1\\1&0\end{pmatrix}.$$
Then for some constant $N$ and some control pair $(\al,k)$ independent of $X\,,X_1\,X_2,\,A$ and $B$, for all $\theta$ in $[0,1]$ and for all positive number $r$ with $r<\frac{\delta}{5k_\eps}$,
\begin{itemize}
\item $P_{1}(\theta)$ and $P_2(\theta)$ are invertible elements in $M_2(A_{\H_X,\rho_X})$;
\item $P_{1}(\theta)-I_2$ and  $P_{1}(\theta)^{-1}-I_2$ have coefficients in  $A^{(1)}_{\H_X,\rho_X}$   and propagation less than $k_\eps r$;
 \item  $P_{2}(\theta)-I_2$ and  $P_{2}(\theta)^{-1}-I_2$ have coefficients in  $A^{(2)}_{\H_X,\rho_X}$   and propagation less than $k_\eps r$;
 \item $P_{1}(\theta),\,P_{2}(\theta)\,,P_{1}(\theta)^{-1}$ and $P_{2}(\theta)^{-1}$ are bounded in norm by $N$;
 \item $\|P_{1}(\theta)P_{2}(\theta)-\diag(u(\theta),u^*(\theta))\|<\alpha\eps$;
 \item $P_1(0)$ and $P_2(0)$ lie respectively in $M_2(A_{\H_{X^{(\delta)}_1}})+\C\cdot 1\ts \H_X$ and $M_2(A_{\H_{X^{(\delta)}_2}})+\C\cdot 1\ts \H_X$;
   \item $P_1(1)-I_2\ts\H_X$ and $P_1^{-1}(1)-I_2\ts\H_X$ have coefficients in 
 $A_{\H_{X^{(\delta)}_1\cap X^{(\delta)}_2}}$;
 \item  $P_1(1)$  is   connected
 to   $I_2\ts Id_{\H_X}$ as an $\al\eps$-$k_\eps r$-$N$-invertible of  $M_2(A_{\H_{X^{(\delta)}_1\cap X^{(\delta)}_2}})+\C\cdot I_2\ts Id_{\H_X}$. \end{itemize}
  Hence  if we set for $i=1,2$
  \begin{eqnarray*}
  P_i:[0,1]&\lto& M_2(A_{\H_X,\rho_X}) \\         
 \theta&\mapsto& P_{i}(\theta),
 \end{eqnarray*}
%
%
then $P_i$  is an $\eps$-$k_\eps r$-$N$-invertible element in
 $$M_2(A^{(i)}_{\H_X,\rho_X}[0,1])+\C\cdot I_2\ts Id_{\H_X}$$   connected to $I_2\ts Id_{\H_X}$ as an $\eps$-$k_\eps r$-$N$-invertible.

\smallbreak

Let us  consider the filtered semi-split extension $$0\to A^{(1)}_{\H_X,\rho_X}[0,1]\to B^{(1)}_{\H_X,\rho_X}[0,1]{\to} C(X)^{(1)}_{\H_X,\rho_X}[0,1]\to 0$$
and let us give an approximation of the image of $P_1$  in  the $C^*$-algebra $M_2({C(X)^{(1)}_{\H_X,\rho_X}[0,1]})+\C\cdot I_2\ts \H_X$.
In this computation, we can replace in the definition of  $P_1(\th)$.
\begin{itemize}
\item $u'(\th)$ by $$(1\ts \rho_X(\exp(2\imath\pi\th g\circ f )))\cdot \exp(2\imath\pi(1-\th)E_r);$$
\item $\psi^{(1)}_\eps(\theta,1\ts \rho_X(1-f),z_r)$ by
\begin{equation*}
\begin{split}
(1\ts\rho_X(\phi\circ(1-f))(e^{2\imath\pi(1-\th)}-1)&\\+
1\ts\rho_X((1-&\phi\circ (1-f))(\exp(-2\imath\pi\th g\circ(1-f))-1)))\cdot E_r\\
=&(1\ts\rho_X(\exp(-2\imath\pi\th g\circ(1-f))-1))\cdot E_r\\
=&(1\ts\rho_X(\exp(2\imath\pi\th (g\circ f -1))-1))\cdot E_r\\
\end{split}
\end{equation*}
\item $\psi^{(1)}_\eps(\theta,1\ts \rho_X(f),1\ts Id_{\H_X}-z_r)$ by
$$(1\ts\rho_X(\exp(-2\imath\pi\th g\circ f)-1)\cdot (1\ts Id_{\H_X}-E_r);$$
\item $y(\th)$ by
\begin{eqnarray*}
(1\ts\rho_X(\exp(2\imath\pi\th (g\circ f-1))))\cdot E_r+(1\ts\rho_X(\exp(2\imath\pi\th g\circ f))\cdot (1\ts Id_{\H_X}-E_r)
&=&u'(\th);
\end{eqnarray*}
\item 
$u_1(\th)$ by $$1\ts Id_{\H_X}-E_r+(1\ts\rho_X(\exp(2\imath\pi\th (g\circ f-1)))\cdot E_r;$$
\item  $u_2(\th)$ by $(1\ts\rho_X(\exp(2\imath\pi\th g\circ f)))\cdot(1\ts Id_{\H_X}-E_r)$.
\end{itemize}
In these computations, we have used the identities $(1-g)\phi=0$ and $g(t)+g(1-t)=1$ for all $t$ in $[0,1]$ and in the approximation of $y(\th)$ and of $u_2(\th)$, the fact that according to   \eqref{eq-Er-almost-commutes}, then $E_r$ and $\rho_X(\exp(2\imath\pi\th g\circ f-1)$ almost commute. Using again this almost commutation, we see that the image of $P_1$  in  $M_2({C(X)^{(1)}_{\H_X,\rho_X}[0,1]})+\C\cdot I_2\ts \H_X$ is closed to the unitary
\begin{eqnarray}\label{equ-unitary-1}
[0,1]&\to& M_2(C(X)^{(1)}_{\H_X,\rho_X})+\C\cdot I_2\ts \H_X\\
\nonumber\theta&\mapsto&
\diag(v_\theta,v^*_\theta)\end{eqnarray}
\smallbreak

In the same way,  if we consider the filtered semi-split extension $$0\to A^{(2)}_{\H_X,\rho_X}[0,1]\to B^{(2)}_{\H_X,\rho_X}[0,1]{\to} C(X)^{(2)}_{\H_X,\rho_X}[0,1]\to 0,$$
then the image of $P_2$  in  $M_2(C^{(2)}(X)_{\H_X,\rho_X}[0,1])+\C\cdot I_2\ts Id_{\H_X}$ is closed to the unitary
\begin{eqnarray}\label{equ-unitary-2}
[0,1]&\to&  M_2(C(X)^{(2)}_{\H_X,\rho_X})+\C\cdot I_2\ts \H_X\\
\nonumber\theta&\mapsto&\diag(w_\theta,w^*_\theta)\end{eqnarray}
with 
 for all $\theta$ in $[0,1]$.

 \end{proof}
 
 As a corollary we get with notations of Proposition \ref{prop-decompositionP1P2}
 \begin{corollary}\label{cor-decompW1W2}
 For some  control pair $(\lambda,h)$ depending only on the control pair $(\al,k)$ and on the positive number $N$ of Proposition \ref{prop-decompositionP1P2}, then for any positive numbers $\eps$ and $r$ with
$\eps<1/100\lambda$ and $r<\frac{\delta}{5h_\eps}$, there exist 
$W_1$ and $W_2$   respectively $\lambda\eps$-$h_\eps r$-unitary elements in
 $M_2(A[0,1]_{\H_X,\rho_X}^{(1)})+\C\cdot I_2\ts Id_{H_X}$ and $M_2(A[0,1]_{\H_X,\rho_X}^{(2)})+\C\cdot I_2\ts Id_{H_X}$ 
 connected to $I_2\ts Id_{\H_X}$ as $\lambda\eps$-$h_\eps r$-unitaries such that
 \begin{enumerate}
 \item $\|W_1W_2-\diag(u,u^*)\|<\lambda \eps$;
  \item $\|\dot{W_1}-\diag(v,v^*)\|<\lambda \eps$, where $\dot{W_1}$ is the image of $W_1$  in  $$M_2(C(X)^{(1)}_{\H_X,\rho_X}[0,1])+\C\cdot I_2\ts Id_{\H_X}$$ relatively to  the extension 
 $$0\to A^{(1)}_{\H_X,\rho_X}[0,1]\to B^{(1)}_{\H_X,\rho_X}[0,1]{\to} C(X)^{(1)}_{\H_X,\rho_X}[0,1]\to 0;$$ 
 \item $\|\dot{W_2}-\diag(w,w^*)\|<\lambda \eps$, where $\dot{W_2}$ is the image of $W_2$  in  $$M_2(C(X)^{(2)}_{\H_X,\rho_X}[0,1])+\C\cdot I_2\ts Id_{\H_X}$$ relatively to the extension 
 $$0\to A^{(2)}_{\H_X,\rho_X}[0,1]\to B^{(2)}_{\H_X,\rho_X}[0,1]{\to} C(X)^{(2)}_{\H_X,\rho_X}[0,1]\to 0.$$
 \item $W_1(0)$ and $W_2(0)$ lie  respectively in $M_2(A_{\H_{X^{(\delta)}_1}})+\C\cdot 1\ts Id_{\H_X}$ and $M_2(A_{\H_{X^{(\delta)}_2}})+\C\cdot 1\ts Id_{\H_X}$  \item $W_1(1)$  is  an  $\lambda\eps$-$h_\eps r$-unitary of  $M_2(A_{\H_{X^{(\delta)}_1\cap X^{(\delta)}_2}})+\C\cdot I_2\ts Id_{\H_X}$ connected
 to   $I_2\ts Id_{\H_X}$ as a $\lambda\eps$-$l_\eps r$-unitary which belongs to   $M_2(A_{\H_{X^{(\delta)}_1\cap X^{(\delta)}_2}})+\C\cdot I_2\ts Id_{\H_X}$.\end{enumerate} 
 \end{corollary}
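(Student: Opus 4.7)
The plan is to pass from the $N$-invertibles of Proposition~\ref{prop-decompositionP1P2} to $\eps$-$r$-unitaries via a polynomial polar-decomposition procedure, in the spirit of the odd case of Lemma~\ref{lemma-rescaling-projection}. Since each $P_i$ is $\alpha\eps$-$k_\eps r$-$N$-invertible, the self-adjoint element $a_i := P_i^* P_i$ is bounded in norm by $N^2$, is approximately invertible with norm-bounded inverse (obtained from $Q_iQ_i^*$ where $Q_i$ is an $\alpha\eps$-$k_\eps r$-$N$-inverse of $P_i$), and its spectrum is therefore contained in a fixed compact interval $[c, N^2]$ with $c > 0$ depending only on $\alpha\eps$ and $N$. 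The function $t \mapsto t^{-1/2}$ is analytic on a neighborhood of $[c, N^2]$ and hence can be uniformly approximated there by a polynomial $R_\eps$ with $R_\eps(1) = 1$ and $|R_\eps(t) - t^{-1/2}| < \eps$ on $[c, N^2]$, the degree $d_\eps$ of $R_\eps$ depending only on $\eps$ and $N$. I would then define $W_i := P_i \cdot R_\eps(a_i)$.

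Setting $h_\eps := (2d_\eps + 1) k_\eps$, the propagation of $W_i$ is at most $h_\eps r$. Routine norm estimates give $\|W_i^* W_i - 1\|, \|W_i W_i^* - 1\| < C\eps$ for a constant $C = C(\alpha, k, N)$, so $W_i$ is a $\lambda\eps$-$h_\eps r$-unitary for a suitable $\lambda = \lambda(\alpha, k, N)$. Since $R_\eps(a_i)$ is $O(\eps)$-close to $1$ (because $a_i$ is close to $1$ in that same controlled sense), one has $\|W_i - P_i\| = O(\eps)$, from which property~(1) follows directly from item~(i) of Proposition~\ref{prop-decompositionP1P2}. Properties~(2) and~(3) follow from the fact that quotient $*$-homomorphisms commute with polynomial functional calculus: the image $\dot{W_i}$ is the polar-decomposition construction applied to $\dot{P_i}$, which is itself $O(\eps)$-close to the unitary $\diag(v, v^*)$, respectively $\diag(w, w^*)$, so $\dot{W_i}$ remains $O(\eps)$-close to the same unitary.

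The endpoint conditions in~(4) follow because $R_\eps(a_i)$ lies in the same unital subalgebra as $a_i$: thus $W_i(0)$ still belongs to $M_2(A_{\H_{X_i^{(\delta)}}}) + \C \cdot I_2 \ts Id_{\H_X}$, while expanding $(I + \xi)^*(I + \xi) = I + \eta$ with $\eta \in M_2(A_{\H_{X_1^{(\delta)} \cap X_2^{(\delta)}}})$ shows that $W_1(1)$ lies in $M_2(A_{\H_{X_1^{(\delta)} \cap X_2^{(\delta)}}}) + \C \cdot I_2 \ts Id_{\H_X}$ as required. Applying the same construction fibrewise to the homotopy connecting $P_i$ to $I_2 \ts Id_{\H_X}$ through $\alpha\eps$-$k_\eps r$-$N$-invertibles yields a homotopy of $W_i$ to $I_2 \ts Id_{\H_X}$ through $\lambda\eps$-$h_\eps r$-unitaries (the endpoint $I_2 \ts Id_{\H_X}$ being preserved since $R_\eps(1) = 1$). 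The main point of care is uniform control of the degree $d_\eps$ and of the spectral lower bound $c$ in terms of $\alpha\eps$ and $N$ only, which is what guarantees that $(\lambda, h)$ is a control pair depending only on $(\alpha, k)$ and $N$.
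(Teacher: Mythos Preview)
Your overall strategy---apply a polynomial approximation of $t\mapsto t^{-1/2}$ to $P_i^*P_i$ to produce an approximate polar part---is exactly the paper's approach (the paper invokes \cite[Lemma~2.4]{oy4} for this and sets $W_1=P_1Q_\eps(P_1^*P_1)$, $W_2=Q_\eps(P_2P_2^*)P_2$, i.e.\ with a deliberate left/right asymmetry). The normalization $R_\eps(1)=1$ and the subalgebra/endpoint bookkeeping in (iv)--(v) are handled just as you say.

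There is, however, a genuine error in your derivation of (i). You write that ``$R_\eps(a_i)$ is $O(\eps)$-close to $1$ (because $a_i$ is close to $1$ in that same controlled sense)'', and conclude $\|W_i-P_i\|=O(\eps)$. But $P_i$ is only an $\alpha\eps$-$k_\eps r$-$N$-\emph{invertible}, not an almost-unitary: its norm can be as large as $N$, so $a_i=P_i^*P_i$ has spectrum in $[c,N^2]$ and is \emph{not} close to $1$ in general. Consequently $R_\eps(a_i)\approx|P_i|^{-1}$ is far from $1$, and $W_i$ is far from $P_i$. Your argument for (ii)--(iii) is unaffected, because there $\dot P_i$ \emph{is} close to a genuine unitary, so $\dot P_i^*\dot P_i$ really is close to $1$; but the argument for (i) collapses.

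Property (i) is salvageable, but needs a different idea: from $P_1P_2\approx \diag(u,u^*)$ (an almost-unitary) one gets $P_1^*P_1\approx(P_2P_2^*)^{-1}$, hence $|P_1|^{-1}\approx|P_2^*|$, and then using the identity $|P_2^*|\,P_2=P_2\,|P_2|$ one finds $W_1W_2\approx P_1|P_1|^{-1}P_2|P_2|^{-1}\approx P_1P_2$. The paper's asymmetric choice $W_2=Q_\eps(P_2P_2^*)P_2$ is designed precisely so that this cancellation is immediate; it then cites \cite[Proposition~2.11]{oy4} for the estimate. Either route works, but you must replace the false step ``$a_i\approx 1$'' with this coupling argument between $P_1$ and $P_2$.
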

 \begin{proof}
 In view of \cite[Lemma 2.4]{oy4}, there exists a control pair $(\lambda,h)$ and a positive number $N'$  depending only on $N$ and on the controlled pair $(\al,k)$ of  Proposition \ref{prop-decompositionP1P2} for which  the following is satisfied :
 
 \smallbreak
 
 For any positive number $\eps$ with $\eps<\frac{1}{100\lambda}$, there exists   a polynomial function $Q_\eps$ of degree less than $h_\eps$  with $Q_\eps(1)=1$ such that $Q_\eps(P_1^*P_1)$ and $Q_\eps(P_2^*P_2)$ respectively admit positive $\la\eps$-$h_\eps r$-$N'$-inverse in  $M_2(A^{(1)}_{\H_X,\rho_X}[0,1])+\C\cdot 1\ts Id_{\H_X}$ and $M_2(A^{(2)}_{\H_X,\rho_X}[0,1])+\C\cdot 1\ts Id_{\H_X}$ and such that if we set
$W_1=P_1Q_\eps(P_1^*P_1)$ and $W_2=Q_\eps(P_2P_2^*)P_2$,
then 
\begin{itemize}
\item $W_1$ and $W_2$ are $\lambda\eps$-$h_\eps r$-unitaries in $M_2(A_{\H_X,\rho_X}[0,1])+\C\cdot 1\ts Id_{\H_X}$;
\item $W_1-I_2\ts Id_{\H_X}$ has coefficents  in  $A^{(1)}_{\H_X,\rho_X}[0,1]$;
\item $W_2-I_2\ts Id_{\H_X}$ has coefficents  in  $A^{(2)}_{\H_X,\rho_X}[0,1]$;
\item $\|P_1-|P_1|W_1\|<\lambda\eps$ and $\|P_2-W_2|P^*_2|\|<\lambda\eps$;
\item $W_1$ and $W_2$ are respectively homotopic to $I_2\ts Id_{\H_X}$ as $\lambda\eps$-$h_\eps r$-unitaries in
 $M_2(A^{(1)}_{\H_X,\rho_X}[0,1] )  +\C\cdot I_2\ts Id_{\H_X}$  and $M_2(A^{(2)}_{\H_X,\rho_X}[0,1] )  +\C\cdot I_2\ts Id_{\H_X}$;
 \item $W_1(0)$ and $W_2(0)$ lie respectively in $M_2(A_{\H_{X^{(\delta)}_1}})+\C\cdot 1\ts \H_X$ and $M_2(A_{\H_{X^{(\delta)}_2}})+\C\cdot 1\ts \H_X$. \end{itemize}
 Proceeding as in the proof of  \cite[Proposition 2.11]{oy4} and possibly rescaling  $\la$, we see that $\|W_1W_2-\diag(u,u^*)\|<\lambda\eps$.
 Moreover,  since $\dot{P_1}$ is closed to a unitary, then $|\dot{P_1}|$ is closed to $I_2\ts \H_{X}$ in 
$M_2(C(X)^{(1)}_{\H_X,\rho_X}[0,1])+\C\cdot I_2\ts Id_{\H_X}$ and therefore,  up to rescaling $\lambda$, we get that 
$\|\dot{W_1}-\diag(v,v^*)\|<\lambda \eps$ and similarly that $\|\dot{W_2}-\diag(w,w^*)\|<\lambda \eps$.

Since $Q_\eps(1)=1$ and since  $P_1(1)-I_2\ts Id_{\H_X}$ has coefficients in $A_{\H_{X^{(\delta)}_1}\cap \H_{X^{(\delta)}_2}}$ , then  the same holds for  $W_1(1)-I_2\ts Id_{\H_X}$ and 
$W_1(1)$  is connected
 to   $I_2\ts \H_X$ as a  $\lambda\eps$-$l_\eps r$-unitary of  $M_2(A_{\H_{X^{(\delta)}_1\cap X^{(\delta)}_2}})+\C\cdot I_2\ts \H_X$.
 \end{proof}

{\bf End of proof of Proposition  \ref{proposition-index-commutes}.}
With notations of Lemma \ref{lemma-bondary-index}, this amounts to proving that
 $$\D^{(\delta)}_{X_1,X_2,A,*}\circ Ind_{X,A,*}=\partial Ind^{{(\delta})}_{X_1,X_2,A,*}.$$ Let us give the proof for  $K_1(X,A)$, the even case being deduced from controlled Bott periodicity.
 Then any element of $K_1(X,A)$  corresponds to a  semi-split extension
  $$0\to A\to B\to C(X)\to 0,$$ where $A$ is closed two-sided  ideal in $B$. We can assume without loss of generality that $B$ is unital.         
 
\medskip

Recall that
\begin{itemize}

\item $f:X\to[0,1]$ is a continuous function such that   $\supp f\subseteq \mathring{X_2}$ and $\supp 1-f \subseteq \mathring{X_1}$;
\item $r_0$ is a positive number with $r_0\lq r$ such that $|f(x)-f(x')|<\frac{\eps}{16\pi}$ if $d(x,x')<r_0$;
\item the function $g:[0,1]\to[0,1]$ is defined by $g(t)=0$ if $0\lq t\lq 1/4$, $g(t)=2(t-1/4)$ if $1/4\lq t\lq 3/4$ and  $g(t)=1$ if $3/4\lq t\lq 1$;
\item $E_r(x)$ is a projection in $C(X)\ts \Kp(\H_X)$ with  support in $B(x,r')\times B(x,r')$ with $r'=\frac{1}{2}\min\{r_0,r\}$ and such that  $[E_X]_{\eps,r}=[E_r,0]_{\eps,r}$ for every positive numbers $\eps$ and $r$ with $\eps<1/100$;
\item $u(\theta)=(1\otimes \rho_X(\exp({2\imath\pi \theta g\circ f)}))\cdot\exp(2\imath\pi (1-\theta) z_r)$ for all $\theta$ in $[0,1]$ with $z_r$ a lift with propagation $r$ of
$E_r$ in $A_{\H_X}$. 
\end{itemize}
In order to have a more readable  end of proof of the proposition, we divide it  into three steps.

\medskip

{\bf Step 1: }
Let $(\la,h)$ be a control pair and let $W_1$ and $W_2$   be respectively $\lambda\eps$-$h_\eps r$-unitary elements in
 $M_2(A[0,1]_{\H_X,\rho_X}^{(1)})+\C\cdot I_2\ts Id_{H_X}$ and $M_2(A[0,1]_{\H_X,\rho_X}^{(2)})+\C\cdot I_2\ts Id_{H_X}$  as in Corollary \ref{cor-decompW1W2}. According to points (ii) and (iii) of Corollary \ref{cor-decompW1W2},  up to enlarging $\la$ and since for $i=1,2$ the extension
$$0\to A^{(i)}_{\H_X,\rho_X}[0,1]\to B^{(i)}_{\H_X,\rho_X}[0,1]{\to} C(X)^{(i)}_{\H_X,\rho_X}[0,1]\to 0$$ is semi-split filtered,
 there exists an element $y_1$ in  
$M_2(A^{(1)}_{\H_X,\rho_X}[0,1])+\C\cdot 1\ts Id_{\H_X}$ with propagation less than $2h_\eps$ such
that 
$$\|W^*_1\diag(I_2\ts Id_{\H_X},0) W_1-y_1-\diag(I_2\ts Id_{\H_X},0)\|<\la\eps$$ 
 and an element $y_2$ in  
$M_2(A^{(2)}_{\H_X,\rho_X}[0,1])+\C\cdot I_2\ts Id_{\H_X}$ with propagation less than $2h_\eps$ such
that 
$$\|W_2\diag(I_2\ts Id_{\H_X},0) W_2^*-y_2-\diag(I_2\ts Id_{\H_X},0)\|<\la\eps.$$ 
Since $W_2$ is close to $W^*_1\cdot\diag(u,u^*)$, we can assume up to rescaling $\la$ that 
$$\|y_1-y_2\|<\la\eps.$$
Using the CIA property and  up to rescaling the control pair $(\la,h)$, we deduce that there exists  $q$  a $\la\eps$-$h_\eps r$-projection in $M_2(A_{\H_X,\rho_X}[0,1])$
such that
\begin{itemize}
\item $q-\diag(1\ts Id_{\H_X},0)$ has coefficients in 
 $$A^{(1,2)}_{\H_X,\rho_X}[0,1]=A[0,1]\ts\Kp(\H_{X^{(\delta)}_1}\cap \H_{X^{(\delta)}_2})+A[0,1]\ts\rho_X(C_0(\mathring{X_1}\cap \mathring{X_2}));$$
 \item $\|y_1+\diag(1\ts Id_{\H_X},0)-q\|<\la\eps$ and $\|y_2+\diag(1\ts Id_{\H_X},0)-q\|<\la\eps$.
 \end{itemize}

\medskip

{\bf Step 2: }
Let
 $$\D_{A_{\H_X},B_{\H_X},*}=( \partial^{\eps,r}_{A_{\H_X},B_{\H_X},*})_{0<\eps<\frac{1}{100\al_\D},r>0}:\K_*(C(X)_{\H_X}) \lto\K_{*+1}(A_{\H_X})$$ be the controlled boundary map  associated to the semi-split filtered extension
 $$0\lto A_{\H_X}\lto B_{\H_X}\lto C(X)_{\H_X}\lto 0$$ and 
 \begin{equation*}\begin{split}\D_{A_{\H_{X^{(\delta)}_1\cap X^{(\delta)}_2}},B_{\H_{X^{(\delta)}_1\cap X^{(\delta)}_2}},*}=( \partial^{\eps,r}_{A_{\H_{X^{(\delta)}_1\cap X^{(\delta)}_2}},B_{\H_{X^{(\delta)}_1\cap X^{(\delta)}_2}},*})_{0<\eps<\frac{1}{100\al_\D},r>0}&:\\
 \K_*(C(X)_{\H_{X^{(\delta)}_1\cap X^{(\delta)}_2}}) &\lto\K_{*+1}(A_{\H_{X^{(\delta)}_1\cap X^{(\delta)}_2}})\end{split}\end{equation*} be  the controlled boundary associated to the semi-split filtered extension
 $$0\lto A_{\H_{X^{(\delta)}_1\cap X^{(\delta)}_2}}\lto B_{\H_{X^{(\delta)}_1\cap X^{(\delta)}_2}}\lto C(X)_{\H_{X^{(\delta)}_1\cap X^{(\delta)}_2}}\lto 0.$$ 

 Since $W_1(0)$ and $W_2(0)$ lie indeed respectively in $M_2(A_{\H_{X^{(\delta)}_1}})+\C\cdot 1\ts Id_{\H_X}$ and $M_2(A_{\H_{X^{(\delta)}_2}})+\C\cdot 1\ts Id_{\H_X}$ and  since $W_1(0)W_2(0)$ is close  to $\diag(\exp(2\imath\pi z_r),\exp(-2\imath\pi z_r))$, we see that, up to rescaling $(\la,h)$ and possibly $(\al_\D,k_\D)$, the class $[q(0),1]_{\la\eps,h_\eps r}$ coincides with the image of 
 \begin{equation*}\iota_*^{-,\la\eps,h_\eps r}\circ 
 \partial^{(\delta),\al_\D\eps,k_{\D,\eps}r}_{X_1,X_2,*}\circ \partial^{\eps,r}_{A_{\H_X},B_{\H_X},*}([E_X]_{\eps,r})\end{equation*} under the morphism \begin{equation}\label{equ-end-proof}\K_*(A_{\H_{X^{(\delta)}_1\cap X^{(\delta)}_2}})\to \K_*(A^{(1,2)}_{\H_X,\rho_X})\end{equation} induced by the inclusion $$\Kp(\H_{\H_{X^{(\delta)}_1\cap X^{(\delta)}_2}})\hookrightarrow\Kp(\H_{\H_{X^{(\delta)}_1\cap X^{(\delta)}_2}})+\rho_X(C_0(\mathring{X_1}\cap \mathring{X_2})).$$
 On the other hand, 
 \begin{itemize}
 \item according to points (ii)  and (v) of Corollary \ref{cor-decompW1W2};
 \item since $$v(1)=(1\ts  \rho_X(\exp(2\imath\pi   \circ f)))\cdot E_r+1\ts Id_{\H_X}-E_r$$ is according of Equation \eqref{eq-Er-almost-commutes0}\ close and  hence  homotopic to   $$(\exp(2\imath\pi   \circ f)\ts  Id_{H_X})\cdot E_r+1\ts Id_{\H_X}-E_r$$ as an $\eps$-$r$-unitary in $C(X)_{X^{(\delta)}_1\cap X^{(\delta)}_2}$,
 \end{itemize}
  we see that up to rescaling $(\la,h)$, then
 $[q(1),1]_{\la\eps,h_\eps r}$ coincides with the image of $$\iota^{-,\la\eps,h_\eps r}_*\circ\partial^{\eps,r}_{A_{\H_{X^{(\delta)}_1\cap X^{(\delta)}_2}},B_{\H_{X^{(\delta)}_1\cap X^{(\delta)}_2}},*}([U^{(\delta)}_{X_1,X_2}]_{\eps,r})$$   under the morphism of equation (\ref{equ-end-proof}).

 \medskip

{\bf Step 3: }
 As we have seen in the previous step,  $$\iota^{-,\la\eps,h_\eps r}_*\circ\partial^{\eps,r}_{A_{\H_{X^{(\delta)}_1\cap X^{(\delta)}_2}},B_{\H_{X^{(\delta)}_1\cap X^{(\delta)}_2}},*}([U^{(\delta)}_{X_1,X_2}]_{\eps,r})$$   and 
 $$\iota_*^{-,\la\eps,h_\eps r}\circ 
 \partial^{(\delta),\al_\D\eps,k_{\D,\eps}r}_{X_1,X_2,*}\circ \partial^{\eps,r}_{A_{\H_X},B_{\H_X},*}([E_X]_{\eps,r})$$
have same images  under the morphism of equation (\ref{equ-end-proof}).
 Since the extension
 $$0\lto A_{\H_{X^{(\delta)}_1\cap X^{(\delta)}_2}}\lto A^{(1,2)}_{\H_{X^{(\delta)}_1\cap X^{(\delta)}_2}}\lto A\ts \rho_X(C_0(V_1\cap V_2)) \lto 0$$ is filtered and split, then according to  \cite[Corollary 4.9]{oy2}, there exists a control pair $(\la',h')$ such that   the  morphism of equation (\ref{equ-end-proof})  is  $(\lambda',h')$-injective as defined in Definition \ref{def-exact-seq}. Hence we get the result for $K_1(X,A)$ at least for small $\eps$. For $\eps$ in $(0,1/100)$, the result is a consequence of the first point of Remark \ref{rem-quant-morphism-small-propagation}.

%

 \medskip

 Let us prove now the result in the even case.
  Let us consider the following diagram
{ {\verytiny $$\begin{CD}K_0(X,A)@>\ts\tau_{A}([\partial])>> K_1(X,SA)@>\ts[\partial_{X^{(\de)}_1,X^{(\de)}_2}]>> K_{0}({X}^{(\delta)}_1\cap {X}^{(\delta)}_2,SA)@>\ts\tau_{A}([\partial]^{-1})>>K_{1}({X}^{(\delta)}_1\cap {X}^{(\delta)}_2,A)\\
         @VV Ind_{X,A,*} V @V Ind_{X,SA,*} VV         @V Ind_{X^{(\de)}_1,X^{(\de)}_2,SA,*}VV @V Ind_{X^{(\de)}_1,X^{(\de)}_2,A,*}VV \\
\K_0(A(\H_X))@>\T_{H_X}(\tau_{A}([\partial]))>>\K_{1}(SA(\H_{{X}^{(\delta)}_1\cap {X}^{(\delta)}_2} ))@>\D^{(\delta)}_{X_1,X_2,SA,*}>>\K_{0}(SA(\H_{{X}^{(\delta)}_1\cap {X}^{(\delta)}_2} ))@> \T_{\H_X}(\tau_{A}([\partial]^{-1}))>>\K_{1}(A(\H_{{X}^{(\delta)}_1\cap {X}^{(\delta)}_2} ))
\end{CD},
$$}}
where $\ts\bullet$ stands for the left Kasparov product.
According to Proposition \ref{prop-compatibility-index} and to the even case, the three squares  are  commutative. The composition on the top row
is  
$$\ts[\partial_{X^{(\de)}_1,X^{(\de)}_2}]: K_0(X,A)\lto K_{1}({X}^{(\delta)}_1\cap {X}^{(\delta)}_2,A).$$
The result for $\eps$ small enough is then a consequence of the definition of 
$$\D^{(\delta)}_{X_1,X_2,A}:\K_{0}(SA(\H_{{X}^{(\delta)}_1\cap {X}^{(\delta)}_2} ))\lto \K_{1}( A(\H_{{X}^{(\delta)}_1\cap {X}^{(\delta)}_1})$$ (see \cite[Section 3.2]{oy4}).
For $\eps$ in $(0,1/100)$, the result is a consequence of the first point of Remark \ref{rem-quant-morphism-small-propagation}.
 \qed

\subsection{Proof of Theorem \ref{thm-qindex-iso}}\label{sec-proof-thm-iso}
Let $X$ be a compact metric space, let $A$ be a $C^*$-algebra, let $\H_X$ be a non degenerate standard $X$-module and   let $(0,1/200)\to [1,+\infty);\, \eps\mapsto h_\eps$ be a non increasing function. Recall from definition \ref{def-h-isomorphism} that $$Ind_{X,A,*}:K_*(X,A)\lto \K(A_{\H_X})$$ is a $h$-isomorphism if
for any $\eps$ in $(0,1/200)$.
\begin{itemize}
\item $Ind^{\eps,r}_{X,A,*}$ is one-to-one for any $r$ in $(0,1/h_\eps)$;
\item for any $r$ in $(0,1/h^2_\eps)$ and any $y$ in $K^{\eps,r}_*(A_{\H_X})$, there exists
$x$ in $K_*(X,A)$ such that $Ind^{\eps,h_\eps r}_{X,A,*}(x)=\iota_*^{\eps,r,h_\eps r}(y)$.
\end{itemize}
The proof of  Theorem  \ref{thm-qindex-iso} will be by induction. We first prove a Mayer-Vietoris type stability result for $h$-isomorphism of 
$Ind_{X,A,*}$.
\begin{lemma}\label{lemma-5-lemma}
There exists a
non increasing function 
 $$(0,1/200)\to [1,+\infty);\, \eps\mapsto k_\eps$$ such that the following is satisfied. Assume we are given
 \begin{itemize}
  \item $X$ a compact metric space and $X_1$ and $X_2$ two closed subsets  
 such  that $X=\mathring{X_1}\cup \mathring{X_2}$;
 \item  non increasing functions
 $(0,1/200)\to [1,+\infty);\, \eps\mapsto h_\eps$ and $(0,1/200)\to [1,+\infty);\, \eps\mapsto h'_\eps$;
\item  $\de>\frac{5}{\inf\{h_\eps h'_\eps;\,\eps\in (0,1/200)\}}$ and for
 $i=1,2$ a closed subset $X_i^{(\de)}$ with 
 $\{x\in X\text{ such that }d(x,X_i)\lq \de\}\subseteq X_i^{(\de)}$
 \end{itemize}
 for which
 $$Ind_{X_1^{(\de)}\cap X_2^{(\de)},A,*}:K_*(X_1^{(\de)}\cap X_2^{(\de)},A)\to \K(A_{\H_{1,2}})$$ is 
 for 	any 
  non degenerate standard $X_1^{(\de)}\cap X_2^{(\de)}$-module $\H_{1,2}$ a $h$-isomorphism 
 and
 $$Ind_{X_i^{(\de)},A,*}:K_*(X_i^{(\de)},A)\to \K(A_{\H_{i}})$$ is  for any non degenerate standard $X_i^{(\de)}$-module $\H_i$ a $h'$-isomorphism for $i=1,2$.

 \smallbreak
 
 Then,$$Ind_{X,A,*}:K_*(X,A)\to \K(A_{\H_X})$$ is a $khh'$-isomorphism for any non degenerate standard $X$-module 
$\H_X$.
\end{lemma}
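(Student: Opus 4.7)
The plan is to run a controlled five-lemma argument between two Mayer--Vietoris six-term sequences: on the source side, the classical $K$-homology Mayer--Vietoris exact sequence of diagram~\eqref{diag-MV-KH} associated to the cover $X=\mathring{X_1}\cup\mathring{X_2}$ and the fattenings $X_1^{(\de)}$, $X_2^{(\de)}$; on the target side, the controlled Mayer--Vietoris six-term exact sequence of order $\sim\delta$ from Section~\ref{subsec-MV-pairs}, applied to a non-degenerate standard $X$-module $\H_X$ whose restrictions to $X_1^{(\de)}$, $X_2^{(\de)}$ and $X_1^{(\de)}\cap X_2^{(\de)}$ are again standard (which we may assume after replacing $\H_X$ by $\H_X\oplus\H'$ for a suitable auxiliary standard module and invoking Corollary~\ref{corollary-adjoint} together with Remark~\ref{rem-HX} at the end). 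The two sequences are intertwined, degree by degree, by the quantitative index maps: commutativity of the four squares involving $\jmath_{X_i,*}$, $\jmath_{X_i,X_j,*}$ follows from Lemma~\ref{functoriality-index-in-space} and Remark~\ref{rem-funct-inclusion}, and commutativity of the two squares involving the boundary maps is the content of Proposition~\ref{proposition-index-commutes}.

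With this diagram in place, the remainder of the argument is purely formal. For injectivity, fix $\eps\in(0,1/200)$ and $r<1/(kh_\eps h'_\eps)$ (with $k$ depending only on the control pairs $(\al_\mv,k_\mv)$, $(\al_\T,k_\T)$ and the exactness control pair of Theorem~\ref{thm-six term}); take $x\in K_*(X,A)$ with $Ind^{\eps,r}_{X,A,*}(x)=0$. Pushing forward by $\jmath^{\sharp,(\de)}_{X_i,A,*}$ gives an element that is zero after further enlargement of the controls, so by the $h'$-isomorphism assumption on $X_i^{(\de)}$ the restrictions $\jmath_{X_i,*}(x)$ vanish; controlled exactness of the top row then provides $y\in K_{*+1}(X_1^{(\de)}\cap X_2^{(\de)},A)$ with image $x$ under the boundary map, and tracking $y$ through the $h$-isomorphism on $X_1^{(\de)}\cap X_2^{(\de)}$ and through the controlled diagram yields $x=0$. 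Surjectivity is dual: start with $y\in K^{\eps,r}_*(A_{\H_X})$ with $r<1/(khh')^2_\eps$, use the controlled exactness of the bottom row together with the $h'$-isomorphisms to lift the images of $y$ in $K_*(A_{\H_{X_i^{(\de)}}})$ to classes in $K_*(X_i^{(\de)},A)$ whose difference lies in the image of the $K$-homology boundary map (again by the $h$-isomorphism on the overlap), and then chase back.

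The main obstacle is the bookkeeping of the propagation estimates: every control-pair composition inflates $h_\eps$ and $h'_\eps$ multiplicatively, and one has to verify that at each stage of the chase the propagations remain below the order $\sim\delta/5$ at which the controlled boundary map $\D^{(\delta)}_{X_1,X_2,A,*}$ is defined and the controlled Mayer--Vietoris sequence is $(\lambda,h)$-exact. This is precisely what forces the hypothesis $\delta>5/\inf\{h_\eps h'_\eps\}$: it guarantees that, after all the inflations produced by the $(\al_\mv,k_\mv)$, $(\al_\T,k_\T)$ and $(\al_\D,k_\D)$ factors and by the exactness pair of Theorem~\ref{thm-six term}, one still works at an order $<\delta/5$. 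The explicit function $\eps\mapsto k_\eps$ in the statement is obtained by collecting all these control-pair contributions, and the five-lemma diagram chase then produces the required $khh'$-isomorphism bounds.
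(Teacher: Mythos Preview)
Your overall strategy --- intertwining the $K$-homology Mayer--Vietoris sequence of diagram~\eqref{diag-MV-KH} with the controlled Mayer--Vietoris sequence of Section~\ref{subsec-MV-pairs} via the quantitative index maps, and then running a controlled five-lemma --- is exactly the paper's strategy, and your identification of the relevant commutativity inputs (Remark~\ref{rem-funct-inclusion}, Proposition~\ref{proposition-index-commutes}) is correct.

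The actual diagram chase you sketch, however, has the arrows pointing the wrong way. In both sequences the maps are \emph{covariant} in the space: $\jmath_{X_i,*}:K_*(X_i^{(\de)},A)\to K_*(X,A)$ and $\jmath^{\sharp,(\de)}_{X_i,A,*}:\K_*(A_{\H_{X_i^{(\de)}}})\to\K_*(A_{\H_X})$ are induced by inclusions, so for $x\in K_*(X,A)$ there is no ``restriction $\jmath_{X_i,*}(x)$'', and for $y\in K_*^{\eps,r}(A_{\H_X})$ there is no ``image of $y$ in $K_*(A_{\H_{X_i^{(\de)}}})$''. Likewise, the boundary $\partial_{X_1^{(\de)},X_2^{(\de)},*}$ goes \emph{out} of $K_*(X,A)$ into $K_{*+1}(X_1^{(\de)}\cap X_2^{(\de)},A)$, not into it. Consequently neither your injectivity step (restrict, kill on the pieces, then lift from the overlap via a boundary landing in $K_*(X,A)$) nor your surjectivity step (restrict $y$ to the pieces) can be carried out as written.

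The paper's chase respects these directions. For injectivity one first applies the outgoing boundary: $Ind^{\eps,r}_{X,A,*}(x)=0$ forces, via Proposition~\ref{proposition-index-commutes}, $Ind_{X_1^{(\de)}\cap X_2^{(\de)},A,*}(\partial x)=0$, hence $\partial x=0$ by the $h$-hypothesis on the overlap; exactness then writes $x=\jmath_{X_1,*}(x'_1)-\jmath_{X_2,*}(x'_2)$, and one continues by pushing the $Ind(x'_i)$ \emph{forward} into $\K_*(A_{\H_X})$, using controlled exactness to pull back to the overlap, and finally the $h'$-injectivity on $X_i^{(\de)}$ to conclude $x'_i=\jmath_{X_i,X_j,*}(x'')$. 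Surjectivity is dual: apply the controlled boundary to $y$, lift on the overlap, kill on the pieces, produce $x$ with $\partial x=x'$, and then correct $Ind(x)-y$ via elements coming from the pieces. Rewriting your chase with the correct arrow directions (and invoking the rescaling of Proposition~\ref{prop-compatibility-index-rescaling} to pass from $\eps<1/(200\al_\mv)$ to $\eps<1/200$) yields a valid proof; as it stands, the directional error is a genuine gap.
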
 
\begin{proof}
The lemma is proved by using a $5$-lemma argument. Let $(\al,k)$ be a control pair such that the controlled Mayer-Vietoris sequence of Section \ref{subsec-MV-pairs} is $(\al,k)$-exact. We can assume without loss of generality that
$\al=2^{n_0}$ for some integer $n_0$. We can also assume that the restrictions of $\H_X$ to $X_1^{(\de)}$, $X_2^{(\de)}$ and $X_1^{(\de)}\cap X_2^{(\de)}$ are also standard and then use the notations of Section \ref{subsec-MV-pairs}. Let us first prove the injectivity part. Let us prove indeed that
$Ind^{\eps,r}_{X,A,*}$ is one-to-one for any $\eps$ in $(0,\frac{1}{200\al})$ and any $r$ in $(0,\frac{1}{k_\eps h^2_{\al \eps} h'_{\al \eps}})$. The general case will follow using a rescaling argument. Let $x$ be an element of $K_*(X,A)$ such that $Ind_{X,A,*}^{\eps,r}(x)=0$ in
$K^{\eps,r}_*(A_{\H_X})$. In view of Proposition \ref{proposition-index-commutes}, we have
$$Ind^{\al\eps,k_\eps r}_{X_1^{(\de)}\cap X_2^{(\de)},A,*}\circ \partial_{X_1^{(\de)},X_2^{(\de)},*}(x)=0$$
with $\al\eps$ in $(0,1/200)$ and 
$k_\eps r<\frac{1}{h^2_{\al \eps} h'_{\al \eps}}<\frac{1}{h_{\al\eps}}$.
Since by assumption, $Ind^{\al\eps,k_\eps r}_{X_1^{(\de)}\cap X_2^{(\de)},A,*}$ is one-to-one, we deduce that $\partial_{X_1^{(\de)},X_2^{(\de)},*}(x)=0$ in 
$K_*(X_1^{(\de)}\cap X_2^{(\de)},A)$.
Using the Mayer-Vietoris exact sequence in $K$-homology for the pair
$(X_1^{(\de)}, X_2^{(\de)})$, we deduce with notations of Section \ref{subsec-MV-K-hom} that there exist 
$x'_1$ in $K_*(X_1^{(\de)},A)$ and $x'_2$ in $K_*(X_2^{(\de)},A)$ such that
$$\jmath_{X_1^{(\de)},*}(x'_1)-\jmath_{X_2^{(\de)},*}(x'_2)=x.$$ Let us set then 
$y'_1=Ind^{\eps, r}_{X_1^{(\de)},A,*}(x'_1)$ and 
$y'_2=Ind^{\eps, r}_{X_2^{(\de)},A,*}(x'_2)$.
By Lemma \ref{functoriality-index-in-space}  and with notations of Section \ref{subsec-MV-pairs}, we deduce that 
$$\jmath^{\sharp,(\delta),\eps,r}_{X_1,A,*}(y'_1)-\jmath^{\sharp,(\delta),\eps,r}_{X_2,A,*}(y'_2)=0$$ and hence, by controlled exactness of the quantitative Mayer-Vietoris sequence, we deduce that there exists $y''$ in $K_*^{\al\eps,k_\eps r}(A_{\H_{X_1^{(\de)}\cap X_2^{(\de)}}})$ such that
$$\jmath^{\sharp,(\delta),\al\eps,k_\eps r}_{X_1,X_2,A,*}(y'')=\iota_*^{-,\al\eps,k_\eps r}(y'_1)$$ and
$$\jmath^{\sharp,(\delta),\al\eps,k_\eps r}_{X_2,X_1,A,*}(y'')=\iota_*^{-,\al\eps,k_\eps r}(y'_2).$$
Since $\al\eps\in (0,1/200)$ and $k_\eps r<1/ h^2_{\al\eps}$, then by assumption
on $Ind_{X_1^{(\de)}\cap X_2^{(\de)},A,*}$, there exists 
$x''$ in $K_*(X_1^{(\de)}\cap X_2^{(\de)},A)$ such that
$$Ind^{\al\eps,h_{\al\eps}k_\eps r}_{X_1^{(\de)}\cap X_2^{(\de)},A,*}(x'')=\iota_*^{-,\al\eps,h_{\al\eps}k_\eps r}(y'').$$
Since  $k_\eps h_{\al\eps}r<1/ h'_{\al\eps}$, then 
$Ind^{\al\eps,h_{\al\eps}k_\eps r}_{X_i^{(\de)},A,*}$ is one-to-one for $i=1,2$ and hence, we deduce from Lemma \ref{functoriality-index-in-space} that
$x'_1=\jmath_{X_1^{(\de)},X_2^{(\de)},*}(x'')$ and $x'_2=\jmath_{X_2^{(\de)},X_1^{(\de)},*}(x'')$.  We deduce from this that $x=0$ and hence $Ind_{X,A,*}^{\eps,r}$ is one-to-one if
$\eps$ is in $(0,\frac{1}{200\al})$ and  $r$ is in 
$(0,\frac{1}{k_\eps h^2_{\al \eps} h'_{\al \eps}})$.
 Since we have chosen $\al=2^{n_0}$, we see using 
Proposition \ref{prop-compatibility-index-rescaling} that
$Ind_{X,A,*}^{\eps,r}$ is one-to-one if
$\eps$ is in $(0,\frac{1}{200})$ and  $r$ is in $(0,\frac{1}{k'_\eps h_{\eps} h'_{\eps}})$
with $k'_\eps=l_{\eps}\cdots l_{\eps/2^{n_0-1}}k_{\eps/2^{n_0}}$ and 
$l=l_{rs}$ (with notations of Section \ref{subsec-rescaling}). 

\smallbreak

Let us prove now the surjectivity statement. We can assume without loss of generality that 
$(\al,k)$ is greater than the control pair of \cite[Corollary 3.6]{oy4}.
Let us set $l'_\eps=l_{\eps}\cdots l_{\eps/2^{n_0-1}}$  with $l_\eps$ as above and $k'_\eps=l'_\eps k_\eps k_{\eps/\al}$.  Let us prove first that for any $\eps$ in $\left(0,\frac{1}{200\al}\right)$, any $r$ in 
$\left(0,\frac{1}{k'_\eps h_{\al\eps}{h'}^2_{\al\eps}}\right)$  and any $y$ in 
$K^{\eps,r}_*(A_{\H_X})$, there exists $x_y$ in  $K_*(X,A)$ such that 
$$Ind_{X,A,*}^{\al\eps, k'_\eps h_{\al\eps}h'_{\al\eps}r}(x_y)=
\iota_*^{-,\al\eps,k'_\eps h_{\al\eps}h'_{\al\eps}r}(y).$$
With notations as above, let us set
$$y'=\iota_*^{-,\al\eps,k_\eps r}\circ \partial^{(\delta),\eps,r}_{X_1,X_2,A,*}(y).$$
Since $\al\eps$ is in $(0,1/200)$ and
 $k_\eps r<\frac{1}{l'_\eps k_{\eps/\al} h_{\al\eps}{h'}^2_{\al\eps}}\lq \frac{1}{h^2_{\al\eps}}$, there exists
$x'$ in $K_{*+1}(X_1^{(\de)}\cap X_2^{(\de)},A)$ such that 
$$Ind_{X,A,*}^{\al\eps,h_{\al\eps}k_\eps r}(x')=\iota_*^{-,\al\eps,h_{\al\eps}k_\eps r}(y').$$
Let us set then 
$$x'_1=\jmath_{X_1^{(\de)},X_2^{(\de)},*}(x')\in K_{*+1}(X_1^{(\de)},A)$$ and 
$$x'_2=\jmath_{X_2^{(\de)},X_1^{(\de)},*}(x')\in K_{*+1}(X_2^{(\de)},A).$$
According to Lemma \ref{functoriality-index-in-space}, we have
$$Ind_{X_i^{(\de)},A,*}^{\al\eps,h_{\al\eps}k_\eps r}(x'_i)=0$$ for $i=1,2$ and hence, since
$\al\eps\in(0,1/200)$ and $$h_{\al\eps}k_\eps r< \frac{1}{h'_{\al\eps}},$$  we deduce by injectivity of
$Ind_{X_i^{(\de)},A,*}^{\al\eps,h_{\al\eps}k_\eps r}$
that $x'_i=0$ for $i=1,2$.
Hence, by exactness of the Mayer-Vietoris sequence in $K$-homology, there exists an element
$x$ in $K_*(X,A)$ such that 
$$\partial_{X_1^{(\de)},X_2^{(\de)},*}(x)=x'.$$
Using Proposition \ref{proposition-index-commutes}, we see that
$$\iota_*^{-,\al\eps,k_\eps h_{\al\eps}r}\circ \partial^{(\delta),\eps,r}_{X_1,X_2}(Ind_{X,A,*}^{\eps,r}(x)-y)=0$$ in
$K_{*+1}^{\al\eps,k_\eps h_{\al\eps}r}(A_{\H_{X_1^{(\de)}\cap X_2^{(\de)}}}).$ Let us set
$$y_\al=L_{A_{\H_{X}},*}^{\eps/2^{n_0-1}, l_{\eps/2^{n_0-2}}\cdots l_\eps r}\circ\cdots \circ L_{A_{\H_{X}},*}^{{\eps/2}, l_{\eps}r}\circ L_{A_{\H_{X}},*}^{\eps, r}(y).$$ Then $y_\al$ lies in $K^{\eps/\al,l'_\eps r}_*(A_{\H_X})$ and in view of
Lemma \ref{lemma-compatibility-MV-boundary-rescaling} and of Proposition \ref{prop-compatibility-index-rescaling}, we see that
$$\iota_*^{-,\eps,k_{\eps/\al} h_{\al\eps}l'_\eps r}\circ \partial^{(\delta),\eps/\al,l'_\eps r}_{X_1,X_2}(Ind_{X,A,*}^{\eps/\al,l'_\eps r}(x)-y_\al)=0$$ in
$K_{*+1}^{\eps,k_{\eps/\al} h_{\al\eps}l'_\eps r}(A_{\H_{X_1^{(\de)}\cap X_2^{(\de)}}}).$

Then according to \cite[Corollary 3.6]{oy4}, there exists $y_i$ in 
$K_{*}^{\al\eps,k'_\eps h_{\al\eps}r}(A_{\H_{X_i^{(\de)}}})$ such that
\begin{eqnarray*}
\jmath^{\sharp,(\delta),\al\eps,k'_\eps h_{\al\eps}r}_{X_1,A,*}(y_1)-
\jmath^{\sharp,(\delta),\al\eps,k'_\eps  h_{\al\eps}r}_{X_2,A,*}(y_2)&=&
Ind_{X,A,*}^{\al\eps,k'_\eps h_{\al\eps}r}(x)-\iota_*^{-,\al\eps, k'_\eps h_{\al\eps} r}(y_\al)\\
&=&
Ind_{X,A,*}^{\al\eps,k'_\eps  h_{\al\eps}r}(x)-\iota_*^{-,\al\eps,k'_\eps h_{\al\eps} r}(y),
\end{eqnarray*}
where the second equality holds in view of the first point of Lemma \ref{lemma-rescaling} 
(recall that $k'_\eps=k_\eps k_{\eps/\al}l'_\eps$).
Since $\al\eps$ is in $(0,1/200)$ and $k'_\eps  h_{\al\eps}r<\frac{1}{h'^2_{\al\eps}}$, we see by surjectivity assumption for $Ind_{X_i^{(\de)},A,*}$ for $i=1,2$ that   there exists $x_i$ in $K_*(X_i^{(\de)},A)$ such that
$$Ind_{X_i^{(\de)},A,*}^{\al\eps,k'_\eps h_{\al\eps}h'_{\al\eps}r}(x_i)
=\iota_*^{-,\al\eps,k'_\eps  h_{\al\eps}h'_{\al\eps}r}(y_i).$$
Eventually we set $$x_y=x-\jmath_{X_1^{(\de)},*}(x_1)+\jmath_{X_2^{(\de)},X_2^{(\de)},*}(x_2)$$ in
$K_*(X,A)$. Using Lemma \ref{functoriality-index-in-space}, we obtain that
$$\iota_*^{-,\al\eps,k'_\eps  h_{\al\eps}h'_{\al\eps}r}(y)=
Ind_{X,A,*}^{\al\eps,k'_\eps h_{\al\eps}h'_{\al\eps}r}(x_y).$$ 
\smallbreak

Now, for $\eps$ in $(0,1/200)$, $r$ in $\left(0,\frac{1}{l'_\eps k'_{\eps/\al}h_\eps h'^2_{\eps}}\right)$ and $y$ in $K^{\eps,r}_*(A_{\H_X})$, we can  apply the preceding result to
$$L_{A_{\H_X},*}^{\eps/2^{n_0-1}, l_{\eps/2^{n_0-2}}\cdots l_{\eps/2}l_\eps r}\circ\cdots\circ
 L_{A_{\H_X},*}^{\eps/2,l_\eps r} \circ L_{A_{\H_X},*}^{\eps,r}(y).$$ We deduce from
 Proposition \ref{prop-compatibility-index-rescaling} that there exist $x$ in 
 $K_*(X,A)$ such that 
 $$Ind_{X,A,*}^{\eps,l'_\eps k'_{\eps/\al} h_{\eps}h'_{\eps}r}(x)=y.$$

\end{proof}
Let us first prove Theorem \ref{thm-qindex-iso} for  a finite simplicial complexe of dimension $0$.
\begin{lemma}\label{lemma-induction}
 Let $Z_0$ be  a finite metric set such that for some $\de>0$,  then $d(x,x')\gq \de$ for every distinct  points $x$ and $x'$ in $Z_0$ and  let $A$ be a $C^*$-algebra.
 Then
 $Ind^{\eps,r}_{Z_0,A,*}$ is an isomorphism for any $\eps$ in $(0,1/200)$ and any $r$ in $(0,\de)$.
 \end{lemma}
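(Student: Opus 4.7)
The plan is to exploit the fact that the minimum distance constraint $r<\de$ forces all operators of propagation less than $r$ on $\H_{Z_0}$ to be block-diagonal, which reduces the statement to the (trivial) case of a single point.

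First, since $d(z,z')\gq\de>r$ for any two distinct points $z,z'$ of $Z_0$, any operator with support in $\{(x,y)\in Z_0\times Z_0:d(x,y)\lq r\}$ must preserve the orthogonal decomposition $\H_{Z_0}=\bigoplus_{z\in Z_0}\H_z$, where $\H_z=\H_{Z_0}^{\{z\}}$ is the restriction module of Section \ref{section-nds-mod}. Standardness of $\H_{Z_0}$ forces each $\H_z$ to be infinite-dimensional, hence non-degenerate standard as a $\{z\}$-module. Consequently
\[
A_{\H_{Z_0},r}=\bigoplus_{z\in Z_0}A\ts\Kp(\H_z)
\]
for every $r\in(0,\de)$, and the filtration is constant on this interval. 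Since direct sums commute with quantitative $K$-theory, this yields two compatible splittings
\[
K^{\eps,r}_*(A_{\H_{Z_0}})\cong\bigoplus_{z\in Z_0}K^{\eps,r}_*(A\ts\Kp(\H_z))\quad\text{and}\quad K_*(Z_0,A)=\bigoplus_{z\in Z_0}K_*(\{z\},A),
\]
the second coming from the algebra splitting $C(Z_0)=\bigoplus_z\C$ and the finiteness of $Z_0$.

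Next, I would verify, using the functoriality of the quantitative index in the space variable (Lemma \ref{functoriality-index-in-space} together with Remark \ref{rem-funct-inclusion}, applied to the $1$-Lipschitz inclusions $\{z\}\hookrightarrow Z_0$), that $Ind^{\eps,r}_{Z_0,A,*}$ respects these two splittings. The problem then reduces to showing that for each singleton, the map
\[
Ind^{\eps,r}_{\{z\},A,*}:K_*(A)=K_*(\{z\},A)\lto K^{\eps,r}_*(A\ts\Kp(\H_z))
\]
is an isomorphism. But on a single point the filtration of $A\ts\Kp(\H_z)$ is trivial (every operator has propagation $0$ in $\{z\}\times\{z\}$), so the structure maps $\iota_*^{\eps,r,r'}$ act on identical sets of almost-projections and almost-unitaries; for $\eps<1/100$, functional calculus then identifies $K^{\eps,r}_*(A\ts\Kp(\H_z))$ with $K_*(A\ts\Kp(\H_z))\cong K_*(A)$ through the Morita isomorphism of Proposition \ref{prop-morita}. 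Unwinding the construction of Section \ref{sub-sec-fund-class} shows that on $\{z\}$ the fundamental class $[E_{\{z\}}]_{\eps,r}$ is represented by a rank-one projection in $\Kp(\H_z)$, and Proposition \ref{prop-compatibility-index} together with the definition of $Ind$ then identify $Ind^{\eps,r}_{\{z\},A,*}$ with the Morita equivalence itself, which is indeed an isomorphism.

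The main obstacle in carrying out the plan is the naturality bookkeeping of the middle paragraph: one must check that after all identifications, the quantitative index map truly splits as a direct sum of the $Ind^{\eps,r}_{\{z\},A,*}$, which amounts to tracking the interaction of the Kasparov tensorisation $\T_{\H_{Z_0}}$ with the block decomposition $\H_{Z_0}=\bigoplus_z\H_z$. Once this formal matching is in place the conclusion follows at once.
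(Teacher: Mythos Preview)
Your proposal is correct and follows essentially the same approach as the paper: both exploit that propagation less than $\de$ forces operators to be block-diagonal along the points of $Z_0$, and then use functional calculus to identify the quantitative $K$-groups with copies of ordinary $K_*(A)$. The paper is marginally more direct in that it fixes the model $\H_{Z_0}=\ell^2(Z_0)\ts\H$ and writes down explicit isomorphisms $K^{\eps,r}_*(A\ts\Kp(\ell^2(Z_0)))\cong K_*(A)^{Z_0}$ via $\kappa_0$ on diagonal entries, thereby sidestepping the naturality bookkeeping you flag; your reduction to single points is equivalent but routes the same computation through Remark~\ref{rem-funct-inclusion}.
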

 \begin{proof}
 We have  an  isomorphism
\begin{equation}\label{eq-induction}K_*(Z_0,A)\stackrel{\cong}{\lto} K_*(A)^{Z_0}\end{equation} induced by the family of evaluation at points in $Z_0$.
Any non degenerate standard $Z_0$-module  $(\rho_{Z_0},\H_{Z_0})$ is unitary equivalent to  $(\rho,\ell^2(Z_0)\ts\H)$ where $\H$ is a separable Hilbert space and $\rho$ is the pointwise multiplication. Hence we can assume without loss of generality that 
 $(\H_{Z_0},\rho_{Z_0})=(\ell^2(Z_0)\ts\H,\rho)$ and then we have a  Morita equivalence 
 $K_*^{\eps,r}(A_{\H_{Z_0}}))\cong K_*^{\eps,r}(A\ts \Kp(\ell^2(Z_0)))$.
 Any operator in   $A\ts \Kp(\ell^2(Z_0))$  with propagation less than $\de$ is actually diagonal. Hence if $r$ is a positive number with $r<\de$, then any $\eps$-$r$ projection of  $A\ts \Kp(\ell^2(Z_0))$ is given by a family  $(q_z)_{z\in Z_0}$ of self-adjoint elements in $A$ such that $\|q^2_z-q_z\|<\eps$. Since $\eps$ lies in $(0,1/200)$, then the spectra of $q_z$ lies in $(-\infty,1/5)\cup(4/5,+\infty)$. Let us fix a continuous function $\kappa_0:\R\to [0,1]$ sucht that
 \begin{itemize}
 \item  $\kappa_0(t)=0$ if $t\lq1/5$ \item $\kappa_0(t)=1$ if $\ t \gq 4/5$. 
 \end{itemize}
 If $P_n(A)$ stands for the set of projections in $M_n(A)$, it is then straightforward to see that
 $$P^{\eps,r}_n(A\ts \Kp(\ell^2(Z_0)))\lto P_n(A)^{Z_0};\,\diag(q_z)_{z\in Z_0}\mapsto (\kappa(q_z))_{z\in Z_0}$$ induces an isomorphism
 $$K^{\eps,r}_0(A\ts \Kp(\ell^2(Z_0)))\stackrel{\cong}{\lto} K_0(A)^{Z_0}.$$ Similarly, in the odd case, the map 
 $$U^{\eps,r}(A\ts \Kp(\ell^2(Z_0)))\lto GL(A)^{Z_0};\,\diag(u_z)_{z\in Z_0}\mapsto (u_z)_{z\in Z_0}$$ induces an isomorphism
 $$K^{\eps,r}_1(A\ts \Kp(\ell^2(Z_0)))\stackrel{\cong}{\lto} K_1(A)^{Z_0}.$$ Under these identifications and that of equation (\ref{eq-induction}), then $$Ind_{Z_0,A,*}:K_*(Z_0,A)\lto \K_*(A\ts \Kp(\H_{Z_0}))$$ is the identity map for any positive number $\eps$ and $r$ with $\eps< 1/200$ and $r<\de$.
\end{proof}

As a first application of Lemma \ref{lemma-5-lemma},
we have the following estimates for boundaries of simplices.
Let $\partial\De_n$ be the boundary of the standard simplex  of dimension $n$ equipped with the normalized spherical metric. 
\begin{lemma}\label{lemma-index-iso-boundary-simplex}
There exists a non increasing function $$(0,1/200)\to [1,+\infty);\, \eps\mapsto h_\eps,$$ such that for any integer $n$,  for any $C^*$-algebra $A$  and for any non degenerate standard $\partial\De_n$-module $\H_{\partial\De_n}$, then
$$Ind_{\partial\De_n,A,*}:K_*(\partial\De_n,A)\lto K_*(A_{\H_{\partial\De_n}})$$ is a $h^n$-isomorphism.
\end{lemma}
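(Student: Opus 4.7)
The plan is to prove the lemma by induction on $n$, with Lemma \ref{lemma-induction} handling the base case and Lemma \ref{lemma-5-lemma} driving the inductive step. For $n=1$, the space $\partial \De_1$ consists of two vertices at mutual distance $1$, so Lemma \ref{lemma-induction} gives that $Ind^{\eps,r}_{\partial\De_1,A,*}$ is an isomorphism for every $\eps \in (0,1/200)$ and $r<1$; hence it is an $h$-isomorphism for any non-increasing $h : (0, 1/200) \to [1,+\infty)$ with $h_\eps \geq 1$.

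For the inductive step, write points of $\De_n$ in barycentric coordinates $(t_0, \ldots, t_n)$ and set
\begin{equation*}
X_1 = \{x \in \partial\De_n : t_0(x) \geq 1/3\}, \quad X_2 = \{x \in \partial\De_n : t_0(x) \leq 2/3\},
\end{equation*}
so that $\partial\De_n = \mathring X_1 \cup \mathring X_2$. For a sufficiently small $\de > 0$, the closed enlargements $X_i^{(\de)}$ still satisfy $X_1^{(\de)} \subseteq \{t_0 \geq 1/4\}$ and $X_2^{(\de)} \subseteq \{t_0 \leq 3/4\}$. Three geometric facts are crucial, each with Lipschitz constants bounded by a universal constant $C$ independent of $n$:
\begin{itemize}
\item The affine barycentric homotopy $(s,x) \mapsto (1-s)x + s v_0$ Lipschitz-contracts $X_1^{(\de)}$ onto the vertex $v_0$. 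It respects $\partial\De_n$: any $x \in \partial\De_n \cap X_1^{(\de)}$ has some $t_j(x) = 0$ with $j \geq 1$, a condition preserved by the homotopy; the $t_0$-coordinate stays $\geq 1/4$.
\item Rescaling $t_0$ down to zero yields a uniformly Lipschitz retraction of $X_2^{(\de)}$ onto the opposite face $\De_{n-1} = \{t_0 = 0\}$, which is itself Lipschitz-contractible.
\item The maps $\phi(t_0, \ldots, t_n) = (t_1, \ldots, t_n)/(1-t_0)$ and $\psi(s_1, \ldots, s_n) = (1/2, s_1/2, \ldots, s_n/2)$ realize a uniformly Lipschitz homotopy equivalence between $X_1^{(\de)} \cap X_2^{(\de)}$ and $\partial \De_{n-1}$; the constants are bounded since $1-t_0 \geq 1/4$ on this region.
\end{itemize}
Combining Proposition \ref{prop-homotopy} and Lemma \ref{functoriality-index-in-space} with the first two facts reduces $Ind_{X_i^{(\de)},A,*}$ to the single-point case (trivially an isomorphism), yielding an $h_0$-isomorphism for a universal function $h_0$ depending only on $C$. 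The induction hypothesis applied via the third fact gives that $Ind_{X_1^{(\de)} \cap X_2^{(\de)},A,*}$ is an $h^{n-1}$-isomorphism. Lemma \ref{lemma-5-lemma} then produces a $k h_0 h^{n-1}$-isomorphism for $Ind_{\partial\De_n,A,*}$.

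It remains to choose $h$ uniformly in $n$. Pick $h_\eps \geq \max(M, k_\eps h_{0,\eps})$ for a sufficiently large universal constant $M$, chosen so that the hypothesis $\de > 5/\inf\{h_{0,\eps}\, h^{n-1}_\eps\}$ of Lemma \ref{lemma-5-lemma} is satisfied by some $\de \leq 1/12$ for every $n \geq 2$; since $h_\eps \geq M$ forces $h^{n-1}_\eps \geq M$, the required $\de$ is at most $5/M$, which is controlled by choosing $M$ large once and for all. Then $k h_0 h^{n-1} \leq h^n$ pointwise, so $Ind_{\partial\De_n,A,*}$ is indeed an $h^n$-isomorphism. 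The main obstacle is verifying the three Lipschitz statements above \emph{uniformly in $n$} in the normalized spherical metric; this ultimately reduces to the fact that all relevant barycentric formulas have derivatives bounded by a universal constant on the safe region $\{1/4 \leq t_0 \leq 3/4\}$, so the Lipschitz constants do not degrade as the ambient dimension grows.
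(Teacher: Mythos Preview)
Your overall strategy coincides with the paper's: induct on $n$, decompose $\partial\De_n$ into a cap near one vertex and a collar near the opposite face, show both pieces are Lipschitz-contractible and the intersection is Lipschitz homotopy equivalent to $\partial\De_{n-1}$, then invoke Lemma~\ref{lemma-5-lemma}. The paper carries this out in spherical coordinates on the embedding $\partial\De_n\hookrightarrow\S^n$, slicing by the colatitude $\theta$ relative to the vertex $e_{n+1}$, and using \emph{spherical geodesic} homotopies (move $\theta$ toward $\pi/2$ or toward $0$); these are $1$-, $3/2$-, $3$-Lipschitz regardless of $n$.

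There is, however, a genuine gap in your version: the claim that the barycentric homotopies are uniformly Lipschitz in the normalized spherical metric is false for the retraction onto the opposite face. Take $p$ in the face $\{t_n=0\}$ with $t_0=1/4$ and the remaining coordinates equal to $3/(4(n-1))$, and perturb along $w=e_1-e_2$. Then $\|p\|\approx 1/4$, so the spherical length of $w$ at $p$ is $\sqrt{2}/\|p\|\approx 4\sqrt{2}$. Your retraction $r(t)=(0,t_1/(1-t_0),\dots)$ sends $p$ to the barycenter of the face $\{t_0=t_n=0\}$, where $\|r(p)\|=1/\sqrt{n-1}$, and $dr\!\cdot\! w=(4/3)(e_1-e_2)$; its spherical length at $r(p)$ is $(4/3)\sqrt{2}\sqrt{n-1}$. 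The Lipschitz ratio is therefore $\sqrt{n-1}/3$, not bounded in $n$. The same example applies to your map $\phi$ on $X_1^{(\de)}\cap X_2^{(\de)}$. Feeding $h'_n\sim\sqrt{n}$ into the recursion $h_n\sim k\,h'_n\,h_{n-1}$ from Lemma~\ref{lemma-5-lemma} yields a bound of order $k^n\sqrt{n!}$, strictly worse than any $h^n$. The remedy is exactly what the paper does: replace the barycentric straight-line homotopies by spherical geodesic ones (move the angular coordinate $\theta$), which are automatically $1$-Lipschitz on the sphere and preserve $\partial\De_n$.
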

\begin{proof}
Let us view $\partial\De_n$ as embedded in $\S_n$ and let us consider for $\al$ in $(0,1)$
$$\partial\De^+_{n,\al}=\left\{\cos \th\cdot v+\sin\th \cdot e_{n+1};\, \th \in\left[\frac{\al\pi}{2},\pi/2\right], \, v\in \S_{n-1}\subseteq \S_{n}\right\}\bigcap \partial\De_n$$
and
$$\partial\De^-_{n,\al}=\left\{\cos \th\cdot v+\sin\th \cdot e_{n+1};\, \th \in\left[0,\frac{\al\pi}{2}\right], \, v\in \S_{n-1}\subseteq \S_{n}\right\}\bigcap \partial\De_n.$$
Then for any $\beta$ in $(0,\al)$, we have
$$\{v\in \partial\De_n\text{ such that } d(v,\partial\De^+_{n,\al})<\beta\}\subseteq  \partial\De^+_{n,\al-\beta}$$ and for 
any $\beta$ in $(0,1-\al)$, we have
$$\{v\in \partial\De_n\text{ such that } d(v,\partial\De^-_{n,\al})<\beta\}\subseteq  \partial\De^-_{n,\al+\beta}.$$
For $\de$ in $(0,1/6)$, we set $X_1= \partial\De^+_{n,1/3}$, $X^{\de}_1= \partial\De^+_{n,1/3-\de}$, $X_2= \partial\De^-_{n,2/3}$ and 
 $X^{\de}_2= \partial\De^-_{n,2/3+\de}$.

We have thus that
\begin{itemize}
\item $X^{\de}_1$ can be contracted to $\{e_{n+1}\}$ by  a $1$-Lipschitz homotopy;
\item $X^{\de}_2$ is homotopic to $\De_{n-1}$ by using a $3/2$-Lipschitz homotopy which in turn can be contracted to its center by a $1$-Lipschitz homotopy;
\item $X^{\de}_1\cap X^{\de}_2$ is homotopic to \begin{equation}\label{eq-rescal-bound-simplex}\left\{\frac{1}{\sqrt{2}}(v+e_{n+1});v\in \S_{n}\right\}\bigcap \partial\De_n\end{equation} by a $3$-Lipschitz homotopy. The metric subspace of equation \eqref{eq-rescal-bound-simplex} is a rescaling of 
 $\partial\De_{n-1}$ with a factor $\frac{1}{\sqrt{2}}$.
 Using equation \eqref{eq-composition} of Section \ref{subsec-coarse-geom-nds-mod},   Proposition \ref{prop-homotopy} and Lemma \ref{functoriality-index-in-space}  and in view of Lemma \ref{lemma-induction}, we see that
  $Ind_{X_i^{(\de)},A,*}$ is a $C$-isomorphism for some constant $C$ that does not depend on $n$ and that we can choose such that $C>30$.
 Pick then $\delta$ in $(5/C,1/6)$, the result is  then by induction a consequence of Lemma \ref{lemma-5-lemma}.

 \end{itemize}
\end{proof}
We are now in a position to achieve our goal.

\smallbreak

{\bf Proof of Theorem \ref{thm-qindex-iso} :} 
Let $X$ be a connected finite simplicial complexe of dimension $n$. Let us prove the result by induction
 for $k=0,\ldots,n$ and   any simplicial subcomplexe   of $X$ of dimension $k$. According to Lemma \ref{eq-induction}, the result clearly holds for subcomplexes of dimension $0$.
Let us fix $k$ in $1,\ldots,n$ and let $Z$ be  simplicial subcomplexe   of $X$ of dimension $k$.  Let $Z'_0$ be the set of centers of $k$-simplexes in $Z$. For any $\si$ in $Z_0'$ let
 $\De_\si$ be the $k$-dimension simplex that contains $\si$ and define  for any $\la$ in $(0,1)$ the set

$$\De_{\si,\la}=\{x\in\De_\si;\,\exists x'\in\De_\si\,\slash\, d(x,x')=(1-\la) d(x',\si)\text{ and }
d(x,\si)=\la d(x',\si)\}.$$
In other word, an element $x$ of  $\De_\si$ belongs to $\De_{\si,\la}$ if there exists $x'$ in $\De_\si$ such that $x$ is in the geodesic arc between $\si$ and $x'$ at distance $\la d(x',\si)$ from $\si$ and $(1-\la) d(x',\si)$ from $x'$.
Let us set then $$Z_1=\bigsqcup_{\si\in Z'_0}\De_{\si,2/3},$$

 $$Z_2=\overline{Z\setminus \bigsqcup_{\si\in Z'_0}\De_{\si,1/3}},$$

and for $\de$ in $\left(0,\frac{2}{3\pi^2(k+1)}\right)$,

$$Z_1^{(\de)}=\bigsqcup_{\si\in Z'_0}\De_{\si,2/3+\de\pi\sqrt{\frac{k+1}{4}}},$$

and 

 $$Z_2^{(\de)}=\overline{Z\setminus \bigsqcup_{\si\in Z'_0}\De_{\si,1/3-\de\pi\sqrt{\frac{k+1}{4}}}}.$$
 A lenghty but straightforward computation shows that  for $\left(0,\frac{2}{3\pi^2(k+1)}\right)$ and $i=1,2$, we have
 $$\{x\in Z \text{ such that }d(x,Z_i)<\de\}\subseteq Z_i^{(\de)}.$$
Using equation \eqref{eq-composition} of Section \ref{subsec-coarse-geom-nds-mod},   Proposition \ref{prop-homotopy} and Lemma \ref{functoriality-index-in-space}, we see that   
\begin{itemize}
\item $\De_{\si,2/3+\de\pi\sqrt{\frac{k+1}{4}}}$ being homotopic to $\{\si\}$ by a $C$-Lipschitz homotopy with $C$ which  depends neither on $\si$, $k$ nor  $\de$, we deduce that $Z_1^{(\de)}$ is homotopic to $Z'_0$ by a $C$-Lipschitz homotopy.
 Noticing that the distance between two points in $Z_0'$ in greater than $\frac{4}{\pi\sqrt{k+1}}$,   we get according to Lemma \ref{lemma-induction} that $Ind_{Z_1^{(\de)},A,*}$ is a $C'\sqrt{k+1}$-isomorphism for $C'>1$  that depends neither on $k$ nor $\de$.
\item since $$Z_1^{(\de)}\cap Z_2^{(\de)}=\bigsqcup_{\si\in Z_0'}\De_{\si,2/3+\de\pi\sqrt{\frac{n+1}{4}}}\bigcap
\overline{\De_\si\setminus \De_{\si,1/3-\de\pi\sqrt{\frac{n+1}{4}}}}$$
with
$$\De_{\si,2/3+\de\pi\sqrt{\frac{n+1}{4}}}\bigcap
\overline{\De_\si\setminus \De_{\si,1/3-\de\pi\sqrt{\frac{n+1}{4}}}}$$ homotopic to 
the boundary of $\De_\si$ by a $C$-Lipschitz homotopy with $C$ which  depends neither on $k$ nor $\de$, then according to 
Lemma \ref{lemma-index-iso-boundary-simplex}, there exists
a non increasing function  
 $$(0,1/200)\to [1,+\infty);\, \eps\mapsto h_\eps$$ 
 that we can choose satisfying $h^n>10\pi^2(n+1)$ for all positive integer $n$ and 
 such that
$Ind_{Z_1^{(\de)}\cap Z_2^{(\de)},A,*}$ is a $h^k$-isomorphism.
\end{itemize}
Since $Z_2^{(\de)}$ is homotopic to the $k-1$-skeletton of $Z$ by a    $C$-Lipschitz homotopy with $C$ independent  of $k$, and in view once again of equation \eqref{eq-composition} of Section \ref{subsec-coarse-geom-nds-mod},   Proposition \ref{prop-homotopy} and Lemma \ref{functoriality-index-in-space}, we can then apply
Lemma \ref{lemma-5-lemma} to $Z=Z_1\cup Z_2$, $h$, $h'=C'\sqrt{k+1}$ and $Z_1^{\de}$ and 
$Z_2^{\de}$
for $\de$ in $\left(\frac{1}{2\pi^2(n+1)},\frac{2}{3\pi^2(n+1)}\right)$. The result follows then  by induction.

\section{Quantitative assembly map estimates and coarse decomposability}\label{section-QAM-coarse-decomposability}In this section, we give the proof of Theorem \ref{theorem-main}.

\subsection{Preliminary}
\begin{definition}
Let  $X$ and $Y$ be two discrete proper metric spaces such that $Y\subseteq X$ and let $s$ be a positive number. The metric space $Y$ is called a $s$-net for $X$ if for any element $x$ in $X$ there exists an element $y$ in $Y$ such that $d(x,y)<s$.
\end{definition}
\begin{lemma}\label{lemma-coarse-eq}
Let $X$ be a proper metric space, let $s$ be a positive number, let $Y$ be a $s$-net for $X$ and let  $\H$ be a separable Hilbert space.
Then for any  positive number $d$ and for  any  rank $1$ projection $e$ on $\H$,
there exists a continuous map
$$\phi_d:P_d(X)\lto P_{d+2s}(Y)$$ and a unitary map $$V_d:\ell^2(X,\H)\to \ell^2(Y,\H)$$ such that
\begin{enumerate}
\item for any $z$ in $P_d(X)$, then $\supp\phi_d(z)$ has support contained in $$\{y\in Y\text{ such that }d(y,\supp z)<s\}$$ (where $\supp z$ is the support of $z$ as a probability measure on $X$);
\item $V_d$  has support contained  in $$\{(x,y)\in X\times Y\text{ such that }d(x,y)<s\};$$
\item for any compact subset $Z$ of $P_d(X)$  and for any positive number $r$  with  $r>2d+4s$, then $Ad_{V_d,C(Z)}\cdot (P_Z\ts e)$ and $\phi_{d,\ell^2(X,\H)}^*(P_{\phi_d(Z)}\ts e)$ are homotopic as $\eps$-$(r+2s)$-projections in
$C(Z)_{\ell^2(Y,\H)}$, where 
\begin{itemize}
\item $\phi_{d,\ell^2(X,\H)}^*:C(\phi_d(Z))_{\ell^2(X,\H)}\lto C(Z)_{\ell^2(X,\H)}$ is the map induced  by $\phi_d$; \item $P_Z$ and $P_{\phi_d(Z)}$  are respectively the projections  in $C(Z)_Y=C(Z)\ts\Kp(\ell^2(Y))$ and $C(\phi_d(Z))_X=C(\phi_d(Z))\ts\Kp(\ell^2(X))$\ constructed in Section  \ref{sub-sec-geo-ass-map}.\end{itemize}\end{enumerate}
\end{lemma}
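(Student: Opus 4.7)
The argument is by an explicit construction followed by a pointwise homotopy. First, I would fix a retraction $\pi : X \to Y$ with $\pi|_Y = \mathrm{id}_Y$ and $d(x, \pi(x)) < s$ for all $x \in X$, which exists because $Y$ is an $s$-net. Define $\phi_d : P_d(X) \to P_{d+2s}(Y)$ to be the pushforward of measures along $\pi$, so $\phi_d(z)(\{y\}) = \sum_{x \in \pi^{-1}(y)} z(\{x\})$. Item (i) is immediate: $\supp\phi_d(z) = \pi(\supp z) \subseteq \{y \in Y : d(y,\supp z) < s\}$, and any two elements of $\pi(\supp z)$ are within $d + 2s$, so $\phi_d(z) \in P_{d+2s}(Y)$. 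Continuity of $\phi_d$ is automatic since $X$ is discrete.

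Next, for each $y \in Y$ I would enumerate $\pi^{-1}(y) = \{y, x_{y,1}, \ldots, x_{y,n_y}\}$ (finite by properness of $X$) and, fixing an orthonormal basis $(\eta_k)_{k \geq 0}$ of $\H$ with $\eta_0$ spanning the range of $e$, define the unitary $U_y : \ell^2(\pi^{-1}(y)) \otimes \H \to \H$ by $U_y(\delta_y \otimes \eta_k) = \eta_{k(n_y+1)}$ and $U_y(\delta_{x_{y,j}} \otimes \eta_k) = \eta_{k(n_y+1)+j}$ for $j = 1, \ldots, n_y$. Under the orthogonal decomposition $\ell^2(X,\H) = \bigoplus_{y \in Y} \ell^2(\pi^{-1}(y)) \otimes \H$, the operator $V_d = \bigoplus_y \delta_y \otimes U_y$ is a unitary from $\ell^2(X,\H)$ to $\ell^2(Y,\H)$ with support $\{(x, \pi(x)) : x \in X\} \subseteq \{d(x,y) < s\}$, giving item (ii).

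For item (iii), at each $z \in Z$ both operators are rank-one projections on $\ell^2(Y,\H)$. Writing $\xi_z = \sum_x \lambda_x^{1/2}(z)\delta_x$ and $a_y(z) = \sum_{x \in \pi^{-1}(y)} \lambda_x(z)$, a direct computation gives
\[
\eta(z) := V_d(\xi_z \otimes \eta_0) = \sum_{y \in Y} \delta_y \otimes \Bigl(\lambda_y^{1/2}(z) \eta_0 + \sum_{j=1}^{n_y} \lambda_{x_{y,j}}^{1/2}(z)\eta_j\Bigr)
\]
and the unit vector onto which $\phi_d^\ast(P_{\phi_d(Z)} \otimes e)(z)$ projects is $\eta'(z) := \sum_y a_y^{1/2}(z)\delta_y \otimes \eta_0$. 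The crucial point is that the explicit choice of $U_y$ makes $\langle \eta(z),\eta'(z)\rangle = \sum_y a_y^{1/2}(z)\lambda_y^{1/2}(z)$ real and non-negative; the straight-line interpolation $\xi_t(z) = (1-t)\eta(z) + t\eta'(z)$ therefore satisfies $\|\xi_t(z)\|^2 \geq (1-t)^2 + t^2 \geq 1/2$ uniformly. The rank-one projection $P_t(z)$ onto $\C \cdot \xi_t(z)$ is then well defined and depends continuously on $(t,z) \in [0,1] \times Z$, with $P_0 = \Ad_{V_d}(P_Z \otimes e)$ and $P_1 = \phi_d^\ast(P_{\phi_d(Z)} \otimes e)$.

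Finally, I would estimate the propagation of $P_t$ uniformly in $t$: the $y$-component of $\xi_t(z)$ vanishes for $y \notin \pi(\supp z)$, so the matrix coefficient $(P_t(z))_{y,y'}$ vanishes unless $y,y' \in \pi(\supp z)$, giving propagation at most $d + 2s < r + 2s$ for $r > 2d + 4s$. Each $P_t$ is thus a genuine projection, hence an $\varepsilon$-$(r+2s)$-projection for any $\varepsilon > 0$, and the family $(P_t)_{t \in [0,1]}$ supplies the required homotopy. The only genuinely delicate point, which dictated the specific shape of $U_y$, is that a generic unitary $V_d$ could render $\eta(z)$ and $\eta'(z)$ antipodal for some $z$ and thereby cause the straight-line interpolation to vanish; the block-diagonal choice above forces $\langle u_y(z),\eta_0\rangle = \lambda_y^{1/2}(z) \geq 0$ in every block, ruling this out.
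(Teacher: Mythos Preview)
Your argument is correct. The paper follows the same overall plan --- pushforward along a retraction $x\mapsto\tilde x$ and a block-diagonal unitary $V_d=\bigoplus_y W_y$ --- but differs in two technical choices. First, the paper normalises the block unitaries differently: it requires $W_y\bigl(\frac{1}{\sqrt{\sharp X_y}}\chi_{X_y}\otimes\eta\bigr)=\eta$, i.e.\ the \emph{uniform} vector over the fibre (rather than $\delta_y\otimes\eta_0$) is sent to $\eta$. Second, the paper runs the homotopy \emph{upstairs} in $C(Z)\otimes\Kp(\ell^2(X)\otimes\H)$, linearly interpolating the probability weights $(1-t)\lambda_{X,x}(z)+t\lambda'_x(z)$ (with $\lambda'_x$ the uniform redistribution of mass within each fibre $X_{\tilde x}$) and then taking square roots; the resulting vectors stay unit for every $t$, so no renormalisation is needed, and the paper's choice of $W_y$ is exactly what makes $\Ad_{V_d}(q_1)$ coincide on the nose with $\phi_d^*(P_{\phi_d(Z)}\otimes e)$. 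Your straight-line interpolation \emph{downstairs} in $\ell^2(Y,\H)$ is equally valid but forces you to control $\langle\eta(z),\eta'(z)\rangle\geq 0$ (hence your specific $U_y$) and to invoke the uniform bound $\|\xi_t\|^2\geq 1/2$; the paper's route avoids both issues at the cost of one extra conjugation by $V_d$.
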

\begin{proof} Let $(\la_{X,x})_{x\in X}$ and  $(\la_{Y,y})_{y\in Y}$ be respectively the coordinate systems for the Rips complexes of $X$  and $Y$.
For each $x$ in $X$, pick $\tilde{x}$ in $Y$ such that $d(x,\tilde{x})<s$ and set $X_y=\{x\in X,\, \tilde{x}=y\}$ for any $y$ in $Y$. Define then
$\phi_d:P_d(X)\lto P_{d+2s}(Y)$   by
$$\la_{Y,y}(\phi_d(z))=\sum_{x\in X_y}\la_{X,x}(z)$$ for any $z$ in $P_d(X)$ and any $y$ in $Y$. Then $\phi_d(z)$ is clearly a probability measure supported in $$\{y\in Y\text{ such that }d(y,\supp z)<s\}.$$ Let $\eta$ be a vector in $\H$ with $\|\eta\|=1$ such that $e$ is the projection on $\C\cdot\eta$ and let us choose for any $y$ in $Y$ a unitary $$W_y:\ell^2(X_y)\ts \H\lto \H$$ such that 
$$W_y\cdot \left(\frac{1}{\sqrt{\sharp X_y}}\chi_{X_y}\ts \eta\right)=\eta,$$ where $\chi_{X_y}$ is the characteristic function of ${X_y}$ viewed as an element in $\ell^2(X_y)$. Define then
$$V_d=\bigoplus_{y\in Y} W_y:\ell^2(X)\ts\H\cong\bigoplus_{y\in Y} \ell^2(X_y)\ts \H\lto \ell^{2}(Y)\ts\H.$$
The support of $V_d$ is obviously in $\{(x,y)\in X\times Y\text{ such that }d(x,y)<s\}$.
Now for any $z$ in $Z$, then $\phi_{d,\Kp(\ell^2(X)\ts\H)}^*\cdot (P_{\phi_d(Z)}\ts e)(z)$ is the projection associated to 
$$\left(\left(\sum_{x\in X_y} \la_{X,x}(z)\right)^{1/2}\ts \eta\right)_{y\in Y}.$$
Define for all $x$ in $X$
$$\la'_x=\frac{1}{\sharp X_{\tilde{x}}}\sum_{x'\in X_{\tilde{x}}}\la_{X,x'}.$$
Consider  then for any $t$ in $[0,1]$ and any $x$ in $Z$ the projection $q_t(z)$ on $$(((1-t)\la_{X,x}(z)+t\la'_x(z))^{1/2}\ts\eta)_{x\in X}.$$ If $r>2d+4s$, then $(q_t)_{t\in[0,1]}$ is a projection in $C(Z)\ts\Kp(\ell^2(X)\ts \H)$ with propagation less than $r$ such that 
$q_0=P_Z\ts e$ and  $$Ad_{V_d,C(Z)}\cdot q_1=\phi_{d,\Kp(\ell^2(X)\ts\H)}^*\cdot (P_{\phi_d(Z)}\ts e).$$
\end{proof}
\begin{remark}\label{remark-coarse-inclusion}
Let $A$ be a $C^*$-algebra and let $d,\,s,\,X$  and $Y$ be as in the lemma and recall that $A_X=A\ts \Kp(\ell^2(X))$ and $A_Y=A\ts \Kp(\ell^2(Y))$.
\begin{enumerate}
\item According to Lemma \ref{lemma-adjoint}, then the compositions
$$\Kp(\ell^2(X)\ts \H)\stackrel{Ad_{V_d}}{\lto}\Kp(\ell^2(Y)\ts \H)\hookrightarrow\Kp(\ell^2(X)\ts \H)$$  and

$$\Kp(\ell^2(Y)\ts \H)\hookrightarrow\Kp(\ell^2(X)\ts \H)\stackrel{Ad_{V_d}}{\lto}\Kp(\ell^2(Y)$$  respectively  induce for $r>2d+4s$  and up to Morita equivalence
$$\iota_*^{\eps,r,r+2s}:K_*^{\eps,r}(A_X){\lto}K_*^{\eps,r+2s}(A_X)$$  and

$$\iota_*^{\eps,r,r+2s}:K_*^{\eps,r}(A_Y)\lto K_*^{\eps,r+2s}(A_Y).$$
\item The compositions
$$P_d(X)\stackrel{\phi_d}{\lto} P_{d+2s}(Y)\hookrightarrow  P_{d+2s}(X)$$ and
$$P_d(Y)\hookrightarrow P_{d}(X) \stackrel{\phi_d}{\lto} P_{d+2s}(Y)$$ are respectively homotopic to the inclusions
$P_d(X)\hookrightarrow P_{d+2s}(X)$ and $P_d(Y)\hookrightarrow P_{d+2s}(Y)$ and hence induce respectively the morphisms
$$q^X_{d,d+2s,*}:K_*(P_d(X),A)\lto K_*(P_{d+2s}(X),A)$$ and $$q^Y_{d,d+2s,*}:K_*(P_d(Y),A)\lto K_*(P_{d+2s}(Y),A).$$
\item $$q_{d+2s,d'+2s,*}^Y\circ \phi_{d,*}=\phi_{d',*}\circ q^X_{d,d';*}$$ for any positive number $d'$ with $d'\gq d$;
\item $$\iota_*^{\eps,r+2s,r'+2s}\circ Ad^{\eps,r}_{V_d,*}=Ad^{\eps,r'}_{V_{d'},*}\circ \iota_*^{\eps,r,r'}$$ for any positive number $r'$ with $r'\gq r$, $r\gq  2d+4s$ and  $r'\gq  2d'+4s$.\end{enumerate}
\end{remark}
\begin{lemma}\label{lemma-s-net}
Let $\X$ and $\Y$ be families of proper discrete metric spaces. Assume that there exists a positive number $s$ such that \begin{itemize}
\item for any $X$ in $\X$, there exists a set $Y$ in $\Y$  contained in  $X$ as a $s$-net;
\item for any $Y$ in $\Y$, there exists $X$ in $\X$ that contains $Y$ as a $s$-net.
\end{itemize}
Then the following assertions are equivalent:
\begin{enumerate}
\item the family $\X$ satisfies the QAM-estimates;
\item the family $\Y$ satisfies the QAM-estimates.
\end{enumerate}
\end{lemma}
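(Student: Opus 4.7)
The proof is a symmetric transfer argument: I will use Lemma \ref{lemma-coarse-eq} together with Remark \ref{remark-coarse-inclusion} to show that, whenever $Y\subseteq X$ is an $s$-net, the QAM statements $QI_{X,A,*}$ and $QS_{X,A,*}$ imply the analogous statements (with propagation and Rips scale shifted by $2s$) for $Y$, and vice versa. Since the hypothesis is symmetric in $\X$ and $\Y$, it suffices to prove $(i)\Rightarrow(ii)$; $(ii)\Rightarrow(i)$ is identical after swapping roles.

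The first step is to establish the key intertwining diagram. Fix $Y\subseteq X$ with $Y$ an $s$-net and let $\phi_d:P_d(X)\to P_{d+2s}(Y)$ and $V_d:\ell^2(X,\H)\to\ell^2(Y,\H)$ be provided by Lemma \ref{lemma-coarse-eq}. Point (iii) of that lemma asserts that, after Morita absorbing a rank-one projection $e$ on $\H$, the projection $P_Z$ used to define $\nu_{X,A,*}^{\eps,r,d}$ and the projection $P_{\phi_d(Z)}$ used to define $\nu_{Y,A,*}^{\eps,r+2s,d+2s}$ are related via $Ad_{V_d}$ and $\phi_d^*$. Combined with Remark \ref{remark-naturality} (applied to the homomorphism $Ad_{V_d}$ which raises propagation by $2s$) and the naturality of $\T_{\bullet}$ in $C^*$-algebra maps, this yields the identity
\begin{equation*}
Ad_{V_d,A,*}^{\eps,r}\circ \nu_{X,A,*}^{\eps,r,d}\;=\;\nu_{Y,A,*}^{\eps,r+2s,d+2s}\circ \phi_{d,*}
\end{equation*}
for all $d,r,\eps$ with $r>2d+4s$ and $\eps<1/100$.

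For the $QI$ direction assume $\X$ satisfies uniformly the QAM-estimates and fix $d,r,\eps$ with $\eps<1/200$ and $r\geqslant r_{d,\eps}$. Choose $d'\geqslant d$ so that $QI_{X,A,*}(d,d',r,\eps)$ holds uniformly for all $X\in\X$ and all $A$. Given $Y\in\Y$, pick $X\in\X$ with $Y\subseteq X$ an $s$-net, and let $x\in K_*(P_d(Y),A)$ satisfy $\nu_{Y,A,*}^{\eps,r,d}(x)=0$. By Lemma \ref{lemma-subset}, $\nu_{X,A,*}^{\eps,r,d}(\jmath_{P_d(X),P_d(Y),*}(x))=0$, so $QI$ for $X$ gives $q_{d,d',*}^X(\jmath_{P_d(X),P_d(Y),*}(x))=0$ in $K_*(P_{d'}(X),A)$. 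Apply $\phi_{d',*}$ and use point (iii) of Remark \ref{remark-coarse-inclusion} together with the fact that $\phi_{d'}\circ\jmath_{P_{d'}(X),P_{d'}(Y)}$ is canonically homotopic to the restriction $P_{d'}(Y)\hookrightarrow P_{d'+2s}(Y)$: this gives $q_{d,d'+2s,*}^Y(x)=0$, which establishes $QI_{Y,A,*}(d,d'+2s,r,\eps)$ uniformly in $Y\in\Y$ and $A$.

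The $QS$ direction is dual. Given $\eps,r$, choose $d,r'$ with $r\leqslant r'$, $r_{d,\eps}\leqslant r'$, and $QS_{X,A,*}(d,r+2s,r',\eps)$ uniform over $X\in\X$. Given $Y\in\Y$ and $y\in K_*^{\eps,r}(A_Y)$, pick $X\in\X$ with $Y\subseteq X$ an $s$-net and push $y$ forward by $\jmath^\sharp_{X,Y,A,*}$ to get $y'\in K_*^{\eps,r}(A_X)$. Apply $QS$ for $X$ to obtain $x\in K_*(P_d(X),A)$ with $\nu_{X,A,*}^{\eps,r',d}(x)=\iota_*^{\eps,r,r'}(y')$. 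The intertwining identity above then gives $\nu_{Y,A,*}^{\eps,r'+2s,d+2s}(\phi_{d,*}(x))=Ad_{V_d,A,*}^{\eps,r'}(\iota_*^{\eps,r,r'}(y'))$, and by point (i) of Remark \ref{remark-coarse-inclusion} the right-hand side equals $\iota_*^{\eps,r,r'+2s}(y)$. This establishes $QS_{Y,A,*}(d+2s,r,r'+2s,\eps)$ uniformly. The main bookkeeping obstacle is simply tracking the various $+2s$ shifts in propagation and Rips scale and verifying that the structural $\iota_*$ maps match on the nose, which is exactly what Remark \ref{remark-coarse-inclusion} provides.
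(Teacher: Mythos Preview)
Your approach is exactly the paper's: establish the two intertwining identities
\[
\nu^{\eps,r,d}_{X,A,*}\circ \jmath_{P_d(X),P_d(Y),*}=\jmath^{\sharp,\eps,r}_{X,Y,A,*}\circ \nu^{\eps,r,d}_{Y,A,*}
\quad\text{and}\quad
Ad^{\eps,r}_{V_d,A,*}\circ \nu^{\eps,r,d}_{X,A,*}=\nu^{\eps,r+2s,d+2s}_{Y,A,*}\circ \phi_{d,*},
\]
and then chase the $QI$ and $QS$ statements through with $+2s$ shifts, exactly as you do. Your arguments for $QI_X\Rightarrow QI_Y$ and $QS_X\Rightarrow QS_Y$ are correct (modulo what looks like a typo: in the $QS$ paragraph you write $QS_{X,A,*}(d,r+2s,r',\eps)$ but then apply $QS_{X,A,*}(d,r,r',\eps)$; the latter is what you want).

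There is, however, a genuine gap in your reduction. The hypothesis is \emph{not} symmetric in $\X$ and $\Y$: in both bullets $Y$ is the subset and $X$ is the ambient space, so swapping the roles of $\X$ and $\Y$ does not give back the same hypothesis. Consequently $(ii)\Rightarrow(i)$ is not ``identical after swapping roles.'' For that direction, given $X\in\X$ you pick $Y\in\Y$ with $Y\subseteq X$ an $s$-net and must transfer $QI_Y,QS_Y$ \emph{up} to $QI_X,QS_X$. Concretely you need the reverse implications
\[
QI_{Y,A,*}(d+2s,d',r+2s,\eps)\Rightarrow QI_{X,A,*}(d,d'+2s,r,\eps),
\qquad
QS_{Y,A,*}(d,r+2s,r',\eps)\Rightarrow QS_{X,A,*}(d,r,r',\eps).
\]
These follow from the same two identities together with Remark~\ref{remark-coarse-inclusion}: for $QI$, push $x\in K_*(P_d(X),A)$ to $K_*(P_{d+2s}(Y),A)$ via $\phi_{d,*}$, use the intertwining to see its assembly vanishes, apply $QI_Y$, and come back via $\jmath_{P_{d'}(X),P_{d'}(Y),*}$ using point~(ii) of the remark; for $QS$, push $y\in K_*^{\eps,r}(A_X)$ to $K_*^{\eps,r+2s}(A_Y)$ via $Ad_{V_d}$, apply $QS_Y$, and pull the resulting $K$-homology class back via $\jmath_{P_d(X),P_d(Y),*}$ using Lemma~\ref{lemma-subset} and point~(i) of the remark. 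The paper records all four implications; you have supplied two.
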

 \begin{proof}
 Let $X$ be an element of $\X$ and let $Y$ be a $s$-net for $X$ that belongs to $\Y$, let $$\jmath^\sharp_{X,Y}:\Kp(\ell^2(Y))\to  \Kp(\ell^2(X))$$ be the inclusion and  for any $C^*$-algebra $A$, let  $$\jmath_{P_d(X),P_d(Y),*}:K_*(P_d(Y),A)\lto K_*(P_d(X),A)$$  be the morphism induced by the inclusion $P_d(Y)\hookrightarrow P_d(X)$. Then  according to  Lemma \ref{lemma-subset}, we have
\begin{equation}\label{equ-incl}\nu^{\eps,r,d}_{A,X,*}\circ \jmath_{P_d(X),P_d(Y),*}=\jmath^{\sharp,\eps,r}_{X,Y,A,*}\circ \nu^{\eps,r,d}_{A,Y,*},\end{equation} where
$$\jmath^{\sharp}_{X,Y,A,*}:\K_*(A_Y)\lto \K_*(A_X)$$ is the morphism induced by $\jmath^\sharp_{X,Y}$. 
According to Remark 
 \ref{remark-naturality} and Lemma \ref{lemma-coarse-eq}, we also have (up to Morita equivalence)
 \begin{equation}\label{equ-phi}
Ad^{\eps,r}_{V_d,A,*}\circ \nu^{\eps,r,d}_{A,X,*} =\nu^{\eps,r+2s,d+2s}_{A,Y,*}\circ \phi_{d,*}\end{equation}  
for every number $r$ such that $r\gq r_{d+2s,\eps}$,
 where $$\phi_{d,*}:K_*(P_d(X),A)\lto K_*(P_{d+2s}(X),A)$$ is the morphism induces by $\phi_d$ as in Lemma \ref{lemma-coarse-eq}.
 In view of Remark \ref{remark-coarse-inclusion} and using Equations (\ref{equ-incl}) and (\ref{equ-phi}), then for any positive numbers $\eps,\,d,\,d',\,r$ and $r'$ with $\eps<\frac{1}{200}$, then
 \begin{itemize}
 \item $QI_{X,A,*}(d,d',r,\eps)$ implies $QI_{Y,A,*}(d,d'+2s,r,\eps)$ if  $r\gq r_{d,\eps}$ and $d'\gq d$;
 \item $QI_{Y,A,*}(d+2s,d',r+2s,\eps)$ implies $QI_{X,A,*}(d,d'+2s,r,\eps)$ if  $r\gq r_{d+2s,\eps}$ and $d'\gq d+2s$; 
 \item $QS_{X,A,*}(d,r,r',\eps)$ implies $QS_{Y,A,*}(d+2s,r,r'+2s,\eps)$ if  $r'\gq r$ and $r'> r_{d+2s,\eps}-2s$; 
  \item $QS_{Y,A,*}(d,r+2s,r',\eps)$ implies $QS_{X,A,*}(d,r,r',\eps)$ if $r'\gq r+2s$ and $r'>r_{d,\eps}$.\end{itemize}
 \end{proof}
\subsection{Notations}\label{subsection-notations}For convenience of the readers, we  collect in this section some   notations that we shall use throughout the proof of Theorem \ref{theorem-main}.
 \begin{itemize}
 \item If $Z$ is compact   metric space, then for any  subset $Y$  of $Z$ and any positive number $s$, we   set $Y_s=\{z\in Z\text{ such that }d(Y,z)\lq s\}$;
\item If $X_1$ and $X_2$ are two subsets of a finite metric space $X$ and $A$ is a $C^*$-algebra, then $A_{X_1,X_2,r}=\left(A\ts\Kp(\ell^2(X_1),\ell^2(X_2))\right)\cap A_{X,r}$ (recall that $A_X=A\ts \Kp(\ell^2(X))$);
  \item if $(\H,\rho)$ is a $Z$-module,
 \begin{itemize}\item  we set $A_\H=A\ts \Kp(\H)$. Then $A_\H$ is filtered by $$(A_{\H,r})_{r>0}=(A\ts \Kp(\H)_r)_{r>0};$$ 
 \item if $\H'$ is a closed linear subspace of $\H$, we view $A_{\H'}$ as a corner sub-$C^*$-algebra of $A_{\H}$ and $Id_{\H'}$ as the projection onto $\H'$;
 \item 
 If $\H'$ and $\H''$ are two closed linear subsets of $\H$, we view $$A_{\H',\H''}\defi A\ts\Kp(\H',\H'')$$ as a closed subset of $A_\H$
 and we set $A_{\H',\H'',r}=A_{\H',\H''}\cap A_{\H,r}$;\end{itemize}
 \item let $X$ be a finite metric space, let $d$ be a positive number and let $(\H_d,\rho_d)$ be a $P_d(X)$-module. 
 \begin{itemize}
 \item $V_{\H_d}:\H_d\to \ell^2(X,\H_d)$ is the isometry defined by $$(V_{\H_d}\cdot\xi)(x)=\rho(\la^{1/2}_x)\cdot \xi$$ for any $\xi$ in $\H_d$, where $(\la_x)_{x\in X}$ is the coordinates system in $P_d(X)$.
 \end{itemize}
 \end{itemize}
 Let $X$ be a finite metric space and let $X^{(1)}$ and $X^{(2)}$ be two  subsets of $X$ such that
 $X^{(1)}\cup X^{(2)}$. Let $\de_0$ be a non negative number and let $\de_1$ and $r$ be positive numbers such that $\de_1> 5r$. With notations as above  and for any $C^*$-algebra $A$, 
 \begin{equation}\label{equ-MVpair1}\left(A_{X^{(1)}_{\de_0},X^{(1)}_{\de_0+\de_1},r},A_{X^{(2)}_{\de_0},X^{(2)}_{\de_0+\de_1},r},A_{X^{(1)}_{\de_0+\de_1}},A_{X^{(2)}_{\de_0+\de_1}}\right)\end{equation} is a $r$-controlled  Mayer-Vietoris pair for $A_X$.
We denote respectively by $$\jmath^{(1),\sharp}_{X,A}:A_{X^{(1)}_{\de_0+\de_1}}\lto A_{X},$$  $$\jmath^{(2),\sharp}_{X,A}:A_{X^{(2)}_{\de_0+\de_1}}\lto A_{X},$$  $$\jmath^{(1,2),\sharp}_{X,A}:A_{X^{(1)}_{\de_0+\de_1}\cap X^{(2)}_{\de_0+\de_1}}\lto A_{X^{(1)}_{\de_0+\de_1}}$$
and   $$\jmath^{(2,1),\sharp}_{X,A}:A_{X^{(1)}_{\de_0+\de_1}\cap X^{(2)}_{\de_0+\de_1}}\lto A_{X^{(2)}_{\de_0+\de_1}}$$  the morphisms of filtered $C^*$-algebras  induced  by the inclusion $\ell^2(X^{(1)}_{\de_0+\de_1})\hookrightarrow \ell^2(X)$, $\ell^2(X^{(2)}_{\de_0+\de_1})\hookrightarrow \ell^2(X)$,  $\ell^2(X^{(1)}_{\de_0+\de_1}\cap X^{(2)}_{\de_0+\de_1})\hookrightarrow \ell^2(X^{(1)}_{\de_0+\de_1})$ and $\ell^2(X^{(1)}_{\de_0+\de_1}\cap X^{(2)}_{\de_0+\de_1})\hookrightarrow \ell^2(X^{(2)}_{\de_0+\de_1})$.
Let \begin{equation*}\D_{X^{(1)}_{\de_0+\de_1},X^{(2)}_{\de_0+\de_1},A,*}=(\partial_{X^{(1)}_{\de_0+\de_1},X^{(2)}_{\de_0+\de_1},A,*}^{\eps,s})_{0<\eps<\frac{1}{4\al},0<s<\frac{r}{k_{\eps}}}:
\K_*(A_{X})\lto\K_*(A_{X^{(1)}_{\de_0+\de_1}\cap X^{(2)}_{\de_0+\de_1}})\end{equation*} be the associated controlled Mayer-Vietoris boundary map (of  order $r$) with $(\al,k)=(\al_\mv,k_\mv)$.

\smallskip

 Let   $d$ be a  positive number  and let $(\H_d,\rho_d)$ be a non degenerate $P_d(X)$-module.
  For $i=1,2$, and $\de>0$, let  
 $\H_{d,\de}^{(i)}$ be the   set of elements of $\H_{d}$ with support in $P_{d}(X^{(i)}_{\de})$.
Assume now that  $\de_0>2d$ and $\de_1> (2d+1)(5r+1)$.
It is straightforward to check that
 \begin{equation}\label{eq-MV-ripps}P_d(X)=\widering{{P_d(X_{\de_0}^{(1)})}}\cup \widering{P_d(X_{\de_0}^{(2)})}.\end{equation}
Let us  describe the $r$-controlled Mayer-Vietoris for $A_{\H_d}$  associated to the decomposition $P_d(X)={{P_d(X_{\de_0}^{(1)})}}\cup {P_d(X_{\de_0}^{(2)})}$.
 Since $P_d(Y)_s\subseteq P_d(Y_{2d(s+1)})$ for any subset $Y$ of $X$ and any positive number $s$, 
 \begin{equation}\label{equ-MVpair2}\left(A_{\H_{d,\de_0}^{(1)},\H_{d,\de_0+\de_1}^{(1)},r},A_{\H_{d,\de_0}^{(2)},\H_{d,\de_0+\de_1}^{(1)},r}, A_{\H_{d,\de_0+\de_1}^{(1)}}, A_{\H_{d,\de_0+\de_1}^{(2)}}\right)\end{equation}  is a $r$-controlled Mayer-Vietoris pair for $A_{\H_d}$.
 We denote respectively by 
 $$\jmath^{(1),\sharp}_{P_d(X),A}:A_{\H_{d,\de_0+\de_1}^{(1)}}\lto A_{\H_d},$$
  $$\jmath^{(2),\sharp}_{P_d(X),A}:A_{\H_{d,\de_0+\de_1}^{(2)}}\lto A_{\H_d},$$  
  $$\jmath^{(1,2),\sharp}_{P_d(X),A}:A_{\H_{d,\de_0+\de_1}^{(1)}\cap\H_{d,\de_0+\de_1}^{(2)}} \lto A_{\H_{d,\de_0+\de_1}^{(1)}}$$ and 
  $$\jmath^{(2,1),\sharp}_{P_d(X),A}:A_{\H_{d,\de_0+\de_1}^{(1)}\cap\H_{d,\de_0+\de_1}^{(2)}}\lto A_{\H_{d,\de_0+\de_1}^{(2)}}$$ 
  the morphisms of filtered $C^*$-algebras induced by the inclusions $\H_{d,\de_0+\de_1}^{(1)}\hookrightarrow \H_d$, $\H_{d,\de_0+\de_1}^{(2)}\hookrightarrow \H_d$, $\H_{d,\de_0+\de_1}^{(1)}\cap\H_{d,\de_0+\de_1}^{(2)} \hookrightarrow\H_{d,\de_0+\de_1}^{(1)}$ and $\H_{d,\de_0+\de_1}^{(1)}\cap\H_{d,\de_0+\de_1}^{(2)} \hookrightarrow\H_{d,\de_0+\de_1}^{(2)}$.
  
  On the same way, for $K$-homology, 
  we denote respectively by 
 $$\jmath^{(1)}_{P_d(X),*}:K_*(P_d(X^{(1)}_{\de_0+\de_1}),A)\lto K_*(P_d(X),A),$$
 $$\jmath^{(2)}_{P_d(X),*}:K_*(P_d(X^{(2)}_{\de_0+\de_1}),A)\lto K_*(P_d(X),A),$$
  $$\jmath^{(1,2)}_{P_d(X),*}:K_*(P_d(X^{(1)}_{\de_0+\de_1}\cap X^{(2)}_{\de_0+\de_1}),A)\lto K_*(P_d(X^{(1)}_{\de_0+\de_1}),A)$$ and 
   $$\jmath^{(2,1)}_{P_d(X),*}:K_*(P_d(X^{(1)}_{\de_0+\de_1}\cap X^{(2)}_{\de_0+\de_1}),A)\lto K_*(P_d(X^{(2)}_{\de_0+\de_1}),A),$$
  the morphisms  induced by the inclusions 
   $P_d(X^{(1)}_{\de_0+\de_1})\hookrightarrow  P_d(X)$, 
   $P_d(X^{(2)}_{\de_0+\de_1})\hookrightarrow  P_d(X)$,
   $P_d(X^{(1)}_{\de_0+\de_1}\cap X^{(2)}_{\de_0+\de_1})\hookrightarrow P_d(X^{(1)}_{\de_0+\de_1})$ and
    $P_d(X^{(1)}_{\de_0+\de_1}\cap X^{(2)}_{\de_0+\de_1})\hookrightarrow P_d(X^{(2)}_{\de_0+\de_1})$.
   
    Let \begin{equation*}\D_{X,d,\de_0,\de_1,A,*}=(\partial^{\eps,r}_{X,d,\de_0,\de_1,A,*})_{0<\eps<\frac{1}{100\al},0<s<\frac{r}{k_{\eps}}}:
\K_*(A_{\H_d})\lto\K_*(A_{\H_{d,\de_0+\de_1}^{(1)}\cap\H_{d,\de_0+\de_1}^{(2)} })\end{equation*} be the associated controlled Mayer-Vietoris boundary map at order $r$ with $(\al,k)=(\al_\mv,k_\mv)$.

\medskip
Assume now that there exists a positive number $R$ such for $i=1,2$ then $$X^{(i)}=_R\bigsqcup_{j=1}^{n_i}X^{(i)}_j$$ and let us fix a positive number $\de$ such that $2\de<R$.
Let $s$ and $s'$ be positive numbers with $2s<R-2\de$ and $s\lq s'$.
 Let us define  
for $i=1,2$ $${\iota'}^{(i),\eps,s,s'}_{X,A,*}:\bigoplus_{j=1}^{n_i}K_*^{\eps,s}(A_{X^{(i)}_{j,\de}})\lto\bigoplus_{j=1}^{n_i}K_*^{\eps,s'}(A_{X^{(i)}_{j,\de}})$$ 

and 

$${\iota'}^{(1,2),\eps,s,s'}_{X,A,*}:\bigoplus_{j=1,\ldots,n_1,l=1,\ldots,n_2}K_*^{\eps,s}(A_{X^{(1)}_{j,\de}\cap X^{(2)}_{l,\de}  })\lto\bigoplus_{j=1,\ldots,n_1,l=1,\ldots,n_2}K_*^{\eps,s'}(A_{X^{(1)}_{j,\de}\cap X^{(2)}_{l,\de}  })$$ as the morphisms  obtained by increasing the control from $s$ to $s'$.
As usual, we should write ${\iota'}^{(i),-,\eps,s'}_{X,A,*}$ (resp. ${\iota'}^{(i),\eps,s,-}$) when the source (resp. the range) is obvious and similarly for ${\iota'}^{(1,2),\eps,s,s'}_{X,A,*}$.
Let  
  $${\jmath'}^{(1,2),\sharp,\eps,s'}_{X,A,*}:\bigoplus_{j=1,\ldots,n_1,l=1,\ldots,n_2}K_*^{\eps,s'}(A_{X^{(1)}_{j,\de}\cap X^{(2)}_{l,\de}})\lto\bigoplus_{j=1}^{n_1}K_*^{\eps,s'}(A_{X^{(1)}_{\de,j}})$$ be the morphism induced by the inclusion
  $$\bigoplus_{j=1,\ldots,n_1,l=1,\ldots,n_2}A_{X^{(1)}_{j,\de}\cap X^{(2)}_{l,\de}}\hookrightarrow \bigoplus_{j=1}^{n_1}A_{X^{(1)}_{j,\de}}$$ and let  $${\jmath'}^{(2,1),\sharp,\eps,s'}_{X,A,*}:\bigoplus_{j=1,\ldots,n_1,l=1,\ldots,n_2}K_*^{\eps,s'}(A_{X^{(1)}_{j,\de}\cap X^{(2)}_{l,\de}})\lto\bigoplus_{l=1}^{n_2}K_*^{\eps,s'}(A_{X^{(2)}_{l,\de}})$$ be the morphism induced by the inclusion
  $$\bigoplus_{j=1,\ldots,n_1,l=1,\ldots,n_2}A_{X^{(1)}_{j,\de}\cap X^{(2)}_{l,\de}}\hookrightarrow \bigoplus_{l=1}^{n_2}A_{X^{(2)}_{l,\de}}$$
 Let us set also for any positive number $d$
\begin{eqnarray*}
&&P'_{d}\left(X^{(i)}_\de\right)=\bigsqcup_{j=1}^{n_i} P_{d}\left(X^{(i)}_{j,\de}\right)\subseteq P_{d}\left(X^{(i)}_\de\right)\text{ for }i=1,2;\\
&&P'_{d}\left(X^{(1)}_\de\cap X^{(2)}_\de\right)=P'_{d}\left(X^{(1)}_\de\right)\cap P'_{d}\left(X^{(2)}_\de\right)=\displaystyle \bigsqcup_{j=1,\ldots, n_1;\,l=1,\ldots n_2} P_{d}\left(X^{(1)}_{j,\de}\cap X^{(2)}_{l,\de}\right);\\
&&P_{d,\de}(X)=P'_{d}\left(X^{(1)}_{\de}\right)\cup P'_{d}\left(X^{(2)}_{\de}\right).\end{eqnarray*}

For $i=1,2$ and $\eps$, $s$ positive numbers with $\eps<1/100$ and $s\gq r_{\eps,d}$,  let us define $${\nu'}^{\eps,s,d}_{X_\de^{(i)},A,*}:K_*(P'_d(X_\de^{(i)}),A)\lto \bigoplus_{j=1}^{n_i}K_*^{\eps,s}(A_{X^{(i)}_{j,\de}})$$ as the composition
\begin{equation*}K_*(P'_d(X_\de^{(i)}),A)\stackrel{\cong}{\lto} \bigoplus_{j=1}^{n_i}K_*(P_d(X_{j,\de}^{(i)}),A)
\stackrel{\underrightarrow{{\oplus_{j=1}^{n_i}\nu^{\eps,s,d}_{X_{j,\de}^{(i)},A,*}}}}{}\bigoplus_{j=1}^{n_i}K_*^{\eps,s}(A_{X^{(i)}_{j,\de}})\end{equation*}where the first  map is induced by the isomorphism $$C(P'_d(X_\de^{(i)}))\stackrel{\cong}{\lto}\bigoplus_{j=1}^{n_i}C(P_d(X_{j,\de^{(i)}})).$$
Define similarly 
 $${\nu'}^{\eps,s,d}_{X_\de^{(1)}\cap X_\de^{(2)},A,*}:K_*(P'_d(X_\de^{(1)}\cap X_\de^{(2)}),A)\lto\bigoplus_{j=1,\ldots,n_1;l=1,\ldots,n_2}K_*^{\eps,s}(A_{X^{(1)}_{j,\de}\cap X^{(2)}_{l,\de}}).$$

\smallskip

Assume that $d<R-2\de$, in which case we have $P^\de_{d}(X)=P_d(X)$,   $P'_{d}(X^{(i)}_\de)=P_d(X^{(i)}_\de)$ for $i=1,2$ and
$P'_d(X^{(1)}_\de\cap X^{(2)}_\de)=P_d(X^{(1)}_\de\cap X^{(2)}_\de)$.
Let $d'$ be a positive number with $d\lq d'$.
For any $C^*$ algebra $A$, let us define 
$${q'}_{d,d',\de,*}:K_*(P_{d}(X),A)\lto K_*(P_{d',\de}(X),A),$$
$${q'}^{(1)}_{d,d',\de,*}:K_*(P_{d}(X^{(1)}_\de),A)\lto K_*(P'_{d'}(X^{(1)}_\de),A),$$
$${q'}^{(2)}_{d,d',\de,*}:K_*(P_{d}(X^{(2)}_\de),A)\lto K_*(P'_{d'}(X^{(2)}_\de),A)$$ and 
$${q'}^{(1,2)}_{d,d',\de,*}:K_*(P_{d}(X^{(1)}_\de\cap X^{(2)}_\de),A)\lto K_*(P'_{d'}(X^{(1)}_\de\cap X^{(2)}_\de),A)$$
as the morphism induced by the inclusion 
$P_{d}(X)\hookrightarrow P_{d',\de}(X)$.

\smallskip

 On the other hand, let $(\H_{d',\de},\rho_{d',\de})$ be a non degenerate $P_{d',\de}(X)$-module, let  $\H_d$ be the set of elements in $\H_{d',\de}$ with support in $P_d(X)$ and let $\rho_d$ be the restriction of
 $\rho_{d',\de}$ to  $\H_d$. 
 For $i=1,2$, let  
 ${\H}_{d',\de}^{(i)}$ be the   set of elements of $\H_{d',\de}$ with support in $P'_{d'}(X^{(i)}_{\de})$.

 Let then
$${q'}_{d,d',\de}^\sharp:\Kp(\H_d)\lto\Kp(\H_{d',\de})$$ be the natural inclusion and let us denote respectively by 
$${q'}_{d,d',\de}^{(1),\sharp}:\Kp({\H}_{d,\de}^{(1)})\lto\Kp({\H}_{d',\de}^{(1)})$$
$${q'}_{d,d',\de}^{(2),\sharp}:\Kp({\H}_{d,\de}^{(2)})\lto\Kp({\H}_{d',\de}^{(2)})$$ and
$${q'}_{d,d',\de}^{(1,2),\sharp}:\Kp({\H}_{d,\de}^{(1)}\cap {\H}_{d,\de}^{(2)} )\lto\Kp({\H}_{d',\de}^{(2)}\cap {\H}_{d',\de}^{(2)} )$$ the restriction of    ${q'}_{d,d',\de}^\sharp$ to 
${\H}_{d,\de}^{(1)}$, ${\H}_{d,\de}^{(2)}$ and ${\H}_{d,\de}^{(1)}\cap{\H}_{d,\de}^{(2)}$.

   Since the inclusion 
 $P_d(X)\hookrightarrow P_{d'}(X)$ is contractive, we see that ${q'}_{d,d',\de}^\sharp$ is a morphism of filtered $C^*$-algebras. 
Notice that if   moreover $(\H_d,\rho_d)$ and $(\H_{d',\de},\rho_{d',\de})$ are  standard modules  and  up to a propagation rescaling, then ${q'}_{d,d',\de}^\sharp$ induces in quantitative $K$-theory the morphism corresponding to the inclusion $P_d(X)\hookrightarrow P_{d',\de}(X)$ constructed in Section \ref{subsec-coarse-geom-nds-mod} (see Remark \ref{remark-covering-isometries}).

As for $P_d(X)$,
  we denote respectively by 
 $$\jmath^{(1),\sharp}_{P_{d',\de}(X),A}:A_{{\H}_{d',\de}^{(1)}}\lto A_{{\H_{d',\de}}},$$ $$\jmath^{(2),\sharp}_{P_{d',\de}(X),A}:A_{{\H}_{d',\de}^{(2)}}\lto A_{{\H_{d',\de}}},$$  $$\jmath^{(1,2),\sharp}_{P_{d',\de}(X),A}:A_{{\H}_{d',\de}^{(1)}\cap {\H}_{d',\de}^{(2)}}\lto  A_{{\H}_{d',\de}^{(1)}}$$ and
  $$\jmath^{(2,1),\sharp}_{P_{d',\de}(X),A}:A_{{\H}_{d',\de}^{(1)}\cap {\H}_{d',\de}^{(2)}}\lto A_{{\H}_{d',\de}^{(2)}}$$ the morphisms of filtered $C^*$-algebras induced by the inclusions ${\H}_{d',\de}^{(1)}\hookrightarrow {\H_{d',\de}}$, ${\H}_{d',\de}^{(2)}\hookrightarrow {\H_{d',\de}}$, ${\H}_{d',\de}^{(1)}\cap {\H'}_{d',\de}^{(2)} \hookrightarrow{\H}_{d',\de}^{(1)}$ and  ${\H}_{d',\de}^{(1)}\cap {\H}_{d',\de}^{(2)} \hookrightarrow{\H}_{d',\de}^{(2)}$.

 In the same way, for $K$-homology, 
  we denote for $i=1,2$  by 
 $$\jmath^{(i)}_{P_{d',\de}(X),*}:K_*(P'_{d'}(X^{(i)}_{\de}),A)\lto K_*(P_{d',\de}(X),A),$$
  the morphisms  induced by the inclusion
   $P'_{d'}(X^{(i)}_{\de})\hookrightarrow P_{d',\de}(X).$

 Eventually let $$V_{\H_{d',\de}}:\H_{d',\de}\lto \ell^2(X,\H_{d',\de})$$  be  the isometry defined by $$(V_{\H_{d',\de}}\cdot\xi)(x)=\rho(\la^{1/2}_x)\cdot \xi$$ for any $\xi$ in $\H_{d',\de}$  and any $x$ in $X$. Let
 $$V_{{\H}_{d',\de}^{(1)}}:{\H}_{d',\de}^{(1)}\lto \ell^2(X^{(1)}_{\de},{\H}_{d',\de}^{(1)}),$$ 
  $$V_{{\H}_{d',\de}^{(2)}}:{\H}_{d',\de}^{(1)}\lto \ell^2(X^{(2)}_{\de},{\H}_{d',\de}^{(2)})$$ and
   $$V_{{\H}_{d',\de}^{(1)}\cap{\H}_{d',\de}^{(2)}}:{\H}_{d',\de}^{(1)}\cap {\H}_{d',\de}^{(2)}\lto \ell^2(X^{(1)}_{\de}\cap X^{(2)}_{\de},{\H}_{d',\de}^{(1)}\cap {\H}_{d',\de}^{(2)})$$
   be respectively the restriction of 
  $V_{\H_{d',\de}}$ to    
 ${\H}_{d',\de}^{(1)}$, ${\H}_{d',\de}^{(2)}$ and
  ${\H}_{d',\de}^{(1)}\cap{\H}_{d',\de}^{(2)}$. 
 
\subsection{The $QI$-statement}\label{subsection-QI-statement}
In this subsection, we set $(\al,k)=(\al_\mv,k_\mv)$.
 \begin{lemma}\label{lemma1-preliminary-QI}There exists a control pair  $(\la,h)$ with $(\la,h)\gq(\al,k)$ such that the following holds :
 
 \medskip

Let $X$ be a finite metric space and let  $X^{(1)}$ and $X^{(2)}$ be two subsets of $X$ such that
  $X=X^{(1)}\cup X^{(2)}$, then 
\begin{itemize}
\item for any $C^*$-algebra $A$;
\item for any positive numbers $\eps$, $s$, $d$, $\de_0$, $\de_1$ and $r$ with  $\de_0>2d$, $\de_1> (2d+1)(5r+1)$,
 $\eps<\frac{1}{100\la}$,  $s\gq r_{d,\al\eps}$ and  $s<\frac{r}{k_\eps}$,
\end{itemize}
we have

$$\nu_{X^{(1)}_{\de_0+\de_1}\cap X^{(2)}_{d,\de_0+\de_1},A,*}^{\al\eps,k_{\eps}s,d}\circ \partial_{P_d(X^{(1)}_{\de_0+\de_1}),P_d(X^{(2)}_{\de_0+\de_1},*)}=\partial^{\eps,s}_{X^{(1)}_{\de_0+\de_1},X^{(2)}_{\de_0+\de_1},A,*}\circ \nu^{\eps,s,d}_{X,A,*}.$$ 
\end{lemma}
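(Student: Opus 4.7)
The plan is to reduce the statement to a commutativity property of the quantitative \emph{index} map already established in Proposition \ref{proposition-index-commutes}, by exploiting the factorization $\nu \sim Ad_V \circ Ind$ provided by Proposition \ref{proposition-assembly-conjugate}. Concretely, fix a non-degenerate standard $P_d(X)$-module $(\H_d,\rho_d)$ whose restrictions to $P_d(X^{(i)}_{\de_0+\de_1})$ for $i=1,2$ and to $P_d(X^{(1)}_{\de_0+\de_1}\cap X^{(2)}_{\de_0+\de_1})$ are still standard (such modules exist since the Rips subcomplexes are closed subsets of a compact metric space). With notations from Section \ref{subsection-notations}, Proposition \ref{proposition-assembly-conjugate} gives (up to Morita equivalence, and for suitably rescaled propagation) the identities $\nu^{\eps,s,d}_{X,A,*} = Ad^{\eps,s_0}_{V_{\H_d},A,*}\circ Ind^{\eps,s_0}_{P_d(X),A,*}$ and analogously for the intersection, with $s_0 = s/(2d(\cdot+1))$ suitably chosen.

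First I would apply Proposition \ref{proposition-index-commutes} to the Mayer-Vietoris pair $(P_d(X^{(1)}_{\de_0+\de_1}), P_d(X^{(2)}_{\de_0+\de_1}))$ inside $P_d(X)$, obtained from $X^{(1)}, X^{(2)}$ via the inequality $\de_0 > 2d$ ensuring $P_d(X) = \mathring{P_d(X^{(1)}_{\de_0})} \cup \mathring{P_d(X^{(2)}_{\de_0})}$ as in equation \eqref{eq-MV-ripps}. This yields the equality
\[
Ind_{P_d(X^{(1)}_{\de_0+\de_1}\cap X^{(2)}_{\de_0+\de_1}),A,*}\circ \partial_{P_d(X^{(1)}_{\de_0+\de_1}),P_d(X^{(2)}_{\de_0+\de_1}),*} = \D_{X,d,\de_0,\de_1,A,*}\circ Ind_{P_d(X),A,*},
\]
where $\D_{X,d,\de_0,\de_1,A,*}$ is the controlled boundary associated to the Mayer-Vietoris pair \eqref{equ-MVpair2} for $A_{\H_d}$.

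Second, I would reduce to showing that conjugation by the isometry $V_{\H_d}$ intertwines $\D_{X,d,\de_0,\de_1,A,*}$ on $A_{\H_d}$ with the controlled boundary $\D_{X^{(1)}_{\de_0+\de_1},X^{(2)}_{\de_0+\de_1},A,*}$ on $A_X$ attached to the Mayer-Vietoris pair \eqref{equ-MVpair1}. The key point is that $V_{\H_d}$ sends the subspace $\H^{(i)}_{d,\de_0}$ into $\ell^2(X^{(i)}_{\de_0},\H_d)$, since the coordinate function $\la_x$ vanishes on $P_d(X^{(i)}_{\de_0})$ whenever $x \notin X^{(i)}_{\de_0}$. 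Combined with the propagation estimate $Ad_{V_{\H_d}}(A_{\H_d,r}) \subseteq A_{X,2d(r+1)}$ from Proposition \ref{proposition-assembly-conjugate}, this shows that $Ad_{V_{\H_d}}$ restricts to a morphism of the four-term Mayer-Vietoris pair \eqref{equ-MVpair2} into \eqref{equ-MVpair1} (with a suitable coercivity adjustment). Naturality of the controlled Mayer-Vietoris boundary, i.e.\ its functoriality with respect to filtered morphisms of Mayer-Vietoris pairs \cite[Section 3]{oy4}, then yields the required commutation
\[
\partial^{\eps,s}_{X^{(1)}_{\de_0+\de_1},X^{(2)}_{\de_0+\de_1},A,*}\circ Ad^{\eps,s_0}_{V_{\H_d},A,*} = Ad^{\al\eps,k_\eps s_0}_{V_{\H^{(1)}_{d,\de_0+\de_1}\cap \H^{(2)}_{d,\de_0+\de_1}},A,*}\circ \partial^{\eps,s_0}_{X,d,\de_0,\de_1,A,*},
\]
and chaining these two facts concludes the proof.

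The main obstacle will be the bookkeeping in step two: one must verify that the assumption $\de_1 > (2d+1)(5r+1)$ precisely accommodates the propagation inflation by the factor $2d(\cdot+1)$ produced by $Ad_{V_{\H_d}}$, so that the coercivity condition ($5r < \de_1$ at the level of $A_X$) is preserved when translating the Mayer-Vietoris pair for $A_{\H_d}$ to that for $A_X$. Once this coercivity matching is established, the naturality of the controlled Mayer-Vietoris six-term exact sequence yields the equality with the claimed control pair $(\la,h)$, obtained by composing the controls from Proposition \ref{proposition-index-commutes}, the rescaling in Proposition \ref{proposition-assembly-conjugate}, and the naturality of $\D$.
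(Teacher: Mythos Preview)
Your proposal is correct and follows essentially the same approach as the paper: factorize the assembly map through $Ad_{V_{\H_d}}\circ Ind$ via Proposition~\ref{proposition-assembly-conjugate}, invoke Proposition~\ref{proposition-index-commutes} for the compatibility of $Ind$ with the Mayer--Vietoris boundaries, and use that $Ad_{V_{\H_d},A}$ carries the Mayer--Vietoris pair~\eqref{equ-MVpair2} to the pair~\eqref{equ-MVpair1} so that naturality of the controlled boundary gives the required intertwining. The paper's proof is terser (it simply asserts that $Ad_{V_{\H_d},A}$ ``clearly'' maps one pair to the other), but your more explicit verification of the coercivity and propagation bookkeeping is exactly what underlies that assertion.
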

\begin{proof}
Let us choose a non degenerate standard $P_d(X)$-module $(\H_d,\rho_d)$ such that the restriction to 
$P_d(X^{(1)}_{\de_0+\de_1}\cap X^{(2)}_{\de_0+\de_1})$ is  also standard. Then $Ad_{V_{\H_d},A}$ clearly maps the controlled $r$-controlled Mayer-Vietoris pair of  equation (\ref{equ-MVpair2})  to  the one of  equation (\ref{equ-MVpair1})  and hence intertwines  up to Morita equivalence the associated controlled boundary maps. The lemma is then a consequence of   Propositions \ref{proposition-assembly-conjugate} and  \ref{proposition-index-commutes}.
\end{proof}

  \begin{lemma}\label{lemma2-preliminary-QI}
There exists a control pair $(\la,h)$ with  $(\la,h)\gq (\al,k)$ such that
\begin{itemize}
\item for any finite metric space $X$;
\item for any $C^*$-algebra $A$;
\item for any subsets $X^{(1)}$ and $X^{(2)}$ of $X$  and any $R>0$ such that
\begin{itemize}
\item  $X=X^{(1)}\cup X^{(2)}$;
\item $X^{(i)}=_R\bigsqcup_{k=1}^{n_i}X^{(i)}_k$ for $i=1,2$.
\end{itemize}
\item for any positive numbers $d$, $d'$, $\de_0$, $\de_1$ and $s$ with $2(\de_0+\de_1)<R$, $d<2s$, $d\lq d'$, $\de_0>2d$ and $\de_1> (2d+1)(5s+1)$, $s<R/2-\de_0-\de_1$,
\end{itemize}

 the following is satisfied:

\medskip

\begin{itemize}
\item let $(\H_{d',\de_0+\de_1},\rho_{d',\de_0+\de_1})$  be a non degenerate  $P_{d',\de_0+\de_1}(X)$-module and let  $(\H_d,\rho_d)$ be its restriction to
the set of elements in $\H_{d',\de_0+\de_1}$ with support in $P_d(X)$;
\item let $\eps_0$, $\eps$ and $s_0$ be positive numbers such that $\eps_0\lq \eps<\frac{1}{100\la}$, $h_{\eps_0}s_0< s$ and $2d(h_\eps s_0+1)<s$;
\item let $y$ be an element in $K_1^{\eps_0,s_0}(A_{\H_d})$ such that 
\begin{itemize} \item $\iota^{-,\eps,s}_*\circ Ad^{\eps_0,s_0}_{V_{\H_d},A,*}(y)=0$ in 
$K_1^{\eps,s}(A_{X})$ (up to Morita equivalence);
\item ${q'}^{(1,2),\sharp,\al\eps_0,k_{\eps_0}s_0}_{d,d',\de_0+\de_1,A,*}\circ \partial^{\eps_0,s_0}_{X,d,\de_0,\de_1,A,*}(y)=0$ in $K_0^{\al\eps_0,k_{\eps_0}s_0}(A_{{\H}_{d',\de_0+\de_1}^{(1 )}\cap {\H}_{d',\de_0+\de_1}^{(2)}}  )$,
\end{itemize}
\end{itemize}
Then there exists
\begin{itemize}
\item $y^{(1)}$ in $K_1^{\la\eps_0,h_{\eps_0}s_0}(A_{{\H}_{d',\de_0+\de_1}^{(1 )}}  )$;
\item $y^{(2)}$ in $K_1^{\la\eps_0,h_{\eps_0}s_0}(A_{{\H}_{d',\de_0+\de_1}^{(2 )}}  )$;
\item $z$ in  $\displaystyle\bigoplus_{j=1,\ldots,n_1;\,l=1,\ldots,n_2}K_1^{\la\eps,2(d'+1)(h_{\eps}s+1)}(A_{X^{(1)}_{j,\de_0+\de_1}\cap X^{(2)}_{l,\de_0+\de_1}})$
\end{itemize}
such that
\begin{enumerate}
\item ${\jmath}_{P_{d',\de_0+\de_1}(X),A,*}^{(1),\sharp,\la\eps_0,h_{\eps_0} s_0}(y^{(1)})-{\jmath}_{P_{d',\de_0+\de_1}(X),A,*}^{(2),\sharp,\la\eps_0,h_{\eps_0} s_0}(y^{(2)})=\iota^{-,\la\eps_0,h_{\eps_0} s_0}_*\circ {q'}_{d,d',\de_0+\de_1,A,*}^{\sharp,\eps_0,s_0}(y)$ in $K_1^{\la\eps_0,h_{\eps_0} s_0}(A_{\H_{d',\de_0+\de_1}})$;
\item ${\iota'}_{X,A,*}^{(1),-,\la\eps,2(d'+1)(h_{\eps}s+1)}\circ Ad^{\la\eps_0,h_{\eps_0} s_0}_{V_{{\H}_{d',\de_0+\de_1}^{(1)}},A,*}(y^{(1)})={\jmath'}^{\sharp,(1,2),\la\eps,2(d'+1)(h_{\eps}s+1)}_{X,A,*}(z)$ in $K^{\la\eps',h_{\eps'}r}_1(A_{X^{(1)}_{\de_0+\de_1}})$;
\item ${\iota'}_{X,A,*}^{(2),-,\la\eps,2(d'+1)(h_{\eps}s+1)}\circ Ad^{\la\eps_0,h_{\eps_0} s_0}_{V_{{\H}_{d',\de_0+\de_1}^{(2)}},A,*}(y^{(2)})={\jmath'}^{\sharp,(2,1),\la\eps,2(d'+1)(h_{\eps}s+1)}_{X,,A,*}(z)$ in $K^{\la\eps,2(d'+1)(h_{\eps}s+1)}_1(A_{X^{(2)}_{\de_0+\de_1}})$
where the equalities in points (ii) and (iii) are indeed up to Morita equivalence.
\end{enumerate}\end{lemma}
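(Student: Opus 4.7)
The strategy is a controlled five-lemma-type argument using two controlled Mayer--Vietoris six-term exact sequences in parallel: the one associated to the Rips-complex decomposition $P_d(X) = \widering{P_d(X^{(1)}_{\de_0})} \cup \widering{P_d(X^{(2)}_{\de_0})}$ at the level of $A_{\H_d}$ (the pair of equation (\ref{equ-MVpair2})), and the one associated to the coarse decomposition $X = X^{(1)} \cup X^{(2)}$ at the level of $A_X$ (the pair of equation (\ref{equ-MVpair1})). The two are intertwined up to Morita equivalence by conjugation by the isometries $V_{\H_{d',\de_0+\de_1}}$ and their restrictions, which is precisely the mechanism behind Proposition \ref{proposition-assembly-conjugate}.

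\textbf{Step 1: Controlled exactness on the $A_{\H_{d',\de_0+\de_1}}$-side.} Let $y' = {q'}^{\sharp,\eps_0, s_0}_{d,d',\de_0+\de_1, A,*}(y)$. By naturality of the controlled Mayer--Vietoris boundary $\D_{X,d,\de_0,\de_1,A,*}$ with respect to the inclusion of modules ${q'}^{\sharp}_{d,d',\de_0+\de_1}$, the hypothesis ${q'}^{(1,2),\sharp} \circ \partial^{\eps_0, s_0}_{X, d, \de_0, \de_1, A, *}(y) = 0$ implies $\partial^{\eps_0,s_0}_{X,d',\de_0,\de_1,A,*}(y')=0$ at controlled cost. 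Applying controlled exactness of the Mayer--Vietoris sequence (Theorem \ref{thm-six term}, in the order-$r$ form of Section \ref{subsec-MV-pairs}) to this vanishing produces $y^{(1)} \in K_1^{\la\eps_0,h_{\eps_0}s_0}(A_{\H^{(1)}_{d',\de_0+\de_1}})$ and $y^{(2)} \in K_1^{\la\eps_0,h_{\eps_0}s_0}(A_{\H^{(2)}_{d',\de_0+\de_1}})$ verifying condition (i).

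\textbf{Step 2: Transfer to the $A_X$-side.} Applying $\Ad_{V_{\H_{d',\de_0+\de_1}}}$ (together with the naturality of Lemma \ref{lemma-adjoint} and Proposition \ref{prop-naturality} relating it to $\Ad_{V_{\H^{(i)}_{d',\de_0+\de_1}}}$ and $\Ad_{V_{\H_d}}$) to condition (i), one gets that $\jmath^{(1),\sharp}_{X,A}(\Ad_{V}(y^{(1)})) - \jmath^{(2),\sharp}_{X,A}(\Ad_{V}(y^{(2)}))$ coincides at a controlled cost with $\Ad_{V_{\H_d}}(y)$ pushed up to $A_X$, which vanishes in $K_1^{\eps, s}(A_X)$ by the first hypothesis on $y$. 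Controlled exactness of the Mayer--Vietoris six-term sequence attached to the pair (\ref{equ-MVpair1}) for $A_X$ then yields an element of $K_1^{\la\eps, 2(d'+1)(h_{\eps}s+1)}(A_{X^{(1)}_{\de_0+\de_1}\cap X^{(2)}_{\de_0+\de_1}})$ whose images under $\jmath^{(1,2),\sharp}_{X,A}$ and $\jmath^{(2,1),\sharp}_{X,A}$ are respectively $\Ad_V(y^{(1)})$ and $\Ad_V(y^{(2)})$ up to the appropriate controls.

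\textbf{Step 3: Splitting via the $R$-disjoint structure, and the main obstacle.} Because $2(\de_0 + \de_1) < R$, the subsets $X^{(i)}_{j,\de_0+\de_1}$ are pairwise at distance strictly greater than $R - 2(\de_0 + \de_1)$. Thus, as long as the ambient propagation stays below this threshold, matrix coefficients in $A \otimes \Kp(\ell^2(X^{(i)}_{\de_0+\de_1}))$ between distinct components vanish, so that
\[
K_1^{\eps', r'}\bigl(A_{X^{(i)}_{\de_0+\de_1}}\bigr) \;\cong\; \bigoplus_{j=1}^{n_i} K_1^{\eps', r'}\bigl(A_{X^{(i)}_{j,\de_0+\de_1}}\bigr),
\]
and similarly on the intersection piece. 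The constraints $d<2s$, $s<R/2-\de_0-\de_1$ and $2d(h_\eps s_0+1)<s$ are precisely what guarantees that the propagation produced by $\Ad_{V_{\H^{(i)}_{d',\de_0+\de_1}}}$ (which multiplies propagation by a factor depending on $d'$, cf.\ the proof of Proposition \ref{proposition-assembly-conjugate}) remains strictly below $R - 2(\de_0 + \de_1)$. Under this constraint, the element produced in Step 2 decomposes as the desired $z$, and conditions (ii) and (iii) hold. The principal technical difficulty is bookkeeping: verifying that the control pair $(\la, h)$ produced by the two applications of controlled exactness (and the intermediate use of Proposition \ref{proposition-index-commutes} plus the rescalings of Propositions \ref{prop-compatibility-index-rescaling} and \ref{prop-compatibility-kasparov-rescaling}) can be chosen universally in terms of $(\al_\mv, k_\mv)$ alone, and that the quantitative indices at the two levels $d$ and $d'$ produced throughout remain in the region where the $R$-disjoint direct-sum decomposition is valid.
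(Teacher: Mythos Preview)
Your overall strategy---two applications of controlled Mayer--Vietoris exactness, first on the $A_{\H_{d',\de_0+\de_1}}$-side and then on the $A_X$-side, intertwined by $\Ad_V$---is the right picture, and Step 1 is essentially what the paper does (the paper invokes \cite[Lemma 3.5]{oy4}, which is the element-level form of the exactness you cite).

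The gap is in Step 2. You propose to feed the pair $(\Ad_V(y^{(1)}),\Ad_V(y^{(2)}))$ into controlled exactness for the $A_X$-Mayer--Vietoris pair at the position $\K_1\oplus\K_1$. But $\Ad_{V_{\H^{(i)}_{d',\de_0+\de_1}}}$ multiplies propagation by roughly $2d'$, so these elements live at scale $(\la\eps_0,\,2d'(h_{\eps_0}s_0+1))$. The vanishing hypothesis, on the other hand, is at scale $(\eps,s)$, and the stated constraints give only $2d(h_\eps s_0+1)<s$ with $d$, not $d'$. Since $d'$ can be arbitrarily large relative to $d$, there is no reason for $(\la\eps_0,\,2d'(h_{\eps_0}s_0+1))\lq(\eps,s)$, and hence no way to invoke abstract controlled exactness at the scale where the vanishing is known.

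The paper circumvents this precisely by \emph{not} working with the $K$-theory classes $\Ad_V(y^{(i)})$ in Step 2. Instead it keeps explicit representatives throughout: the specific representative $u'$ of $q'^\sharp(y)$ satisfies $u'-I\in M_n(A_{\H_d})$, so $\Ad_{V_{\H_{d',\de_0+\de_1}}}(u')$ has propagation governed by $d$, not $d'$, and \emph{does} fit inside scale $s$. To this the paper applies \cite[Proposition 2.12]{oy4} (decomposition of a quantitatively trivial almost unitary) to obtain $V_1,V_2$ at scale $(\la\eps,h_\eps s)$. Separately, the $w_i$ from Step 1 are transferred to $W_i=\Ad_V(w_i)+\cdots$; these have the larger $d'$-scale propagation, but now one can compare $W_1W_2\approx \Ad_V(u')\approx V_1V_2$ at the combined scale, deduce $V_1^*W_1\approx V_2W_2^*$, and use the CIA property directly to produce the single element $U$ giving $z$. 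The point is that the two hypotheses are exploited at two different propagation scales ($d$ for the vanishing, $d'$ for the boundary), and the gluing is done by an explicit CIA argument rather than by a second invocation of abstract exactness.
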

\begin{proof}
Let $u$ be a $\eps_0$-$s_0$-unitary in some $\M_n(\widetilde{A_{\H_d}})$ such that
$y=[u]_{\eps_0,s_0}$ in $K_1^{\eps_0,s_0}(A_{\H_d})$. In view of \cite[Lemma 1.8]{oy4}, we can assume without loss of generality that
$u-I_n\ts Id_{\H_d}$ is in $M_n(A_{\H_d})$.
As in the construction of the controlled Mayer-Vietoris boundary  \cite[Section 3.2]{oy4} and possibly taking a larger $n$, let us pick  two  $\al\eps_0$-$k_{\eps_0}s_0$-unitaries  $v_1$ and $v_2$ in $\widetilde{M_{2n}(A_{\H_d})}$ and  an $\al\eps_0$-$k_{\eps_0}s_0$-projection $p$ in $\widetilde{M_{2n}(A_{\H_d})}$ such that
\begin{itemize}
\item $v_i-I_{2n}\ts Id_{\H_d}$ lies in $M_{2n}(A_{\H_{d,\de_0+\de_1}^{(i)}})$ for $i=1,2$;
\item $p-\diag(I_n\ts Id_{\H_d},0)$ lies in $M_{2n}(A_{\H_{d,\de_0+\de_1}^{(1)}\cap\H_{d,\de_0+\de_1}^{(2)}     })$
\item $\|\diag(u,u^*)-v_1v_2\|<\al \eps_0$;
\item $\|p-v_1^*\diag(I_n\ts Id_{\H_d},0)v_1\|<\al \eps_0$;
\item $\|p-v_2\diag(I_n\ts Id_{\H_d},0)v_2^*\|<\al \eps_0$;
\item  $[p,n]_{\al \eps_0,k_{\eps_0}s_0}= \partial^{\eps_0,s_0}_{X,d,\de_0,\de_1,A,*}(y)$ in
 $K_0^{\al \eps_0,k_{\eps_0}s_0}(A_{\H_d^{X^{(1)},\de_0+\de_1}\cap \H_d^{X^{(2)},\de_0+\de_1}     })$.
 \end{itemize}
 Let us set   \begin{eqnarray*}
 u'&=&u-I_n\ts Id_{\H_d}+I_n\ts Id_{\H_{d',\de_0+\de_1}}\\
 v_1'&=&v_1- I_{2n}\ts Id_{\H_d}+I_{2n}\ts Id_{\H_{d',\de_0+\de_1}}\\
 v_2'&=&v_2-I_{2n}\ts Id_{\H_d}+I_{2n}\ts Id_{\H_{d',\de_0+\de_1}}\\ 
 p'&=&p-\diag(I_n\ts Id_{\H_d},0)+\diag(I_n\ts Id_{\H_{d',\de_0+\de_1}},0),\end{eqnarray*}(recall that $Id_{\H_d}$ is viewed as the projection on $\H_d\subseteq \H_{d',\de_0+\de_1}$).
 Apply then \cite[Lemma 3.5]{oy4} to
 $\widetilde{A_{\H_{d',\de_0+\de_1}}}$, $\widetilde{A_{{\H'}_{d',\de_0+\de_1}^{(1)}}}$,   $\widetilde{A_{{\H}_{d',\de_0+\de_1}^{(2)}}}$, $u'$, ${u'}^*$,  $v'_1$,  $v_2'$ and $p'$, we see that for some control pair $(\la,h)$ depending only on $(\al,k)=(\al_\mv,k_\mv)$ and such that     $(\la,h)\gq (\al,k)$ and possibly taking a larger $n$, there exist two $\la\eps_0$-$h_{\eps_0} s_0$-unitaries $w_1$ and $w_2$ in  
  $\widetilde{M_{n}(A_{\H_{d',\de_0+\de_1}})}$ such that
  \begin{itemize}
  \item $w_i-I_{2n}\ts Id_{\H_{d',\de_0+\de_1}}$ lies in ${A_{{\H_{d',\de_0+\de_1}}^{(i)}}}$ for $i=1,2$; 
  \item $\|\diag(u',I_n\ts Id_{\H_{d',\de_0+\de_1}})-w_1w_2\|<\la\eps_0$.
  \end{itemize}
  Let us set then
 \begin{itemize}
 \item $y^{(1)}=[w_1]_{\la\eps_0,h_{\eps_0} s_0}$ in $K_1^{\la\eps_0,h_{\eps_0} s_0}(A_{{\H_{d',\de_0+\de_1}}^{(1)}})$;
 \item  $y^{(2)}=[w_2^*]_{\la\eps_0,h_{\eps_0} s_0}$ in $K_1^{\la\eps_0,h_{\eps_0} s_0}(A_{{\H_{d',\de_0+\de_1}}^{(2)}})$. 
 \end{itemize}
Point (i) is then a consequence of the equality
$$[u']_{\eps_0,s_0}={q'}_{d,d',\de_0+\de_1,*}^{\eps_0,s_0}([u]_{\eps_0,s_0}).$$

  Notice that
  $$[Ad_{V_{\H_{d',\de_0+\de_1},M_n(A)}}(u')+I_n\ts Id_{\ell^2(X,\H_{d',\de_0+\de_1})}-I_n\ts V_{\H_{d',\de_0+\de_1}}V^*_{\H_{d',\de_0+\de_1}}]_{\eps_0,2d(s_0+1)}$$ is the image of
    $$[Ad_{V_{\H_{d},M_n(A)}}(u)+I_n\ts Id_{\ell^2(X,\H_d)}-I_n\ts V_{\H_d}V^*_{\H_d}]_{\eps_0,2d(s_0+1)}$$  under the morphism
    $$K_1^{\eps_0,2d(s_0+1)}(A_{\ell^2(X,\H_d)})\lto K_1^{\eps,2d(s_0+1)}(A_{\ell^2(X,\H_{d',\de_0+\de_1})}) $$ induced by the inclusion
    $A_{\ell^2(X,\H_d)})\hookrightarrow A_{\ell^2(X,\H_{d',\de_0+\de_1})}$ and hence 
     $$[Ad_{V_{\H_{d',\de_0+\de_1},M_n(A)}}(u')+I_n\ts Id_{\ell^2(X,\H_{d',\de_0+\de_1})}-I_n\ts V_{\H_{d',\de_0+\de_1}}V^*_{\H_{d',\de_0+\de_1}}]_{\eps,s}=0$$ in
     $K_1^{\eps,s}(A_{\ell^2(X,\H_{d',\de_0+\de_1})})$.

  Hence, in view of \cite[Proposition 2.12]{oy4}, and up to rescaling $(\la,h)$ and to enlarging  $n$, we see that there exists  two $\la\eps$-$h_{\eps}s$-unitaries $V_1$ and $V_2$ in $M_{n}(\widetilde{A_{\ell^2(X,\H_{d',\de_0+\de_1})}})$ such that
 \begin{itemize}
 \item       $V_i-I_{2n}\ts Id_{\ell^2(X,\H_{d',\de_0+\de_1})}$ is in        $$\bigoplus_{j=1}^{n_i}M_{n}(A_{\ell^2(X^{(i)}_{j,\de_0+\de_1},\H_{d',\de_0+\de_1})})$$ for $i=1,2$;
 \item $V_i$ is homotopic to $I_n\ts Id_{\ell^2(X,\H_{d',\de_0+\de_1})}$ as 
 a $\la\eps$-$h_{\eps}s$-unitary in $$\bigoplus_{j=1}^{n_i}M_{n}(A_{\ell^2(X^{(i)}_{j,\de_0+\de_1},\H_{d',\de_0+\de_1})})+\C\cdot I_n\ts Id_{\ell^2(X,\H_{d',\de_0+\de_1})}; $$
 \item $\|Ad_{V_{\H_{d',\de_0+\de_1},M_{n}(A)}}(u')+I_{n}\ts Id_{\ell^2(X,\H_{d',\de_0+\de_1})}-I_{n}\ts V_{\H_{d',\de_0+\de_1}}V^*_{\H_{d',\de_0+\de_1}}-V_1V_2\|<\la\eps$.
 \end{itemize}
 Let us set for $i=1,2$
 $$W_i=Ad_{V_{\H_{d',\de_0+\de_1},M_{n}(A)}}(w_i)+I_{n}\ts Id_{\ell^2(X,\H_{d',\de_0+\de_1})}-I_{n}\ts V_{\H_{d',\de_0+\de_1}}V^*_{\H_{d',\de_0+\de_1}}.$$ For $i=1,2$, then $W_i$ is a $\la\eps_0$-$2d'(h_{\eps_0}s_0+1)$-unitary in 
 $$M_n(A_{\ell^2(X,\H_{d',\de_0+\de_1})})+\C\cdot I_n\ts Id_{\ell^2(X,\H_{d',\de_0+\de_1})}$$ and moreover, since   the propagation of  $w_i$ is  less than $h_{\eps_0}s_0$ then
 $W_i-I_n\ts Id_{\ell^2(X,\H_{d',\de_0+\de_1})}$ lies in $$\bigoplus_{j=1}^{n_i}M_{n}(A_{\ell^2(X^{(i)}_{j,\de_0+\de_1},\H_{d',\de_0+\de_1})}).$$
 Furthermore,  we have $$\|V_1V_2-W_1W_2\|<2\la\eps$$ and hence
 $$\|V_2W_2^*-V_1^*W_1\|<12\la\eps.$$ 
 Using the $CIA$-property for the pair of algebras 
 $$\left(\bigoplus_{j=1}^{n_1} A_{\ell^2(X^{(1)}_{j,\de_0+\de_1},\H_{d',\de_0+\de_1})   },\bigoplus_{j=1}^{n_1} A_{\ell^2(X^{(2)}_{j,\de_0+\de_1},\H_{d',\de_0+\de_1})   }\right),$$  we  deduce that there exists a $12\la\eps$-$2(d'+1)(h_{\eps}s+1)$-unitary $U$ in the unitarization of
 $$\bigoplus_{j=1,\ldots,n_1;\,l=1,\ldots n_2}M_{n}(A_{\ell^2(X^{(1)}_{j,\de_0+\de_1}\cap X^{(2)}_{l,\de_0+\de_1},\H_{d',\de_0+\de_1})})$$  such that
 $$\|U-V_1^*W_1\|<12\la\eps$$ and 
 $$\|U-V_2W_2^*\|<12\la\eps.$$

 In particular, in view of point (ii) of Lemma \ref{lemma-almost-closed}, we deduce that
 \begin{itemize}
 \item $U$ and $W_1$ are homotopic as $48\la\eps$-$2(d'+1)(h_{\eps }s+1)$-unitaries in  $$\bigoplus_{k=1}^{n_i}M_{n}(A_{\ell^2(X^{(1)}_{k,\de_0+\de_1},\H_{d',\de_0+\de_1})})+\C\cdot I_n\ts Id_{\ell^2(X,\H_{d',\de_0+\de_1})};$$
 \item $U$ and $W^*_2$ are homotopic as $48\la\eps$-$2(d'+1)(h_{\eps }s+1)$-unitaries in $$\bigoplus_{k=1}^{n_i}M_{n}(A_{\ell^2(X^{(2)}_{k,\de_0+\de_1},\H_{d',\de_0+\de_1})})+\C\cdot I_n\ts Id_{\ell^2(X,\H_{d',\de_0+\de_1})};$$
 \end{itemize}
 Let us replace $48\la$ by $\la$   and let then set
$z=[U]_{\la\eps,2(d'+1)(h_{\eps}s+1)}$  (up to Morita equivalence) in          $K_1^{\la\eps,2(d'+1)(h_\eps s+1)}(A_{\ell^2(X^{(1)}_{\de_0+\de_1}\cap X^{(2)}_{\de_0+\de_1})})$. 
We have 
\begin{eqnarray*}
{\jmath'}^{\sharp,(1,2),\la\eps,2(d'+1)(h_{\eps}s+1)}_{X,A,*}(z)&=&[{\jmath'}^{\sharp,(1,2)}_{X,M_{n(A)}}(U)]_{\la\eps,2(d'+1)(h_{\eps}s+1)}\\
&=&[W_1]_{\la\eps,2(d'+1)(h_{\eps}s+1)}\\
&=&{\iota'}_*^{(1)-,\la\eps,2(d'+1)(h_{\eps}s+1)}\circ Ad^{\la\eps_0,h_{\eps_0} s_0}_{V_{{\H}_{d',\de_0+\de_1}^{(1)}},A,*}(y^{(1)})\end{eqnarray*}
and similarly
$${\jmath'}^{\sharp,(2,1),\la\eps,h_{\eps}s}_{X,A,*}(z)={\iota'}_*^{(2),-,\la\eps,2(d'+1)(h_{\eps}s+1)}\circ Ad^{\la\eps_0,h_{\eps_0} s_0}_{V_{{\H}_{d',\de_0+\de_1}^{(2)}},A,*}(y^{(2)}).$$ and hence we deduce points (ii) and (iii).

 \end{proof}
 {\it  End of the proof of the $QI$ statement.} 
 Let $\X$ be a family of finite metric spaces satisfying the assumptions of Theorem \ref{theorem-main}.
 Let $\eps$, $s$ and $d$ be positive numbers with $\eps<\frac{1}{200}$
 and $s\gq r_{d,\eps}$.  Let us show that there exists a positive number $d'$ such that for any $X$ in $\X$ and any $C^*$-algebra $A$, then $QI_{X,A,*}(d,d',s,\eps)$ holds. Using  controlled Bott Periodicity, we can reduce to the odd case. As before, we set  $(\al,k)=(\al_\mv,k_\mv)$ and for sake of simplicity, we shall also denote by 
   $(\al,k)$  the  control pairs of Lemmas  \ref{lemma1-preliminary-QI} and  \ref{lemma2-preliminary-QI}.

 Let us fix four positive numbers $r$, $\de_0$, $\de_1$ and $R$  such that  $k_{\eps}s<r$, $\de_0>2d$, $\de_1>(2d+1)(5r+1)$ and $R>2(\de_0+\de_1)+2r$. Let  $\Y$ be a family of finite metric spaces with uniformly bounded  geometry such that
 \begin{itemize}
 \item $\X$ is $R$-decomposable with respect to $\Y$;
 \item $\widetilde{\Y}$ satisfies uniformly the QAM-estimates.
 \end{itemize}

 For any  positive number $\delta$, let $\widetilde{\X_\de}$ be the set of all subsets of metric spaces in $\X$ thats contains a metric space in $\Y$ as a $\de$-net.
 According to Lemma \ref{lemma-s-net}, the family $\widetilde{\X_\de}$ satisfies uniformly the QAM-estimates.

 \medskip

 \medskip
 
{\bf Step 1: } Assume that $\eps<\frac{1}{200\al}$ (the general case will be obtained by rescaling using Proposition \ref{prop-compatibility-assembly-rescaling}).
Let $X$ be a metric space in $\X$ and let $X^{(1)}$ and  $X^{(2)}$ be two subsets of $X$ such that
\begin{itemize}
\item $X=X^{(1)}\cup X^{(2)}$;
\item $X^{(i)}=_R\bigsqcup_{j=1}^{n_i} X_j^{(i)}$ with $X_j^{(i)}$ in $\Y$ for $j=1,\ldots,n_i$ and $i=1,2$.
\end{itemize}
Recall from equation (\ref{eq-MV-ripps}) that
$$P_d(X)=\widering{{P_d(X_{\de_0+\de_1}^{(1)})}}\cup \widering{P_d(X_{\de_0+\de_1}^{(2)})}.$$
Let   $d'$ be a positive number with $d'\gq d$ such that
$QI_{X',A,*}(d,d',k_\eps s,\eps)$ holds for all metric spaces in $\widetilde{\X_{\de_0+\de_1}}$.
Notice that

$$K_*(P_d(X^{(1)}_{\de_0+\de_1}\cap X^{(2)}_{\de_0+\de_1}),A)\cong \bigoplus_{j=1,\ldots,n_1,l=1,\ldots n_2}K_*(P_d(X^{(1)}_{j,\de_0+\de_1}\cap X^{(2)}_{l,\de_0+\de_1}),A)$$ and 
$$K_*^{\al\eps,k_{\eps}s}(A_{X^{(1)}_{\de_0+\de_1}\cap X^{(2)}_{\de_0+\de_1}})\cong \bigoplus_{j=1,\ldots,n_1,l=1,\ldots n_2}K_*^{\al\eps,k_{\eps}s}(A_{X^{(1)}_{j,\de_0+\de_1}\cap X^{(2)}_{l,\de_0+\de_1}})$$ and that under these two identifications, then
\begin{equation}\label{equ-decom-assembly-map}\nu_{X^{(1)}_{\de_0+\de_1}\cap X^{(2)}_{\de_0+\de_1},A,*}^{\al\eps,k_{\eps}s,d}=\bigoplus_{j=1,\ldots,n_1,l=1,\ldots n_2} \nu_{X^{(1)}_{j,\de_0+\de_1}\cap X^{(2)}_{l,\de_0+\de_1},A,*}^{\al\eps,k_{\eps}s,d}.\end{equation}
Let $x$ be a element of $K_1(P_d(X),A)$  such that
$\nu_{X,A,*}^{\eps,s,d}(x)=0$ in $K_1^{\eps,s}(A_{X})$. 

Since $QI_{X^{(1)}_{\de_0+\de_1}\cap X^{(2)}_{\de_0+\de_1,A,*}}(d,d',k_\eps s,\al\eps)$ holds, we deduce from  Lemma \ref{lemma1-preliminary-QI} and from equation (\ref {equ-decom-assembly-map}) that
\begin{equation}\label{eq-step1}{q'}^{(1,2)}_{d,d',\de_0+\de_1,*}\circ\partial_{P_d(X^{(1)}_{\de_0+\de_1}),P_d(X^{(2)}_{\de_0+\de_1}),*}(x)=0.\end{equation}

\medskip

{\bf Step 2: }
Let $n_{d'}$ be an integer such that for any metric space in $\Y$, balls of radius $d'$ have cardinal bounded by $n_{d'}$.
In view of Theorem  \ref{thm-qindex-iso}, let $(0,1/200)\to [1,+\infty);\,\eps\mapsto h_\eps$ be a non increasing function such that for any finite simplicial complexe $Z$ with dimension less than $n_{d'}$ and for any $C^*$-algebra $A$, then $Ind_{Z,A,*}$ is a $h$-isomorphism.
Let us fix a non degenerate standard module $P_{d',\de_0+\de_1}(X)$-module $(\H_{d',\de_0+\de_1},\rho_{d',\de_0+\de_1})$ such that the restrictions to elements of  $\H_{d',\de_0+\de_1}$ with support in
$P_d(X)$, $P'_{d'}(X^{(1)}_{\de_0+\de_1})$, $P'_{d'}(X^{(2)}_{\de_0+\de_1})$ and $P'_{d'}\left(X^{(1)}_{\de_0+\de_1}\cap X^{(2)}_{\de_0+\de_1}\right)$ are standard. Let $(\H_d,\rho_d)$ be the restriction of $(\H_{d',\de_0+\de_1},\rho_{d',\de_0+\de_1})$ to element of $\H_{d',\de_0+\de_1}$ with support in
$P_d(X)$. Let us pick two positive numbers $\eps_0$ and $s_0$ with 
$\eps_0<\eps $, $s_0<\frac{1}{k_{\eps_0}h^2_{\al\eps_0}}$, $k_{\eps_0} s_0<s$ and $2d(k_{\eps_0} s_0+1)<s$ (recall that $s\gq r_{d,\eps}>2d$).
Let us set then $$y=Ind_{P_d(X),A,*}^{\eps_0,s_0}(x)$$ in $K_1^{\eps_0,s_0}(A_{\H_d})$. According to Proposition \ref{proposition-assembly-conjugate}, we see that
$$\iota_*^{-,\eps, s}\circ Ad^{\eps_0,s_0}_{V_{\H_d},A,*}(y)=0.$$
On the other hand,
\begin{equation*}\begin{split}
{q'}^{(1,2),\sharp,\al\eps_0,k_{\eps_0}s_0}_{d,d',\de_0+\de_1,A,*}\circ& \partial^{\eps_0,s_0}_{X,d,\de_0,\de_1,A,*}(y)\\
=&{q'}^{(1,2),\sharp,\al\eps_0,k_{\eps_0}}_{d,d',\de_0+\de_1,A,*}\circ \partial^{\eps_0,s_0}_{X,d,\de_0,\de_1,A,*}\circ Ind_{X,A,*}^{\eps_0,s_0}(x)\\
=& {q'}^{(1,2),\sharp,\al\eps_0,k_{\eps_0}s_0}_{d,d',\de_0+\de_1,A,*}\circ Ind^{\al\eps_0,k_{\eps_0}s_0}_{P_d(X^{(1)}_{\de_0+\de_1}\cap X^{(2)}_{\de_0+\de_1}),A,*}\circ\partial_{P_d(X^{(1)}_{\de_0+\de_1}),P_d(X^{(2)}_{\de_0+\de_1}),*}(x)\\
=&Ind^{\al\eps_0,k_{\eps_0}s_0}_{P_{d',\de_0+\de_1}X^{(1)}_{\de_0+\de_1}\cap X^{(2)}_{\de_0+\de_1},A,*}\circ {q'}^{(1,2)}_{d,d',*}\circ \partial_{P_d(X^{(1)}_{\de_0+\de_1}),P_d(X^{(2)}_{\de_0+\de_1}),*}(x)\\
=&0,\end{split}
\end{equation*}
where 
\begin{itemize}
\item the second equality is a consequence of  Proposition \ref{proposition-index-commutes}; 
\item the third  equality is a consequence of Remark \ref{rem-funct-inclusion};
\item  the last equality is a consequence of equation (\ref{eq-step1}).
\end{itemize}

\medskip

{\bf Step 3: }
Let  $y^{(1)}$ in $K_1^{\al\eps_0,k_{\eps_0} s_0}(A_{{\H}_{d',\de_0+\de_1}^{(1)}})$, $y^{(2)}$ in  $K_1^{\al\eps_0,k_{\eps_0} s_0}(A_{{\H}_{d',\de_0+\de_1}^{(2)}})$ and 
 $z$ in  $\displaystyle\bigoplus_{j=1,\ldots;n_1;\,l=1,\ldots,n_2} K_1^{\al\eps,2(d'+1)(k_{\eps}s+1)}(A_{X^{(1)}_{j,\de_0+\de_1}\cap X^{(2)}_{l,\de_0+\de_1}})$ be as in   Lemma \ref{lemma2-preliminary-QI} with respect to $y$. 
  Pick then 
 positive numbers $d''$ and $s'$ with  $s'\gq r_{d'',\al\eps}$ and $s'\gq 2(d'+1)(k_{\eps} s+1)$ and such that $$QS_{X',A,*}(d'',2(d'+1)(k_{\eps} s+1),s',\al\eps)$$ holds for any  metric space $X'$ in 
 $\widetilde{\X_{\de_0+\de_1}}$. 

 Then  there exists an element $x_z$ in $$K_1(P'_{d''}(X^{(1)}_{\de_0+\de_1}\cap X^{(2)}_{\de_0+\de_1}),A)\cong \bigoplus_ {j=1,\ldots,n_1,l=1,\ldots,n_2}K_1(P_{d''}(X^{(1)}_{j,\de_0+\de_1}\cap X^{(2)}_{k,\de_0+\de_1}),A)$$ such that under above identifications 
\begin{equation}\label{equ-z-xz}{\nu'}^{\al\eps,s',d''}_{X^{(1)}_{\de_0+\de_1}\cap X^{(2)}_{\de_0+\de_1},A,*}(x_z)=z',\end{equation} where $z'$ is the image of $z$ under the map
\begin{equation*}\begin{split}\bigoplus_{j=1,\ldots;n_1;\,l=1,\ldots,n_2} K_1^{\al\eps,2(d'+1)(k_{\eps}s+1)}&(A_{X^{(1)}_{j,\de_0+\de_1}\cap X^{(2)}_{l,\de_0+\de_1}})\\&  \lto \displaystyle\bigoplus_{j=1,\ldots;n_1;\,l=1,\ldots,n_2} K_1^{\al\eps,s'}(A_{X^{(1)}_{j,\de_0+\de_1}\cap X^{(2)}_{l,\de_0+\de_1}})\end{split}\end{equation*} (increasing control).

Since $k_{\eps_0}s_0<1/h^2_{\al\eps_0}$ and $Ind_{P'_{d'}(X^{(i)}_{\de_0+\de_1}),A,*}$ is a $h$-isomorphism, 
there exists  an element $x^{(i)}$ in $K_1(P'_{d'}(X^{(i)}_{\de_0+\de_1}),A)$ for $i=1,2$ such that 
\begin{equation}\label{eq-ind-yi}
Ind_{P'_{d'}(X^{(i)}_{\de_0+\de_1}),A,*}^{\al\eps_0,h_{\al\eps_0}k_{\eps_0}s_0}(x^{(i)})=\iota_*^{-,{\al\eps_0,h_{\al\eps_0}k_{\eps_0}s_0}}(y^{(i)}).\end{equation}
 For $i=1,2$, let ${x'}^{(i)}$ in $K_1(P'_{d''}(X^{(i)}_{\de_0+\de_1}),A)$ be the image of $x^{(i)}$  under the morphism induced by the inclusion
 $P'_{d'}(X^{(i)}_{\de_0+\de_1})\hookrightarrow P'_{d''}(X^{(i)}_{\de_0+\de_1})$.
 Then 
 \begin{eqnarray*}
 {\nu'}^{\al\eps,s',d''}_{X^{(i)}_{\de_0+\de_1},A,*}(x'^{(i)})&=&\iota'^{(i),-,\al\eps,s'}_{X,A,*}\circ Ad_{V_{\H^{(i)}_{d',\de_0+\de_1}},A,*}^{\al\eps_0,k_{\eps_0}s_0}(y^{(i)})\\
 &=&\jmath^{(1,2),\sharp,\al\eps,s'}_{P_{d'',\de_0+\de_1}(X),A,*}\circ \iota_*^{-,-}(z)\\
 &=&\jmath^{(1,2),\sharp,\al\eps,s'}_{P_{d'',\de_0+\de_1}(X),A,*}\circ{\nu'}^{\al\eps,s',d''}_{X^{(1)}_{\de_0+\de_1}\cap X^{(2)}_{\de_0+\de_1},A,*}(x_z). \end{eqnarray*}
\medskip 

{\bf Step 4: }
Let $d'''$ be a positive number with $d'''\gq d''$ and such that $QI_{X',A,*}(d'',d''',s',\al\eps)$ holds  for every metric space  $X'$ in 
 $\widetilde{\X_{\de_0+\de_1}}$. Since $$P'_{d''}(X^{(i)}_{\de_0+\de_1})= \bigsqcup_ {j=1,\ldots,n_i} P_{d''}(X^{(i)}_{j,\de_0+\de_1})$$ and according to Lemma \ref{lemma-subset} and to equation (\ref{equ-z-xz}), we deduce that in $K_1(P'_{d'''}(X^{(i)}_{\de_0+\de_1}),A)$, the image of ${x'}^{(i)}$    under the morphism induced by the inclusion
 $P'_{d''}(X^{(i)}_{\de_0+\de_1})\hookrightarrow P'_{d'''}(X^{(i)}_{\de_0+\de_1})$  and  the image of $x_z$  under the morphism induced by the inclusion $P'_{d''}(X^{(1)}_{\de_0+\de_1}\cap X^{(2)}_{\de_0+\de_1})\hookrightarrow P'_{d'''}(X^{(i)}_{\de_0+\de_1})$ coincides for $i=1,2$.
 As a consequence of equation \eqref{eq-ind-yi} and of the injectivity of $Ind_{P'_{d'}(X^{(i)}_{\de_0+\de_1}),A,*}^{\al\eps_0,h_{\al\eps_0}k_{\eps_0}s_0}$  and since $${\jmath}_{P_{d',\de_0+\de_1}(X),A,*}^{(1),\sharp,\al\eps_0,k_{\eps_0} s_0}(y^{(1)})-{\jmath}_{P_{d',\de_0+\de_1}(X),A,*}^{(2),\sharp,\al\eps_0,k_{\eps_0} s_0}(y^{(2)})=\iota^{-,\al\eps_0,k_{\eps_0}s_0}_*\circ {q'}_{d,d',\de_0+\de_1,A,*}^{\sharp,\eps_0,s_0}(y)$$ in $K_1^{\al\eps_0,k_{\eps_0} s_0}(A_{\H_{d',\de_0+\de_1}})$, we deduce from   Remark \ref{rem-funct-inclusion} that 
 $${\jmath}_{P^{(1)}_{d',\de_0+\de_1}(X),A,*}({x}^{(1)})-{\jmath}^{(2)}_{P_{d',\de_0+\de_1}(X),A,*}({x}^{(2)})={q'}_{d,d',\de_0+\de_1,A,*}(x)$$  and hence
$${\jmath}_{P^{(1)}_{d'',\de_0+\de_1}(X),A,*}({x'}^{(1)})-{\jmath}^{(2)}_{P_{d'',\de_0+\de_1}(X),A,*}({x'}^{(2)})={q'}_{d,d'',\de_0+\de_1,A,*}(x)$$ 
  and therefore
  $q_{d,d''',*}(x)=0$. Hence the condition $QI_{X,A,*}(d,d''',s,\eps)$ holds.
  
\subsection{The $QS$-statement}
Let $\X$ be a family of finite metric spaces and  let $\Y$ be a family of finite metric spaces with uniformly bounded  geometry such that for any positive number $R$,
 \begin{itemize}
 \item $\X$ is $R$-decomposable with respect to $\Y$;
 \item $\tilde{\Y}$ satisfies uniformly the QAM-estimates.
 \end{itemize}
  As for the $QI$-statement,  by using controlled Bott Periodicity we can reduce it  to the odd case and we set $(\al,k)=(\al_\mv,k_\mv)$.
 
 \medskip

 Let $\eps$ and $s$ be positive numbers with  $\eps<\frac{1}{200\al}$ (the general case will follow by rescaling).
 Let $X$ be a metric space in $\X$  and let $z$ be an element in $K_1^{\eps,s}(A_{X})$. Let us fix $\de$  and $R$ two  positive numbers such that
$10k_{\eps}s<10r<2\de<R$.
Let $X=X^{(1)}\cup X^{(2)}$ be a $R$-decomposition of $X$ in $\Y$ with $X^{(i)}=_R\bigsqcup_{j=1}^{n_i}X_j^{(i)}$ for $i=1,2$.
   Then $X_\de^{(i)}=_{R-2\de}\bigsqcup_{j=1}^{n_i}X_{j,\de}^{(i)}$  for $i=1,2$ and as mentioned in Section \ref{subsection-notations} (see equation (\ref{equ-MVpair1})), for any $C^*$-algebra $A$,
   $$(A_{X^{(1)},X_\de^{(1)},r},A_{X^{(2)},X_\de^{(2)},r},A_{X_\de^{(1)}},A_{X_\de^{(2)}})$$ is a $r$-controlled Mayer-Vietoris pair for $A_{X}$.
    Recall from  Lemma \ref{lemma-s-net} that  the family $\widetilde{\X_\de}$ satisfies uniformly the QAM-estimates.     
   \medskip
   
   {\bf Step 1: }Let $s'$ and $d$ be positive numbers with $s'\gq r_{d,\al\eps}$, $s'\gq k_{\eps}s$ and such that
   the condition $QS_{X',A,*}(k_{\eps}s,s',d,\al\eps)$ holds for every metric space $X'$ in $\widetilde{\X_\de}$.
Since $$K^{\eps,s}_*(A_{X_\de^{(1)}\cap X_\de^{(2)}})\cong\bigoplus_{k=1,\ldots,n_1; \,l=1,\ldots,n_2}
K^{\eps,s}_*(A_{X_{k,\de}^{(1)}\cap X_{l,\de}^{(2)}})$$ and under the identification 
$$K_0(P'_d(X_{\de}^{(1)}\cap X_{\de}^{(2)}),A)\cong \bigoplus_{j=1,\ldots,n_1;\, l=1,\ldots,n_2} K_0(P_d(X_{j,\de}^{(1)}\cap X_{l,\de}^{(2)}),A),$$ there exists an element $x'$ in $K_0(P'_d(X_{\de}^{(1)}\cap X_{\de}^{(2)}),A)$ such that

$$\nu'^{\al\eps,s',d}_{X_{\de}^{(1)}\cap X_{\de}^{(2)},A,*}(x')={\iota'}_*^{(1,2),-,\al\eps,s'}\circ 
\partial_{X_\de^{(1)},X_\de^{(2)},A,*}^{\eps,s}(z).$$
Since $$\jmath^{(1,2),\sharp}_{X,A,*}\circ \D _{X_\de^{(1)},X_\de^{(2)},A,*}=0$$ and 
$$\jmath^{(2,1),\sharp}_{X,A,*}\circ \D _{X_\de^{(1)},X_\de^{(2)},A,*}=0,$$ using Lemma \ref{lemma-subset} we deduce 
that 
\begin{equation}\label{equ1-QS}{\nu'}^{\al\eps,s',d}_{X_{\de}^{(1)},A,*}\circ \jmath^{(1,2)}_{P_{d,\de}(X),*}(x')=0\end{equation}
and 
\begin{equation}\label{equ2-QS}{\nu'}^{\al\eps,s',d}_{X_{\de}^{(2)},A,*}\circ \jmath^{(2,1)}_{P_{d,\de}(X),*}(x')=0,\end{equation}
where
 $$\jmath^{(1,2)}_{P_{d,\de}(X),*}:K_*(P'_d(X_\de^{(1)}\cap  X_\de^{(2)}),A)\lto K_*(P'_d(X_\de^{(1)}),A)$$ and 
 $$\jmath^{(2,1)}_{P_{d,\de}(X),*}:K_*(P'_d(X_\de^{(1)} \cap  X_\de^{(2)}),A)\lto K_*(P'_d(X_\de^{(2)}),A)$$ are respectively the morphism induced by the inclusions 
$$P'_d(X_\de^{(1)}\cap X_\de^{(2)})\hookrightarrow  P'_d(X_\de^{(1)})$$  and 
$$P'_d(X_\de^{(1)}\cap X_\de^{(2)})\hookrightarrow  P'_d(X_\de^{(2)}).$$

  \medskip
   
   {\bf Step 2: }Let $d'$ be a positive number with $d'\gq d$  and such that $QI_{X',A,*}(d,d',s',\al\eps)$ holds for every metric space $X'$ in $\widetilde{\X_\de}$.
   We deduce from equation (\ref{equ1-QS}) that 
   \begin{eqnarray*}
  \jmath^{(1,2)}_{P_{d',\de}(X),*}\circ {q'}^{(1,2)}_{d,d',\de,*}(x')&=&{q'}^{(1)}_{d,d',\de,*}\circ  \jmath^{(1,2)}_{P_{d,\de}(X),*}(x')\\
   &=&0,
   \end{eqnarray*}
   and similarly from   equation (\ref{equ2-QS}) that 
$$\jmath^{(2,1)}_{P_{d',\de}(X),*}\circ {q'}^{(2,1)}_{d,d',\de,*}(x')=0.$$   
Let $(\la,h)$ be the control pair of \cite[Lemma 3.8]{oy4}. For sake  of simplicity, we also denote by $h$  the non increasing function of  Step 2 of the proof  
of the $QI$-statement in Section \ref{subsection-QI-statement}.
Let us pick $\eps_0$ and $s_0$ two positive numbers such that 
$\eps_0<\frac{1}{200\la}$ and $s_0<\frac{1}{h^2_{\la\eps_0}h_{\eps_0}}$. Let us fix $(\H_{d',\de},\rho_{d',\de})$ a non degenerate standard
  $P_{d',\de}(X)$-module  with standard restriction to  ${\H}_{d',\de}^{(1)}$, ${\H}_{d',\de}^{(2)} $ and ${\H}_{d',\de}^{(1)}\cap{\H}_{d',\de}^{(2)}$.  
 Let us set then 
 $$y'=Ind^{\eps_0,s_0}_{P_{d'}(X^{(1)}_\de\cap X^{(1)}_\de),A,*}\circ {q'}^{(1,2)}_{d,d',\de,*}(x')$$ in $K_0^{\eps_0,s_0}(A_{{\H}_{d',\de}^{(1)}\cap{\H}_{d',\de}^{(2)}})$.
 From Remark \ref{rem-funct-inclusion}, we deduce
 that $$\jmath^{(1,2),\sharp,\eps_0,s_0}_{P_{d',\de}(X),A,*}(y')=0$$ in $K_0^{\eps_0,s_0}(A_{{\H}_{d',\de}^{(1)}})$ and 
 $$\jmath^{(2,1),\sharp,\eps_0,s_0}_{P_{d',\de}(X),A,*}(y')=0$$ in $K_0^{\eps_0,s_0}(A_{{\H}_{d',\de}^{(2)}})$.
 Let then $q$ be an $\eps_0$-$s_0$-projection in some $M_n(\widetilde{A_{{\H}_{d',\de}^{(1)}\cap{\H}_{d',\de}^{(2)}}})$ and let $l$ be an integer with $l\lq n$ such that $y'=[q,l]_{\eps_0,s_0}$ in $K^{\eps_0,s_0}_0(A_{{\H}_{d',\de}^{(1)}\cap{\H}_{d',\de}^{(2)}})$. We can assume without loss of generality that
 \begin{itemize}
 \item $q-\diag(I_l\ts Id_{\H_{d',\de}},0)$ is in $M_n({A_{{\H}_{d',\de}^{(1)}\cap{\H}_{d',\de}^{(2)}}})$(see \cite[Lemma 1.7]{oy4});
 \item $q$ is homotopic to $\diag(I_l\ts Id_{\H_{d',\de}},0)$ as an $\eps_0$-$s_0$-projection in  $M_n(\widetilde{A_{{\H}_{d',\de}^{(i)}}})$ for $i=1,2$.
 \end{itemize}
 Applying  \cite[Lemma 3.8]{oy4}), we see that for an integer $N$ with $N\gq n$, there exist $w_1$ and $w_2$ two 
 $\la\eps_0$-$h_{\eps_0}s_0$-unitaries   in  $M_N(\widetilde{A_{\H_{d',\de}}})$,  $u$  an $\la\eps_0$-$h_{\eps_0}s_0$-unitary in
 $M_n(\widetilde{A_{\H_{d',\de}}})$, $v$  an  $\la\eps_0$-$h_{\eps_0}s_0$-unitary in
 $M_{N-n}(\widetilde{A_{\H_{d',\de}}})$ such that
 \begin{itemize}
\item $w_i-I_N\ts Id_{\H_{d',\de}}$ is    in   $M_N({A_{{\H'}_{d',\de}^{(i)}}})$ for $i=1,2$;
\item $$\|w_1^*\diag(I_n\ts Id_{\H_{d',\de}},0)w_1-diag(q,0)\|<\la\eps_0$$ and
$$\|w_2\diag(I_n\ts Id_{\H_{d',\de}},0)w_2^*-\diag(q,0)\|<\la\eps_0.$$
 \item $\|\diag(u,v)-w_1w_2\|<\la\eps_0$.
 \end{itemize}
 \medskip
   
   {\bf Step 3: } Let $U$ be an $\eps$-$s$-unitary in some $M_m(\widetilde{A_{\ell^2(X,\H_{d',\de})}})$ such that  $z=-[U]_{\eps,s}$ in $K^{\eps,s}_1(A_{X})$ (up to Morita equivalence). According to \cite[Lemma 1.8]{oy4}, we can assume without loss of generality that 
  $U-I_m\ts Id_{\ell^2(X,\H_{d',\de})}$ is in $M_m({A_{\ell^2(X,\H_{d',\de})}})$. Up to stabilization and by construction of the Mayer-Vietoris controlled boundary map, there exist
   $W_1$ and $W_2$ two $\al\eps$-$k_{\eps}s$-unitaries in 
   $M_{2m}(\widetilde{A_{\ell^2(X,\H_{d',\de})}})$ and $Q$ an $\al\eps$-$k_\eps s$-projection  in 
   $M_{2m}(\widetilde{A_{\ell^2(X,\H_{d',\de})}})$  such that
   
    \begin{itemize}
\item $W_i-I_{2m}\ts Id_{\ell^2(X,\H_{d',\de})}$ is    in   $M_{2m}(A_{\ell^2(X_\de^{(i)},\H_{d',\de})})$ for $i=1,2$;
\item $Q-\diag(I_{m}\ts Id_{\ell^2(X,\H_{d',\de})},0)$ is in $M_{2m}(A_{\ell^2(X_\de^{(1)}\cap X_\de^{(2)},\H_{d',\de})})$
\item  $\|\diag(U,U^*)-W_1W_2\|<\al\eps$;
\item $$\|W_1^*\diag(I_{m}\ts Id_{\ell^2(X,\H_{d',\de})},0)W_1-Q\|<\al\eps$$ and
$$\|W_2\diag(I_n\ts Id_{\H_{d',\de}},0)W_2^*-Q\|<\al\eps;$$
 \item up  to Morita equivalence, then
  $$\partial^{\eps,s}_{X_\de^{(1)},X_\de^{(2)},A,*}(z)=-[Q,m]_{\al\eps,k_{\eps}s}$$ in 
 $K_0^{\al\eps,k_{\eps}s}(A_{X_\de^{(1)}\cap X_\de^{(2)}})$.
 \end{itemize}
 Let $s''$ be a positive number such that $s''\gq s'$ and $s''>2d'(h_{\eps_0}s_0+1)$.
 Applying \cite[Lemma 3.5]{oy4} to 
 $$\diag(Ad_{V_{\H_{d',\de},A}} u+I_n\ts Id_{\H_{d',\de}}-I_n\ts V_{\H_{d',\de}}V^*_{\H_{d',\de}},U),$$
  $$\diag(Ad_{V_{\H_{d',\de},A}} v+I_{N-n}\ts Id_{\H_{d',\de}}-I_{N-n}\ts V_{\H_{d',\de}}V^*_{\H_{d',\de}},U^*)$$ and to 
  the matrices obtained from
  $$\diag(Ad_{V_{\H_{d',\de},A}} w_1+I_N\ts Id_{\H_{d',\de}}-I_n\ts V_{\H_{d',\de}}V^*_{\H_{d',\de}},W_1),$$
 
 $$\diag(Ad_{V_{\H_{d',\de},A}} w_2+I_N\ts Id_{\H_{d',\de}}-I_n\ts V_{\H_{d',\de}}V^*_{\H_{d',\de}},W_2)$$  
 and 
  $$\diag(Ad_{V_{\H_{d',\de},A}} \diag(q,0)+I_N\ts Id_{\H_{d',\de}}-I_n\ts V_{\H_{d',\de}}V^*_{\H_{d',\de}},Q)$$

 by swapping the order
 of the coordinates $n+1,\ldots,N$ and $N+1,\ldots,N+m$, we see that for some control pair $(\la',h')$ that depends only on
 $(\al,k)$ and $(\la,h)$ with  $(\la',h')\gq (\la,h)$, there exists $U^{(1)}$ and $U^{(2)}$ two $\la'\eps$-$h'_{\eps}s''$-unitary in
 some $M_{n'}(\widetilde{A_{X}})$ such that
 \begin{itemize}
 \item $U^{(i)}-I_{n'}\ts Id_{\ell^2(X)}$ is in $\bigoplus_{k=1}^{n_i} M_{n'}\left(A_{\ell^2(X_{k,\de}^{(i)})}\right)$ for $i=1,2$;
 \item up to Morita equivalence, we have \begin{equation*}\begin{split}
 [U^{(1)}]_{\la'\eps,h'_{\eps}s''}&+[U^{(2)}]_{\la'\eps,h'_{\eps}s''}=\\&[Ad_{V_{\H_{d',\de},A}} u+I_n\ts Id_{\H_{d',\de}}-I_n\ts V_{\H_{d',\de}}V^*_{\H_{d',\de}}]_{\la'\eps,h'_{\eps}s''}-\iota_*^{\la'\eps,h'_{\eps}s''}(y)\end{split}\end{equation*} in $K_1^{\la'\eps,h'_{\eps}s''}(A_{\ell^2(X)})$.
 \end{itemize}
 
  \medskip
   
   {\bf Step 4: } Assume from now on that $\eps<\frac{1}{200\la'}$, the general case being deduced by rescaling using Proposition \ref{prop-compatibility-assembly-rescaling}. 
   Let $s'''$ and $d''$ be positive numbers with $d''\gq d'$, $s'''>h'_\eps s''$, $s'''> r_{d'',\la'\eps}$ and $s'''>2d''(h_{\eps_0}h_{\la\eps_0}s_0+1))$ such that
 the condition $QS_{X',A,*}(d'',h'_{\eps}s'',s''',\la'\eps)$ holds for any metric space $X'$ in $\widetilde{\X_\de}$ 	and any  $C^*$-algebra $A$.
 Then for $i=1,2$, there exists $x'^{(i)}$ in $K_1(P'_{d''}(X_\de^{(i)}),A)\cong \bigoplus_{j=1}^{n_i}
 K_1(P_{d''}(X_{j,\de}^{(i)}),A)$ such that up to Morita equivalence, then
 $${\nu'}_{X_\de^{(i)},A,*}^{\la'\eps,s''',d''}(x'^{(i)})=[U^{(i)}]_{\la'\eps,s''',d''}$$ in $K_1^{\la'\eps,s'''}(\bigoplus_{j=1}^{n_i}A_{X^{(i)}_{j,\de}})$. Since
 $Ind_{P_{d',\de}(X),A,*}$ is a $h$-isomorphism, there exists a unique  element $x'$ in $K_1(P_{d',\de}(X),A)$ such that
 $$Ind^{\la\eps_0,h_{\la\eps_0}h_{\eps_0}s_0}_{P_{d',\de}(X),A,*}(x')=[u]_{\la\eps_0,h_{\la\eps_0}h_{\eps_0}s_0}$$ in $K_1^{\la\eps_0,h_{\la\eps_0}h_{\eps_0}s_0}(A_{\H_{d',\de}})$. 
 Let $x$ in $K_1(P_{d''}(X),A)$ be the image of $x'$ under the morphism induced by the inclusion 
 $P_{d',\de}(X)\hookrightarrow P_{d''}(X)$. By choosing a non degenerate standard module for  $P_{d''}(X)$ that restricts to
 $(\H_{d',\de},\rho_{d',\de})$ on   $P_{d',\de}(X)$, it is straightforward to check using Lemma  \ref{lemma-subset} and Proposition 
 \ref{proposition-assembly-conjugate} that we have up  to Morita equivalence
 \begin{equation}\label{eq-surjectivity-step4}\nu^{\la'\eps,s''',d''}_{X,A,*}(x)=[Ad_{V_{\H_{d',\de},A}} u+I_n\ts Id_{\H_{d',\de}}-I_n\ts V_{\H_{d',\de}}V^*_{\H_{d',\de}}]_{\la'\eps,s'''}\end{equation}
 in $K_1^{\la'\eps,s'''}(A_X)$.
 For $i=1,2$, let $x^{(i)ø}$ be the image of  $x'^{(i)}$ under the map $K_1(P'_{d''}(X_\de^{(i)}),A)\to K_1(P_{d''}(X_\de^{(i)}),A)$ induced by the inclusion 
 $P'_{d''}(X_\de^{(i)})\hookrightarrow P_{d''}(X_\de^{(i)})$.
 We deduce from equation \eqref{eq-surjectivity-step4} that 
 \begin{equation*}\begin{split}
 \nu^{\la'\eps,s''',d''}_{X,A,*}&(x-\jmath_{P_{d''}(X_\de^{(1)}),A,*}(x^{(1)})-\jmath_{P_{d''}(X_\de^{(2)}),A,*}(x^{(2)}))\\
 =& [Ad_{V_{\H_{d',\de},A}} u+I_n\ts Id_{\H_{d',\de}}-I_n\ts V_{\H_{d',\de}}V^*_{\H_{d',\de}}]_{\la'\eps,s'''}-[U^{(1)}]_{la'\eps,s'''}-
 [U^{(2)}]_{\la'\eps,s'''}\\
 =& \iota^{-,\la'\eps,s'''}_*(y)\end{split},\end{equation*}where the first equality holds by Lemma  \ref{lemma-subset}.
  Thus the condition
 $QS_{X,A,*}(d'',s,s''',\eps)$ holds for any  $C^*$-algebra $A$.
 \qed
 \bibliographystyle{plain}

\end{document}